\definecolor{slightblue}{rgb}{.8, .8, 1}
\definecolor{tif}{RGB}{10, 186, 181}
\definecolor{hair}{RGB}{100,225,190}
\definecolor{ruby}{RGB}{220,50,120}
\definecolor{grass}{RGB}{150,220,110}
\newtheorem{theorem}{Theorem}[section] 
\newtheorem*{theorem*}{Theorem}
\newtheorem{proposition}[theorem]{Proposition}
\newtheorem{lemma}[theorem]{Lemma}
 \newtheorem{corollary}[theorem]{Corollary}
\theoremstyle{definition} 
\newtheorem{definition}[theorem]{Definition}
\theoremstyle{remark} \newtheorem{remark}[theorem]{Remark} \numberwithin{equation}{section}
\numberwithin{figure}{section}
\newcommand{\N}{\mathcal{N}}
\newcommand{\Lap}[1]{\raisebox{-1pt}{\scalebox{1.15}{$\Delta$}}^{\!{#1}}}
\newcommand{\id}{\mathrm{id}}
\newcommand{\A}{\mathcal{A}}
\newcommand{\Diffeo}{\mathsf{Diffeo}}
\newcommand{\dev}{\mathsf{dev}}
\newcommand{\hol}{\mathsf{hol}}
\newcommand{\Aut}{\mathrm{Aut}}
\newcommand{\pt}{$\bullet$ }
\newcommand{\ve}[1]{{\boldsymbol{#1}}}
\newcommand{\T}{\mathsf{T}}
\newcommand{\dif}{\mathsf{d}}
\newcommand{\ac}[1]{{\boldsymbol{#1}}} %complex structure
\newcommand{\pa}{\partial}
\newcommand{\bpa}{{\bar{\partial}}}
\newcommand{\dz}{{\dif z}}
\newcommand{\dbz}{{\dif\bar z}}
\newcommand{\dx}{{\dif x}}
\newcommand{\paz}{\partial_z}
\newcommand{\pabz}{\partial_{\bar z}}
\newcommand{\pazbz}{\partial_{z\bar z}}
\newcommand{\ima}{\boldsymbol{i}} %imaginary unit
\newcommand{\GL}{\mathrm{GL}}
\newcommand{\SL}{\mathrm{SL}}
\newcommand{\eg}{\textit{e.g. }}
\newcommand{\cf}{\textit{c.f. }}
\newcommand{\ie}{\textit{i.e. }}
\newcommand{\etc}{\textit{etc.}}
\DeclareMathOperator{\ch}{ch}
\DeclareMathOperator{\Ad}{Ad}
\DeclareMathOperator{\im}{\mathsf{Im}}
\DeclareMathOperator{\re}{\mathsf{Re}}
\DeclareMathOperator{\U}{U}
\newcommand{\Fa}[1]{\mathsf{Farey}({#1})}
\newcommand{\Se}{\mathscr{S}}
\renewcommand{\H}{\mathbf{H}}
\newcommand{\CH}{\overline{\H}}
\newcommand{\fun}{f}
\renewcommand{\P}{\mathcal{P}}
\newcommand{\lap}{\Delta}
\newcommand{\tinf}{{t\rightarrow+\infty}}
\renewcommand{\k}{\ve{k}}
\newcommand{\dr}{\mathsf{r}}
\newcommand{\V}{\mathcal{W}}
\renewcommand{\dev}{\mathit{dev}}
\newcommand{\Dev}{\mathit{Dev}}
\renewcommand{\hol}{\mathit{hol}}
\newcommand{\C}{\mathscr{C}}
\renewcommand{\U}{\mathscr{U}}
\newcommand{\Q}{B}
\newcommand{\dzeta}{\dif\zeta}
\newcommand{\dbzeta}{\dif\bar\zeta}
\newcommand{\hyp}{\mathsf{hyp}}
\newcommand{\para}{\ve{T}}
\renewcommand{\sl}{\mathfrak{sl}}
\renewcommand{\a}{\ve{T}}
\newcommand{\Ac}{\mathsf{Accum}}
\newcommand{\dist}{\mathsf{dist}}
\newcommand{\devbd}[1]{\pa_{#1}\Omega}
\newcommand{\devlim}{\mathsf{Lim}}
\newcommand{\devlima}{\mathsf{Lim}_0}
\newcommand{\D}{\mathsf{D}}
\newcommand{\PGL}{\mathrm{PGL}}
 \newcommand{\gl}{\mathfrak{gl}}
\begin{document}

\title{Meromorphic cubic differentials and convex projective structures}
\author[Xin Nie]{Xin Nie} 
\address{School of Mathematics, KIAS
85 Hoegiro, Dongdaemun-gu, 
Seoul 02455,
Republic of Korea}
 \email{xinnie@mail.kias.re.kr}
% \thanks{The research leading to these results has received funding from the European Research Council under the {\em European Community}'s seventh Framework Programme (FP7/2007-2013)/ERC {\em grant agreement} ${\rm n^o}$ FP7-246918}

\maketitle

%\textbf{\red{This paper is in preparation, please do not cite or distribute.}}

\begin{abstract}
Extending the Labourie-Loftin correspondence, we establish, on any punctured oriented surface of finite type, a one-to-one correspondence between  convex projective structures with specific types of ends and punctured Riemann surface structures endowed with meromorphic cubic differentials whose poles are at the punctures. This generalises previous results of Loftin, Benoist-Hulin and Dumas-Wolf. 
\end{abstract}

\tableofcontents

\section{Introduction}

\subsection{Backgrounds} 
Let $\Sigma$ be an oriented surface other than the $2$-sphere.  A \emph{convex projective structure} on $\Sigma$ is given by a \emph{developing pair} $(\dev,\hol)$, where the holonomy $\hol$  is a representation of $\pi_1(\Sigma)$ in the group $\PGL(3,\mathbb{R})\cong\SL(3,\mathbb{R})$ of projective transformations  of $\mathbb{RP}^2$, whereas the \emph{developing map} $\dev$ is a $\hol$-equivariant diffeomorphism from the universal cover $\widetilde{\Sigma}$ to a bounded convex open subset $\Omega$ of an affine chart $\mathbb{R}^2\subset\mathbb{RP}^2$.

%Convex projective structures on manifolds of arbitrary dimensions are extensively studied, see \eg the surveys \cite{benoist_divisible, marquis}. However,  a good understanding of the moduli space of convex projective structures is available only for surfaces, \ie the case of dimension two, as we review now. 

Let $\mathcal{C}(\Sigma)$ denote the space of pairs $(\ac{J},\ve{b})$, where $\ac{J}$ is a complex structure on $\Sigma$ compatible with the orientation and $\ve{b}$ a holomorphic cubic differential on the Riemann surface $(\Sigma, \ac{J})$.
Results of Cheng-Yau \cite{cheng-yau_1, cheng-yau_2} and Wang \cite{wang} in affine differential geometry provide a natural map
\begin{equation}\label{eqn_corresp}
\P(\Sigma)\rightarrow \mathcal{C}(\Sigma).
\end{equation}
If $\Sigma$ is further assumed to be closed, Labourie  \cite{labourie_cubic} and Loftin \cite{loftin_amer} showed independently that the map (\ref{eqn_corresp}) is bijective, establishing a bijection between the quotients $\P(\Sigma)/\Diffeo^0(\Sigma)$ and $\mathcal{C}(\Sigma)/\Diffeo^0(\Sigma)$, where $\Diffeo^0(\Sigma)$ is the identity component of the diffeomorphism group of $\Sigma$. 
%Since $\mathcal{C}(\Sigma)/\Diffeo^0(\Sigma)$ is a holomorphic vector bundle of rank $5(g-1)$ over the Teichm\"uller space $\mathcal{T}(\Sigma)$, 
%When $\Sigma$ has genus $g\geq 2$, 
This recovers a previous result of Choi and Goldman \cite{choi-goldman} that $\P(\Sigma)/\Diffeo^0(\Sigma)$ is homeomorphic to $\mathbb{R}^{16(g-1)}$.

The purpose of the present paper is to extend the Labourie-Loftin bijection to open surfaces of finite type, \ie the case where $\Sigma$ is obtained from a closed oriented surface $\overline{\Sigma}$ by removing finitely many punctures. 

%In this case, it is perceived that an inverse to the map (\ref{eqn_corresp}) can exist only when we restrict to some particular classes of $(\ac{J}, \ve{b})\in\mathcal{C}(\Sigma)$, those such that the asymptotic behavior of $\ve{b}$ at punctures can be controlled.  
%More specifically, 
Let $\mathcal{C}_0(\Sigma)$ denote the space of $(\ac{J},\ve{b})\in\mathcal{C}(\Sigma)$ satisfying the following \mbox{requirements}. 

\begin{itemize}
\item The complex structure $\ac{J}$ is cuspidal at each puncture $p$, \ie a punctured neighborhood of $p$ is biholomorphic to $\{z\in\mathbb{C}\mid 0<|z|<1\}$ in such a way that points near $p$ correspond to points near $0$.
\item Each puncture is a pole or a removable singularity of $\ve{b}$. If $\Sigma$ has non-negative Euler characteristic (\ie homeomorphic to either $\mathbb{C}$, $\mathbb{C}^*$ or a torus), we further assume that $\ve{b}$ does not vanish identically.
\end{itemize}

Our main theorem says that the map (\ref{eqn_corresp}) restricts to a bijection between $\mathcal{C}_0(\Sigma)$ and a subset $\P_0(\Sigma)$ of $\P(\Sigma)$ consisting of convex projective structures 
with specific types of ends. We proceed to describe these ends in detail.

\subsection{The main theorem}
Fix a developing pair $(\dev,\hol)$ of a convex projective structure on $\Sigma$ and put $\Omega=\dev(\widetilde{\Sigma})$. Let $\devbd{p}$ denote the subset of $\pa\Omega$ ``corresponding to $p$'' (we will define this notion rigorously in \S \ref{sec_farey}). $\devbd{p}$  has infinitely many connected components, which are indexed by a transitive $\pi_1(\Sigma)$-set that we call \emph{Farey set} following \cite{fock-goncharov} and denote by $\Fa{\Sigma,p}$ (see \S \ref{sec_farey} for details), thus $\devbd{p}=\bigsqcup_{\tilde{p}\in\Fa{\Sigma,p}}\devbd{\tilde{p}}$. The generalise of $\tilde{p}\in\Fa{\Sigma,p}$ in $\pi_1(\Sigma)$ is generated by a loop $\gamma_{\tilde{p}}$ which goes around $p$ once and the holonomy $\hol_{\tilde{p}}:=\hol(\gamma_{\tilde{p}})\in\SL(3,\mathbb{R})$ preserves $\devbd{\tilde{p}}$.

%The following more precise description will be needed in the sequel.
%
%If $\Sigma$ has negative Euler characteristic,   the \emph{Farey set} of $p$ and denote it by $\Fa{p}$. Given $\tilde{p}\in\Fa{p}$,  take a decreasing sequence of horodisks $D_1\supset D_2\supset\cdots$ based at $\tilde{p}$ and converging to $\tilde{p}$.  The developed boundary $\devbd{\tilde{p}}$ is defined as the set of accumulation points of the sequence of subsets $(\dev(D_i))$ of $\Omega$. Also let $\hol_{\tilde{p}}\in\SL(3,\mathbb{R})$ denote the holonomy of the projective structure around $p$ which  preserves $\devbd{\tilde{p}}$. If $\Sigma$ has non-negative Euler characteristic, \ie homeomorphic to $\mathbb{C}$ or $\mathbb{C}^*$, the definition of developed boundaries is more straightforward since no choice of $\tilde{p}$ is involved.

We define $\P_0(\Sigma)$ as the set of convex projective structures such that each $\devbd{\tilde{p}}$ belongs to one of the following types. Here we employ the standard classification of projective automorphisms of properly convex sets into hyperbolic, quasi-hyperbolic, planar and parabolic ones (see \S \ref{sec_auto} below).
\begin{itemize}
\item
\textbf{A point}. In this case $\hol_{\tilde{p}}$ is parabolic and we call $p$ a \emph{cusp} of the convex projective surface.
Marquis \cite{marquis} proved that we are in this case if and only if a punctured neighborhood of $p$ has finite volume with respect to the Hilbert metric.
\item
\textbf{A segment}. In this case $\hol_{\tilde{p}}$ is either hyperbolic, quasi-hyperbolic or planar (the last one occurs only when $\Sigma$ is an annulus) and we call $p$ a \emph{geodesic end}.
%because, if we let $\Sigma'_p$ be the partial compactification of $\Sigma$ obtained by attaching a boundary circle to $p$, then the projective structure extends to $\Sigma'_p$ and has straight boundary.
\item
\textbf{A letter ``V''}, \ie two non-collinear segments sharing an endpoint. In this case $\hol_{\tilde{p}}$ is hyperbolic and the three endpoints of the two segments are the fixed points of $\hol_{\tilde{p}}$. We call such $p$ a \emph{V-end}. Choi \cite{choi_ends} studied ends of convex projective manifolds of arbitrary dimensions,  the present case corresponding to \emph{properly convex radial ends} in his classification.
\item
\textbf{A twisted $n$-gon}, \ie $\devbd{\tilde{p}}$ consists of consecutive segments $(Y_k)_{k\in\mathbb{Z}}$ such that the adjacent segments $Y_k$ and $Y_{k+1}$ are not collinear and $\hol_{\tilde{p}}$ maps $Y_k$ to $Y_{k+n}$ for every $k$. In this case we call $p$ a \emph{broken geodesic end} with $n$-pieces. 

\end{itemize}

%structure of Fr\'echet manifold...

Our main result is the following theorem. Here we recall that the residue $R$ of a cubic differential $\ve{b}$ at a pole $p$ of order at most $3$ is the coefficient of $z^{-3}\dz^3$ in the Laurent expansion of $\ve{b}$ with respect to a conformal local coordinate $z$ centered at $p$.  $R$ does not depend on the choice of the coordinate (which is  false when the order exceeds $3$)
\begin{theorem}\label{intro_main}
Let $\Sigma$ be a punctured oriented surface of finite type. Then
\begin{enumerate}
\item\label{item_intro1}
The natural map (\ref{eqn_corresp}) restricts to a bijection from $\P_0(\Sigma)$ to $\mathcal{C}_0(\Sigma)$.

\item\label{item_intro2}
Given a convex projective structure in $\P_0(\Sigma)$, at each puncture, the type of end of the convex projective structure is determined by  the type of singularity of the corresponding cubic differential as in the following table. Here $R$ denotes the residue of the cubic differential
\vspace{4pt}

\begin{tabular}{|c|c|}
\hline
\emph{type of end of the}& \emph{type of singularity of}\\
\emph{projective structure}&\emph{the cubic differential}\\ \hline
cusp&removable singularity or\\ 
&pole of order at most $2$\\ \hline
geodesic end with planar&third order pole\\
 or quasi-hyperbolic holonomy&with $R\in\ima\mathbb{R}^*$\\
\hline
geodesic end with & third order pole\\
hyperbolic holonomy&with $\re(R)>0$ \\
 \hline
{V-end}&third order pole\\ 
&with $\re(R)<0$\\
\hline
{broken geodesic end with $n$-pieces}&{pole of order $n+3$}\\[3pt]
\hline
\end{tabular}
\end{enumerate}
\end{theorem}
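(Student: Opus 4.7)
The plan is to extend the closed-surface Labourie–Loftin argument to the punctured setting by a careful local-to-global analysis of Wang's semilinear PDE. Recall that, via Cheng–Yau and Wang, the map (\ref{eqn_corresp}) arises from the Blaschke metric $g=e^{2u}|\dz|^2$ of the hyperbolic affine sphere over $\Omega=\dev(\widetilde\Sigma)$, where the conformal factor $u$ satisfies Wang's equation relating it to $\ve{b}$ and the curvature. The inverse is recovered by solving this equation for prescribed $(\ac{J},\ve{b})\in\mathcal{C}_0(\Sigma)$ and then integrating the affine structure equations to obtain $(\dev,\hol)$. The new content compared to the closed case lies in controlling $u$ near each puncture and in matching its asymptotic behaviour with the geometry of $\devbd{\tilde p}$.

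I would first carry out a puncture-by-puncture model analysis on the punctured disk $\{0<|z|<1\}$, producing for each singularity type of $\ve{b}$ in the table an explicit local solution of Wang's equation with a definite asymptotic profile as $|z|\to 0$. When $\ve{b}$ has at most a double pole, a bounded solution should exist and the resulting affine sphere degenerates to a single point of $\pa\Omega$, giving the cusp. When $\ve{b}$ has a third-order pole with residue $R$, the leading term $R\,\dz^3/z^3$ is rotationally symmetric up to a phase, so after passing to the universal cover of the punctured disk the model reduces to an ODE whose phase portrait should split into three regimes according to $\re R>0$, $\re R<0$, or $R\in\ima\mathbb{R}^*$, matching the hyperbolic / V-end / planar or quasi-hyperbolic trichotomy. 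For a pole of order $n+3$, a suitable normal form makes $\dz^3/z^{n+3}$ exhibit an $n$-fold symmetry, and the model affine sphere should assemble $n$ asymptotic flat pieces into the twisted $n$-gon.

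These local models would then be glued to a global solution on $\Sigma$ via a sub-/supersolution construction in the spirit of Loftin's argument, combined with barriers near the punctures coming from the models; uniqueness would follow from the maximum principle for Wang's equation. Integrating the associated flat $\SL(3,\mathbb{R})$-connection over $\widetilde\Sigma$ reconstructs the developing pair, and the asymptotic behaviour of $u$ at each $\tilde p$ translates, through the structure equations, into the shape of $\devbd{\tilde p}$ together with the conjugacy class of $\hol_{\tilde p}$, yielding the correspondence of part (\ref{item_intro2}). In the reverse direction, given a projective structure in $\P_0(\Sigma)$, Cheng–Yau produces the affine sphere and hence $(\ac{J},\ve{b})$; comparing the resulting Blaschke metric with the model solutions identifies the singularity type at each puncture, closing the bijection in part (\ref{item_intro1}).

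The main obstacle is the local analysis at third-order poles, where the hyperbolic, V-end and planar/quasi-hyperbolic regimes must be separated by the single complex parameter $R$ alone, and where the holonomy of the model flat connection has to be computed finely enough to read off its conjugacy class. The order $n+3$ case is equally subtle: the twisted $n$-gon structure of $\devbd{\tilde p}$ is a genuinely global combinatorial feature built out of a purely local power-law singularity, and one must verify both that the $n$ pieces appear and that $\hol_{\tilde p}$ shifts the index by $n$. I expect both points to require nontrivial ODE and asymptotic work beyond the closed-surface case, together with new a priori estimates for Wang's equation in weighted function spaces adapted to the pole orders.
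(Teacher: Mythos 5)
Your overall architecture --- solve Wang's equation with barriers and a maximum principle, integrate the flat connection, and read the end geometry off asymptotics near each puncture --- matches the paper's strategy, but two decisive steps are missing, and the first cannot be repaired by the route you describe. For the direction from $\P_0(\Sigma)$ to $\mathcal{C}_0(\Sigma)$ you propose to ``compare the resulting Blaschke metric with the model solutions'' to identify the singularity type; this is circular, because before any such comparison can be set up one must prove that the conformal structure underlying the Blaschke metric is cuspidal at the puncture and that the Pick differential is meromorphic there with the asserted pole order (a priori the end could be conformally an annulus of finite modulus, or $\ve{b}$ could have an essential singularity). The paper establishes this by an argument of a different nature: Hausdorff continuity of $(\Omega,x)\mapsto f_\Omega(x)$ together with Benz\'ecri compactness shows $\kappa_g\to 0$, i.e. $g$ is comparable to $2^{\frac{1}{3}}|\ve{b}|^{\frac{2}{3}}$, on quadrilaterals abutting the segments and corners of $\devbd{\tilde{p}}$, and then two theorems of Huber on complete flat metrics of the form $|\ve{b}|^{\frac{2}{3}}$ force the end to be cuspidal and the pole order to be at least $3$; the cusp case rests on Marquis's finite-Hilbert-volume characterization and the Benoist--Hulin argument. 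Nothing in your proposal produces these regularity statements. Also, on a complete noncompact surface the uniqueness of the solution requires the Omori--Yau maximum principle together with an a priori quasi-isometry between two candidate solutions, not the plain maximum principle.

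For part (2), which you rightly flag as the main obstacle, the plan as written does not yet contain the arguments that make it work. One needs the strong exponential decay of $u$ and $\pa_\zeta u$ on the natural half-planes, an ODE asymptotics lemma showing that the comparison transformation $P_\beta(t)=\a(\beta_{[0,t]}^{-1})\,\a_0(\beta_{[0,t]})$ converges along stable directions, and the eigenvalue analysis of $\Ad_{\a_0(0,\zeta)}$ showing that the limits over adjacent stable sectors differ by specific unipotent elements --- this is what produces the vertices, the non-collinear edges of the twisted $n$-gon, and the shift by $n$ under $\hol_{\tilde{p}}$. At a third-order pole with $R\in\ima\mathbb{R}^*$ every relevant direction is unstable, so the naive comparison diverges and a refined lemma (comparison against a sublinearly controlled unipotent factor $\Xi(t)$) is needed to extract the quasi-hyperbolic or planar holonomy; and when $\re(R)>0$ with $\im(R)\neq 0$ the developing limits collapse to a single fixed point, so one must in addition control the asymptotic direction along which the developed paths approach that point in order to rule out a V-end or a non-simple end and conclude the end is geodesic. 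Until these estimates and comparison lemmas are supplied, the trichotomy at third-order poles and the twisted-polygon statement remain unproven.
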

\begin{remark}
In the second case above, planar holonomy occurs only when the convex projective surface is an annulus obtained as the quotient of a triangle in $\mathbb{RP}^2$ by a planar projective transformation. In this case the corresponding $(\ac{J},\ve{b})\in \mathcal{C}_0(\Sigma)$ is given by
$((\Sigma,\ac{J}),\ve{b})\cong(\mathbb{C}^*, Rz^{-3}\dz^3$). See Lemma \ref{lemma_planar} below. 
\end{remark}

The above theorem is a generalisation of many previous works. Namely,
\begin{itemize}
\item The restriction of Theorem \ref{intro_main} to the subspace of $\mathcal{C}_0(\Sigma)$ consisting of those $(\ac{J},\ve{b})$ where $\ve{b}$ only has removable singularities or poles of order at most $2$ (resp. $3$) is proved by Benoist and Hulin \cite{benoist-hulin} (resp. Lofin \cite{loftin_compactification, loftin_neck}).

\item Dumas and Wolf \cite{dumas-wolf} proved Theorem \ref{intro_main} for $\Sigma=\mathbb{R}^2$. In this case, an element in $\mathcal{C}_0(\Sigma)$ is a complex structure biholomorphic to $\mathbb{C}$ together with a polynomial cubic differential, whereas an element in $\P(\Sigma)$ is a diffeomorphism from $\mathbb{R}^2$ to a properly convex polygon $\Omega\subset\mathbb{RP}^2$. 
%In this case, Dumas and Wolf showed that (\ref{eqn_corresp}) restricts to a bijection between $\mathcal{C}_0(\mathbb{R}^2)$ and convex projective structures in which $\Omega$ is a polygon.
\end{itemize}

Loftin \cite{loftin_compactification} also showed that the residue of $\ve{b}$ at a third order pole determines the eigenvalues of the holonomy of the convex projective structure around that pole. We can state his result as follows.
\begin{theorem}[Loftin]\label{intro_thm2}
Let $(\ac{J},\ve{b})\in\mathcal{C}_0(\Sigma)$. Equip $\Sigma$ with the convex projective structure corresponding to $(\ac{J},\ve{b})$ given by Theorem \ref{intro_main}. Let $p$ be a third order pole of $\ve{b}$ with residue $R$.  Then the eigenvalues of the holonomy of the projective structure around $p$ are $\exp({-4\pi \mu_i})$ ($i=1,2,3$) where $\mu_1, \mu_2,\mu_3\in\mathbb{R}$ are the imaginary parts of the three cubic roots of $R/2$, respectively.
\end{theorem}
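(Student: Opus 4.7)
The plan is to reduce the holonomy computation to an explicit translation-invariant model on a half-cylinder, then invoke the asymptotic match between the actual Blaschke metric and the model established in the proof of Theorem \ref{intro_main}. Choose a conformal coordinate $z$ centered at $p$ in which $\ve{b} = Rz^{-3}\dz^3 + O(z^{-2})\dz^3$, and pass to the logarithmic coordinate $w = \log z$ on the universal cover of a punctured neighborhood of $p$. In this coordinate $\ve{b} = \bigl(R + O(\en^{w})\bigr)(\dif w)^3$ as $\re w \to -\infty$, and $\gamma_{\tilde{p}}$ is represented by the translation $T\colon w \mapsto w + 2\pi\ima$. Thus it suffices to compute the monodromy along $T$ of the flat $\SL(3,\mathbb{R})$-connection associated with the Blaschke metric and $\ve{b}$.

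Consider the translation-invariant model on $\mathbb{C}$ with constant cubic differential $R\,(\dif w)^3$. A translation-invariant solution of the Wang equation is $g_0 = \en^{2u_0}|\dif w|^2$ for a constant $u_0$; substituting reduces the Wang equation to an algebraic equation in $\en^{u_0}$ depending only on $|R|$, which admits a unique positive solution. The associated flat $\SL(3,\mathbb{R})$-connection then takes the constant-coefficient form
\[
\nabla_0 = \dif + M_0\,\dif w + \overline{M_0}\,\dif \bar w
\]
with $M_0 \in \sl(3,\mathbb{C})$. A direct computation of the affine-sphere frame equations in the Blaschke frame $(f, \pa_w f, \pa_{\bar w} f)$ gives the characteristic polynomial $\det(\lambda\iden - M_0) = \lambda^3 - R/2$ (the factor $\tfrac{1}{2}$ coming from the standard Blaschke normalization); so $M_0$ has eigenvalues the three cube roots $r_1, r_2, r_3$ of $R/2$. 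Integrating along $T$ and using $\oint_T \dif w = 2\pi\ima$, $\oint_T \dif \bar w = -2\pi\ima$, the monodromy of $\nabla_0$ is conjugate to $\exp\bigl(-4\pi\,\im M_0\bigr)$, whose eigenvalues are precisely $\exp(-4\pi\mu_i)$ with $\mu_i = \im r_i$.

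To pass from the model to the actual structure, write the Blaschke metric on the half-cylindrical end as $g = \en^{2u}|\dif w|^2$. The construction of $u$ in the proof of Theorem \ref{intro_main}, obtained by matching the Wang equation solution to its singularity model at each puncture, yields that $u - u_0$ together with its derivatives decays exponentially as $\re w \to -\infty$. Hence $\nabla$ and $\nabla_0$ differ on that end by an exponentially decaying connection 1-form, and a standard gauge-theoretic argument produces a gauge transformation asymptotic to the identity conjugating $\nabla$ to $\nabla_0$ on any fundamental domain of $T$ pushed deep enough towards $p$. Therefore $\hol_{\tilde{p}}$ is conjugate to the model monodromy, giving the stated eigenvalues.

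The main obstacle is the exponential asymptotic match $u - u_0 \to 0$ and the ensuing gauge-equivalence of connections. The convergence should follow from a maximum principle applied to the difference of the Wang equations satisfied by $u$ and $u_0$, exploiting positivity of the leading linearization at $u_0$ on the cylindrical end; indeed this decay is essentially built into the asymptotic construction of solutions used in proving Theorem \ref{intro_main}, so the remaining task is the standard translation of exponential metric convergence into equality of holonomies.
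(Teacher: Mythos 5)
Your model computation is fine (it reproduces what the paper does in Proposition \ref{prop_triangle} and Proposition \ref{prop_aux3}: the translation-invariant solution is exactly $2^{\frac{1}{3}}|\ve{b}|^{\frac{2}{3}}$ and its monodromy has eigenvalues $e^{-4\pi\mu_i}$), and the exponential decay of $u-u_0$ is indeed available (Lemma \ref{lemma_3estimate}). The genuine gap is the step ``exponentially decaying connection difference $\Rightarrow$ gauge transformation asymptotic to the identity $\Rightarrow$ the two holonomies are conjugate.'' This is not a standard consequence: to compare holonomies you must conjugate the (small) difference $A-A_0$ by the model parallel transport along paths running into the puncture, and the adjoint action of that transport grows like $e^{\varpi(\theta)|\zeta|}$ with maximal rate $2\sqrt{3}$ (see \S \ref{sec_eigen}), which is \emph{exactly} the decay rate $e^{-2\sqrt{3}|\zeta|}$ of $u$. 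Whether the comparison converges therefore depends on the direction of approach: it does when $\re(R)\neq 0$ (Proposition \ref{prop_rera}), but when $R\in\ima\mathbb{R}^*$ the relevant direction is an odd multiple of $\tfrac{\pi}{6}$, the comparison transformation need not converge, and in fact no gauge transformation asymptotic to the identity exists in general. Indeed your claimed conclusion is false there: the model monodromy is planar (diagonalizable with a repeated eigenvalue), while the actual holonomy can be quasi-hyperbolic (Theorem \ref{thm_holo} (\ref{item_coroholo2})--(\ref{item_coroholo3}), Theorem \ref{thm_pole3} (\ref{item_pole32})), so the two holonomies are \emph{not} conjugate; only their eigenvalues coincide, and proving that equality in the borderline case requires the refined ODE argument with the sub-linearly growing unipotent correction $\Xi(t)$ (Proposition \ref{prop_rerb}, Lemma \ref{lemma_new}) rather than a gauge-equivalence statement.

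Two smaller points. First, you should work with the exact normal form $\ve{b}=Rz^{-3}\dz^3$ (which exists, see (\ref{eqn_normalform})) rather than $Rz^{-3}\dz^3+O(z^{-2})\dz^3$: in the natural coordinate $\zeta$ the extra term contributes a perturbation decaying at a rate proportional to $|R|^{-\frac{1}{3}}$, which for large $|R|$ is slower than the critical rate $2\sqrt{3}$ and would spoil even the $\re(R)\neq 0$ comparison. Second, even in the favourable case $\re(R)\neq 0$ your ``standard gauge-theoretic argument'' must be replaced by the quantitative ODE comparison of Theorem \ref{thm_compa}/Proposition \ref{prop_rera}, i.e.\ you must verify that the decay constant strictly beats the adjoint growth rate along the chosen rays; this rate comparison is the heart of the proof and is absent from your proposal.
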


Theorem \ref{intro_thm2} and many other results in \cite{loftin_compactification} are re-obtained as byproducts of our proof of Theorem \ref{intro_main}.

Around poles of order $\geq 4$, it does not seem possible to capture the holonomy from local information of the cubic differential. Indeed, when the order $r$ is not divisible by $3$, there always exists a conformal local coordinate $z$ in which $\ve{b}=z^{-r}\dz^3$, nevertheless the holonomy can be quite arbitrary.

\subsection{How twisted polygons arises}\label{intro_deve}
Before discussing the proof Theorem \ref{intro_main}, it might be enlightening to briefly describe here how a twisted polygon arises from a higher order pole of a cubic differential. 

Take a punctured Riemann surface $\Sigma=\overline\Sigma\setminus P$, where $\overline\Sigma$ is a closed Riemann surface and $P\subset\overline{\Sigma}$ a finite set. Fix a meromorphic cubic differential $\ve{b}$ on $\overline\Sigma$ with poles in $P$ and let $\dev:\widetilde{\Sigma}\rightarrow\mathbb{RP}^2$ be a developing map of the convex projective structure on $\Sigma$ corresponding to $\ve{b}$ provided by Theorem \ref{intro_main}. 

Let $p\in P$ be a pole of order $n+3$ with $n\geq1$ and fix $\tilde{p}\in\Fa{\Sigma,p}$. Before giving a precise statement in Theorem \ref{intro_how} below which explains why $\devbd{\tilde{p}}$ is a twisted $n$-gon, let us introduce some more notations. Details of the notions and claims below will be presented in \S \ref{sec_farey}.

\pt Let $\mathscr{P}_{m,p}$ denote the set of all parametrized paths $\beta:[0,+\infty)\rightarrow\Sigma$ such that $\beta(0)=m$ and $\beta(t)\rightarrow p$ as $t\rightarrow+\infty$.   The Farey set $\Fa{\Sigma,p}$ is naturally identified with the set of homotopy classes of elements in $\mathscr{P}_{m,p}$.

\pt $\beta\in\mathscr{P}_{m,p}$ develops into a path $\beta^\dev:[0,+\infty)\rightarrow\Omega$. The limit points of $\beta^\dev(t)$ as $t\rightarrow+\infty$ are in $\devbd{p}\subset\pa\Omega$. 

\pt If  $\beta\in\mathscr{P}_{m,p}$ is in the homotopy class $\tilde{p}\in\Fa{\Sigma,p}$, the limit points of $\beta^\dev$ are correspondingly in the connected component $\devbd{\tilde{p}}$ of $\devbd{p}$.  When there is a single limit point, \ie  $\lim_{t\rightarrow+\infty}\beta^\dev(t)$ exits, we denote it by $\devlim(\beta)$.

\vspace{6pt}

Of particular importance are those $\beta\in\mathscr{P}_{m,p}$ in the homotopy class $\tilde{p}$ such that $\ve{b}(\dot{\beta}(t))=-1$ when $t$ is large enough. 
Let $\mathscr{N}\subset\mathscr{P}_{m,p}$ denote the set of all such $\beta$. More generally, we call the (oriented) trajectory of a path $t\mapsto\beta(t)\in\Sigma$ satisfying $\ve{b}(\dot{\beta}(t))\in\mathbb{R}_-$  a \emph{negative trajectory}.

In order to better understand negative trajectories tending to $p$, we identify the tangent space $\T_p\overline{\Sigma}$ with $\mathbb{C}$ by means of a local coordinate of $\overline\Sigma$ centered at $p$ under which the leading coefficient in the Laurent expansion of $\ve{b}$ is positive. Then a negative trajectory tending to $p$ must be asymptotic to a ray in $\T_p\overline{\Sigma}$ of the form
$$
N_k:=e^{2\pi \ima k/n}\mathbb{R}_{\geq 0}\subset\mathbb{C}\cong\T_p\overline\Sigma,
$$
where $k\in\mathbb{Z}/n\mathbb{Z}$. One easily checks this fact for the example $(\Sigma, \ve{b})=(\mathbb{C},\dz^3)$ at the puncture $p=\infty$, where a negative trajectory is just an oriented line towards the direction $e^{\pi\ima/3}$, $-1$ or $e^{5\pi\ima/3}$.

We call $\beta_1,\beta_2\in\mathscr{N}$ \emph{equivalent} if there is $t_0\in\mathbb{R}$ such that $\beta_1(t+t_0)=\beta_2(t)$ when $t$ is large enough. This is an equivalence relation which does not affects $\devlim(\beta)$. Let $\mathcal{N}$ denote the set of equivalence classes. In the above $(\mathbb{C},\dz^3)$ example, $\N$ is the union of three one-parameter families, corresponding to the three directions. In general, when $\Sigma$ is not simply connected, $\N$ consists of countably many one-parameter families $\N_k$ ($k\in\mathbb{Z}$) such that elements in $\N_k$ are asymptotic to the direction $N_k$, whereas elements in $\N_k$ and in $\N_{k+n}$ are distinguished by how many times they wrap around $p$, see \S \ref{sec_geq} for details.
The following theorem is a part of Theorem \ref{thm_devhigher} and essentially implies that $\devbd{\tilde{p}}$ is a twisted $n$-gon.
\begin{theorem}\label{intro_how}
The limit $\devlim(\beta)$ exists for any $\beta\in\mathcal{N}$. For each $k\in\mathbb{Z}$, the set of limits $Y^\circ_{k}:=\{\devlim(\beta)\}_{\beta\in\N_{k}}$ is the interior of  a segment $Y_k\subset\mathbb{RP}^2$. 
\end{theorem}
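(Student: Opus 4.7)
The plan is to combine precise asymptotics for the Blaschke metric $g_\ve{b}$ and its associated flat $\SL(3,\mathbb{R})$-connection near a pole of order $n+3$ with an ODE analysis of parallel transport along negative trajectories, interpreted projectively.

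Near $p$ I would first introduce a cubic-differential coordinate $w$ on a punctured neighbourhood in which $\ve{b} = -\dif w^3$ to leading order. In this coordinate the neighbourhood unwraps to a union of $n$ large sectors, the asymptotic rays $N_k$ become the positive real axes of these sectors, and negative trajectories in $\N_k$ become, asymptotically, horizontal half-lines $w(t) = w_0 + t$ with $t \to +\infty$. The equivalence relation defining $\N_k$ is translation in $t$, so $\N_k$ is naturally parametrised by $\im(w_0) \in \mathbb{R}$.

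Next I would extend the asymptotic analysis of Wang's equation (established in earlier work for poles of order $\leq 3$) to the higher-order case, showing that $g_\ve{b}$ approaches the flat model $|\dif w|^2$ with exponentially decaying error as $|w|\to\infty$. Consequently, in a suitable frame the flat $\SL(3,\mathbb{R})$-connection $D$ governing the developing map takes the form $D = \dif + \omega$ with $\omega$ exponentially close to an explicit model $1$-form whose restriction to a horizontal line ($\dot w = 1$) is a constant matrix $M$ with three distinct real eigenvalues $\lambda_1 > \lambda_2 > \lambda_3$. Along a negative trajectory $\beta \in \N_k$, the parallel transport equation is therefore an exponentially small perturbation of $\dot F = -MF$, and Levinson's asymptotic integration theorem yields $F(t) = e^{-\lambda_i t}(v_i + o(1))$ for each solution, with $v_i$ an eigenvector of $M$. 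Reading off the developing map from this frame and projecting to $\mathbb{RP}^2$ kills the dominant prefactor, giving convergence to a well-defined point $\devlim(\beta) \in \mathbb{RP}^2$.

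For the segment claim, as $\beta$ varies within $\N_k$ the initial data of the frame changes only through the parameter $\im(w_0)$, acting via the transverse $\dif\bar w$ part of $\omega$. A direct computation in the model should show that this one-parameter family of initial data produces leading eigenvectors all lying in a fixed $2$-plane of $\mathbb{R}^3$, so their projectivisations $\devlim(\beta)$ lie on a single projective line $\ell_k \subset \mathbb{RP}^2$. Since $\im(w_0) \mapsto \devlim(\beta)$ is continuous with image in $\ell_k \cap \pa\Omega$ and $\Omega$ is convex, the image $Y_k^\circ$ is an open arc of $\ell_k$, hence an open straight segment. The main obstacle is the higher-order asymptotic analysis of Wang's equation, since the model geometry at a pole of order $\geq 4$ is Euclidean rather than hyperbolic and requires new barrier constructions; once this asymptotic is in place, the projective and convex-geometric conclusions follow from standard ODE asymptotics.
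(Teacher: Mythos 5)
Your overall strategy (cubic-differential coordinate, exponential decay of the error in Wang's equation, ODE asymptotic integration against an explicit model transport, then reading off the projectivised limit) is the right one and is essentially the strategy of the paper. However, there is a concrete error at the heart of the step that is supposed to produce the segment: along a negative trajectory the model connection does \emph{not} have three distinct eigenvalues. In a coordinate $\zeta$ with $\ve{b}=2\,\dzeta^3$, the model connection is diagonal in the equilateral frame with entries $2\re(\dzeta)$, $2\re(\omega^2\dzeta)$, $2\re(\omega\dzeta)$, and a negative trajectory has $\arg\dot\zeta\in\{\pm\pi/3,\pi\}$; for these arguments the three rates are (up to permutation) $(1,1,-2)$, i.e.\ the \emph{dominant} exponent is attained twice. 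This is not a technicality: if the dominant eigenvalue of your constant matrix $M$ were simple, then the Levinson-type asymptotics you invoke would force every solution with generic data to converge projectively to the single dominant eigendirection, so $\devlim(\beta)$ would be the \emph{same point} for all $\beta\in\N_k$ and $Y_k^\circ$ could never be the interior of a segment. Your explanation of the segment ("leading eigenvectors all lying in a fixed $2$-plane") is incompatible with the distinct-eigenvalue hypothesis you assumed one sentence earlier; the open segment exists precisely \emph{because} of the two-dimensional dominant block, with the transverse parameter $\im(w_0)$ entering the ratio of the two dominant components (in the \c{T}i\c{t}eica model the limit is $[e^{-\sqrt3 s}e_i+e^{\sqrt3 s}e_j]$, an interior point of an edge depending on $s$; this is Proposition \ref{prop_tit}(\ref{item_tit2}) and is how the paper gets the edge).

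Two further points would still need repair even after fixing the eigenvalue structure. First, the perturbation you must integrate is not simply "exponentially small": to compare the actual parallel transport with the model one you conjugate the difference $A-A_0$ by the model transport, and $\Ad_{\a_0(0,\zeta)}$ grows like $e^{\varpi(\theta)|\zeta|}$; convergence of the comparison transformation $P_\beta(t)=\para(\beta_{[0,t]}^{-1})\a_0(\beta_{[0,t]})$ uses that along these directions $\varpi(\pm\pi/3)<2\sqrt3$, so the $e^{-2\sqrt3|\zeta|}$ decay of $u$ wins (Theorem \ref{thm_compa}(\ref{item_ocompa1})). Second, your final "continuity plus convexity" step is both unjustified (continuity of $\im(w_0)\mapsto\devlim(\beta)$ is a statement about limits of limits and does not come for free) and unnecessary: once one knows the comparison limit $P_{k,k+1}$ exists and is the \emph{same} for all $\beta$ in the family (Theorem \ref{thm_compa}(\ref{item_ocompa2})), the image is computed exactly as $Y_k^\circ=P_{k,k+1}(\hat X^\circ_{[k],[k+1]})$, the $P_{k,k+1}$-translate of the open edge of the model triangle, which is the interior of a segment by projective linearity.
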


The convex projective structure corresponding to $(\Sigma,\ve{b})=(\mathbb{C}, 2\,\dz^3)$ provides an explicit example for the above theorem. A developing map of this projective structure is 
$$
\dev_0(z)=[e^{2\re(z)}:e^{2\re(\omega^2 z)}: e^{2\re(\omega z)}] \quad (\omega:=e^{2\pi\ima/3}).
$$
The image  $\dev_0(\mathbb{C})$ is the triangle $\Delta=\{[x_1:x_2:x_3]\mid x_i>0\}$. 
%(see \S \ref{sec_titeica} for more discussions on this example). 
The following picture shows the images of some negative trajectories. Theorem \ref{intro_how} can be read off from the picture.
\begin{figure}[h]
\centering
\includegraphics[width=3.6in]{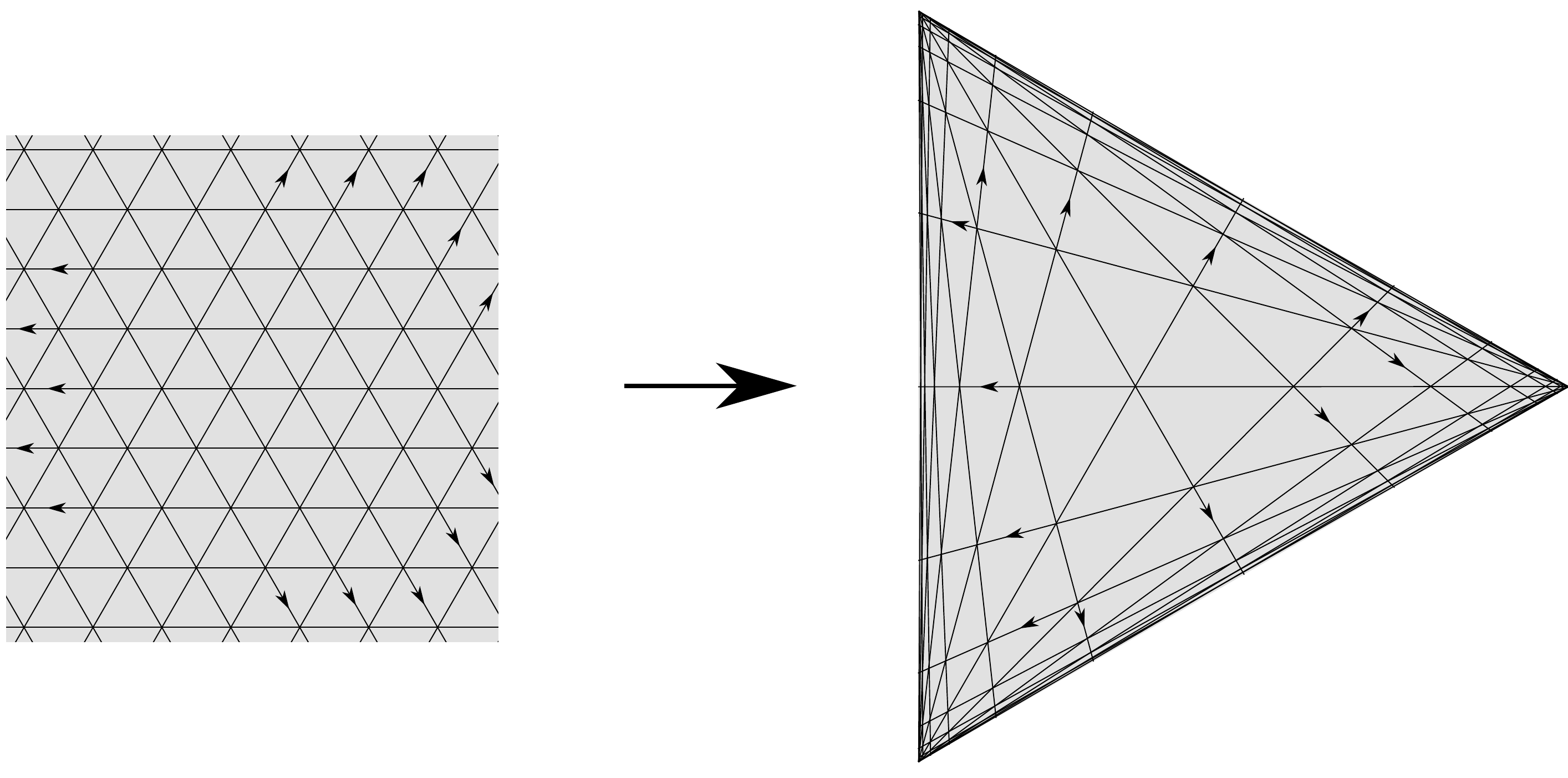}
\caption{Images of negative trajectories by $\dev_0$.}
\label{figure_dev01}
\end{figure}

On the other hand, the next picture illustrates the fact that if the asymptotic direction of $\beta\in\mathscr{P}_{m,p}$ at $p$ ``lies between'' $\N_k$ and $\N_{k+1}$, then $\devlim(\beta)$ is the common vertex of the edges $Y_k$ and $Y_{k+1}$.
This will be made precise in Theorem \ref{thm_devhigher}.
\begin{figure}[h]
\centering
\includegraphics[width=3.75in]{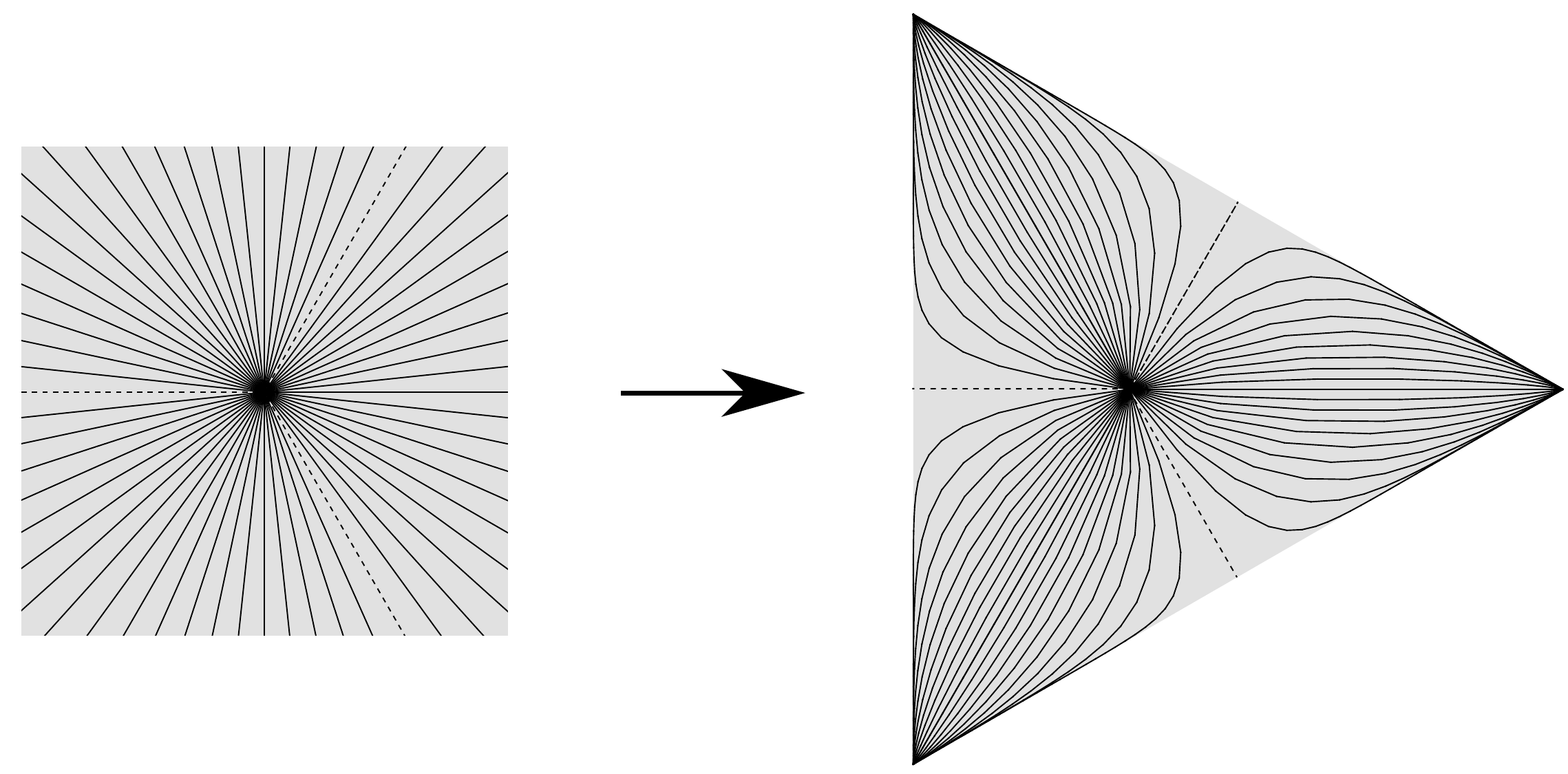}
\caption{Images of rays issuing from $0$ by $\dev_0$.}
\label{figure_dev02}
\end{figure}

When $p$ is a third order pole, similar descriptions of the $\devlim(\beta)$'s are given in Theorem \ref{thm_pole3}.

\subsection{Sketch of proof}\label{sec_sketch}
Theorem \ref{intro_main} is proved by  assembling  and adapting various techniques developed by Loftin \cite{loftin_compactification}, Benoist-Hulin \cite{benoist-hulin} and Dumas-Wolf \cite{dumas-wolf}. In order to give a sketch, we first recall the construction of the natural map (\ref{eqn_corresp}).  

Let $\V(\Sigma)$ denote the space of pairs $(g,\ve{b})$ where $g$ is a complete Riemannian metric on $\Sigma$ and $\ve{b}$ a holomorphic cubic differential (with respect to the conformal structure of $g$) satisfying \emph{Wang's equation}
\begin{equation}\label{intro_vortex}
\kappa_g=-1+2\|\ve{b}\|^2_g.
\end{equation}
Here $\kappa_g$ is the curvature of $g$ and $\|\ve{b}\|_g$ is the pointwise norm of $\ve{b}$ with respect to $g$.

The map (\ref{eqn_corresp}) is defined as a composition
$$
\P(\Sigma)\overset\sim\rightarrow\V(\Sigma)\rightarrow\mathcal{C}(\Sigma)
$$
where 

\pt 
$\V(\Sigma)\rightarrow\mathcal{C}(\Sigma)$ is the forgetful map,  sending $(g,\ve{b})$ to $(\ac{J},\ve{b})$ where $\ac{J}$ is the complex structure given by $g$ and the orientation of $\Sigma$.

\pt The bijection $\P(\Sigma)\overset\sim\rightarrow\V(\Sigma)$ is given by assigning to each convex projective structure an \emph{hyperbolic affine sphere structure} then taking its \emph{Blaschke metric}  $g$ and \emph{Pick differential} $\ve{b}$.

\pt The inverse $\V(\Sigma)\overset\sim\rightarrow \P(\Sigma)$ to the above bijection 
is provided by integrating a flat connection on the vector bundle $\T\Sigma\oplus\underline{\mathbb{R}}$ (where $\underline{\mathbb{R}}$ denotes the trivial line bundle) associated to each $(g,\ve{b})\in \V(\Sigma)$. We refer to the so-obtained developing map \emph{Wang's developing pair} after \cite{loftin_amer}. 
The flat vector bundle is actually the real form of a $\SL(3,\mathbb{R})$-Higgs bundle \cite{hitchin} (see \cite{labourie_cubic}).

\vspace{5pt}

%Given a cubic differential $\ve{b}=b(z)\dz^3$, we let $|\ve{b}|^\frac{2}{3}:=|b(z)|^\frac{2}{3}|\dz|^2$ denote the associated flat metric with conic singularities at the zeros.  
Let $\V_0(\Sigma)$ denote the set of $(g,\ve{b})\in\V(\Sigma)$ satisfying the following conditions:
\begin{itemize}
\item[-] $(g,\ve{b})$ is sent into $\mathcal{C}_0(\Sigma)$ by the forgetful map $\V(\Sigma)\rightarrow\mathcal{C}(\Sigma)$;
\item[-]  $\kappa_g(x)$ tends to $-1$ as $x$ tends to a removable singularity or a pole of order at most $2$.
\item[-] $\kappa_g(x)$ tends to $0$ as $x$ tends to a pole of order $\geq 3$.
\end{itemize}

%\begin{itemize}
%\item[-] $(g,\ve{b})$ is sent into $\mathcal{C}_0(\Sigma)$ by the forgetful map $\V(\Sigma)\rightarrow\mathcal{C}(\Sigma)$;
%\item[-]  $g$ and  $|\ve{b}|^\frac{2}{3}$ are \emph{conformally quasi-isometric} around poles of order $\geq 3$, \ie their conformal ratio has positive upper and lower bounds around these poles.
%\item[-] $\kappa_g$  has negative upper and lower bounds around each removable singularitie or pole of order $\leq 2$.
%\end{itemize}

We obtain Theorem \ref{intro_main} as a combination of the following independent results.
 
 \vspace{5pt}
 
\textbf{(I) $\P(\Sigma)\overset\sim\rightarrow\mathcal{W}(\Sigma)$ maps $\P_0(\Sigma)$ into $\V_0(\Sigma)$.} More specifically, if $p$ is either a cusp, a geodesic end, a V-end or a twisted polygonal end of a convex projective structure, then the conformal structure underlying the Blaschke metric $g$ is cuspidal at $p$ whereas the Pick differential $\ve{b}$ is meromorphic at $p$, and furthermore, $g$ satisfies the above defining conditions of $\V_0(\Sigma)$ at $p$.

For cusps, this is proved in \cite{benoist-hulin}. We will give a proof for other types of ends by generalizing a result from \cite{dumas-wolf}. 
The main tool  is the Hausdorff continuity property of Blascke metrics and Pick differentials discovered in \cite{benoist-hulin}.

\vspace{5pt}

\textbf{(II) $\V_0(\Sigma)\rightarrow\mathcal{C}_0(\Sigma)$ is bijective}. In other words, for any $(\ac{J},\ve{b})\in\mathcal{C}_0(\Sigma)$, there exists a unique  metric $g$ in the conformal class of $\ac{J}$ satisfying Wang's equation (\ref{intro_vortex}) and the above defining conditions of $\V_0(\Sigma)$. 

This is the Koebe-Poincar\'e uniformization theorem if $\ve{b}=0$ and $\Sigma$ has negative Euler characteristic, so we can assume that $\ve{b}$ does not vanish identically.  

We will give a proof of the above statement following the standard approach as in \cite{loftin_compactification, benoist-hulin, dumas-wolf}, using the method of barriers to establish existence and the Yau-Omori maximum principle to prove uniqueness.

\vspace{5pt}

\textbf{(III)} \textbf{The correspondence between the types of ends and the types of singularities in Theorem \ref{intro_main} holds.} At poles of other $\geq 4$, this essentially follows from the results outlined in \S \ref{intro_deve} above.  The proof consists in comparing the developing map of the convex projective structure corresponding to $(g,\ve{b})$ with an ``auxiliary developing map'' $\dev_0: \widetilde{\Sigma}\rightarrow\mathbb{RP}^2$, although in general the metric $2^\frac{1}{3}|\ve{b}|^\frac{2}{3}$ is singular and $\dev_0$ does not define a projective structure. The map $\dev_0$ can be fully understood by virtue of its explicit expressions. The twisted polygon is found by comparing the actual developing map $\dev$ with $\dev_0$ using ODE techniques from \cite{dumas-wolf}.

We then apply the same method to tackle the case of third order poles. Many results that we obtained in this case have already appeared more or less inexplicitly in \cite{loftin_compactification}. We also refine the ODE technique to obtain informations on ``unstable directions'' as singled out in \cite{dumas-wolf}, \ie directions in $\T_p\overline{\Sigma}$ along which $\dev_0$ and $\dev$ are not comparable.

For poles of order $\leq 2$, the required result is simpler and is already proved in \cite{loftin_compactification}, using ODE techniques as well.  There is also a new proof in \cite{benoist-hulin}. We will review these proves for completeness of the paper.

\vspace{5pt}

Once statements (I), (II) and (III) are proved, Theorem \ref{intro_main} follows immediately. Indeed, we obtain two bijections $\P_0(\Sigma)\overset\sim\rightarrow \V_0(\Sigma)\overset\sim\rightarrow \mathcal{C}_0(\Sigma)$, the first one provided by (I) and (III) and the second by (II).

\subsection{Organization of the paper}
In Section \ref{sec_pre} we review in detail the construction of the natural map (\ref{eqn_corresp}) mentioned above. In Section \ref{sec_devpunc}, ends of convex projective structures are discussed, where we carefully define the developed boundary $\devbd{\tilde{p}}$ and related notions, and establish some fundamental properties. Section \ref{sec_model} describes a local model for a cubic differential around a pole of order $\geq 4$. In Section \ref{sec_ptoc}, \ref{sec_cv} and \ref{sec_ctop} we prove assertions (I), (II) and (III) in the above sketch, respectively.  Section \ref{sec_ctop} also contains a proof of Theorem \ref{intro_thm2}.

\subsection{Acknowledgements}
We would like to thank David Dumas and Michael Wolf for helpful conversations and correspondences, and thank \mbox{Alexandre} Eremenko for pointing out the reference \cite{huber}.

\section{Review of the identification $\P(\Sigma)\cong\V(\Sigma)$}\label{sec_pre}

Let $\Sigma$ be a surface obtained from a connected closed oriented $C^\infty$ surface $\overline{\Sigma}$ by removing a finite (possibly empty)  set of punctures. Furthermore, $\Sigma$ is assumed not to be the $2$-sphere. 

In this section, we first give a concise review of the natural bijection between the space $\P(\Sigma)$ of convex projective structures and the space $\V(\Sigma)$ of pairs $(g,\ve{b})$ satisfying Wang's equation. As we have seen in the introduction, this bijection lies at the foundation of the proof of Theorem \ref{intro_main}. We then investigate the particular pair $(g,\ve{b})=(2|\dz|^2, 2\,\dz^3)$ for later use.

Besides the seminal work \cite{loftin_amer, labourie_cubic}, the theory relating convex projective structures, affine spheres and cubic differentials is surveyed in many subsequent papers such as \cite{loftin_compactification, benoist-hulin, dumas-wolf}.
Here we merely aim at covering the notions and results used later on as quickly as possible. The reader is referred to the above cited papers whenever a detailed explanation or proof is in need.

\subsection{From $\P(\Sigma)$ to $\V(\Sigma)$: Blaschke metrics and Pick differentials}\label{sec_ptov}

\subsubsection{Support functions}
Given a bounded convex open set $\Omega\subset \mathbb{R}^2$, a \emph{support function} for $\Omega$ is by definition a strictly convex function $u\in C^0(\overline{\Omega})\cap C^\infty(\Omega)$ which solves the following boundary value problem of (Monge-Amp\`ere type) non-linear PDE
$$
\det(\mathsf{Hess}(u))=u^{-4}, \quad u|_{\pa\Omega}=0.
$$
Cheng and Yau \cite{cheng-yau_1} ensured existence and uniqueness of the solution.

The significance of support functions lies in the fact that the above PDE is the condition for the hypersurface
$$
S=\left\{-\frac{1}{u(x)}
(x, 1)
\right\}_{ x\in \Omega}\subset \mathbb{R}^3
$$ 
to be an \emph{hyperbolic affine sphere}, whereas the vanishing boundary condition means that $S$ is asymptotic to the convex cone $\mathbb{R}_+\cdot\{(x,1)\}_{x\in \Omega}$. Thus the Cheng-Yau theorem establishes unique-existence of hyperbolic affine sphere asymptotic to a convex cone.

The following Hausdorff continuity result on $u_\Omega$ is due to Benoist and Hulin (see \cite{benoist-hulin} Corollary 3.3 and \cite{dumas-wolf} Theorem 4.4).
\begin{theorem}\label{thm_bh}
Let $\Omega\subset\mathbb{R}^2$ be a bounded convex open set,  $K\subset \Omega$ be a compact subset and let $\varepsilon>0$. Then for any  $k\in\mathbb{N}$ and any convex open set $\Omega'\subset\mathbb{R}^2$ sufficiently close to $\Omega$ in the Hausdorff topology, we have $K\subset \Omega'$ and 
$$
\|u_\Omega-u_{\Omega'}\|_{K,k}<\epsilon.
$$
Here $\|\cdot\|_{K,k}$ denote the $C^k$-norm on $K$.
\end{theorem}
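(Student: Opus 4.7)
The plan is to argue by interior compactness combined with uniqueness of the solution to the Cheng--Yau Monge--Amp\`ere problem. Given a sequence $\Omega_n \to \Omega$ in the Hausdorff topology, the inclusion $K \subset \Omega_n$ for large $n$ is immediate from $\mathsf{dist}(K,\pa\Omega) > 0$. It then suffices to show that any subsequence of $(u_{\Omega_n})$ has a further subsequence converging in $C^k(K)$ to $u_\Omega$; the stated estimate follows.

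First I would extract uniform bounds on a slightly enlarged compact set $L$ with $K \Subset L \Subset \Omega$, hence $L \Subset \Omega_n$ for all large $n$. Strict convexity and the vanishing boundary values yield $u_{\Omega_n} \le 0$. The monotonicity of support functions under inclusion of domains (a consequence of the comparison principle for the affine-sphere PDE, or equivalently of the monotonicity of hyperbolic affine spheres asymptotic to nested convex cones) provides a two-sided bound $-C \le u_{\Omega_n} \le -c < 0$ on $L$, using comparison against the support function of a fixed small ball contained in every $\Omega_n$ on the one hand, and of a fixed large ball containing every $\Omega_n$ on the other. With $u_{\Omega_n}$ uniformly bounded away from $0$ and $-\infty$ on $L$, the right-hand side $u_{\Omega_n}^{-4}$ of the equation is uniformly bounded, and the Monge--Amp\`ere operator is uniformly non-degenerate. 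Caffarelli's interior estimates then give uniform $C^{k+2,\alpha}$ bounds on compact subsets of $L$; Arzel\`a--Ascoli combined with a diagonal extraction yields a subsequence converging in $C^\infty_\mathrm{loc}(\Omega)$ to a strictly convex $u^\ast$ solving $\det(\mathsf{Hess}(u^\ast)) = (u^\ast)^{-4}$ in $\Omega$.

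The main obstacle, and the technical heart of the argument, is to show that $u^\ast$ extends continuously to $\overline{\Omega}$ with $u^\ast|_{\pa\Omega} = 0$: once this is done, the Cheng--Yau uniqueness theorem forces $u^\ast = u_\Omega$ and the subsequence argument closes. To obtain this I would construct, near each boundary point $x_0 \in \pa\Omega$, a barrier for $|u_{\Omega_n}|$ that is independent of $n$ and tends to $0$ at $x_0$. Natural candidates are the support functions of slabs or half-planes obtained from supporting lines of $\Omega$ at $x_0$, whose behaviour near the boundary is explicit and decays like a power of the distance to the slab boundary. Hausdorff convergence guarantees that any open slab containing $\Omega$ eventually contains each $\Omega_n$, so the monotonicity of support functions noted above transfers the slab decay to a uniform-in-$n$ modulus of continuity of $u_{\Omega_n}$ at $x_0$, which passes to the limit $u^\ast$. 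Implementing this uniformly in $x_0$ and patching the boundary bound with the interior $C^k$ estimate is the delicate step, because convexity of $\pa\Omega$ is the only regularity available and must carry the whole boundary analysis. Once the boundary identification is established, uniqueness gives $u^\ast = u_\Omega$, every subsequence has a further subsequence converging to $u_\Omega$ in $C^k(K)$, and the theorem follows.
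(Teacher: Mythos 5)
The paper does not prove this statement at all: it quotes it from Benoist--Hulin (Corollary 3.3) and Dumas--Wolf (Theorem 4.4), so your proposal has to be measured against the standard argument in those references, whose outline you do follow: Hausdorff convergence gives $K\subset\Omega'$, uniform interior a priori estimates for the Monge--Amp\`ere equation give local compactness of the family $u_{\Omega'}$, and the limit is identified with $u_\Omega$ through its zero boundary values and Cheng--Yau uniqueness. Your interior step is essentially right, up to two small imprecisions: comparison with one fixed small ball bounds $u_{\Omega'}$ away from $0$ only at that ball's centre, so you should centre small balls at every point of the enlarged compact set $L$ (Hausdorff convergence does let you do this uniformly); and Caffarelli's interior estimates presuppose strict convexity, so the cleaner citation here is the Pogorelov-type interior estimate for the zero-boundary-value problem, followed by Evans--Krylov and Schauder.

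The genuine gap is in the step you yourself single out as the heart: the barriers you propose do not exist. There is no strictly convex solution of $\det(\mathsf{Hess}(u))=u^{-4}$ with vanishing boundary values on a half-plane or a slab. These domains are unbounded (the support function of the paper is only defined for bounded convex sets), and the cones over them in $\mathbb{R}^3$ contain full affine lines, so the Cheng--Yau existence theorem does not apply; concretely, a translation-invariant candidate would depend on one variable only and would have degenerate Hessian. Hence the claimed ``explicit power decay'' for slab barriers is unavailable, and the monotonicity argument cannot be run against such domains. The standard repair is to compare with \emph{bounded} model domains whose support functions are known: balls and ellipses, for which $u_{\mathbb{D}}(x)=-\sqrt{1-|x|^2}$ together with the affine scaling law $u_{T\Omega}\circ T=(\det T)^{1/3}\,u_\Omega$ gives an explicit modulus of vanishing, and---crucially at boundary points $x_0$ through which $\partial\Omega$ contains a segment, where no enclosing ball tangent at $x_0$ exists---triangles with one edge on the supporting line, whose support function comes from the \c{T}i\c{t}eica affine sphere and vanishes continuously on the boundary. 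With such barriers (uniform in $\Omega'$, since any $\Omega'$ Hausdorff-close to $\Omega$ is contained in a slightly enlarged model domain), your scheme closes; as written, the boundary identification of $u^\ast$ would fail.
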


\subsubsection{Blaschke metrics and Pick differentials}\label{sec_canometric}
Any oriented properly convex open set $\Omega\subset\mathbb{RP}^2$ carries a canonical complete Riemannian metric $g_\Omega$, called the \emph{Blaschke metric},  and a holomorphic \mbox{cubic} differential $\ve{b}_\Omega$ (with respect to the conformal structure underlying $g_\Omega$ and compatible with the orientation), called the \emph{Pick differential}. 
The Blaschke metric is actually independent of the orientation, while Pick differentials resulting from opposite orientations are complex-conjugates of each other. We let $\nu_\Omega$ denote the volume form of $g_\Omega$.

The precise definitions of $g_\Omega$ and $\ve{b}_\Omega$ are not so important for our purpose. We only mention here that they are affine invariants of the hyperbolic affine sphere asymptotic to a cone in $\mathbb{R}^3$ generated by $\Omega$. More important to us are the following properties.
\begin{itemize}
\item  $g_\Omega$ and $\ve{b}_\Omega$ satisfy Wang's equation $\kappa_{g_\Omega}=-1+2\|\ve{b}\|^2_{g_\Omega}$. Here $\kappa_{g_\Omega}$ is the curvature of $g_\Omega$ and $\|\ve{b}\|_{g_\Omega}$ is the pointwise norm of $\ve{b}_\Omega$ with respect to $g_\Omega$.

\item
If we view $\Omega$ as a bounded subset of an affine chart $\mathbb{R}^2\subset\mathbb{RP}^2$ and let $u=u_\Omega$ be the support function, then the coefficients of $g_\Omega$ and $\ve{b}_\Omega$ are linear combinations of $u$ and its derivatives of order up to $3$. Indeed, we have
$$
g_\Omega=-\frac{1}{u}\sum_{i,j=1,2}\pa_i\pa_ju\,\dx^i\dx^j,\quad %\nu_\Omega=-u_\Omega^{-3}\dx^1\wedge\dx^2
$$
while $\ve{b}_\Omega$ is basically the difference between the Levi-Civita connection of $g_\Omega$ and the \emph{Blaschke connection} $\nabla$ associated to the affine sphere, given by
$$
\nabla=\dif-\frac{1}{u}
\begin{pmatrix}
\dif u+\pa_1u\,\dif x^1&\pa_2u\,\dif x^1\\
\pa_1u\,\dif x^2&\dif u+\pa_2u\,\dif x^2
\end{pmatrix}.
$$
\item $g_\Omega$ and $\ve{b}_\Omega$ are invariant by projective transformations in the sense that for any $a\in\SL(3,\mathbb{R})$ we have $
g_{a(\Omega)}=a_*g_\Omega$ and  $\ve{b}_{a(\Omega)}=a_*\ve{b}_\Omega\,.
$
Here the orientation on $a(\Omega)$ is induced from the one on $\Omega$ through $a$.
\item $\Omega$ is a triangle if and only if 
$$
(\Omega, g_\Omega,\ve{b}_\Omega)\cong (\mathbb{C}, 2|\dz|^2,\,2\,\dz^3),
$$ 
whereas $\Omega$ is an ellipse if and only if 
$$
(\Omega, g_\Omega,\ve{b}_\Omega)\cong (\mathbb{D}, \,g_\hyp,\, 0).
$$ Here $(\mathbb{D},\,g_\hyp)$ is the Poincar\'e disk with the hyperbolic metric.

\end{itemize}

\subsubsection{Hilbert metrics}

A properly convex set $\Omega\subset\mathbb{RP}^2$ also carries a natural Finsler metric $g^\mathsf{H}_\Omega$, called the \emph{Hilbert metric}. By definition, in an affine chart $\mathbb{R}^2$ containing $\Omega$, we have
$$
g^\mathsf{H}_\Omega(v)=\left(\frac{1}{|x-a|}+\frac{1}{|x-b|}\right)|v|,\quad \forall x\in\Omega,\,v\in\T_{x}\Omega\cong\mathbb{R}^2,
$$
where $a,b\in\pa\Omega$ are the intersections of $\pa\Omega$ with the line passing through $x$ along the direction $v$ and $|\cdot|$ is a Euclidean norm on $\mathbb{R}^2$. Let $\nu^\mathsf{H}_\Omega$ denote the volume form of $g^\mathsf{H}_\Omega$.

The Hilbert metric is invariant by projective transformations in the same sense as how $g_\Omega$ and $\ve{b}_\Omega$ are.

\subsubsection{Continuity}

Let $\mathfrak{C}$ denote the space of all properly convex open sets in $\mathbb{RP}^2$, equipped with the Hausdorff topology. Let $\mathfrak{C}_*$ denote the topological subspace of $\mathfrak{C}\times \mathbb{RP}^2$ consisting of pairs $(\Omega, x)$ such that $x\in\Omega$.

The Benz\'ecri Compactness Theorem (see \cite{benoist-hulin} Theorem 2.7) says that the quotient of $\mathfrak{C}_*$ by the natural action of $\SL(3,\mathbb{R})$ is compact. Combining this with Theorem \ref{thm_bh}, we get the following  consequences. 
%They are our main tools in Section \ref{sec_ptoc} below.
\begin{corollary}[\textbf{Hausdorff continuity}]\label{coro_bh}
${}$
\begin{enumerate}
\item\label{item_bh1}
The proportion between the Hilbert volume form and the Blaschke volume form, viewed as a map
$$
\mathfrak{C}_*\rightarrow\mathbb{R}_+,\quad (\Omega,x)\mapsto \frac{\nu_\Omega^\mathsf{H}}{\nu_\Omega}(x),
$$
is continuous and is bounded from above and below by positive constants.
\item\label{item_bh2}
On any properly convex open set $\Omega$ we define the function 
$$
f_\Omega:=2\|\ve{b}_\Omega\|_{g_\Omega}^2=\kappa_{g_\Omega}+1: \Omega\rightarrow\mathbb{R}_{\geq 0}.
$$ 
Then the map
$$
\mathfrak{C}_*\rightarrow\mathbb{R}_{\geq 0},\quad (\Omega, x)\mapsto f_{\Omega}(x)
$$
is continuous and is bounded from above.
\end{enumerate}
\end{corollary}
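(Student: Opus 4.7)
The plan is to combine the projective invariance of both quantities with the $C^k$-continuity of support functions from Theorem \ref{thm_bh} and with the Benz\'ecri Compactness Theorem. Both maps $\phi_1(\Omega,x):= \nu_\Omega^\mathsf{H}/\nu_\Omega\,(x)$ and $\phi_2(\Omega,x):=f_\Omega(x)$ are $\SL(3,\mathbb{R})$-invariant, and therefore factor through functions on the compact quotient $\mathfrak{C}_*/\SL(3,\mathbb{R})$. For $\phi_1$ this is because both $\nu^\mathsf{H}_\Omega$ and $\nu_\Omega$ are projectively equivariant, so their ratio at corresponding points is preserved; for $\phi_2$ it follows from the projective equivariance of $g_\Omega$ and $\ve{b}_\Omega$ stated in \S\ref{sec_canometric}, which makes both $\kappa_{g_\Omega}$ and $\|\ve{b}_\Omega\|_{g_\Omega}$ projectively invariant as scalar quantities.

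To establish continuity on $\mathfrak{C}_*$, I would fix $(\Omega_0, x_0)\in\mathfrak{C}_*$ and choose an affine chart $\mathbb{R}^2\subset\mathbb{RP}^2$ containing $\overline{\Omega_0}$. For $\Omega$ sufficiently Hausdorff-close to $\Omega_0$, Theorem \ref{thm_bh} yields $C^3$-convergence $u_\Omega\to u_{\Omega_0}$ on a compact neighborhood $K$ of $x_0$. Since the coefficients of $g_\Omega$ and $\ve{b}_\Omega$ are smooth rational expressions in $u_\Omega$ and its derivatives of order up to $3$, with nonvanishing denominator $u_\Omega$ on $K$ (recall $u_\Omega<0$ in the interior by strict convexity and the boundary condition), the maps $(\Omega,x)\mapsto g_\Omega(x)$ and $(\Omega,x)\mapsto\ve{b}_\Omega(x)$ are continuous at $(\Omega_0, x_0)$. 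Continuity of $\phi_2=2\|\ve{b}_\Omega\|_{g_\Omega}^2$ follows. For $\phi_1$, the Blaschke volume form $\nu_\Omega$ is continuous by the same reasoning, while the Hilbert volume form $\nu_\Omega^\mathsf{H}$ is continuous via its explicit chord-length formula: the chord endpoints $a,b\in\pa\Omega$ through a point $x$ in a given direction vary continuously in $(\Omega, x)$ as long as $x$ stays bounded away from $\pa\Omega$, which is automatic near the interior point $x_0\in\Omega_0$.

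Finally, pushing these continuous functions down to the compact quotient $\mathfrak{C}_*/\SL(3,\mathbb{R})$ supplied by Benz\'ecri, both attain finite maxima and minima. The function $\phi_2$ is nonnegative by definition, which together with the finite maximum gives the upper bound in (\ref{item_bh2}). The function $\phi_1$ is strictly positive everywhere (both volume forms being nondegenerate top forms), so compactness promotes positivity to a uniform positive lower bound and a finite upper bound, completing (\ref{item_bh1}). The only mildly delicate point in the whole argument is the continuity of the Hilbert metric near the boundary of $\Omega$, but restricting attention to interior points keeps the chord endpoints uniformly separated from $x$, so this presents no real obstacle; the substantive input is simply the combination of Theorem \ref{thm_bh} with Benz\'ecri compactness.
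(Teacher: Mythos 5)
Your proposal is correct and follows essentially the same route as the paper: continuity of $(\Omega,x)\mapsto g_\Omega(x)$, $\ve{b}_\Omega(x)$, $\nu_\Omega(x)$ via Theorem \ref{thm_bh} and the explicit formulas in $u_\Omega$ and its derivatives, continuity of the Hilbert volume form from its chord-length definition, and then the uniform bounds from projective invariance plus Benz\'ecri compactness. The extra details you supply (nonvanishing denominator $u_\Omega$ on compacta, chord endpoints staying away from $x$) are exactly the points the paper leaves implicit.
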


\begin{proof}[Outline of proof]
We only need to prove that these maps are continuous, then the bounds follow from Benz\'ecri Compactness theorem. But $(\Omega,x)\mapsto\nu_\Omega^\mathsf{H}(x)$ is continuous by definition, while $(\Omega,x)\mapsto\nu_\Omega^\mathsf{H}$,
 $(\Omega, x)\mapsto\ve{b}_\Omega(x)$ and $(\Omega, x)\mapsto g_\Omega(x)$
are continuous by Theorem \ref{thm_bh} and the fact that $\nu_\Omega$, $g_\Omega$ and $\ve{b}_\Omega$ can be expressed in terms of $u_\Omega$ and its derivatives. Continuity of the required maps follows.
\end{proof}

\subsubsection{Convex projective structures}\label{sec_pscs}
%\begin{definition}[\textbf{Projective structures}]\label{def_clas}
A \emph{projective structure} on $\Sigma$ is an equivalence class of \emph{developing pairs} $(\dev, \hol)$, where 
\begin{itemize}
\item The \emph{holonomy} $\hol$ is a homomorphism from $\pi_1(\Sigma)$ to $\PGL(3,\mathbb{R})\cong\SL(3,\mathbb{R})$.

\item The \emph{developing map} $\dev: \widetilde{\Sigma}\rightarrow\mathbb{RP}^2$  is a $\hol$-equivariant local diffeomorphism.

\item Two pairs $(\dev,\hol)$ and $(\dev',\hol')$ are equivalent if there is $g\in\SL(3,\mathbb{R})$ such that 
$$
\dev'=g\circ\dev,\quad \hol'(\alpha)=g\, \hol(\alpha)\, g^{-1},\  \forall \alpha\in\pi_1(\Sigma).
$$
\end{itemize}
A projective structure is said to be \emph{convex} if $\dev$ sends $\widetilde{\Sigma}$ bijectively to a properly convex open set $\Omega\subset\mathbb{RP}^2$. The space of all convex projective structure on $\Sigma$ is denoted by $\P(\Sigma)$.
%\end{definition}

Given a convex projective structure on $\Sigma$ with developing map $\dev$ and developing image $\Omega=\dev(\widetilde{\Sigma})$, we endow $\Omega$ with an orientation by means of the orientation on $\Sigma$. The previously defined $g_\Omega$, $\ve{b}_\Omega$ and $g_\Omega^\mathsf{H}$ pull back to  $\Sigma$ through $\dev$.  These pullbacks are simply called the \emph{Blaschke metric}, the \emph{Pick differential} and the \emph{Hilbert metric} of the convex projective structure, respectively. 
%They are well defined by virtue of the projective invariance of $g_\Omega$, $\ve{b}_\Omega$ and $g_\Omega^\mathsf{H}$.

Recall from the introduction that  we let $\V(\Sigma)$ denote the space of pairs $(g,\ve{b})$ where
\begin{itemize}
\item
$g$ is a complete Riemannian metric on $\Sigma$,
\item $\ve{b}$ is a holomorphic cubic differential on $\Sigma$ with respect to the conformal structure underlying $g$ and the orientation of $\Sigma$. 
\item $g$ and $\ve{b}$ satisfy Wang's equation
\begin{equation}\label{eqn_vortex}
\kappa_g=-1+2\|\ve{b}\|^2_g.
\end{equation}
\end{itemize}
For later use, we record here the coordinate expression of Eq.(\ref{eqn_vortex}) in a conformal local coordinate $z$: suppose $g=h|\dz|^2$ and $\ve{b}=b\,\dz^3$, then (\ref{eqn_vortex}) reads
$$
-\frac{2}{h}\paz\pabz \log h=-1+\frac{2|b|^2}{h^3}.
$$

Assigning to each convex projective structure its Blaschke metric and Pick differential gives rise to a canonical map
$\P(\Sigma)\rightarrow\V(\Sigma)$ which is known to be bijective. 

Let us give here some more details on the bijectivity.  Let $\A(\Sigma)$ be the space of  \emph{hyperbolic affine sphere structures} on $\Sigma$, \ie equivalence classes of ``developing pairs'' $(\Dev,\hol)$, where $\Dev: \widetilde{\Sigma}\rightarrow\mathbb{R}^3$ is now a $\hol$-equivariant embedding whose image is a hyperbolic affine sphere. Then the map $\P(\Sigma)\rightarrow\V(\Sigma)$ is essentially defined as a composition 
\begin{equation}\label{eqn_paw}
\P(\Sigma)\rightarrow\A(\Sigma)\rightarrow\V(\Sigma).
\end{equation}
The first map is bijective by virtue of the Cheng-Yau theorem, while bijectivity of the second map follows from the fundamental theorem of affine differential geometry (an analogue of the fundamental theorem of surface theory which determines a hyperspace in a Euclidean space from the first and second fundamental forms). 

The inverse $\A(\Sigma)\rightarrow\P(\Sigma)$ to the first map in (\ref{eqn_paw})  trivially assigns to each pair $(\Dev, \hol)$ the underlying pair $(\dev=\mathbb{P}\circ\dev, \hol)$, where $\mathbb{P}: \mathbb{R}^2\setminus\{0\}\rightarrow\mathbb{RP}^2$ is the projectivization.
In the next section we give a concrete construction of the inverse $\V(\Sigma)\rightarrow\A(\Sigma)$ to the second map in (\ref{eqn_paw}). This would yield immediately a description of the inverse to the canonical map $\P(\Sigma)\rightarrow\V(\Sigma)$.

\subsection{From $\V(\Sigma)$ to $\P(\Sigma)$: Wang's developing pair}\label{sec_wangdev}
\subsubsection{Parallel transports}\label{sec_pt}
Let $(E,\D,\mu)$ be a flat $\SL(3,\mathbb{R})$-vector bundle over $\Sigma$ (\ie $E$ is a rank $3$ vector bundle, $\D$ is a flat connection on $E$ and $\mu$ is a fiberwise volume form on $E$ preserved by $\D$).
The \emph{parallel transport} of the connection $\D$ along a $C^1$ path $\gamma: [0,1]\rightarrow \Sigma$ is a linear isomorphism between fibers
$$
\para(\gamma): E_{\gamma(0)}\rightarrow E_{\gamma(1)}
$$
preserving $\mu$.  A precise description goes as follows. Let $\ve{e}(t)=(e_1(t),e_2(t),e_3(t))$ be a unimodular frame of the pullback $\SL(3,\mathbb{R})$-vector bundle $\gamma^*E\rightarrow[0,1]$. The pullback connection $\gamma^*\D$ is expressed under the frame as $\dif+A(t)$, where $A(t)$ is a traceless $3\times 3$-matrix of functions on $[0,1]$. The initial value problem of linear ODE
$$
\tfrac{\dif}{\dif t}T(t) +A(t)T(t) =0, \quad T(0)=\id
$$
admits a unique solution $T: [0,1]\rightarrow\SL(3,\mathbb{R})$. We then define $\para(\gamma)$ as the linear map  $E_{\gamma(0)}\rightarrow E_{\gamma(1)}$ which is represented by the matrix $T(1)$ under the bases $\ve{e}(\gamma(0))$ and $\ve{e}(\gamma(1))$. In particular, given a base point $m\in\Sigma$, $\para$ associates to each loop $\gamma\in\pi_1(\Sigma, m)$ an element $\para(\gamma)$ in $\SL(E_m,\mu_m)$. The resulting group homomorphism $\pi_1(\Sigma, m)\rightarrow\SL(E_m,\mu_m)$ is called the \emph{holonomy} of $\D$ at $m$.

%See \eg \cite{kobayashi-nomizu} Chapter 2 Section 3 for more discussions.
The parallel transport $\para(\gamma)$ remains unchanged when $\gamma$ undergoes a (endpoints-fixing) homotopy. It also satisfies the following properties
 $$
\para(\alpha^{-1})=\para(\alpha)^{-1},\quad \para(\beta)\para(\alpha)=\para(\alpha\cdot\beta).
 $$
Here $\alpha$ and $\beta$ are oriented paths such that $\alpha$ terminates at the point where $\beta$ starts. $\alpha\cdot\beta$ denotes the concatenation of $\alpha$ and $\beta$, running from the starting point of $\alpha$ to the ending point of $\beta$.

Now let $\ve{e}=(e_1,e_2,e_3)$ be a unimodular frame field of $E$ over a contractible open set $U\subset \Sigma$. Suppose $\D$ is expressed under $\ve{e}$ as $\D=\dif+A$. Since paths in $U$ are determined up to homotopy by their endpoints, parallel transports are expressed by a \emph{two-pointed parallel transport map}
\begin{equation}\label{eqn_pttwo}
\para: U\times U\rightarrow \SL(3,\mathbb{R}), \quad (y,x)\mapsto \para(y,x).
\end{equation}
For any $x,y\in U$, let $\gamma$ be a path in $U$ going from $x$ to $y$, then $\para(y,x)$ is by definition the matrix of $\para(\gamma): E_x\rightarrow E_y$ with respect to the bases $\ve{e}(x)$ and $\ve{e}(y)$.

The map (\ref{eqn_pttwo})  is characterized by the following properties:
\begin{enumerate}
\item\label{item_pttwo1}
$\para(x,x)=\id$.
\item\label{item_pttwo2}
$\para(x,y)=\para(y,x)^{-1}$.
\item\label{item_pttwo3}
Let $\pa_u^{(2)}\para(y,x)$ (resp. $\pa_v^{(1)}\para(y,x)$) denote the derivative of $\para(y,x)$ with respect to the variable $x$ (resp. $y$) along the tangent vector $u\in\T_xU$ (resp. $v\in\T_yU$). Then
$$
\pa^{(2)}_u\para(y,x)=\para(y,x)A(u),\quad\pa^{(1)}_v\para(y,x)=-A(v)\para(y,x).
$$
\end{enumerate}

In the rest of the paper, $E$ will be the rank $3$ vector bundle $\T\Sigma\oplus\underline{\mathbb{R}}$ over $\Sigma$. Let $\underline{1}$ denote the canonical section of the line bundle $\underline{\mathbb{R}}\subset E$ and let $\underline{1}^*$ denote the section of $E^*$ such that $\langle\underline{1}^*,\underline{1}\rangle=1$ and $\underline{1}^*(v)=0$ for any $v\in\T\Sigma\subset E$.

\subsubsection{Wang's developing pair}\label{sec_wang}

We now describe the map $\V(\Sigma)\rightarrow\A(\Sigma)$ mentioned at the end of \S \ref{sec_pscs}. Namely, we assign to each $(g,\ve{b})\in\V(\Sigma)$ a developing pair $(\Dev, \hol)$ of some affine sphere structure which represents the pre-image of $(g,\ve{b})$ under the second map in (\ref{eqn_paw}).
The construction depends on a choice of base point $m\in\Sigma$ and the resulting map $\Dev$ takes values in $E_m$. A different choice gives rise to a equivalent developing pair.

Given $m\in\Sigma$, let $\widetilde{\Sigma}_m$ denote the universal cover of $\Sigma$ viewed as the space of (endpoints-fixing) homotopy classes of oriented paths on $\Sigma$ starting from $m$. The covering map $\pi:\widetilde{\Sigma}_m\rightarrow\Sigma$ assigns to each homotopy class its final point. The fundamental group $\pi_1(\Sigma, m)$ acts on $\widetilde{\Sigma}_m$ by pre-concatenation.

We first associate to $(g,\ve{b})\in\V(\Sigma)$ a flat $\SL(3,\mathbb{R})$-vector bundle $(E, \D, \mu)$ over $\Sigma$. Suppose
$g=h|\dz|^2$ and $\ve{b}=b\,\dz^3$ in a conformal local coordinate $z$. We define 
\begin{itemize}
\item 
$E:=\T\Sigma\oplus\underline{\mathbb{R}}$ , $\mu:=\nu_g\wedge\underline{1}^*$, where $\nu_g$ is the volume form of $g$;
\item
Let $\D$ be the connection on $E$ whose extension (by complex linearity) to $E\otimes\mathbb{C}=\T_\mathbb{C}\Sigma\oplus\underline{\mathbb{C}}$ is expressed under the local frame $(\paz,\pabz,\underline{1})$ as
\begin{equation}\label{eqn_wangcon}
\D=\dif+
\begin{pmatrix}
\pa\log h&\frac{\bar b}{h}\dbz&\dz\\[6pt]
\frac{b}{h}\dz&\bpa\log h&\dbz\\[6pt]
\frac{h}{2}\dbz&\frac{h}{2}\dz&0
\end{pmatrix}.
\end{equation}
Straightforward computations show $\D$ is coordinate-independent, flat and preserves $\mu$.
\end{itemize}

\begin{theorem}[\textbf{Wang's developing pair}]\label{thm_wang}
Let 
$$
\hol: \pi_1(\Sigma, m)\rightarrow \SL(E_m)\cong\SL(3,\mathbb{R})
$$
be the holonomy of the connection $\D$ at $m$ and define the map
$$
\Dev: \widetilde{\Sigma}_m\rightarrow E_m\cong\mathbb{R}^3, \quad \Dev(\gamma):=\para(\gamma^{-1})\underline{1}_{\pi(\gamma)},
$$
where $\a(\gamma^{-1}):E_{\pi(\gamma)}\rightarrow E_m$ is the parallel transport of $\D$ along $\gamma^{-1}$.
Then $(\Dev,\hol)$ represents the pre-image of $(g,\ve{b})$ under the second map in (\ref{eqn_paw}). 

As a consequence, $(\dev:=\mathbb{P}\circ\Dev,\hol)$ is a developing pair of the convex projective structure with Blaschke metric $g$ and Pick differential $\ve{b}$. 
\end{theorem}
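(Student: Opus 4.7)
The plan is to read the connection matrix (\ref{eqn_wangcon}) as the Maurer--Cartan form of a would-be affine-sphere immersion, under the identification of each fiber $E_x = \T_x\Sigma \oplus \mathbb{R}$ with the tangent-plus-affine-normal decomposition at the corresponding point. Parallel-transporting the canonical section $\underline{1}$ back to the base fiber $E_m$ should then reproduce this immersion. I would first check the equivariance $\Dev(\alpha\cdot\gamma) = \hol(\alpha)\Dev(\gamma)$, which follows directly from the composition rule $\para(\beta)\para(\alpha) = \para(\alpha\cdot\beta)$ of \S\ref{sec_pt} combined with $(\alpha\cdot\gamma)^{-1} = \gamma^{-1}\cdot\alpha^{-1}$. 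Next, applying the two-pointed parallel-transport identity from the same subsection, for $v\in\T_{\pi(\gamma)}\Sigma$ one obtains
\[
d\Dev|_\gamma(v) = \para(\gamma^{-1})\bigl(\D_v\underline{1}\bigr).
\]
Reading off the last column of (\ref{eqn_wangcon}) shows $\D_{\paz}\underline{1} = \paz$ and $\D_{\pabz}\underline{1} = \pabz$, so $v\mapsto \D_v\underline{1}$ is the canonical inclusion $\T\Sigma\hookrightarrow E$. In particular $\Dev$ is an immersion and transverse to the radial direction $\underline{1} = \Dev(\gamma)$, so $\dev = \mathbb{P}\circ\Dev$ is also an immersion.

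The heart of the argument is to recognise the remaining entries of (\ref{eqn_wangcon}) as the structure equations of a hypersurface immersion into $\mathbb{R}^3$ with position vector $\underline{1}$, induced first fundamental form $g = h|\dz|^2$, and cubic Pick form $\ve{b} = b\,\dz^3$. Computing $\D\paz$ and $\D\pabz$ from the matrix, one sees that the diagonal and off-diagonal entries of the upper $2\times 2$ block encode the Levi-Civita connection of $g$ and the cubic form $\ve{b}$ respectively, while the bottom row $\tfrac{h}{2}\dbz,\,\tfrac{h}{2}\dz,\,0$ imposes the affine-normal identification $\underline{1} \leftrightarrow \Dev(\gamma)$ with affine mean curvature $-1$, i.e. the hyperbolic affine sphere condition. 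Flatness of $\D$, already asserted in the text preceding the theorem, is then precisely the system of Gauss--Codazzi--Ricci equations for this data, of which the Gauss equation is Wang's equation (\ref{eqn_vortex}).

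With (\ref{eqn_wangcon}) identified as the Maurer--Cartan form of the desired affine sphere, the fundamental theorem of affine differential geometry, already invoked for bijectivity of the second map in (\ref{eqn_paw}), produces a hyperbolic-affine-sphere immersion, unique up to unimodular affine motion, with Blaschke metric $g$ and Pick differential $\ve{b}$; the parallel-transport construction of $\Dev$ simply integrates this Maurer--Cartan form, so $(\Dev,\hol)$ represents the pre-image of $(g,\ve{b})$ under the second map of (\ref{eqn_paw}). The statement about $(\dev,\hol)$ then follows from the definition in \S\ref{sec_pscs} of the first map in (\ref{eqn_paw}). I expect the main obstacle to be the bookkeeping in the middle paragraph: carefully matching each entry of the $3\times 3$ connection matrix with the corresponding piece of the affine-sphere structure (position vector, tangent frame, Levi-Civita connection, cubic form, shape operator) and confirming that flatness of $\D$ reduces termwise to Gauss--Codazzi--Ricci and, in particular, to Wang's equation.
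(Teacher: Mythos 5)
The paper itself offers no proof of Theorem~\ref{thm_wang}: Section~\ref{sec_pre} is explicitly a review, and this statement is quoted from the affine-sphere literature (Wang, Loftin, Labourie). Your proposal reconstructs the standard argument, and it is correct: the computation $d\Dev|_\gamma(v)=\para(\gamma^{-1})(\D_v\underline{1})=\para(\gamma^{-1})(v)$ via property (3) of \S\ref{sec_pt}, the identification of the columns of (\ref{eqn_wangcon}) with the affine-sphere structure equations (Levi-Civita part, Pick form $\ve{b}$, second fundamental form $\tfrac{h}{2}$, affine normal equal to the position vector), and the fact that flatness encodes Gauss--Codazzi with Gauss being Wang's equation (\ref{eqn_vortex}) are exactly the ingredients used in the cited sources; the equivariance of $\Dev$ is indeed immediate from the composition rule (up to the usual $\hol(\alpha)$ versus $\hol(\alpha)^{-1}$ convention bookkeeping). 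The one point you should make explicit rather than leave implicit in the phrase ``simply integrates this Maurer--Cartan form'': the local structure equations alone do not show that $\Dev$ is an \emph{embedding} whose image is a hyperbolic affine sphere, which is what membership in $\A(\Sigma)$ requires. The clean way to close this is your own uniqueness appeal, spelled out: the actual pre-image $(\Dev',\hol')$ of $(g,\ve{b})$ under the second map in (\ref{eqn_paw}) has an adapted frame field satisfying the same linear system $\dif F=F\!\cdot\!A$ as the parallel-transport frame, so by uniqueness of solutions (after normalizing the initial frame by an element of $\SL(3,\mathbb{R})$) the two developing pairs are equivalent; the global embeddedness and completeness are thereby imported from $(\Dev',\hol')$, i.e.\ ultimately from Cheng--Yau, rather than re-proved.
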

Here, $\SL(E_m)$ denotes the group of $\mu_m$-preserving linear automorphisms of the fiber $E_m$.

We call the developing pair $(\dev, \hol)$ produced by Theorem \ref{thm_wang}  \emph{Wang's developing pair associated to $(g,\ve{b})\in \V(\Sigma)$} relative to the base point $m$. 

%\begin{remark}
%A different choice of base point $m'$ gives rise to another developing pair $(\Dev',\hol')$ which is equivalent to $(\Dev,\hol)$ in the sense of Definition 
%\ref{def_clas} and  \ref{def_centro}. Indeed, for any path $\gamma$ going from $m$ to $m'$, we have
%$$
%\hol'(\alpha)=\para(\gamma)\hol(\alpha)\para(\gamma^{-1}),\quad \Dev'=\para(\gamma)\circ\Dev.
%$$
%\end{remark}

\subsubsection{Equilateral frames}\label{sec_equilateral}
It is conceptually clearer to express the connection (\ref{eqn_wangcon}) under a frame of $E$, rather than a frame of the complexification $E\otimes\mathbb{C}$. An obvious choice of such a frame is $(\pa_x, \pa_y, \underline{1})$, where $z=x+\ima y$. Another choice, which is more convenient for our purpose, is
\begin{align}
(e_1,e_2,e_3)&:=\left(\pa_x+\underline{1}\,,\ -\tfrac{1}{2}\pa_x+\tfrac{\sqrt{3}}{2}\pa_y+\underline{1}\,,\ -\tfrac{1}{2}\pa_x-\tfrac{\sqrt{3}}{2}\pa_y+\underline{1}\right)\label{eqn_frame}\\
&=(\paz,\pabz,\underline{1})\Q,\nonumber
\end{align}
where the base change matrix $\Q$ and its inverse are given by
\begin{equation}\label{eqn_db}
\Q=\begin{pmatrix}
1&\omega&\omega^2\\
1&\omega^2&\omega\\
1&1&1
\end{pmatrix},
\quad\Q^{-1}=\frac{1}{3}
\begin{pmatrix}
1&1&1\\
\omega^2&\omega&1\\
\omega&\omega^2&1
\end{pmatrix}, \quad \omega=e^{2\pi\ima/3}.
\end{equation}
We refer to the frame (\ref{eqn_frame}) as the \emph{equilateral frame} of $E$ associated to the coordinate $z$, because its projection to $\T\Sigma$ gives vertices of an equilateral triangle in each $\T_m\Sigma$. Note that the canonical section $\underline{1}$ of $E$ is $e_1+e_2+e_3$.

Such a frame is useful when $\ve{b}$ is a constant multiple of $\dz^3$. Throughout this paper, we always use the letter $\zeta$ to denote a local coordinate of $\Sigma$ away from zeros of $\ve{b}$ such that $\ve{b}=2\,\dzeta^3$, and let $u(\zeta)$ be the function  such that the Blaschke metric is expressed as
$$
g=2e^{u(\zeta)}|\dzeta|^2.
$$
In other words, $u$ is defined in a coordinate-free way as 
$$
u=\log\left(\frac{g}{2^\frac{1}{3}|\ve{b}|^\frac{2}{3}}\right)=-\frac{1}{3}\log(2\|\ve{b}\|_g^2). 
$$

The connection (\ref{eqn_wangcon}) is expressed under the equilateral frame associated to $\zeta$ as
\begin{equation}\label{eqn_d}
\D=\dif+\Q^{-1}\begin{pmatrix}
\pa u&e^{-u}\dbzeta&\dzeta\\[5pt]
e^{-u}\dzeta&\bpa u&\dbzeta\\[5pt]
e^u\dbzeta&e^u\dzeta&0\\
\end{pmatrix}\Q.
\end{equation}
We shall study next the resulting parallel transport and developing map in detail when $g=2|\dzeta|^2$, \ie when $u\equiv0$.

\subsection{Wang's developing map for $g=2|\dzeta|^2$ and $\ve{b}=2\,\dzeta^3$}
\subsubsection{The \c{T}i\c{t}eica affine sphere}\label{sec_titeica}
The simplest example of $(g,\ve{b})\in\V(\Sigma)$ is given by 
\begin{equation}\label{eqn_g0}
\Sigma=\mathbb{C},\quad g_0=2|\dzeta|^2,\quad \ve{b}_0=2\,\dzeta^3.
\end{equation}

Let $\D_0$ denote the connection (\ref{eqn_wangcon}) in this case. The matrix expression of $\D_0$ under the equilateral frame associated to $\zeta$ turns out to be diagonal:
\begin{equation}\label{eqn_wangcon0}
\D_0=\dif+B^{-1}
\begin{pmatrix}
0&\dbzeta&\dzeta\\[3pt]
\dzeta&0&\dbzeta\\[3pt]
\dbzeta&\dzeta&0
\end{pmatrix}\Q
=\begin{pmatrix}
2\re(\dzeta)&&\\
&\!\!\!2\re(\omega^2\,\dzeta)&\\
&&\!\!\!2\re(\omega\,\dzeta)
\end{pmatrix},
\end{equation}
where $\omega=e^{2\pi\ima/3}$.

Taking $m=0\in\mathbb{C}$ to be the base point, we apply Theorem \ref{thm_wang} and get an affine spherical embedding $\Dev_0: \mathbb{C}\rightarrow E_0$ (here $E_0$ is the fiber of $E=\T\mathbb{C}\oplus\underline{\mathbb{R}}$ at $0$), known as the \emph{\c{T}i\c{t}eica affine sphere}.

In order to get an explicit expression, we let $\a_0: \mathbb{C}\times\mathbb{C}\rightarrow \SL(3,\mathbb{R})$ denote the two-pointed parallel transport map of $\D_0$ under the equilateral frame $(e_1,e_2,e_3)$ associated to the globally defined coordinate $\zeta$.
 %To get an explicit expression for $\a_0$, we remark that $\D$ is invariant under translations $z\mapsto z+a$, hence $\a_0$ is invariant as well, so we have
%$$
%\a_0(z_2,z_1)=\a_0(0,z_1-z_2),
%$$
%$$
%\pa^{(2)}_u\a_0(0,z)=\a_0(0,z)\begin{pmatrix}
%2\re(u)&&\\
%&2\re(\omega^2u)&\\
%&&2\re(\omega u)
%\end{pmatrix}
%$$
Since $\D_0$ is invariant under translations, we have
$$
\a_0(\zeta_2,\zeta_1)=\a_0(0, \zeta_1-\zeta_2),
$$
whereas the expression of $\D_0$ and the properties of two-pointed parallel transport maps given at the end of \S \ref{sec_pt} yield
\begin{equation}\label{eqn_tit}
\a_0(0,\zeta)
=
\begin{pmatrix}
e^{2\re(\zeta)}&&\\
&\!\!\!e^{2\re(\omega^2\zeta)}&\\
&&\!\!\!e^{2\re(\omega \zeta)}
\end{pmatrix}
.
\end{equation}

Since $\underline{1}=e_1+e_2+e_3$, we get
$$
\Dev_0(\zeta)=\a_0(0,\zeta)\underline{1}_\zeta
%\begin{pmatrix}
%1\\
%1\\
%1
%\end{pmatrix}=
%\begin{pmatrix}
%e^{2\re(z)}\\[3pt]
%e^{2\re(\omega^2z)}\\[3pt]
%e^{2\re(\omega z)}
%\end{pmatrix}.
=e^{2\re(\zeta)}e_1(0)+e^{2\re(\omega^2\zeta)}e_2(0)+e^{2\re(\omega \zeta)}e_3(0).
$$
%More precisely, this expression gives the coordinates of $\Dev(z)\in \T_0\mathbb{C}\oplus\mathbb{R}\cong\mathbb{R}^3$ under the equilateral frame.  

The image of $\Dev_0$ is the hypersurface 
$$
\{x\,e_1(0)+y\,e_2(0)+z\,e_3(0)\in E_0\mid xyz=1\}
$$
asymptotic to the first octant in $E_0\cong\mathbb{R}^3$.  The developing map of the associated convex projective structure is
$$
\dev_0=\mathbb{P}\circ\Dev_0: \widetilde{\Sigma}_m\rightarrow \mathbb{P}(E_0).
$$ 
Its image is a triangle $\Delta\subset\mathbb{P}(E_0)$, in accordance with the last property in \S \ref{sec_canometric}.  

For each  $v\in E_0$, let $[v]\in\mathbb{P}(E_0)$ denote its projectivization. We denote the vertices and edges of the triangle $\Delta$ by
$$
X_i:=[e_i(0)], \  X_{ij}:=\left\{\big[(1-s)\,e_i(0)+s\,e_j(0)\big]\right\}_{s\in[0,1]}
$$
and let $X_{ij}^\circ$ denote the interior of the closed segment $X_{ij}$.
%$$
%x_{100}:=[e_1(0)],\quad x_{010}:=[e_2(0)],\quad x_{001}:=[e_3(0)].
%$$
%Also denote the interiors of edges by
%\begin{align*}
%l_{110}&:=\{[s e_1(0)+s^{-1}e_2(0)]\}_{s>0},\\
%l_{101}&:=\{[s e_1(0)+s^{-1}e_3(0)]\}_{s>0},\\
%l_{011}&:=\{[s e_2(0)+s^{-1}e_3(0)]\}_{s>0}.
%\end{align*}

Although the norm of $\Dev_0(\zeta)$ tends to infinity when $\zeta$ goes to $\infty$ in $\mathbb{C}$, the projectivization $\dev_0(\zeta)$ has limit points lying on $\pa\Delta$, as described by the next proposition.  The proof amounts to elementary limit calculations, which we omit.  Note that in the first part of the proposition, a parametrized curve $t\mapsto x(t)$ in $\mathbb{RP}^2$ converging to a point $x_0$ is said to be \emph{asymptotic} to a ray or a segment $l$  issuing from $x_0$ if, for any circular sector $S$ of small angle base at $x_0$ such that $S$ contains a portion of $l$, we have $x(t)\in S$ when $t$ is big enough.  

\begin{proposition}\label{prop_tit}
Assume that a path $\alpha: [0,+\infty)\rightarrow \mathbb{C}$ satisfies  
$$|\alpha(t)|\rightarrow+\infty,\quad\arg(\alpha(t))\rightarrow\theta(\alpha)\in[0,2\pi)$$ 
as $t\rightarrow+\infty$.
\begin{enumerate}
\item\label{item_tit1}
%If $\theta(\alpha)\neq\pm\tfrac{\pi}{3}$ or $\pi$, then $\dev_0(\alpha(t))$ converges. More specifically, 
%$$
%\lim_\tinf \dev_0(\alpha(t))=
%\begin{cases}
%X_1&\mbox{ if } \theta\in (-\tfrac{\pi}{3}, \tfrac{\pi}{3}),\\
%X_2&\mbox{ if } \theta\in (\tfrac{\pi}{3}, \pi),\\
%X_3&\mbox{ if }\theta\in (\pi, \frac{4\pi}{3}).
%\end{cases}
%$$
Define the intervals
$$
I_1=(-\tfrac{\pi}{3}, \tfrac{\pi}{3}),\quad I_2=(\tfrac{\pi}{3},\pi),\quad I_3=(\pi,\tfrac{5\pi}{3}).
$$
Then
$\lim_\tinf\dev_0(\alpha(t))=X_i$ if $\theta(\alpha)\in I_i$.
Furthermore, let $I_i^-$ (resp. $I_i^+$) denote the first (resp. second) open half of $I_i$, \ie $I_1^-=(-\frac{\pi}{3},0)$, $I_1^+=(0,\frac{\pi}{3})$, \etc, then the curve $t\mapsto\dev_0(\alpha(t))$ is asymptotic to $X_{i,i\pm1}$ (here the indices are counted modulo $3$) if $\theta(\alpha)\in I_i^\pm$.

 \item\label{item_tit2}
For any $\theta\in\{\pm\tfrac{\pi}{3}, \pi\}$ and $s\in\mathbb{R}$, put 
$$
\alpha_{\theta,s}(t):=e^{\theta\ima}(t+s\ima).
$$ 
Then
\begin{align*}
\lim_\tinf \dev_0(\alpha_{\tfrac{\pi}{3},s}(t))&=\big[e^{-\sqrt{3}\, s}e_1(0)+e^{\sqrt{3}\, s}e_2(0)\big]\in X_{12}^\circ,\\	
 \lim_\tinf \dev_0(\alpha_{-\tfrac{\pi}{3},s}(t))&=\big[e^{-\sqrt{3}\, s}e_3(0)+e^{\sqrt{3}\, s}e_1(0)\big]\in X_{31}^\circ,\\
  \lim_\tinf \dev_0(\alpha_{\pi,s}(t))&=\big[e^{-\sqrt{3}\, s}e_2(0)+e^{\sqrt{3}\, s}e_3(0)\big]\in X_{23}^\circ.
\end{align*}
As a consequence, each point of  $X_{12}^\circ$ (resp. $X_{23}^\circ$, $X_{31}^\circ$) is the limit of $\dev_0(\alpha(t))$ for some $\alpha$ satisfying $\theta(\alpha)=\tfrac{\pi}{3}$ (resp. $\pi$, $-\tfrac{\pi}{3}$).
\end{enumerate}
\end{proposition}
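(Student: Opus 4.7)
The plan is to work directly from the explicit formula
$$
\dev_0(\zeta)=[e^{2\re(\zeta)}:e^{2\re(\omega^2\zeta)}:e^{2\re(\omega\zeta)}]
$$
coming from (\ref{eqn_tit}). Writing $\zeta=re^{\ima\theta}$, the three homogeneous coordinates become $e^{2r\phi_j(\theta)}$, where $\phi_1(\theta)=\cos\theta$, $\phi_2(\theta)=\cos(\theta-2\pi/3)$, $\phi_3(\theta)=\cos(\theta+2\pi/3)$. Each $\phi_i$ is strictly larger than the other two on the interior of the corresponding interval $I_i$, and the three graphs cross exactly at the boundary points $\pm\pi/3$ and $\pi$. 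Everything in the proposition then reduces to tracking which of these exponentials dominates.

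For part \ref{item_tit1}, fix $\theta_0:=\theta(\alpha)\in I_i$. By continuity of $\phi_i-\phi_j$ together with the hypothesis $\arg\alpha(t)\to\theta_0$, one may choose $\delta>0$ and $T$ such that $\phi_i(\arg\alpha(t))-\phi_j(\arg\alpha(t))\geq\delta$ for $j\neq i$ and $t\geq T$. Dividing all homogeneous coordinates of $\dev_0(\alpha(t))$ by $e^{2|\alpha(t)|\phi_i(\arg\alpha(t))}$ and using $|\alpha(t)|\to+\infty$ shows that the two other coordinates decay like $e^{-2\delta|\alpha(t)|}$, so $\dev_0(\alpha(t))\to X_i$. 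For the refinement on $I_i^\pm$, observe that within $I_i^+$ one has $\phi_{i+1}(\theta)>\phi_{i-1}(\theta)$ (indices mod $3$), so after the same renormalisation the ratio of the $(i-1)$-th to the $(i+1)$-th coordinate also tends to $0$ exponentially. Consequently $\dev_0(\alpha(t))$ eventually enters any prescribed circular sector at $X_i$ containing a portion of $X_{i,i+1}$, which is exactly the asymptoticity condition. The case of $I_i^-$ is symmetric.

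For part \ref{item_tit2} with $\theta=\pi/3$, I would substitute $\alpha_{\pi/3,s}(t)=e^{\ima\pi/3}(t+s\ima)$ directly. Using $\omega^2 e^{\ima\pi/3}=e^{-\ima\pi/3}$ and $\omega\, e^{\ima\pi/3}=-1$, one obtains
\begin{align*}
2\re(\alpha_{\pi/3,s}(t))&=t-s\sqrt{3},\\
2\re(\omega^2\alpha_{\pi/3,s}(t))&=t+s\sqrt{3},\\
2\re(\omega\,\alpha_{\pi/3,s}(t))&=-2t,
\end{align*}
and dividing the three homogeneous coordinates by $e^t$ yields $(e^{-\sqrt{3}s},e^{\sqrt{3}s},e^{-3t})$, whose limit as $t\to+\infty$ is precisely the announced point of $X_{12}^\circ$. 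The remaining cases $\theta=-\pi/3$ and $\theta=\pi$ follow from the same computation after cyclically permuting the indices via multiplication by $\omega$ and $\omega^2$. The only mildly delicate point in the whole argument is the uniform control of $\arg\alpha(t)$ required in part \ref{item_tit1}, but this is immediate from the two hypotheses placed on $\alpha$; the rest is routine bookkeeping with exponentials.
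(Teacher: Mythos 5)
Your proof is correct and is exactly the ``elementary limit calculation'' that the paper explicitly omits: comparing the exponents $\cos\theta$, $\cos(\theta-\tfrac{2\pi}{3})$, $\cos(\theta+\tfrac{2\pi}{3})$ in the explicit formula for $\dev_0$ to identify the dominant homogeneous coordinate (and the subdominant one for the asymptotic-direction refinement), plus the direct substitution for the boundary rays $\theta\in\{\pm\tfrac{\pi}{3},\pi\}$. No gaps; the dominance inequalities and the computations for part (2) all check out.
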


See Figure \ref{figure_dev02}  and Figure \ref{figure_dev01} in the introduction for illustrations of part (\ref{item_tit1}) and part (\ref{item_tit2}), respectively, of the above proposition.

\subsubsection{Eigenvalues of $\Ad_{\a_0(0,\zeta)}$}\label{sec_eigen}
For our purpose in Section \ref{sec_ctop},  we need some details on  eigenvalues of the Lie algebra inner automorphism 
$$
\Ad_{\a_0(0,\zeta)}: \sl_3\mathbb{R}\rightarrow \sl_3\mathbb{R}
$$ 
relative to $\zeta\in\mathbb{C}$.  We record here some calculations used later on.

Let $E_{ij}$ denote the matrix whose $(i,j)$-entry is $1$ and the other entries vanish. It follows from the expression (\ref{eqn_tit}) that $\Ad_{\a_0(0,\zeta)}$ acts trivially on the diagonal subalgebra and acts on $E_{ij}$ ($i\neq j$) with eigenvalue 
$$
\exp\big(2\re(\omega^{1-i}\zeta-\omega^{1-j}\zeta)\big)=\exp\big(\varpi_{ij}(\arg(\zeta))|\zeta|\big),
$$
where
$$
\varpi_{ij}(\theta):=2\re\big(\omega^{1-i}e^{\theta\ima}-\omega^{1-j}e^{\theta\ima}\big).
$$

It is convenient to visualize $\varpi_{ij}(\theta)$ as follows. Consider a planar tripod formed by three equally angled rods of length $2$, rotating on the complex plane $\mathbb{C}$ with center fixed at $0$. Label the rods clockwise by $1$, $2$ and $3$. Then $\varpi_{ij}(\theta)$ is the signed horizontal distance from the end of rod $i$ to the end of rod $j$ when the angle between rod $1$ and the ray $\mathbb{R}_{\geq 0}$ is $\theta$. Using this description, one easily checks the following facts.
\begin{enumerate}
\item\label{item_eigen1}
The function $$\varpi(\theta):=\max_{i,j}\varpi_{ij}(\theta)$$ is continuous and reaches its maximum $2\sqrt{3}$ if and only if $\theta$ is an odd multiple of $\tfrac{\pi}{6}$.  Note that the spectral radius of $\Ad_{\a_0(0,\zeta)}$ is
$$
\rho(\Ad_{\a_0(0,\zeta)})=\exp\big(\varpi\big(\arg(\zeta)\big)|\zeta|\big).
$$

\item\label{item_eigen2}
For $\theta$ at each odd multiple of $\frac{\pi}{6}$, there is exactly one pair $(i,j)$ such that $\varpi_{ij}(\theta)=\varpi(\theta)=2\sqrt{3}$,  given by the following table.

\vspace{5pt}
\begin{tabular}{|c|c|c|c|c|c|c|}
\hline
$\theta$&$\frac{\pi}{6}$&$\frac{\pi}{2}$&$\frac{5\pi}{6}$&$\frac{7\pi}{6}$&$\frac{3\pi}{2}$&$-\frac{\pi}{6}$\\[3pt]\hline
$(i,j)$&$(1,3)$&$(2,3)$&$(2,1)$&$(3,1)$&$(3,2)$&$(1,2)$\\
\hline
\end{tabular}
\vspace{5pt}

%then $(i,j)$-entry of the following matrix is the eigenvalue of $\Ad_{\a_0(0,\zeta)}$ on $E_{ij}$ (since the eigenvalue on $E_{ij}$ is reciprocal to the one on $E_{ji}$, it is sufficient to specify the upper-triangular part of the matrix).  \begin{align*}
%\begin{bmatrix}
%1\!&e^{2t(\cos(\theta)-\cos(\theta-\frac{2\pi}{3}))}&\!\!\!\!\!\!\!e^{2t(\cos(\theta)-\cos(\theta+\frac{2\pi}{3}))}\\[5pt]
%*&1&\!\!\!\!\!\!\!e^{2t(\cos(\theta-\frac{2\pi}{3}))-\cos(\theta+\frac{2\pi}{3}))}\\[5pt]
%*&*&1
%\end{bmatrix}
%%\end{align*}
%%\begin{align*}
%%\quad\quad
%=
%\begin{bmatrix}
%1\!&e^{2\sqrt{3}\,t\cos(\theta+\tfrac{\pi}{6})}&\!\!e^{2\sqrt{3}\,t\cos(\theta-\tfrac{\pi}{6})}\\[5pt]
%*&1&\!\!e^{2\sqrt{3}\,t\sin(\theta)}\\[5pt]
%*&*&1
%\end{bmatrix}\label{eqn_eigens}
%\end{align*}
\end{enumerate}

\subsubsection{Convex projective structures whose developing images are triangles}
The following proposition enumerates all convex projective structures whose developing image is a triangle in $\mathbb{RP}^2$.

\begin{proposition}\label{prop_triangle}
Let $(\dev,\hol)$ be a developing pair of a convex projective structure $\Sigma$ with Blaschke metric $g$ and Pick differential $\ve{b}$. Then $\Omega=\dev(\widetilde{\Sigma})$ is a triangle if and only if the triple $(\Sigma, g,\ve{b})$ is equivalent  to one of the following three.
\begin{enumerate}[(a)]
\item \label{item_tri1}
$(\mathbb{C}, 2|\dzeta|^2, 2\dzeta^3)$;\vspace{4pt}
\item\label{item_tri2}
$(\mathbb{C}^*, 2^\frac{1}{3}|R|^\frac{2}{3}|z|^{-2}|\dz|^2, Rz^{-3}\dz^3)$, where $R\in\mathbb{C}^*$;\vspace{4pt}
\item\label{item_tri3}
$(\mathbb{C}/\Lambda, 2|\dzeta|^2, 2\,\dzeta^3)$, where $\Lambda\cong\mathbb{Z}^2$ is a lattice in $\mathbb{C}$.
\end{enumerate}
Furthermore,
\begin{enumerate}
\item\label{item_proptri1}
In case (\ref{item_tri2}),  the holonomy of the convex projective structure along a  loop going around $0$ counter-clockwise is conjugate to
$$
\begin{pmatrix}
e^{-4\pi\mu_1}&&\\
&e^{-4\pi\mu_2}&\\
&&e^{-4\pi\mu_3}
\end{pmatrix},
$$
where $\mu_1,\mu_2,\mu_3\in\mathbb{R}$ are the imaginary parts of the three cubic roots of $R/2$, respectively.
\item\label{item_proptri2}
If $\Sigma$ is a torus then $(\Sigma, g, \ve{b})$ is always equivalent to (\ref{item_tri3}). As a consequence, the developing image of any convex projective structure on a torus is a triangle.
\end{enumerate}
\end{proposition}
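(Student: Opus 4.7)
The plan is to combine the characterisation $(\Omega, g_\Omega, \ve{b}_\Omega) \cong (\mathbb{C}, 2|\dzeta|^2, 2\dzeta^3)$ for triangular $\Omega$ (last bullet of \S\ref{sec_canometric}) with a classification of fixed-point-free isometries of this standard data, and to settle the torus assertion (\ref{item_proptri2}) separately via a maximum-principle argument on Wang's equation.

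For the ``if'' direction, each of (\ref{item_tri1}), (\ref{item_tri2}), (\ref{item_tri3}) has universal cover biholomorphic to $\mathbb{C}$ carrying the data $(2|\dzeta|^2, 2\dzeta^3)$: for (\ref{item_tri2}), this is witnessed by $\zeta = (R/2)^{1/3}\log z$, which uniformises $\mathbb{C}^*$ as $\mathbb{C}/\lambda\mathbb{Z}$ with $\lambda = 2\pi\ima(R/2)^{1/3}$ and transports the given $(g, \ve{b})$ to the standard form. Wang's developing pair is therefore $\dev_0$ from \S\ref{sec_titeica}, whose image is the triangle $\Delta$. Conversely, if $\Omega$ is a triangle then pulling back along the diffeomorphism $\dev\colon \widetilde{\Sigma} \to \Omega$ identifies $(\widetilde{\Sigma}, \tilde{g}, \tilde{\ve{b}})$ with $(\mathbb{C}, 2|\dzeta|^2, 2\dzeta^3)$. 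The deck group $\Gamma$ acts by holomorphic maps $\zeta \mapsto a\zeta + b$ preserving both tensors, which forces $|a|=1$ and $a^3=1$. Any such element with $a \neq 1$ has the fixed point $b/(1-a)$, contradicting freeness of the deck action; hence $\Gamma$ consists of translations and is trivial, cyclic, or a rank-two lattice, producing the three cases respectively.

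For item (\ref{item_proptri1}), in case (\ref{item_tri2}) a counter-clockwise loop around $0$ lifts to the translation $\zeta \mapsto \zeta + \lambda$. By the explicit expression for $\Dev_0$ in \S\ref{sec_titeica}, this translation acts on $\Dev_0(\mathbb{C})$ by $\diag\bigl(e^{2\re\lambda},\, e^{2\re(\omega^2\lambda)},\, e^{2\re(\omega\lambda)}\bigr)$. Substituting $\lambda = 2\pi\ima(R/2)^{1/3}$ rewrites the $i$-th entry as $e^{-4\pi\im(\omega^{1-i}(R/2)^{1/3})}$, and since the three numbers $\omega^{1-i}(R/2)^{1/3}$ exhaust the cubic roots of $R/2$, the claim follows.

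For item (\ref{item_proptri2}), any holomorphic cubic differential on a torus is a constant multiple $c\,\dif\zeta_0^{\,3}$ in a translation-invariant coordinate $\zeta_0$. If $c=0$ then Wang's equation would force $\kappa_g \equiv -1$, contradicting Gauss--Bonnet on a torus; so $c \neq 0$. Writing $g = h|\dif\zeta_0|^2$, Wang's equation reduces to
\[
\pa_{\zeta_0}\pa_{\bar\zeta_0}\log h \;=\; \tfrac{h}{2} - \tfrac{|c|^2}{h^2}.
\]
At a maximum (resp.\ minimum) of $\log h$ on the compact torus, the left-hand side is $\le 0$ (resp.\ $\ge 0$), so $h^3 \le 2|c|^2$ (resp.\ $h^3 \ge 2|c|^2$); hence $h$ is the constant $(2|c|^2)^{1/3}$. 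The substitution $\zeta = (c/2)^{1/3}\zeta_0$ then brings $(g,\ve{b})$ into the form $(2|\dzeta|^2, 2\dzeta^3)$ and identifies $\Sigma$ with case (\ref{item_tri3}). The main technical step is precisely this maximum-principle argument; the rest is elementary group theory and a direct computation from \eqref{eqn_tit}.
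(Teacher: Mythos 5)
Your proof is correct and follows essentially the same route as the paper: identify $(\widetilde{\Sigma},\tilde g,\tilde{\ve{b}})$ with the Ţiţeica data $(\mathbb{C},2|\dzeta|^2,2\,\dzeta^3)$, observe that the deck group must act by translations (so it is trivial, $\mathbb{Z}$, or a lattice), read off the holonomy in case (b) from the explicit formula (\ref{eqn_tit}), and settle the torus case by the maximum principle applied to Wang's equation. The only differences are that you spell out details the paper leaves implicit (the classification of symmetries of $2\,\dzeta^3$ and the exclusion of $\ve{b}\equiv 0$ on the torus via Gauss--Bonnet), which is fine.
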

Note that part (\ref{item_proptri1}) is the simplest example of Theorem \ref{intro_thm2} from the introduction.

\begin{proof}
%Let $\tilde{g}$ and $\tilde{\ve{b}}$ be the lifts of $g$ and $\ve{b}$ to $\widetilde{\Sigma}$.
%The discussions in \S \ref{sec_titeica} imply that the triangle $\Delta=\Omega$, we have
%$$
%(\Delta, g_\Delta, \ve{b}_\Delta)\cong (\mathbb{C}, 2|\dz|^2, 2\dz^3).
%$$

The developing map $\dev$ induces an isomorphism between $(\Sigma, g, \ve{b})$ and the quotient of $(\mathbb{C}, g_0, \ve{b}_0)$ by a free and properly discontinuous action of $\pi_1(\Sigma)$ preserving $\ve{b}_0$ (and hence preserving $g_0$). But the only non-trivial such actions are the actions of either $\mathbb{Z}$ or $\mathbb{Z}^2$ by translations.

In the case of $\mathbb{Z}$,  the action is generated by $\zeta\mapsto \zeta+2\pi\ima\mu$ for some $\mu\in\mathbb{C}$, $\mu\neq0$. The map
\begin{equation}\label{eqn_proptri}
\mathbb{C}\rightarrow \mathbb{C}^*,\quad \zeta\mapsto z=e^{\zeta/\mu},
\end{equation}
identifies the quotient $(\mathbb{C},g_0,\ve{b}_0)/\mathbb{Z}$ with 
$\left(\mathbb{C}^*,\, \tfrac{2|\mu|^2}{|\zeta|^2}|\dzeta|^2, \, \tfrac{2\mu^3}{\zeta^{3}}\dzeta^3\right)$
%$$
%\left(\mathbb{C}^*,\, \tfrac{2|\lambda|^2}{|\zeta|^2}|\dzeta|^2, \, \tfrac{2\lambda^3}{\zeta^{3}}\dzeta^3\right)=\left(\mathbb{C}^*,\,\frac{2^\frac{1}{3}|R|^\frac{2}{3}}{|\zeta|^{2}}|\dzeta|^2, \frac{R}{\zeta^{3}}\dzeta^3\right)
%$$
in such a way that a path in $\mathbb{C}$ going from  $2\pi\ima\mu$ to $0$ corresponds to a loop in $\mathbb{C}^*$ going around $0$ counter-clockwise.
Eq. (\ref{eqn_tit}) gives the holonomy along such a loop as
$$
\a_0(0,2\pi\lambda\ima)
=
\begin{pmatrix}
e^{-4\pi\im(\lambda)}&&\\
&e^{-4\pi\im(\omega^2\lambda)}&\\
&&e^{-4\pi\im(\omega\lambda)}
\end{pmatrix},\  \omega=e^{2\pi\ima/3}.
$$
Statement (\ref{item_proptri1}) is proved by setting $R=2\mu^3$.

We now assume that $\Sigma$ is a torus and prove statement (\ref{item_proptri2}). 
Fix $(g,\ve{b})\in\V(\Sigma)$. $(\Sigma,g)$ is conformally equivalent to $\mathbb{C}/\Lambda$ for some lattice $\Lambda\subset\mathbb{C}$ and we have $\ve{b}=b(z)\,\dz^3$ for a $\Lambda$-periodic entire function $b(z)$, which must be a constant. We can assume $b=2$ by scaling and assume $g=h|\dz|^2$ for some smooth function $h: \mathbb{C}/\Lambda\rightarrow\mathbb{R}_+$. Wang's equation (\ref{eqn_vortex}) then becomes
$$
-\frac{2}{h}\pazbz\log h+1-\frac{8}{h^3}=0.
$$
Applying the maximum principle to this equation shows that the maximal and minimal values of $h$ are both $2$, hence the proof is complete.
\end{proof}

\subsubsection{Wang's developing pair associated to $(2^\frac{1}{3}|\ve{b}|^\frac{2}{3}, \ve{b})$}\label{sec_wangass} 
%\red{.....}
 It is important for our purpose later on to remark that the construction of Wang's developing pair $(\dev, \hol)$ in Theorem \ref{thm_wang} makes sense not only for $(g,\ve{b})\in\V(\Sigma)$, but also for some more general $(g,\ve{b})$.

Indeed, one can carry out the construction of $\Dev$ and $\hol$ in Theorem \ref{thm_wang} whenever the connection $\D$ defined by (\ref{eqn_wangcon}) is flat, whereas the flatness is equivalent to Wang's equation (\ref{eqn_vortex}) and does not particularly require $g$ to be complete as in the definition of $\V(\Sigma)$. We can even allow $g$ to have zeros as long as the ratio $b/h$ in the expression of $\D$ makes sense at the zeros.

Therefore, we can put the singular flat metric $2^\frac{1}{3}|\ve{b}|^\frac{2}{3}$ in place of $g$ and carry out the construction, \ie integrate the flat connection (\ref{eqn_wangcon}), denote by $\D_0$ in this case. We get a pair $(\dev_0, \hol_0)$ with
$$
\dev_0: \widetilde{\Sigma}_m\rightarrow\mathbb{P}(E_m),\quad \hol_0:\pi_1(\Sigma, m)\rightarrow \SL(E_m, \mu_m)
$$
such that $\dev_0$ is $\hol_0$-equivariant.  We called it \emph{Wang's developing pair associated to $(2^\frac{1}{3}|\ve{b}|^\frac{2}{3}, \ve{b})$}, although in general it is not really a developing pair of any projective structure.

%Note that if $\ve{b}$ is non-vanishing and has a pole of order $\geq 3$ at each puncturen, then $(2^\frac{1}{3}|\ve{b}|^\frac{1}{3}, \ve{b})$ actually belongs to $\V(\Sigma)$. This occurs only in the three examples in Proposition \ref{prop_triangle}.

As in \S \ref{sec_equilateral}, let $\zeta$ be a conformal local coordinate defined on a contractible open set $U$ such that $\ve{b}=2\,\dzeta^3$. Let $\a_0: U\times U\rightarrow \SL(3,\mathbb{R})$ be the two-pointed parallel transport map of $\D_0$ with respect to the equilateral frame associated to $\zeta$.
Then the local expression of $(2^\frac{1}{3}|\ve{b}|^\frac{2}{3}, \ve{b})$ coincide with  (\ref{eqn_g0}), hence $\D_0$ and $\a_0$ also have the same expressions (\ref{eqn_wangcon0}) and (\ref{eqn_tit}) as the \c{T}i\c{t}eica affine sphere. 
%This explains our choice of notations.
Indeed, in this paper, the main use of the \c{T}i\c{t}eica example is to provide a local model for Wang's developing map associated to $(2^\frac{1}{3}|\ve{b}|^\frac{2}{3}, \ve{b})$.

\section{Ends of convex projective surfaces}\label{sec_devpunc}

In this section, we fix the following data:
\begin{itemize}
\item a surface $\Sigma$ satisfying the assumptions at the beginning of Section \ref{sec_pre}.
\item a base point $m\in\Sigma$, so as to view points in the universal cover $ \widetilde{\Sigma}=\widetilde{\Sigma}_m$ as paths on $\Sigma$ as in \S \ref{sec_wang}; let $\pi:\widetilde{\Sigma}\rightarrow\Sigma$ denote the covering map;
\item a developing pair $(\dev, \hol)$ of a convex projective structure on $\Sigma$; put $\Omega:=\dev(\widetilde{\Sigma})\subset\mathbb{RP}^2$.
\end{itemize}

\subsection{Developed boundaries}\label{sec_devbd}
\subsubsection{The Farey set}\label{sec_farey}
If $S$ is a complete hyperbolic surface and $p$ is a cusp, identifying the universal cover $\widetilde{S}$ with the Poincar\'e disk $\mathbb{D}$, following \cite{fock-goncharov}, we call the subset of $\pa\mathbb{D}$ corresponding to $p$ the \emph{Farey set} of $S$ with respect to $p$.  This is a countable set with a transitive $\pi_1(S)$-action such that the stabilizer of each point is an infinite cyclic group generated by a loop going around $p$ once. This name comes from the fact that when $S$ is a punctured torus, an ideal triangulation of $S$ by two triangles lifts to the classic Farey triangulation of $\mathbb{D}$, whose vertex set is the Farey set.

We shall now give an equivalent definition of Farey sets without referring to hyperbolic metrics, which is more suitable for our purposes later on. 

\begin{definition}[\textbf{Homotopy of paths tending to a puncture}]\label{def_homotopy}
Given a puncture $p$ of $\Sigma$, let $\mathscr{P}_{m,p}$ denote the set of continuous paths $\beta:[0,+\infty)\rightarrow \Sigma$ such that $\beta(0)=m$ and $\lim_\tinf\beta(t)=p$. Two paths $\beta_0,\beta_1\in\mathscr{P}_{m,p}$ are call \emph{homotopic} if there is $\beta_s\in\mathscr{P}_{m,p}$ for $s\in(0,1)$ such that the map 
$$
[0,1]\times[0,+\infty)\rightarrow\Sigma,\quad(s,t)\mapsto \beta_s(t)
$$ 
is continuous and that the convergence $\lim_\tinf\beta_s(t)=p$ is uniform in $s\in[0,1]$. 
\end{definition}
%\begin{remark}
%Adding the puncture $p$ to $\beta\in\mathscr{P}_{m,p}$, we get a path $\bar{\beta}$ on $\overline{\Sigma}$ going from $m$ to $p$. Then $\beta_0,\beta_1\in\mathscr{P}_{m,p}$ are homotopic in the above sense if and only if $\bar{\beta}_0$ and $\bar{\beta}_1$ are homotopic as paths on $\overline{\Sigma}$ in the usual sense through a family of paths from $m$ to $p$ which does not pass through any puncture. 
%\end{remark}

\begin{definition}[\textbf{Farey set}]
The Farey set $\Fa{\Sigma,p}$ is the set of homotopy classes of elements in $\mathscr{P}_{m,p}$, equipped with the natural $\pi_1(\Sigma,m)$-action given by the pre-concatenation of $\beta\in\mathscr{P}_{m,p}$ with loops based at $m$ which represent elements in $\pi_1(\Sigma,m)$.
\end{definition}
Note that in the non-negative Euler characteristic cases $\Sigma\cong\mathbb{C}$ and $\Sigma\cong\mathbb{C}^*$, the Farey set $\Fa{\Sigma,p}$ consists of a single element.

For the sake of completeness, we outline a prove of the equivalence between the above definition and the one introduced earlier through hyperbolic metrics.
\begin{proposition}\label{prop_farey}
Fixing a complete hyperbolic metric on $\Sigma$ such that $p$ is a cusp, we identify $\widetilde{\Sigma}$ with the Poincar\'e disk $\mathbb{D}$ and view $\pi_1(\Sigma)$ as a subgroup of the isometry group $\mathsf{Isom}(\mathbb{D})$. Then there is a natural $\pi_1(\Sigma)$-action-preserving bijection between $\Fa{\Sigma,p}$ and the subset of $\pa\mathbb{D}$ corresponding to $p$.
\end{proposition}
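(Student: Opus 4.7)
The plan is to construct a map $\Phi:\mathscr{P}_{m,p}\to\partial\mathbb{D}$ by lifting paths and taking boundary limits, then verify that it descends to a $\pi_1(\Sigma,m)$-equivariant bijection from $\Fa{\Sigma,p}$ onto the subset of $\partial\mathbb{D}$ corresponding to $p$. Concretely, I fix a lift $\tilde{m}\in\mathbb{D}$ of $m$ and a point $\tilde{p}_0\in\partial\mathbb{D}$ whose stabilizer in $\pi_1(\Sigma,m)$ is the maximal parabolic subgroup generated by a loop around $p$, so that the subset corresponding to $p$ is the orbit $F:=\pi_1(\Sigma,m)\cdot\tilde{p}_0$. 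Given $\beta\in\mathscr{P}_{m,p}$, I lift it to $\tilde{\beta}:[0,+\infty)\to\mathbb{D}$ with $\tilde{\beta}(0)=\tilde{m}$ and set $\Phi(\beta):=\lim_\tinf\tilde{\beta}(t)$.

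The key input is the standard thick--thin description of a cusp: there is a decreasing family of punctured neighborhoods $(U_n)$ of $p$ in $\Sigma$ whose preimages in $\mathbb{D}$ are disjoint unions of open horoballs, each tangent to exactly one point of $F$, and whose Euclidean diameters in $\overline{\mathbb{D}}$ shrink uniformly to $0$. Since $\beta(t)\to p$, for each $n$ there is $T_n$ such that $\tilde{\beta}([T_n,+\infty))$ is contained in a single connected component of the preimage of $U_n$, by connectedness; these horoballs are then nested in $n$ and must all be tangent to the same point of $F$, which is then $\Phi(\beta)$.

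That $\Phi$ descends to $\Fa{\Sigma,p}$ exploits the uniformity in Definition~\ref{def_homotopy}: for any homotopy $(\beta_s)$ and any $n$, there is $T_n$ such that $\beta_s(t)\in U_n$ for all $s\in[0,1]$ and $t\geq T_n$, and the lifted family $\{\tilde{\beta}_s(t):s\in[0,1]\}$ is then a connected subset of the preimage of $U_n$, hence lies in a single horoball, so $\Phi(\beta_s)$ is constant in $s$. Equivariance amounts to the observation that pre-concatenating $\beta$ with a loop representing $\alpha\in\pi_1(\Sigma,m)$ lifts to prepending the $\tilde{m}$-based lift of $\alpha$ and then continuing with the $T_\alpha$-translate of $\tilde{\beta}$, where $T_\alpha$ is the deck transformation corresponding to $\alpha$; the endpoint of the lift is thus moved to $T_\alpha(\Phi(\beta))=\alpha\cdot\Phi(\beta)$.

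For surjectivity, given $\tilde{p}\in F$, the geodesic ray in $\mathbb{D}$ from $\tilde{m}$ to $\tilde{p}$ enters every horoball based at $\tilde{p}$, so its projection to $\Sigma$ enters every $U_n$ and hence lies in $\mathscr{P}_{m,p}$ with $\Phi$-image $\tilde{p}$. For injectivity, if $\Phi(\beta_0)=\Phi(\beta_1)=\tilde{p}$, I pass to the upper half-plane model with $\tilde{p}=\infty$, where horoballs at $\tilde{p}$ become Euclidean half-planes $\{\im(z)>C\}$. For each $n$, there is $T_n$ such that both $\tilde{\beta}_0(t)$ and $\tilde{\beta}_1(t)$ lie in $\{\im(z)>n\}$ for $t\geq T_n$, and the Euclidean convex combination $\tilde{\beta}_s(t):=(1-s)\tilde{\beta}_0(t)+s\tilde{\beta}_1(t)$ stays there by convexity; after interpolating on $[0,T_n]$ through a fixed continuous family of paths in $\widetilde{\Sigma}$ and projecting to $\Sigma$, I obtain a homotopy in $\mathscr{P}_{m,p}$ with the required uniform convergence. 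The main obstacle I anticipate is the careful bookkeeping of nested horoball preimages used in defining $\Phi$; once this standard cusp geometry is in place, the rest reduces to elementary topology and hyperbolic geometry in $\mathbb{D}$.
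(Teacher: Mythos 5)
Your proposal is correct and follows essentially the same route as the paper's own proof: the limit map on lifted paths, well-definedness via connectedness of the lifted homotopy inside the horoball preimage of a cusp neighborhood, surjectivity via geodesic rays, and injectivity via a straight-line (convex-combination) homotopy whose uniform convergence is guaranteed by convexity of horoballs. The only differences are cosmetic: you spell out the existence of the boundary limit via nested shrinking horoballs and pass to the upper half-plane model for the injectivity step, where the paper does the convex combination directly in the disk model.
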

\begin{proof}[Outline of proof]
Let $F\subset\pa\mathbb{D}$ be the subset corresponding to $p$.  Assume that $0\in\mathbb{D}$ projects to $m\in\Sigma$. Each $\beta\in\mathscr{P}_{m,p}$ lifts to a path $\widetilde{\beta}:[0,+\infty)\rightarrow\mathbb{D}$ with $\widetilde{\beta}(0)=0$. $\widetilde{\beta}(t)$ converges to a point in $F$, so 
we can define the map 
\begin{equation}\label{eqn_pmpf}
\devlim: \mathscr{P}_{m,p}\rightarrow F, \quad \devlim(\beta)=\lim_\tinf\widetilde{\beta}(t).
\end{equation}
It is surjective because for any $x\in F$, the ray in $\mathbb{D}$ from $0$ to $x$ projects to an element of $\beta\in\mathscr{P}_{m,p}$ whose image by $\devlim$ is $x$. To prove the proposition, it is sufficient to show that $\beta_0$ and $\beta_1$ are homotopic if and only if $\devlim(\beta_0)=\devlim(\beta_1)$. The ``if'' part is proved by taking the homotopy $(\beta_s)_{s\in[0,1]}$ such that $\beta_s$ is the projection of $\widetilde{\beta}_s(t):=(1-s)\widetilde{\beta}_0+s\,\widetilde{\beta}_1$. To prove the ``only if'' part, let $U$ be a punctured neighborhood of $p$ such that $\pa U$ is a horocycle, so that $\pi^{-1}(U)$ is a disjoint union of horodisks.  By assumption there is $t_0>0$ such that $\beta_s(t)\in U$ whenever $t>t_0$ and $s\in[0,1]$, hence the image of $[0,1]\times[t_0,+\infty)$ by the map
$$
[0,1]\times[0,+\infty)\mapsto \mathbb{D},\quad (s,t)\mapsto \widetilde{\beta}_s(t)
$$
is contained in $\pi^{-1}(U)$. By connectedness, the image is contained in the horodisk at some $\tilde{p}\in F$, therefore $\devlim(\beta_0)=\devlim(\beta_1)=\tilde{p}$.
\end{proof}
\begin{remark}
The following example about necessity of the uniformity condition in Definition \ref{def_homotopy} might help understanding Proposition \ref{prop_farey}: let $\widetilde{\beta}_0$ and $\widetilde{\beta}_1$ be rays in $\mathbb{D}$ from $0$ to different points $x_0,x_1\in\pa\mathbb{D}$, then they extend to a continuous map $(s,t)\mapsto\widetilde{\beta}_s(t)$ from $[0,1]\times[0,+\infty)$ to $\mathbb{D}$ such that $\lim_\tinf\widetilde{\beta}_0(t)=x_0$ and $\lim_\tinf\widetilde{\beta}_s(t)=x_1$ for $s\in(0,1]$. Such an  extension can be obtained, say, from the following map into the triangle $\Delta=\{[x_1:x_2:x_3]\mid x_i\geq 0\}\subset\mathbb{RP}^2$,
$$
f: [0,1]\times[0,+\infty)\rightarrow \Delta,\ (s,t)\mapsto [(1-s)t:st^2:1],
$$ 
which satisfies $\lim_\tinf f(0,t)=[1:0:0]$ and $f(s,t)=[0:1:0]$ for $s\in(0,1]$.  Projecting $\widetilde{\beta}_s$ to the hyperbolic surface, we can get paths $\beta_0,\beta_1\in\mathscr{P}_{m,p}$ satisfying Definition \ref{def_homotopy} except the uniformity condition, and $\beta_0, \beta_1$ are mapped by (\ref{eqn_pmpf}) to different points of $F$.
\end{remark}

With the above interpretation of $\Fa{\Sigma,p}$ as a subset of $\pa\mathbb{D}$, given a punctured neighborhood $U$ of $p$ such that $\pa U$ is a horocycle, the pre-image of $U$ by the covering map $\mathbb{D}\cong\widetilde{\Sigma}\rightarrow\Sigma$ consists of infinitely many horodisks, and we call the one centered at $\tilde{p}\in\Fa{\Sigma,p}\subset\pa\mathbb{D}$ the \emph{$\tilde{p}$-lift} of $U$. Here is an alternative definition without referring to hyperbolic metrics.

\begin{definition}[\textbf{$\tilde{p}$-lift}]
Let $U$ be a connected punctured neighborhood of $p$ and let $\tilde{p}\in\Fa{\Sigma,p}$. Suppose that $\beta\in\mathscr{P}_{m,p}$ is in the homotopy class $\tilde{p}$ and that $\beta([t_0,+\infty))\subset U$ for some $t_0>0$. The subset $\widetilde{U}\subset\widetilde{\Sigma}$ consisting of points represented by paths of the form $\alpha\cdot\beta|_{[0,t_0]}$, where $\alpha$ is a path in $U$ starting from $\beta(t_0)$, is called the \emph{$\tilde{p}$-lift} of $U$.
\end{definition}

\subsubsection{Developed boundaries}\label{subsubsec_devbd}
%Fix a connected punctured neighborhood $U$ of $p$. The pre-image $\pi^{-1}(U)$ has infinitely many connected components unless $\Sigma$ is homeomorphic to $\mathbb{C}$ or $\mathbb{C}^*$. We choose from now on a connected component $\widetilde{U}$ of $\pi^{-1}(U)$. Many definitions below depends on the choice, but statements are true for any choice.
%
%
%Let $\pi_1^p(\Sigma,m)$ denote the subgroup of $\pi_1(\Sigma,m)$ consisting of elements represented by loops of the form
%$$
%\gamma\circ\alpha\circ \gamma^{-1}, 
%$$
%where the path $\gamma$ represents a point in $\widetilde{U}$ and $\alpha$ is a loop in $U$ based at the starting point of $\gamma$.
%
%The orientation of $\Sigma$ enables us to pick a distinguished generator $\beta$ of $\pi_1^p(\Sigma,m)$. We define holonomy of the projective structure \emph{around $p$} as 
%$$
%\hol_{\tilde{p}}:=\hol(\beta)\in\SL(3,\mathbb{R}).
%$$

%As mentioned in the introduction, given a convex projective structure on $\Sigma$, one can assign to each puncture $p$ some connected closed sets on the boundary of $\Omega=\dev(\widetilde{\Sigma})$. 
%
%In order to give a detailed definition, 

We recall the following point-set-topological notion: given a sequence of subsets $S_1, S_2,\cdots$ of a topological space $X$, the set of \emph{accumulation points} of the $S_i$'s, as $i$ tends to infinity, is defined as
$$
\underset{i\rightarrow+\infty}\Ac(S_i):=\{x\in X\mid \exists\, x_i\in S_i\mbox{ such that } \lim_{i\rightarrow+\infty}x_i=x\}.
$$
Note that if $(S_i)$ is a decreasing sequence, \ie $S_1\supset S_2\supset\cdots$, then
$$
\underset{i\rightarrow+\infty}\Ac(S_i)=
\bigcap_{i=1}^{+\infty
}\overline{S_i}.
$$

We return to the punctured surface $\Sigma$. A decreasing sequence of punctured neighborhoods $U_1\supset U_2\supset\cdots$ of $p$ is said to be \emph{exhausting} if any punctured neighborhood of $p$ contains some $U_i$.

\begin{definition}[\textbf{Developed Boundary and end holonomy}]\label{def_db}
Suppose that $U_1\supset U_2\supset\cdots$ is an exhausting sequence of connected punctured neighborhoods of $p$. Let $\tilde{p}\in\Fa{\Sigma,p}$ and let $\widetilde{U}_i\subset\widetilde{\Sigma}$ be the $\tilde{p}$-lift of $U_i$. Define 
$$
\devbd{\tilde{p}}:=\underset{i\rightarrow+\infty}\Ac(\dev(\widetilde{U}_i))\subset\mathbb{RP}^2,\quad \hol_{\tilde{p}}:=\hol(\gamma_{\tilde{p}})\in\SL(3,\mathbb{R}).
$$
We call $\devbd{\tilde{p}}$ and $\hol_{\gamma_{\tilde{p}}}$ the \emph{developed boundary} and the \emph{end holonomy} at $\tilde{p}$, respectively.

When $\Sigma\cong\mathbb{C}$ or $\mathbb{C}^*$, $\Fa{\Sigma,p}$ has only one element and we simply denote its corresponding developed boundary and end holonomy, respectively, by $\devbd{p}$ and $\hol_p$.
\end{definition}
Since $(U_i)$ is exhausting, the definition of $\devbd{\tilde{p}}$ does not depend on the choice of $(U_i)$.  Some fundamental properties of $\devbd{\tilde{p}}$ are given by the next proposition.

\begin{proposition}\label{prop_devbdproperty}
\begin{enumerate}
\item\label{prop_devbdproperty1}
$\devbd{\tilde{p}}$ is a nonempty connected closed subset of $\pa\Omega$.
\item\label{prop_devbdproperty2}
$\devbd{\gamma.\tilde{p}}=\hol(\gamma).\devbd{\tilde{p}}$ for any $\gamma\in\pi_1(\Sigma)$. In particular, $\devbd{\tilde{p}}$ is preserved by $\hol_{\tilde{p}}$.
\item\label{prop_devbdproperty3}
If $\Sigma$ has negative Euler characteristic and $\tilde{p}\neq \tilde{p}'$, then $\devbd{\tilde{p}}$ and $\devbd{\tilde{p}'}$ are disjoint.
\end{enumerate}
\end{proposition}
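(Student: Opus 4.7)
\emph{Parts (\ref{prop_devbdproperty1}) and (\ref{prop_devbdproperty2}).} Since $(\widetilde U_i)$ is decreasing, $\devbd{\tilde p}=\bigcap_i\overline{\dev(\widetilde U_i)}$ with closures taken in $\mathbb{RP}^2$. Because $\Omega$ is bounded in an affine chart, $\overline\Omega$ is compact, so each $\overline{\dev(\widetilde U_i)}$ is nonempty and compact, and it is connected because $\widetilde U_i$ is (by its path-concatenation description, varying $\alpha$ continuously through the connected set $U_i$). The standard fact that a decreasing intersection of nonempty compact connected subsets of a Hausdorff space inherits these properties then gives the first three assertions. For $\devbd{\tilde p}\subset\pa\Omega$: if $x\in\Omega\cap\devbd{\tilde p}$, write $x=\dev(\tilde y)$ and pick $\tilde y_i\in\widetilde U_i$ with $\dev(\tilde y_i)\to x$; continuity of $\dev^{-1}:\Omega\to\widetilde\Sigma$ forces $\tilde y_i\to\tilde y$, hence $\pi(\tilde y)=\lim\pi(\tilde y_i)=p\notin\Sigma$, a contradiction. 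Part (\ref{prop_devbdproperty2}) is essentially formal: by direct inspection of the path-concatenation definition, the deck transformation by $\gamma$ sends the $\tilde p$-lift of $U_i$ to the $(\gamma\cdot\tilde p)$-lift, so equivariance $\dev\circ\gamma=\hol(\gamma)\circ\dev$ gives $\dev(\gamma\widetilde U_i)=\hol(\gamma)\dev(\widetilde U_i)$; taking closures and intersections yields $\devbd{\gamma\cdot\tilde p}=\hol(\gamma)\cdot\devbd{\tilde p}$, and specializing to $\gamma=\gamma_{\tilde p}$ gives invariance.

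\emph{Part (\ref{prop_devbdproperty3}), setup and length bound.} This is the substantive claim. Using negative Euler characteristic, I equip $\Sigma$ with a complete finite-area hyperbolic metric, identifying $\widetilde\Sigma\cong\mathbb{D}$ and $\Fa{\Sigma,p}\subset\pa\mathbb{D}$ through Proposition \ref{prop_farey}. Choose $(U_i),(U_i')$ with horocyclic boundaries, and fix larger horocyclic neighborhoods $W\supset U_i$ (eventually) and $W'\supset U_i'$ (eventually) whose lifts in $\mathbb{D}$ are pairwise disjoint horodisks, with $W=W'$ if $p=p'$ and $W\cap W'=\emptyset$ otherwise. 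Suppose for contradiction $x\in\devbd{\tilde p}\cap\devbd{\tilde p'}$ and pick $\tilde y_i\in\widetilde U_i$, $\tilde y_i'\in\widetilde U_i'$ with $\dev(\tilde y_i),\dev(\tilde y_i')\to x$. Let $L_i\subset\Omega$ be the Euclidean segment from $\dev(\tilde y_i)$ to $\dev(\tilde y_i')$ and $\bar L_i:=\pi(\dev^{-1}(L_i))$. The lift of $\bar L_i$ starts in the horodisk $\widetilde W^{(\tilde p)}$ and terminates in a \emph{different} component of the pairwise-disjoint family (either a different lift of $W$ or a lift of $W'$), so it must cross $\pa\widetilde W^{(\tilde p)}$; projecting, $\bar L_i$ crosses $\pa W$. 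Hence its Blaschke length is at least the Blaschke distance $d^g(\pi(\tilde y_i),\pa W)$, which tends to $+\infty$ by completeness of the Blaschke metric as $\pi(\tilde y_i)\to p$. Since $\dev$ is an isometry of Blaschke metrics and the Blaschke and Hilbert Finsler norms are pointwise comparable on $\mathfrak{C}_*$ (by the same continuity-and-Benz\'ecri-compactness reasoning that produces Corollary \ref{coro_bh}), the Hilbert length of $L_i$ also diverges; and as Euclidean segments are Hilbert geodesics, this reads $d^H_\Omega(\dev(\tilde y_i),\dev(\tilde y_i'))\to+\infty$.

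\emph{The main obstacle} is that divergence of $d^H_\Omega(y_i,y_i')$ is not by itself incompatible with $y_i,y_i'\to x\in\pa\Omega$ (two points in $\mathbb{D}$ converging to the same boundary point at different rates may have unbounded hyperbolic distance). To close, I would exploit the extra structure that $y_i\in\dev(\widetilde U_i)$ and $y_i'\in\dev(\widetilde U_i')$ arise from \emph{distinct} members of a pairwise-disjoint horodisk family. The plan is to invoke Benz\'ecri compactness: choose $g_i\in\SL(3,\mathbb{R})$ normalizing $(\Omega,y_i)$ into a compact region of $\mathfrak{C}_*/\SL(3,\mathbb{R})$, pass to a subsequential limit $(\Omega_\infty,o_\infty)$, and monitor the Hausdorff limits of $g_i\cdot\dev(\widetilde W^{(\tilde p)})$ and $g_i\cdot\dev(\widetilde W^{(\tilde p')})$ (or $g_i\cdot\dev(\widetilde W'^{(\tilde p')})$). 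Their upstairs disjointness together with the proper discontinuity of the normalized $\hol$-action should force the two limiting regions to stay separated by positive Hilbert distance in $\Omega_\infty$, contradicting the fact that both sequences $g_iy_i,g_iy_i'$ accumulate at the basepoint $o_\infty$. Negative Euler characteristic is indispensable precisely here: it guarantees that different Farey points yield genuinely separated horocyclic regions in $\Sigma$, which fails, e.g., for $\Sigma\cong\mathbb{C}^*$ (where Proposition \ref{prop_triangle} shows both developed boundaries can coincide with the boundary of a triangle).
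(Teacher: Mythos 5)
Parts (\ref{prop_devbdproperty1}) and (\ref{prop_devbdproperty2}) of your proposal are correct and essentially identical to the paper's own proof: decreasing intersection of nonempty compact connected sets, the exhaustion argument to push $\devbd{\tilde{p}}$ into $\pa\Omega$, and equivariance of $\dev$ for the deck action.

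Part (\ref{prop_devbdproperty3}) is where the genuine gap lies, and you have located it yourself but not closed it. Your reduction is fine as far as it goes: with disjoint horodisk lifts, completeness of the Blaschke metric at the puncture, comparability of Blaschke and Hilbert norms (note that Corollary \ref{coro_bh} as stated only compares \emph{volume forms}; the Finsler-norm comparison is true but needs the same Benz\'ecri argument spelled out), and the fact that straight segments are Hilbert geodesics, you obtain $y_i=\dev(\tilde y_i)\to x$, $y_i'=\dev(\tilde y_i')\to x$ while the Hilbert distance $d^{\mathsf H}_\Omega(y_i,y_i')\to+\infty$. As you say, this is not yet a contradiction. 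The proposed Benz\'ecri closing step does not supply one. First, the Hausdorff limits of $g_i\cdot\dev(\widetilde W^{(\tilde p)})$ and $g_i\cdot\dev(\widetilde W^{(\tilde p')})$ need not be disjoint, let alone at positive Hilbert distance: disjointness of two convex subsets of $\Omega$ gives no quantitative separation, and Hausdorff limits of disjoint sets can overlap; a positive lower bound on their separation is exactly what would have to be proved, and nothing in the setup provides it (the appeal to ``proper discontinuity of the normalized action'' would itself require a geometric-limit argument for the conjugated groups $g_i\hol(\pi_1(\Sigma))g_i^{-1}$, which is not given). Second, the contradiction you aim for rests on the claim that \emph{both} $g_iy_i$ and $g_iy_i'$ accumulate at the basepoint $o_\infty$; this is unjustified and in fact incompatible with what you already established: the Hilbert metric is projectively invariant, so $d^{\mathsf H}(g_iy_i,g_iy_i')=d^{\mathsf H}(y_i,y_i')\to\infty$, hence once $g_iy_i$ is pinned near $o_\infty$ the points $g_iy_i'$ must leave every compact subset of the limiting domain. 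Convergence of $y_i$ and $y_i'$ to the common point $x$ in $\mathbb{RP}^2$ is simply not preserved by the unbounded normalizing sequence $(g_i)$, so the two ``facts'' you want to play off each other cannot both hold, and the argument as described collapses.

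For comparison, the paper does not argue metrically at all for part (\ref{prop_devbdproperty3}): it defers the proof to the discussion of automorphisms of properly convex sets (\S\ref{sec_auto}), i.e.\ it exploits the classification of $\hol_{\tilde p}$ into hyperbolic, quasi-hyperbolic, planar and parabolic types and the resulting constraints on how the $\hol_{\tilde p}$-invariant connected set $\devbd{\tilde p}$ sits relative to the fixed points of $\hol_{\tilde p}$. If you want to salvage your route, the missing ingredient is a genuine separation statement for the developed images of distinct horodisk lifts (uniform in the normalization), which is of comparable difficulty to the statement being proved; as it stands, part (\ref{prop_devbdproperty3}) is not established by your proposal.
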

\begin{proof}[Proof of part (\ref{prop_devbdproperty1}) and part (\ref{prop_devbdproperty2})]
(\ref{prop_devbdproperty1}) Since $\overline{\Omega}$ is compact, $\devbd{\tilde{p}}=\bigcap_i\overline{\dev(\widetilde{U}_i)}$ is nonempty and is contained in $\overline{\Omega}$. We proceed to prove that $\devbd{\tilde{p}}$ is contained in $\pa\Omega$. Otherwise, $\dev(\widetilde{U}_1), \dev(\widetilde{U}_2),\cdots$ accumulates at some $q\in \Omega$. The projection of $\dev^{-1}(q)\in\widetilde{\Sigma}$ to $\Sigma$ is an accumulation point of $(U_i)$, contradicting the exhausting hypothesis. 

Since each $\dev(\widetilde{U}_i)$ is connected, connectedness of $\devbd{\tilde{p}}$ follows from the simple fact that in a compact normal topological space $X$, if $(S_i)$ is a sequence of connected subsets, then $\Ac_{i\rightarrow+\infty}(S_i)$ is connected as well.  To prove this fact, we assume by contradiction that $\Ac(S_i)$ is a disjoint union of nonempty closed sets $A, B\subset X$. Then $A$ and $B$ have disjoint neighborhoods $A'$ and $B'$. Since $S_i$ is connected, each $S_i$ contains a point $x_i$ outside $A'\cup B'$. Then accumulation points of the sequence $(x_i)$, which are contained in $\Ac(S_i)$ by definition, are not contained in $A'\cup B'$, a contradiction.

(\ref{prop_devbdproperty2}) Let $\widetilde{U}_i$ and $\widetilde{U}_i'$ denote the $\tilde{p}$-lift and $\gamma.\tilde{p}$-lift of $U_i$, respectively. Then we have $\widetilde{U}_i'=\gamma. \widetilde{U}_i$, hence $\dev(\widetilde{U}_i')=\hol(\gamma).\dev(\widetilde{U}_i)$ by equivariance of $\dev$. Taking accumulation points as $i$ goes to $+\infty$, we get $\devbd{\gamma.\tilde{p}}=\hol(\gamma).\devbd{\tilde{p}}$.

%(\ref{prop_devbdproperty3})...

\end{proof}
Our proof of part (\ref{prop_devbdproperty3}) involves classifications of automorphisms of properly convex sets and will be presented in \S \ref{sec_auto} below.

\subsubsection{Developing paths}\label{sec_devlim}
We now give the details on how a path $\beta:[0,+\infty)\rightarrow\Sigma$ belonging to $\mathscr{P}_{m,p}$ develops into a path $\beta^\dev:[0,+\infty)\rightarrow\Omega$ tending to $\devbd{\tilde{p}}$, as mentioned in the introduction.

For each $t>0$, the truncated path $\beta|_{[0,t]}$ represents an element in $\widetilde{\Sigma}$ whose projection on $\Sigma$ is $\beta(t)$. We denote this point simply by $\beta_{[0,t]}$. So $t\mapsto \beta_{[0,t]}$ is the  lift of $\beta$ to $\widetilde{\Sigma}$. We then put 
$$
\beta^\dev(t):=\dev(\beta_{[0,t]}).
$$

The point $\beta_{[0,t]}\in \widetilde{\Sigma}$ eventually enters any open set $\widetilde{U}_i$ from Definition \ref{def_db} as $\tinf$. It follows from the definition that any limit point of $\beta^\dev(t)$ as $t\rightarrow+\infty$ is contained in $\devbd{\tilde{p}}$. We will only need to consider the case where there is a single limit point.

\begin{definition}\label{def_devlim}
%Given $\beta$ as above, we let $\pdev(\beta)$ denote the path which $\beta$ develops into
%\footnote{The notation ``$\dev(\beta)$'' is not to be confused with the more commonly used  notation  ``$\dev(\,\cdot\,)$'' where ``$\,\cdot\,$'' is a point in $\widetilde{\Sigma}$.}.
%Precisely, $\pdev(\beta)$ is the path in $\Omega$ which can be parametrized by
%$$
%[0,+\infty)\rightarrow\Omega,\quad t\mapsto \dev(\beta_{[0,t]}^{-1}).
%$$
If the limit
$
\lim_\tinf \beta^\dev(t) \in \devbd{\tilde{p}}
$
exists, we denoted it by $\devlim(\beta)$. Otherwise, we just say ``$\devlim(\beta)$ does not exists''.
\end{definition}
As an example, for the convex projective structure on $\mathbb{C}$ given by the \c{T}i\c{t}eica affine sphere embedding (\cf \S \ref{sec_titeica}), Proposition \ref{prop_tit} gives a description of $\devlim(\beta)$ for various $\beta$'s.

\subsection{Simple and non-simple ends}
\subsubsection{Automorphisms of properly convex sets}\label{sec_auto}
%The holonomy $\hol$ of a convex projective structure on $\Sigma$ sends a non-trivial element $\gamma$ in $\pi_1(\Sigma)$ to a projective transformation $\hol(\gamma)$ preserving the properly convex set $\Omega$. Moreover, $\hol(\gamma)$ does not fix any point in $\Omega$ because the deck action of $\gamma$ on $\widetilde{\Sigma}$ does not fix any point. 

Let $a\in\SL(3,\mathbb{R})$  be a projective transformation preserving a properly convex open set $\Omega\subset\mathbb{RP}^2$ such that $a$ does not have any fixed point in $\Omega$. It is well known (see \eg \cite{goldman_convex}) that  $a$ belongs to one of the following four classes.

\begin{enumerate}
\item\label{item_clas1} $a$ is said to be \textbf{hyperbolic} if it is conjugate to
$$
\begin{pmatrix}
\lambda_+&&\\
&\lambda_0&\\
&&\lambda_-
\end{pmatrix}
$$
with $\lambda_+>\lambda_0>\lambda_->0$ and $\lambda_+\lambda_0\lambda_-=1$. The fixed point of $a$ in $\mathbb{RP}^2$ corresponding to the eigenvalue $\lambda_+$ (resp. $\lambda_0$, $\lambda_+$)  is called the \empty{attracting} (resp.  \emph{saddle}, \emph{repelling}) fixed point and is denoted by $x_+$ (resp. $x_0$, $x_-$).

Any properly convex open set $\Omega$ preserved by such $a$ has the following properties. Both $x_+$ and $x_-$ are on the boundary $\pa\Omega$. The segment $I\subset\overline{\Omega}$ joining $x_+$ and $x_-$ is called the \emph{principal segment} of $a$. We can suppose $\pa\Omega=C_1\cup C_2$, each $C_i$ being a curve joining $x_-$ and $x_+$. Then $C_i$ is either 
\begin{itemize}
\item $I$, or 
\item the union of two segments connecting $x_-$ and $x_+$ to $x_0$, or
\item an $a$-invariant convex curve such that $C_i\setminus\{x_+,x_-\}$ is contained in an open triangle $\Delta\subset\mathbb{RP}^2$ invariant by $a$.
\end{itemize}

% For a generic point $x\in\mathbb{RP}^2$, the iterating sequence $(a^k(x))_{k\in\mathbb{Z}}$ converges to $x_+$ (resp. $x_-$) as $k\rightarrow+\infty$ (resp. $k\rightarrow-\infty$), hence $x_+$ and $x_-$ are both on the boundary $\pa\Omega$. Convexity implies that a segment joining $x_+$ and $x_-$ is contained in $\overline{\Omega}$. This segment is called the \emph{principal segment} of $a$.
\vspace{5pt} 

\item\label{item_clas2} $a$ is said to be \textbf{quasi-hyperbolic} if it is conjugate to
$$
\begin{pmatrix}
\lambda&1&\\
&\lambda&\\
&&\mu
\end{pmatrix} 
$$
with $\lambda>0$, $\lambda\neq 1$ and $\lambda^2\mu=1$. We call the fixed points corresponding to the eigenvalues $\lambda$ and $\mu$ the \emph{double fixed point} and the \emph{simple fixed point}, respectively.

For any properly convex open set $\Omega$ preserved by $a$, the boundary
 $\pa\Omega$ contains a segment $I$ joining the two fixed points, still called the \emph{principal segment} of $a$.

\vspace{5pt}

\item\label{item_clas3} $a$ is said to be \textbf{planar}  if it is conjugate to
$$
\begin{pmatrix}
\lambda&&\\
&\lambda&\\
&&\mu
\end{pmatrix}
$$
with $\lambda>0$, $\lambda\neq 1$ and $\lambda^2\mu=1$. $a$ has an isolated fixed point $x_0$ and a pointwise fixed line $l$, corresponding to the eigenvalues $\mu$ and $\lambda$, respectively.  

A properly convex open set $\Omega$ preserved by $a$ must be a triangle whose boundary contains $x_0$ as a vertex and contains a segment of $l$ as an edge. In \mbox{analogy} with the quasi-hyperbolic case, we call both edges of the triangle issuing from $x_0$ the \emph{principal segments} of $a$, and call the two vertices other than $x_0$ the \emph{double fixed points}.

\vspace{5pt}

\item\label{item_clas4}
$a$ is said to be \textbf{parabolic} if it is conjugate to
$$
\begin{pmatrix}
1&1&\\
&1&1\\
&&1
\end{pmatrix}.
$$
In this case, $a$ fixes a single point $x_0\in\mathbb{RP}^2$ and preserves a line $l$ passing through $x_0$. For any properly convex open set $\Omega$ preserved by $a$, the boundary $\pa\Omega$ contains $x_0$ and is tangent to $l$ at $x_0$.
\end{enumerate}

Dynamical properties of each type of automorphisms, such as asymptotic behavior of an iteration sequence $(a^n(x))$, are well known and are used in proving the above statements about the shape of $\Omega$ relative to fixed points, see \eg \cite{marquis} Section 2. We will make use of these dynamical properties a few times below without detailed explanation.

%...\subsubsection{Developed boundaries of convex projective annuli}
%....
%\begin{proposition}\label{prop_annuli}
%Let ....
%\end{proposition}

The following lemma singles out all possible convex projective surfaces with planar end holonomies.
\begin{lemma}[\textbf{Ends with planar holonomy}]
\label{lemma_planar}
A convex projective structure on $\Sigma$ has planar end holonomy $\hol_{\tilde{p}}$ if and only if $\Sigma$ is an annulus and the Blaschke metric $g$ and Pick differential $\ve{b}$ are in case (\ref{item_tri2}) of Proposition \ref{prop_triangle} with $R\in\ima \mathbb{R}^*$.
In this case, the developed boundaries of each puncture is a principal segment of $\hol_{\tilde{p}}$.
\end{lemma}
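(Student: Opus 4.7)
The plan is to combine the classification of automorphisms of properly convex sets from \S \ref{sec_auto} with the enumeration of triangle-developing convex projective surfaces from Proposition \ref{prop_triangle}. For the forward direction, I would first observe that since $\hol_{\tilde{p}}$ preserves $\Omega=\dev(\widetilde{\Sigma})$ and is planar, the item (\ref{item_clas3}) of \S \ref{sec_auto} forces $\Omega$ to be a triangle in $\mathbb{RP}^2$. Proposition \ref{prop_triangle} then says $(\Sigma,g,\ve{b})$ falls into one of the three listed cases. Case (\ref{item_tri1}) is excluded because $\Sigma\cong\mathbb{C}$ is simply connected, so there is no nontrivial end holonomy; case (\ref{item_tri3}) is excluded because a torus has no punctures. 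Hence $\Sigma$ must be the annulus $\mathbb{C}^*$ as in case (\ref{item_tri2}).

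Next I would determine for which $R\in\mathbb{C}^*$ the end holonomy in case (\ref{item_tri2}) is planar. By Proposition \ref{prop_triangle}(\ref{item_proptri1}), $\hol_{\tilde{p}}$ is conjugate to $\diag(e^{-4\pi\mu_1},e^{-4\pi\mu_2},e^{-4\pi\mu_3})$ where the $\mu_i$ are the imaginary parts of the three cube roots of $R/2$, and since these cube roots sum to zero we have $\sum_i\mu_i=0$. The matrix is planar precisely when exactly two of the $\mu_i$ coincide (all three equal would force them to vanish, making $\hol_{\tilde{p}}$ the identity, which is impossible since the cube roots of $R/2\neq 0$ cannot all be real). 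Writing $R/2=re^{\ima\theta}$, the cube roots are $r^{1/3}e^{\ima(\theta+2\pi k)/3}$ for $k=0,1,2$, and the elementary identity $\sin A=\sin B\Leftrightarrow A\equiv B$ or $A\equiv\pi-B\pmod{2\pi}$ gives that two of the imaginary parts coincide if and only if $\theta\equiv\pm\pi/2\pmod{\pi}$, i.e.\ if and only if $R\in\ima\mathbb{R}^*$. This yields both directions of the equivalence.

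To identify $\devbd{\tilde{p}}$ as a principal segment, I would use the explicit \c{T}i\c{t}eica developing map $\dev_0$ of \S \ref{sec_titeica} together with the covering $\zeta\mapsto z=e^{\zeta/\mu}$ used in the proof of Proposition \ref{prop_triangle} (with $R=2\mu^3$). Choosing $\mu$ to be a cube root of $R/2$, one of the three $\mu_i$ is singled out as the ``non-repeated'' one, say $\im(\mu_{j_0})$, and the corresponding projective fixed point $X_{j_0}=[e_{j_0}(0)]$ is the isolated fixed point $x_0$ of $\hol_{\tilde{p}}$, while the opposite edge $X_{j_1j_2}$ lies on the pointwise-fixed line; the two principal segments are then the remaining edges $X_{j_0 j_1}$ and $X_{j_0 j_2}$ of the triangle $\Delta$. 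A path $\beta\in\mathscr{P}_{m,p}$ tending to $p=0\in\mathbb{C}^*$ has lifts $\tilde\beta$ in the $\zeta$-plane along which $\re(\zeta/\mu)\to-\infty$, restricting the possible asymptotic directions $\arg(\zeta)$ to a half-plane's worth of angles; applying Proposition \ref{prop_tit} to these angles pins down the accumulation set of $\dev_0\circ\tilde\beta$ as the union of one closed edge of $\Delta$, which is precisely a principal segment of $\hol_{\tilde p}$. The analogous argument at the other puncture gives the other principal segment.

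The main obstacle I expect is the bookkeeping in the last step: correctly matching which cube root of $R/2$ corresponds to the isolated fixed point of the planar holonomy, and ruling out the possibility that $\devbd{\tilde{p}}$ could contain the edge on the pointwise-fixed line (which would \emph{not} be a principal segment). The key is to verify that the asymptotic directions in the $\zeta$-plane accessible to paths tending to $p$ lie in exactly those intervals $I_i$ of Proposition \ref{prop_tit} whose associated vertex is $x_0$, so that $\devlim$ always lands on an edge through $x_0$; this is a direct computation once the phase of $\mu$ is fixed.
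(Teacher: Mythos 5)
For the equivalence itself your route coincides with the paper's: planarity of $\hol_{\tilde{p}}$ forces the developing image to be a triangle by the classification in \S \ref{sec_auto}, Proposition \ref{prop_triangle} then leaves only case (\ref{item_tri2}), and the eigenvalue formula of Proposition \ref{prop_triangle} (\ref{item_proptri1}) shows that exactly two of the $\mu_i$ coincide (and not all three vanish) precisely when $R\in\ima\mathbb{R}^*$; your cube-root computation is just the explicit form of what the paper invokes. The real difference is in the second assertion. The paper argues synthetically: writing $x_0$ for the isolated fixed point and $x_1,x_2$ for the other vertices of $\Delta$, it picks $y_i$ in the interior of $\overline{x_1x_2}$ with $y_i\to x_1$, notes that the sub-triangles $\Delta_i$ with vertices $x_0,x_1,y_i$ are $\hol_{\tilde{p}}$-invariant and that their quotients form an exhausting sequence of punctured neighborhoods of $p$, and reads off $\devbd{\tilde{p}}=\bigcap_i\overline{\Delta_i}=\overline{x_0x_1}$ directly from Definition \ref{def_db}, with no use of the \c{T}i\c{t}eica map at all.

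Your route through $\dev_0$ and Proposition \ref{prop_tit} can be completed, but as written it has two gaps. First, $\devbd{\tilde{p}}$ is by definition the accumulation set of the developed lifts $\dev(\widetilde{U}_i)$, not the set of limits $\devlim(\beta)$ along paths that happen to possess asymptotic directions; you must control the whole accumulation set, for instance by computing the closure of $\dev_0$ applied to the lift of $\{0<|z|<\epsilon\}$, which in the $\zeta$-plane is an explicit half-plane. Second, your proposed ``key verification'' is false: the accessible asymptotic directions form a closed half-circle centred at $\arg(-\mu)$, an odd multiple of $\pi/6$, and this half-circle always contains one \emph{entire} interval $I_j$ of Proposition \ref{prop_tit} whose vertex is a double fixed point (not $x_0$), one wall in its interior, and exactly one wall at an endpoint. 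Concretely, for $\mu=-t\ima$ (so $R=2t^3\ima$) the end holonomy is $\diag(e^{4\pi t},e^{-2\pi t},e^{-2\pi t})$ in the equilateral frame, hence $x_0=X_1$, while the accessible directions are $[0,\pi]\supset I_2$; limits along $I_2$ give $X_2\neq x_0$ --- harmless, since $X_2$ is the other endpoint of the principal segment $X_{12}$ --- but the endpoint direction $\pi$ is exactly the wall whose limit set in Proposition \ref{prop_tit} is $X_{23}^\circ$, on the pointwise-fixed (non-principal) edge. Excluding those points needs an argument outside Proposition \ref{prop_tit}: a path tending to $p$ with asymptotic direction $\pi$ must have $\im(\zeta)\to+\infty$, and then $\dev_0(x+\ima y)=[e^{2x}:e^{-x+\sqrt{3}y}:e^{-x-\sqrt{3}y}]$ can only accumulate at $X_2$. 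Carrying out the half-plane closure computation settles both issues at once and yields $\devbd{\tilde{p}}=X_{12}$, a principal segment; without it, the plan as stated does not close.
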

\begin{proof}
If a convex projective structure has a planar holonomy, then the developing image is a triangle (see \S \ref{sec_auto}), hence Proposition \ref{prop_triangle} applies. 
But among the three cases in Proposition \ref{prop_triangle}, the only one with planar end holonomy is (\ref{item_tri2}) with $R\in\ima\mathbb{R}^*$. This proves the first statement.

Let $x_1$ and $x_2$ denote the vertices of $\Delta$ other than $x_0$. Let $(y_i)_{i\geq 1}$ be a sequence of points contained in the interior of the edge $\overline{x_1x_2}$ and converging to $x_1$. Let $\Delta_i$ be the sub-triangle of $\Delta$ with vertices $x_0$, $x_1$ and $y_i$, which is invariant by the holonomy $h$.

The projective structure identifies the annulus $\Sigma$ with the quotient of $\Delta$ by  $h$. Hence the quotients the $\Delta_i$'s form an exhausting sequence of  punctured neighborhoods of a punctured $p$. By definition, the developed boundary at $p$ is $\bigcap_i\overline{\Delta_i}=\overline{x_0x_1}$.
Similarly, the developed boundary of the other puncture is $\overline{x_0x_2}$. 
\end{proof}

\subsubsection{Simple ends}
In addition to the surface $\Sigma$ and the convex projective structure  fixed at the beginning of this section, we pick a punctured $p$ of $\Sigma$ and pick $\tilde{p}\in\Fa{\Sigma,p}$ throughout the rest of this section. We shall study $\devbd{\tilde{p}}$ and $\hol_{\tilde{p}}$ (see \S \ref{sec_devbd} for the definitions).

\begin{definition}[\textbf{Simple ends}]\label{def_simpleends}
We call $p$ a \emph{simple end} of the convex projective structure
if the developed boundary $\devbd{\tilde{p}}$ is either
\begin{itemize}
\item
 a point, or
\item
a segment, or
\item 
a letter ``V'', \ie union of two non collinear segments sharing an endpoint.
%the holonomy $\hol_{\tilde{p}}$ is a hyperbolic and $\devbd{\tilde{p}}$ is the union of two segments: one joining the unstable and attracting fixed points of $\hol_{\tilde{p}}$, the other joining the unstable and repelling fixed points.
%If it is the last case, we call $p$ a $\Lambda$-end.
\end{itemize}
In the three cases, we call $p$ a \emph{cusp}, a \emph{geodesic end} and a \emph{V-end}, respectively.  
%If $\devbd{\tilde{p}}$ is not in these cases, we call $p$ a \emph{non-simple end}.
\end{definition}
Note that if $\Sigma\cong\mathbb{C}$ then $\devbd{\tilde{p}}$ is just the whole $\pa\Omega$ and the puncture $\infty$ is always a non-simple end. 

The holonomy of a general simple end is described by the following proposition.
\begin{proposition}[\textbf{Holonomy of simple ends}]\label{prop_classification}
${}$
\begin{enumerate}
\item\label{item_endclas1}
$p$ is a cusp if and only if $\hol_{\tilde{p}}$ is parabolic and $\devbd{\tilde{p}}$ is the fixed point of $\hol_{\tilde{p}}$.
\item\label{item_endclas3}
$p$ is a V-end if and only if $\hol_{\tilde{p}}$ is a hyperbolic projective transformation with saddle fixed point contained in $\devbd{\tilde{p}}$.  In this case, $\devbd{\tilde{p}}$ consists of a segment joining the saddle and attracting fixed points and a segment joining the saddle and repelling fixed points.
\item\label{item_endclas2}
$p$ is a geodesic end if and only if  $\hol_{\tilde{p}}$ is either hyperbolic, quasi-hyperbolic or planar and $\devbd{\tilde{p}}$ is a principal segment of $\hol_{\tilde{p}}$.

% If it is hyperbolic or quasi-hyperbolic then $\devbd{\tilde{p}}$ is the principal segment of $\hol_{\tilde{p}}$. 

%Furthermore, $\devbd{\tilde{p}}$ is a maximal segment on $\pa\Omega$ in the sense that there is not segment  on $\pa\Omega$ strictly containing $\devbd{\tilde{p}}$.

\end{enumerate}
\end{proposition}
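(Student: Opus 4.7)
The plan is to proceed case by case, relying on two structural facts. Since $\gamma_{\tilde{p}}\in\pi_1(\Sigma,m)$ acts freely and properly discontinuously on $\widetilde{\Sigma}$ and $\dev$ is a diffeomorphism onto $\Omega$, the end holonomy $\hol_{\tilde{p}}$ acts freely on $\Omega$, is of infinite order, and thus belongs to one of the four classes of \S\ref{sec_auto}. Moreover, by Proposition \ref{prop_devbdproperty}, $\devbd{\tilde{p}}$ is a nonempty, closed, connected, $\hol_{\tilde{p}}$-invariant subset of $\pa\Omega$.

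The key preparatory observation is a dynamical lemma: in the parabolic, hyperbolic, and quasi-hyperbolic cases, the attracting and repelling fixed points $x_{\pm}$ of $\hol_{\tilde{p}}$ on $\pa\Omega$ lie in $\devbd{\tilde{p}}$. To see this, fix an exhausting sequence $(\widetilde{U}_i)$ of $\tilde{p}$-lifts of punctured neighborhoods of $p$; each $\widetilde{U}_i$ is preserved by $\gamma_{\tilde{p}}$. For any choice $y_i\in\dev(\widetilde{U}_i)$, the orbit $\{\hol_{\tilde{p}}^n(y_i)\}_{n\in\mathbb{Z}}$ stays inside $\dev(\widetilde{U}_i)$ and converges to $x_+$ as $n\to+\infty$ and to $x_-$ as $n\to-\infty$ by the standard asymptotics of the corresponding class. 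Hence $x_{\pm}\in\overline{\dev(\widetilde{U}_i)}$ for every $i$, so $x_{\pm}\in\devbd{\tilde{p}}$. The planar case is qualitatively different (a whole line is pointwise fixed) and is already covered by Lemma \ref{lemma_planar}, which forces $\Sigma$ to be an annulus and identifies $\devbd{\tilde{p}}$ with a principal segment.

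With these tools in hand, each part reduces to a short combinatorial check. For (\ref{item_endclas1}), a one-point $\devbd{\tilde{p}}$ must be a fixed point of $\hol_{\tilde{p}}$; the dynamical lemma forbids the hyperbolic and quasi-hyperbolic cases (in which $x_+\neq x_-$), and Lemma \ref{lemma_planar} forbids the planar case, leaving only parabolic. For (\ref{item_endclas3}), the three vertices of the V form a $\hol_{\tilde{p}}$-stable set; since $\hol_{\tilde{p}}$ has infinite order and $\hol_{\tilde{p}}^k$ shares its fixed-point set with $\hol_{\tilde{p}}$ for every $k\geq 1$, all three vertices are actually fixed by $\hol_{\tilde{p}}$. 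Three boundary fixed points exclude parabolic (one) and quasi-hyperbolic (two); planar is excluded by Lemma \ref{lemma_planar} (which yields a segment, not a V). Hence $\hol_{\tilde{p}}$ is hyperbolic; among the three shapes of $C_i$ listed in \S\ref{sec_auto}, only ``two segments through $x_0$'' is a V, so the central vertex is the saddle $x_0$.

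For (\ref{item_endclas2}), both endpoints of the segment $\devbd{\tilde{p}}$ are fixed by $\hol_{\tilde{p}}$, ruling out the parabolic case. In the hyperbolic case, the three fixed points $x_+, x_0, x_-$ are non-collinear in $\mathbb{RP}^2$ (their eigenvector representatives are linearly independent in $\mathbb{R}^3$), so the segment cannot carry all three; by the dynamical lemma $x_+, x_-\in\devbd{\tilde{p}}$, forcing them to be the two endpoints, and the segment is thus principal. In the quasi-hyperbolic case, the segment's endpoints are the two fixed points, so again it is principal; the planar subcase is handled directly by Lemma \ref{lemma_planar}. The converse in each part is immediate from the classification in \S\ref{sec_auto}. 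The main technical subtlety is verifying the $\gamma_{\tilde{p}}$-stability of each $\tilde{p}$-lift so that the orbit argument in the dynamical lemma genuinely stays inside $\dev(\widetilde{U}_i)$; once this is in place, the remainder is bookkeeping with the four holonomy types.
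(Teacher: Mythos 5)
Your dynamical lemma (that $x_{\pm}\in\devbd{\tilde{p}}$ because the $\hol_{\tilde{p}}$-orbit of any point of $\dev(\widetilde{U}_i)$ stays in that invariant set and accumulates at the fixed points) is sound and handles parts (\ref{item_endclas1}) and (\ref{item_endclas2}) in essentially the same spirit as the paper; your power trick $\mathrm{Fix}(\hol_{\tilde{p}})=\mathrm{Fix}(\hol_{\tilde{p}}^2)$ is an acceptable substitute for the paper's orientation argument showing the three vertices of the V are fixed. The genuine gap is in part (\ref{item_endclas3}), and it is twofold. First, your identification of the central vertex with the saddle rests on the remark that ``among the three shapes of $C_i$ listed in \S\ref{sec_auto}, only two segments through $x_0$ is a V''. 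But $\devbd{\tilde{p}}$ is only a closed connected $\hol_{\tilde{p}}$-invariant subset of $\pa\Omega$; it need not equal one of the arcs $C_i$. When $\Omega$ is the invariant triangle with vertices $x_+,x_0,x_-$, the boundary $\pa\Omega$ also contains the V's $\overline{x_0x_+}\cup\overline{x_+x_-}$ and $\overline{x_0x_-}\cup\overline{x_-x_+}$, centered at $x_\pm$; these are invariant and contain both $x_+$ and $x_-$, so nothing in your argument (including the dynamical lemma) rules them out as developed boundaries. Excluding them requires an actual analysis of how $\dev(\widetilde{U}_i)$ sits in $\Omega$.

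Second, you dispose of all converse directions with ``immediate from the classification in \S\ref{sec_auto}'', but the ``if'' direction of (\ref{item_endclas3}) is the substantive half: assuming only that $\hol_{\tilde{p}}$ is hyperbolic and $x_0\in\devbd{\tilde{p}}$, one must show that $\devbd{\tilde{p}}$ is exactly $\overline{x_-x_0}\cup\overline{x_+x_0}$ — a priori it could, for instance, be all of $\pa\Omega$ when $\Omega$ is a triangle and $\Sigma$ an annulus, and your lemma only gives $x_\pm\in\devbd{\tilde{p}}$, which is consistent with that. The paper closes both of these points simultaneously: it takes exhausting neighborhoods $U_i$ with embedded boundary curves, observes that $C_i=\dev(\pa\widetilde{U}_i)$ is an $\hol_{\tilde{p}}$-invariant embedded curve in $\Omega$, uses the dynamics of a hyperbolic element to show $C_i$ lies in the invariant triangle, joins $x_+$ to $x_-$ and is asymptotic to the edges $\overline{x_\pm x_0}$, and then identifies $\dev(\widetilde{U}_i)$ as the component of $\Omega\setminus C_i$ whose closure contains $x_0$, whence $\overline{\dev(\widetilde{U}_i)}\cap\pa\Omega=\overline{x_-x_0}\cup\overline{x_+x_0}$ for every $i$. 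Some argument of this kind (controlling the whole accumulation set, not just which fixed points it contains) is missing from your proposal, and without it both the converse of (\ref{item_endclas3}) and the statement that the V is saddle-centered remain unproved.
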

\begin{proof}
(\ref{item_endclas1}) The ``if'' part is trivial. To prove the ``only if'' part, suppose by contradiction that $a:=\hol_{\tilde{p}}$ is not parabolic, so that $a$ is either hyperbolic, quasi-hyperbolic or planar and $x_0:=\devbd{\tilde{p}}$ is one of the fixed points of $a$.  For a generic point $x$ in a neighborhood of $x_0$, either $a^n(x)$ or $a^{-n}(x)$ converges to another fixed point $x_1$ of $a$ as $n\rightarrow+\infty$. It follows that the closure of each $a$-invariant set $\dev(\widetilde{U}_i)$ in Definition \ref{def_db} contains $x_1$, hence so does $\devbd{\tilde{p}}$, a contradiction. 

(\ref{item_endclas3}) 
Suppose $p$ is a V-end. By the classification in \S \ref{sec_auto}, the \mbox{projective} transformation $a$ restricts to an orientation-preserving homeomorphism of $\pa \Omega$, thus the three endpoints of the two segments are all fixed by $a$, hence $a$ is either hyperbolic or planar. The planar case is excluded by Lemma \ref{lemma_planar}. This proves the ``only if'' part.

Conversely, suppose that $a$ is hyperbolic with saddle fixed point contained in $\devbd{\tilde{p}}$. The description of $\Omega$ in \S \ref{sec_auto} for hyperbolic $a$ implies that $\Omega$ contains some open triangle $\Delta$ preserved by $a$, so that the edge $\overline{x_\pm x_0}$ of $\Delta$ joining $x_\pm$ and $x_0$ is a part of $\pa\Omega$.

Let $(U_i)$ be an exhausting sequence of punctured neighborhood of $p$ whose boundaries are embedded closed curves. Let $\widetilde{U}_i\subset\widetilde{\Sigma}$ be the lift of $U_i$ attached at $\tilde{p}$. The developing image $C_i$ of the boundary curve of $\widetilde{U}_i$ is an $a$-invariant embedded curve in $\Omega$. 
Then dynamical properties of $a$ imply that $C_i$ is contained in $\Delta$, joins $x_+$ and $x_-$ and is asymptotic to $\overline{x_\pm x_0}$ at $x_\pm$. Since $\dev(\widetilde{U}_i)$ is the connected component of $\Omega\setminus C_i$ whose closure contains $x_0$, the properties of $C_i$ imply that $\overline{\dev(\widetilde{U}_i)}\cap\pa\Omega$ is exactly $\overline{x_-x_0}\cup\overline{x_+x_0}$ and does not depend on $i$. Thus we get $\devbd{\tilde{p}}=\overline{x_-x_0}\cup\overline{x_+x_0}$. This proves the ``if'' part and the second statement.

(\ref{item_endclas2}) Both endpoints of the segment $\devbd{\tilde{p}}$ are fixed by $\hol_{\tilde{p}}$. But only hyperbolic, quasi-hyperbolic and planar projective transformations have at least two fixed points. This proves the  first statement.

The second statement is obvious when $\hol_{\tilde{p}}$ is quasi-hyperbolic. It follows from part (\ref{item_endclas3}) above
when $\hol_{\tilde{p}}$ is hyperbolic and follows from Lemma \ref{lemma_planar} when $\hol_{\tilde{p}}$ is planar. 

%To prove the third assertion, we let $I\subset\pa\Omega$ be the maximal segment containing $\devbd{\tilde{p}}$. Then both endpoints of $I$, as well as both endpoints of $\devbd{\tilde{p}}$, are fixed by $\hol_{\tilde{p}}$. But if $\hol_{\tilde{p}}$ is hyperbolic or quasi-hyperbolic then $\hol_{\tilde{p}}$ can not fixe three aligned points, hence $I=\devbd{\tilde{p}}$. If $\hol_{\tilde{p}}$ is planar, the fact that  $I=\devbd{\tilde{p}}$ follows from the next lemma, which implies that $\Omega$ is a triangle and $\devbd{\tilde{p}}$ is an edge.
\end{proof}

The next lemma gives some information about non-simple ends and will be used in Section \ref{sec_ctop} below.

\begin{lemma}[\textbf{Rough classification of non-simple ends}]\label{lemma_rough}
If $\Sigma$ is not homeomorphic to $\mathbb{C}$ and $p$ is not a simple end, then we are in one of the following cases.
\begin{enumerate}[(a)]
\item\label{item_rough1}
$\hol_{\tilde{p}}$ is hyperbolic, while $\devbd{\tilde{p}}$ is a convex curve joining the attracting fixed point $x_+$ and the repelling fixed point $x_-$ of $\hol_{\tilde{p}}$ such that $\devbd{\tilde{p}}\setminus\{x_+, x_-\}$ is contained in an open triangle $\Delta\subset\mathbb{RP}^2$ preserved by $\hol_{\tilde{p}}$.
\item\label{item_rough2}
$\Sigma$ is an annulus, $\hol_p$ is quasi-hyperbolic and $\devbd{p}=\pa\Omega\setminus I^\circ$. Here $I^\circ$ is the interior of the principal segment $I$ of $\hol_p$. 
\item\label{item_rough3}
$\Sigma$ is an annulus, $\hol_p$ is parabolic and $\devbd{p}=\pa\Omega$. \end{enumerate}
Moreover, in the second and the third case, the other end of $\Sigma$ is a geodesic end and a cusp, respectively.
\end{lemma}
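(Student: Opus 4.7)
The plan is to classify the end holonomy $\hol_{\tilde p} \in \SL(3,\mathbb{R})$ by dynamical type using \S \ref{sec_auto}, and then, for each type, to enumerate the possible non-simple shapes of $\devbd{\tilde p}$ by exploiting that it is a non-empty, connected, closed, $\hol_{\tilde p}$-invariant subset of $\pa\Omega$ (Proposition \ref{prop_devbdproperty}). The key dynamical input is that the $\hol_{\tilde p}$-orbit of any non-fixed point on $\pa\Omega$ accumulates only at fixed points of $\hol_{\tilde p}$; combined with connectedness and closedness, this forces $\devbd{\tilde p}$ to contain any closed arc of $\pa\Omega$ (between consecutive fixed points) whose interior it meets. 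The planar case is dispatched immediately by Lemma \ref{lemma_planar}, which only produces (simple) geodesic ends.

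In the hyperbolic case, I would write $\pa\Omega = C_1 \cup C_2$ with both arcs joining the attracting and repelling fixed points $x_\pm$, as described in \S \ref{sec_auto}. The orbit-closure principle above reduces the candidates for $\devbd{\tilde p}$ to a singleton fixed point, one of the arcs $C_1, C_2$, or all of $\pa\Omega$. Singletons are excluded by Proposition \ref{prop_classification}(\ref{item_endclas1}) (cusps require parabolic holonomy), and arcs $C_i$ which are straight segments or ``V''s through the saddle fixed point $x_0$ yield geodesic or V-ends by Proposition \ref{prop_classification}(\ref{item_endclas2})--(\ref{item_endclas3}), hence are simple. Aside from the $\devbd{\tilde p} = \pa\Omega$ possibility (ruled out below), the only remaining option is that $\devbd{\tilde p}$ equals a $C_i$ of the third subtype in \S \ref{sec_auto}---a strictly convex arc lying in an invariant open triangle---which is exactly case (a).

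In the quasi-hyperbolic case, the principal segment $I$ and its closed complement $J := \pa\Omega \setminus I^\circ$ are each $\hol_{\tilde p}$-invariant and share the two fixed points; the orbit-closure principle together with Proposition \ref{prop_classification}(\ref{item_endclas1}) cuts the non-simple candidates down to $\devbd{\tilde p} \in \{J, \pa\Omega\}$, since $I$ itself gives a simple geodesic end. In the parabolic case, every non-fixed $\pa\Omega$-orbit accumulates at the unique fixed point $x_0$ from both sides, so the only invariant connected closed subsets are $\{x_0\}$ (a simple cusp) and $\pa\Omega$; hence non-simplicity forces $\devbd{\tilde p} = \pa\Omega$. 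In all cases where $\devbd{\tilde p}$ equals $J$ or $\pa\Omega$, I would identify $\Sigma$ as an annulus by observing that such a large $\devbd{\tilde p}$ cannot remain disjoint from its $\hol(\gamma)$-translates for $\gamma$ outside the stabilizer of $\tilde p$; Proposition \ref{prop_devbdproperty}(\ref{prop_devbdproperty3}) then forces $|\Fa{\Sigma,p}| = 1$, whence $\pi_1(\Sigma)$ is infinite cyclic and, since $\Sigma \not\cong \mathbb{C}$, $\Sigma \cong \mathbb{C}^*$. The torus alternative is excluded by Proposition \ref{prop_triangle}(\ref{item_proptri2}), because the stabilizer of a triangle in $\SL(3,\mathbb{R})$ is the diagonal torus and contains no quasi-hyperbolic or parabolic elements.

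It remains to rule out the residual $\devbd{\tilde p} = \pa\Omega$ possibilities in the hyperbolic and quasi-hyperbolic settings, and to identify the other end in cases (b) and (c). Both follow from a direct analysis of the convex projective annulus: parametrizing $\widetilde{\Sigma} \cong \mathbb{C}$ so that the two ends of $\mathbb{C}^*$ correspond to $\re\zeta \to \pm\infty$, the developing diffeomorphism $\dev : \mathbb{C} \to \Omega$ partitions $\Omega$ into two asymptotic ``halves'' whose boundary parts in $\pa\Omega$ become $\devbd{p}$ and $\devbd{q}$ respectively. In case (b) this forces $\devbd{q} = I$, a geodesic end by Proposition \ref{prop_classification}(\ref{item_endclas2}); in case (c) it gives $\devbd{q} = \{x_0\}$, a cusp by Proposition \ref{prop_classification}(\ref{item_endclas1}); and in the hyperbolic annular case the two arcs $C_1, C_2$ appear as the two distinct developed boundaries, so neither can be all of $\pa\Omega$. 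The main obstacle I foresee is justifying this last topological decomposition rigorously---namely, showing that the $\hol_p$-action on $\mathbb{C}$ splits $\Omega$ into two complementary regions one of which accumulates on each end.
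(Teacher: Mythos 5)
Your core classification is essentially the paper's: you use that $\devbd{\tilde{p}}$ is a nonempty, connected, closed, $\hol_{\tilde{p}}$-invariant subset of $\pa\Omega$ (Proposition \ref{prop_devbdproperty}), so that a proper such subset has fixed endpoints, then the classification of \S\ref{sec_auto}, Lemma \ref{lemma_planar} to dispose of the planar case, and Proposition \ref{prop_devbdproperty}(\ref{prop_devbdproperty3}) applied to translates $\devbd{\gamma.\tilde{p}}$ to force $\Sigma$ to be an annulus when $\devbd{\tilde{p}}$ is large; the paper merely organizes by $\devbd{\tilde{p}}\subsetneqq\pa\Omega$ versus $\devbd{\tilde{p}}=\pa\Omega$ instead of by holonomy type. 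One genuine omission in your hyperbolic case: your candidate list (a fixed point, $C_1$, $C_2$, or $\pa\Omega$) silently drops connected invariant unions such as a strictly convex side $C_1$ together with one segment $\overline{x_0x_\pm}$ of a V-shaped $C_2$. Such a set is closed, connected, $\hol_{\tilde{p}}$-invariant, has fixed endpoints and is not simple, so nothing you have said excludes it; when $\chi(\Sigma)<0$ it is killed by the same translate-disjointness trick the paper uses in the quasi-hyperbolic subcase (a projective image of it cannot fit inside the remaining segment), and in the annulus case it needs the separation argument below.

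The more serious gap is the step you flag yourself, which is the actual content of the remaining assertions (excluding $\devbd{\tilde{p}}=\pa\Omega$ for hyperbolic and quasi-hyperbolic holonomy, pinning $\devbd{p}=\pa\Omega\setminus I^\circ$ in case (b), forcing parabolic holonomy in case (c), and the ``Moreover'' clause). As stated, ``the two arcs $C_1,C_2$ appear as the two distinct developed boundaries, so neither can be all of $\pa\Omega$'' is not an argument, and as a general principle about annuli it is false: in case (c) itself (e.g.\ an ellipse quotiented by a parabolic) one end has developed boundary equal to all of $\pa\Omega$ while the other is a single point, so the two developed boundaries of an annulus are not in general the two complementary arcs. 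What distinguishes the hyperbolic/quasi-hyperbolic situation is dynamical: developing the lift of a separating core curve of the annulus gives an $\hol_{p}$-invariant properly embedded curve in $\Omega$ whose two ends converge to the two distinct fixed points on $\pa\Omega$ (the same dynamical argument as in the proof of Proposition \ref{prop_classification}(\ref{item_endclas3})), so it cuts $\Omega$ into two invariant pieces whose closures meet $\pa\Omega$ in the two complementary arcs with endpoints the fixed points; each end's developed boundary lies in one of these proper arcs, which both rules out $\pa\Omega$ and identifies the other end as $I$ (a geodesic end by Proposition \ref{prop_classification}(\ref{item_endclas2})). For a parabolic element the analogous curve has both ends at the single fixed point $x_0$, one complementary piece clings to $\{x_0\}$ (the cusp, via Proposition \ref{prop_classification}(\ref{item_endclas1})) and the other to all of $\pa\Omega$, which is exactly case (c). Until this separation argument is written out, your proposal does not prove the lemma as stated; to be fair, the paper's printed proof stops after the annulus reduction and leaves these same points implicit, so you are not diverging from the paper so much as stopping at the same unfinished spot while correctly identifying what is missing.
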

\begin{proof}
We shall consider the cases $\devbd{\tilde{p}}\subsetneqq\pa\Omega$ and $\devbd{\tilde{p}}=\pa\Omega$ respectively.

\textbf{When $\devbd{\tilde{p}}\subsetneqq\pa\Omega$.} Since $\devbd{\tilde{p}}$ is a connected portion of $\pa\Omega$ invariant by $a:=\hol_{\tilde{p}}$, both endpoints are fixed by $a$, thus $a$ is either hyperbolic or quasi-hyperbolic, the planar case being excluded by Lemma \ref{lemma_planar} and the assumption that the end is not simple. 

If $a$ is hyperbolic, it follows from the descriptions of $\Omega$ for hyperbolic $a$ in \S \ref{sec_auto} and the non-simpleness assumption that we are in case (\ref{item_rough1}).  

If $a$ is quasi-hyperbolic, it follows from the descriptions of $\Omega$ for quasi-hyperbolic $a$ in \S \ref{sec_auto} and the non-simpleness assumption that $\devbd{\tilde{p}}=\pa\Omega\setminus I^\circ$. Furthermore, $\Sigma$ must be an annulus, otherwise, take $\tilde{p}'=\gamma.\tilde{p}\neq \tilde{p}$ (where $\gamma\in\pi_1(\Sigma)$), then Proposition \ref{prop_devbdproperty} (\ref{prop_devbdproperty3}) implies that $\devbd{\tilde{p}'}\subset I^\circ$, which is absurd because $\devbd{\tilde{p}'}=\hol(\gamma).\devbd{\tilde{p}}$ is not a segment. Therefore we are in case (\ref{item_rough2}).
%...By Proposition \ref{prop_annuli}, the other end is a geodesic end.

\textbf{When $\devbd{\tilde{p}}=\pa\Omega$.} Again Proposition \ref{prop_devbdproperty} (\ref{prop_devbdproperty3}) implies that $\Sigma$ can only be an annulus. 
%...By Proposition \ref{prop_annuli}, $\hol_p$ is parabolic and the other end is a cusp.

 \end{proof}

\subsubsection{Broken geodesic ends, the space $\P_0(\Sigma)$}
The main object of study of the present paper, the subspace $\P_0(\Sigma)\subset\mathcal{P}(\Sigma)$, consists of convex projective structures whose ends are either simple or ``broken geodesic ends'', as defined below.
\begin{definition}[\textbf{Twisted polygons}]
Given $a\in\SL(3,\mathbb{R})$, a \emph{$n$-gon twisted by $a$} is a piecewise linear curve in $\mathbb{RP}^2$ formed by a sequence of segments $(Y_i)_{i\in\mathbb{Z}}$ such that $Y_i$ and $Y_{i+1}$  are non-collinear segments sharing an endpoint and that $a(Y_i)=Y_{i+n}$.  

Points in $\Ac_{i\rightarrow+\infty}(Y_i)\,\cup\,\Ac_{i\rightarrow-\infty}(Y_i)$  (see \S \ref{subsubsec_devbd} for the notation) are called \emph{accumulation points} of the twisted polygon. 
\end{definition}
In particular,  a $n$-gon twisted by $a=\id$ is a piecewise linear closed curve with $n$ pieces.

\begin{definition}[\textbf{Broken geodesic ends and $\P_0(\Sigma)$}]
A puncture $p$ is said to be a \emph{broken geodesic end with $n$-pieces} if $\devbd{\tilde{p}}$ (where $\tilde{p}\in\Fa{\Sigma,p}$) consists of a $n$-gon twisted by $\hol_{\tilde{p}}$ together with the accumulation points of the twisted polygon.

We let $\P_0(\Sigma)$ denote the subset of $\P(\Sigma)$ consisting of convex projective structures on $\Sigma$ for which each puncture of $\Sigma$ is either a simple end or a broken geodesic end.
\end{definition}

The most common instance of broken geodesic ends occurs when the holonomy $\hol_{\tilde{p}}$ is hyperbolic, so that we are in case (\ref{item_rough1}) of Lemma \ref{lemma_rough} and the accumulation points of the twisted polygon are $x_+$ and $x_-$. It follows from Lemma \ref{lemma_rough} that broken geodesic ends with non-hyperbolic holonomy only occurs in the following two situations:

\pt $\Sigma$ is an annulus. In this case, a convex projective structure can have a broken geodesic end with quasi-hyperbolic or parabolic holonomy, corresponding to case  (\ref{item_rough2}) and case (\ref{item_rough3}) in Lemma \ref{lemma_rough}, respectively.

\pt $\Sigma\cong\mathbb{C}$. In this case, a convex projective structure is given by a single diffeomorphism $\dev:\Sigma\overset\sim\rightarrow\Omega\subset\mathbb{RP}^2$. The puncture $\infty$ being a broken geodesic end with $n$-pieces just means that $\Omega$ is a convex $n$-gon.

\vspace{15pt}
\section{From $\P_0(\Sigma)$ to $\V_0(\Sigma)$}\label{sec_ptoc}
%Recall from the introduction that the subset $\mathcal{C}_0(\Sigma)$ of the space of $\mathcal{C}(\Sigma)$ is defined as follows.
%
%\pt If $\Sigma$ has negative Euler characteristic, $\mathcal{C}_0(\Sigma)$ consists of those $(\ac{J},\ve{b})\in\mathcal{C}(\Sigma)$ (see Section \ref{sec_pre} for the notation) such that $\ac{J}$ is cuspidal and $\ve{b}$ has removable or meromorphic singularity at each puncture $p$ of $\Sigma$.
%
%
%\pt If $\Sigma$ has non-negative Euler characteristic, $\mathcal{C}_0(\Sigma)$ is defined in the same way except we exclude those $(\ac{J},\ve{b})$ with $\ve{b}=0$.
%
%\vspace{5pt}

The goal of this section is to prove the following theorem, which contains the statement (I) in the sketch of proof of Theorem \ref{intro_main} from the introduction. Here, a conformal structure $\ac{J}$ on $\Sigma$ is said to be \emph{cuspidal} at a puncture $p$ if a punctured neighborhood of $p$ is biholomorphic to the punctured disk $\{ 0<|z|<1\}\subset\mathbb{C}$ in such a way that $p$ correspond to $0$. 

\begin{theorem}[\textbf{$g$ and $\ve{b}$ around simple or broken geodesic ends}]\label{thm_ptoc}
Let $g$ and $\ve{b}$ be the Blaschke metric and Pick differential of a convex projective structure, respectively. Let $\ac{J}$ be the conformal structure underlying $g$. 
\begin{enumerate}
\item\label{item_ptoc2}
If $p$ is either a geodesic end, a V-end or a broken geodesic end of the convex projective structure, then 
\begin{itemize}
\item
$\ac{J}$ is cusipdal at $p$;
\item
$p$ is a pole of $\ve{b}$ of order $\geq 3$;
\item
$\lim_{x\rightarrow p}\kappa_g(x)=0$. 
%As a result, $g$ is conformally quasi-isometric to the flat metric $2^\frac{1}{2}|\ve{b}|^\frac{2}{3}$ around $p$.
\end{itemize}
\item\label{item_ptoc1}
If $p$ is a cusp of the projective structure, then
\begin{itemize}
\item 
$\ac{J}$ is cusipdal at $p$;
\item
 $p$ is a removable singularity or a pole of $\ve{b}$ of order $\leq 2$;
 \item
 $\lim_{x\rightarrow p}\kappa_g(x)=-1$;
% \item
% $g$ is conformally quasi-isometric to a 
%cuspidal hyperbolic metric around $p$.
\end{itemize}
\end{enumerate}
\end{theorem}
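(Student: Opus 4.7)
My plan is to use Hausdorff continuity (Corollary \ref{coro_bh}) together with Benz\'ecri compactness as the unifying engine, reducing each bullet to an identification of a rescaled boundary limit $\Omega_\infty$. First, cuspidality of $\ac J$ at every puncture is common to both parts: the Blaschke metric $g$ is complete by definition of $\V(\Sigma)$, and its curvature $\kappa_g=f_\Omega-1$ is bounded on $\Sigma$ by Corollary \ref{coro_bh}(\ref{item_bh2}), so Huber's theorem on complete surfaces of bounded curvature and finite topology (cf.\ the reference thanked at the end of the introduction) identifies $\ac J$ with the restriction of a conformal structure on $\overline\Sigma$, cuspidal at each puncture.

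For the curvature asymptotics at $p$, I would pick any sequence $x_n\to p$ in $\Sigma$, lift to $\tilde x_n\in\widetilde U_n$, and develop to $y_n:=\dev(\tilde x_n)\in\Omega$. Choose $a_n\in\SL(3,\mathbb R)$ so that $(a_n\Omega,a_n y_n)$ lies in a compact fundamental domain for the $\SL(3,\mathbb R)$-action on $\mathfrak C_*$, and extract a convergent subsequence $(a_n\Omega,a_n y_n)\to(\Omega_\infty,y_\infty)$. By Corollary \ref{coro_bh}(\ref{item_bh2}),
\[
\kappa_g(x_n)+1\,=\,f_\Omega(y_n)\,=\,f_{a_n\Omega}(a_n y_n)\,\longrightarrow\, f_{\Omega_\infty}(y_\infty),
\]
so everything reduces to identifying the possible $\Omega_\infty$. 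For a cusp the Benoist--Hulin argument gives $\Omega_\infty$ an ellipse, on which $\ve b\equiv 0$ and $f\equiv 0$ by \S \ref{sec_canometric}, yielding $\kappa_g\to -1$. For a geodesic, V-, or broken geodesic end the claim is that $\Omega_\infty$ is always a triangle; on a triangle $(g,\ve b)=(2|\dzeta|^2,2\,\dzeta^3)$ by Proposition \ref{prop_triangle}(\ref{item_tri1}), so $f\equiv 1$ and $\kappa_g\to 0$.

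To establish the triangle claim I would combine the structural descriptions of $\devbd{\tilde p}$ in Proposition \ref{prop_classification} and Lemma \ref{lemma_rough} with proper convexity of $\Omega_\infty$: in each of these cases, $\devbd{\tilde p}$ contains two non-collinear segments of $\pa\Omega$ (either from the invariant triangle furnished by a hyperbolic $\hol_{\tilde p}$, or from the principal segments of a quasi-hyperbolic or planar $\hol_{\tilde p}$, or from consecutive edges of the twisted polygon), and rescaling around a point $y_n$ accumulating on such a segment produces three non-collinear boundary segments of $\Omega_\infty$ whose convex hull must exhaust $\Omega_\infty$. I expect this triangle-limit claim, which is the essential geometric ingredient beyond the cusp analysis of Benoist--Hulin, to be the main obstacle.

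Finally, the pole order of $\ve b$ at $p$ is read off from Wang's equation in a cuspidal coordinate $z$ centered at $p$: writing $g=h|\dz|^2$ and $\ve b=b(z)\,\dz^3$ gives $|b|^2/h^3=\tfrac{1}{2}(\kappa_g+1)$. In the cusp case, a standard Ahlfors-type comparison with the Poincar\'e cusp metric yields $h\asymp|z|^{-2}(\log|z|^{-1})^{-2}$, so $|b|^2=o(h^3)$ forces $|b|=o(|z|^{-3}(\log|z|^{-1})^{-3})$, ruling out poles of order $\geq 3$. In the other cases $\kappa_g\to 0$ gives $h\sim(2|b|^2)^{1/3}$, and if $b\sim c\,z^k$ near $p$, then completeness of $g$ amounts to the divergence of $\int_0^\epsilon r^{k/3}\,dr$, forcing $k\leq -3$, \ie a pole of $\ve b$ of order at least $3$.
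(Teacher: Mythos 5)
Your overall strategy (Benz\'ecri rescaling plus the Hausdorff continuity of $f_\Omega$, identifying the rescaled limits as a triangle or an ellipse) is the same engine the paper runs on, but three essential steps are missing or incorrect. First, cuspidality: ``complete $+$ bounded curvature $+$ finite topology $\Rightarrow$ cuspidal conformal ends'' is not a theorem — a hyperbolic funnel end is complete with curvature $\equiv-1$ and finite topology, yet is conformally an annulus $\{1<|z|<R\}$. Huber's classical result needs finite \emph{total} curvature, which you have no a priori control over here (near a pole of order $\geq 3$ the $g$-area is infinite). The paper proves cuspidality in part (\ref{item_ptoc2}) by a different route: Lemma \ref{lemma_fomega} bounds $f_\Omega$ away from $0$ near the end, so $g$ is comparable to the flat metric $2^{1/3}|\ve{b}|^{2/3}$, which is therefore complete at $p$; then Huber's theorem on subharmonic functions on an annulus (Theorem \ref{thm_huber1}), applied to $\log|b|$, rules out the annular conformal type. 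At cusps, cuspidality comes from Marquis's finite Hilbert volume together with Corollary \ref{coro_bh} (\ref{item_bh1}) and Lemma \ref{lemma_bilip}, not from a bounded-curvature argument; your unified claim therefore leaves cuspidality unproven in both parts.

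Second, the triangle-limit claim you flag as ``the main obstacle'' is precisely the paper's Lemma \ref{lemma_fomega}, and your sketched mechanism does not deliver it: for a geodesic end $\devbd{\tilde p}$ is a \emph{single} segment, and the rest of $\pa\Omega$ may be strictly convex, so there need not be two non-collinear boundary segments anywhere; in the paper the limit triangle is spanned by the boundary segment and an \emph{interior} point of $\Omega$, obtained by squeezing $\Phi_{x_1,x}(\Omega)$ between a fixed triangle and a sector whose rescaled images converge to that triangle — not by a ``convex hull of three boundary segments'' argument. Sequences approaching a vertex of the V or of the twisted polygon also require the separate corner estimate of Lemma \ref{lemma_fomega} (\ref{item_fomega2}). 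Third, your derivation of the pole order in part (\ref{item_ptoc2}) assumes $b\sim c\,z^{k}$, i.e.\ that $p$ is at worst a pole; excluding an essential singularity is part of what must be proved, and the paper obtains it from Huber's Satz 4 (Theorem \ref{thm_satz4}), which shows $|z^{3}b(z)|$ is bounded below, forcing meromorphy with a pole of order $\geq 3$. (Your pole-order-$\leq 2$ argument at cusps is fine once cuspidality and the quasi-isometry with the cuspidal hyperbolic metric are in place, but those again rest on Marquis plus Lemma \ref{lemma_bilip} rather than on your first step.)
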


Part (\ref{item_ptoc2}) of the above theorem will be proved in \S \ref{sec_otherends}. Part (\ref{item_ptoc1}) is proved by Benoist and Hulin in \cite{benoist-hulin} and we will review the proof in \S \ref{sec_cusps}. An important intermediate result in their proof, which we will use again later, is presented in \S \ref{sec_cuspidal}.

\subsection{Geodesic ends, V-ends and broken geodesic ends}\label{sec_otherends}
Recall from \S \ref{sec_canometric} that, given a properly convex open set $\Omega\subset\mathbb{RP}^2$, we define the function $f_\Omega: \Omega\rightarrow\mathbb{R}_{\geq 0}$ as
$$
f_\Omega:=\kappa_\Omega+1=2\|\ve{b}_\Omega\|_{g_\Omega}^2
=\left(\frac{2^\frac{1}{3}|\ve{b}_\Omega|^\frac{2}{3}}{g_\Omega}\right)^3.
$$ 
%It captures the ratio between $2^\frac{1}{3}|\ve{b}_\Omega|^\frac{2}{3}$ and $g_\Omega$. 
The following lemma is essentially a generalisation of Proposition 3.1 in \cite{benoist-hulin} and Lemma 7.2 in \cite{dumas-wolf}.

\begin{lemma}[\textbf{Controlling $f_\Omega$ near boundary}]\label{lemma_fomega}
Let $\Omega\subset\mathbb{RP}^2$ be a properly convex open set. Assume that $l$ is a maximal line segment on $\pa\Omega$ (\ie $l=L\cap\pa\Omega$, where $L\subset\mathbb{RP}^2$ is a line) and $l$ is not a single point.
\begin{enumerate}
\item\label{item_fomega1}
For any $K>1$ and any segment $l'$ contained in the interior of $l$, there exists a quadrilateral $V\subset\Omega$ with edge $l'$, as in the following picture, such that the inequality 
\begin{equation}\label{eqn_fomega}
\frac{1}{K}\leq f_{\Omega}\leq K
\end{equation}
holds on $V$.
\begin{figure}[h]
\centering
\includegraphics[width=1.55in]{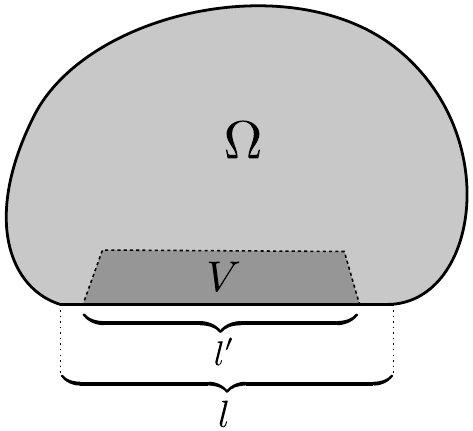}
\end{figure}
\item\label{item_fomega2}
Assume furthermore that $\pa\Omega$ is not $C^1$ at an endpoint $y_0$ of $l$, \ie the two tangent directions to $\pa\Omega$ at $y_0$ form an angle $\theta<\pi$. Then for any $K>1$, any $\theta'<\theta$ and any sub-interval $l'\subsetneqq l$ containing $y_0$, there is a quadrilateral $V$ with edge $l'$ and with angle $\theta'$ at  $y_0$, as in the following picture, such that (\ref{eqn_fomega}) holds on $V$. 
\begin{figure}[h]
\centering\includegraphics[width=1.7in]{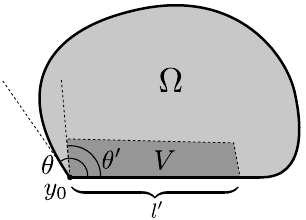}
\end{figure}
\end{enumerate}
\end{lemma}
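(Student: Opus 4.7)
The upper bound $f_\Omega\le K$ on $V$ is immediate from Corollary~\ref{coro_bh}(\ref{item_bh2}): the function $(Q,x)\mapsto f_Q(x)$ is universally bounded above on $\mathfrak{C}_*$, so it suffices to take $K$ greater than this universal bound. The substantive content of the lemma is the lower bound, and my plan is to produce one-parameter families of projective normalizations under which $\Omega$ Hausdorff-converges, in $\mathbb{RP}^2$, to a projective \emph{triangle} $T$. On any triangle, $(\Omega,g_\Omega,\ve{b}_\Omega)\cong(\mathbb{C},2|\dz|^2,2\,\dz^3)$ by the last bullet of \S\ref{sec_canometric}, which gives $f_T\equiv 1$. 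Combined with the continuity of $f$ on $\mathfrak{C}_*$ and its projective invariance $f_{g\Omega}(gx)=f_\Omega(x)$, this will force $f_\Omega\to 1$ uniformly on a sufficiently thin neighborhood of $l'$, from which the lower bound $f_\Omega\ge 1/K$ will follow.

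For part~(\ref{item_fomega1}), I choose an affine chart $\mathbb{R}^2\subset\mathbb{RP}^2$ in which $l=[a,b]\times\{0\}$ and $\Omega$ locally sits in the upper half-plane; maximality of $l$ forces the $x$-axis to meet $\overline\Omega$ exactly in $l$. Let $h_\epsilon\in\SL(3,\mathbb{R})$ be the projective transformation acting as $(x,y)\mapsto(x,y/\epsilon)$: it fixes $l$ pointwise while stretching by $1/\epsilon$ in the $y$-direction. A direct point-by-point check should give that, as $\epsilon\to 0^+$, $h_\epsilon\Omega$ Hausdorff-converges in $\mathbb{RP}^2$ to the triangle $T$ with vertices $(a,0)$, $(b,0)$ and $[0{:}1{:}0]$. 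For $(x_0,y_0)\in\Omega$ with $x_0\in l'$ and $y_0>0$ small, $h_{y_0}(x_0,y_0)=(x_0,1)$ lies in the compact subset $l'\times\{1\}$ of the interior of $T$, so continuity on $\mathfrak{C}_*$ and projective invariance yield $f_\Omega(x_0,y_0)\to 1$ uniformly in $x_0\in l'$. The rectangle $V=l'\times(0,\eta)$ with $\eta$ sufficiently small then satisfies the claimed bounds.

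For part~(\ref{item_fomega2}), points of $V$ bounded away from the corner $y_0$ are controlled by the argument above. To handle a neighborhood of $y_0$ away from $l$, I use the family $k_\epsilon\in\SL(3,\mathbb{R})$ of scalings about $y_0$ by factor $1/\epsilon$: since $\partial\Omega$ has a corner of angle $\theta<\pi$ there, $k_\epsilon\Omega$ Hausdorff-converges, as $\epsilon\to 0^+$, to the projective closure $T_{y_0}$ of the Euclidean tangent wedge of $\Omega$ at $y_0$. Because $\theta<\pi$, this closure is a properly convex triangle in $\mathbb{RP}^2$ (with $y_0$ as one vertex and the two extremal tangent directions giving two further vertices on the line at infinity), so again $f_{T_{y_0}}\equiv 1$. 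For $x=y_0+r(\cos\alpha,\sin\alpha)\in V$ with $\alpha$ bounded away from $0$ and $\alpha\le\theta'$, the rescaled point $k_r(x)=y_0+(\cos\alpha,\sin\alpha)$ stays in a fixed compact subset of the interior of $T_{y_0}$ (this is exactly where the strict inequality $\theta'<\theta$ enters), whence $f_\Omega(x)\to 1$ uniformly. For points of $V$ simultaneously close to both $y_0$ and $l$ (angle $\alpha\approx 0$), where neither $h_\epsilon$ nor $k_\epsilon$ alone yields an interior limit, I instead use the two-parameter family $T_{u,v}(x,y)=(a+(x-a)/u,\,y/v)$; its Hausdorff limits, as $(u,v)\to(0,0)$ with the ratio $v/u$ bounded in $[0,\tan\theta']$, are again wedge-shaped projective triangles with vertex at $y_0$, and the normalised point $T_{u,v}(a+u,v)=(a+1,1)$ always sits in their interior, so the same continuity argument concludes.

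The principal technical obstacle I anticipate is the careful verification of the three Hausdorff limits $h_\epsilon\Omega\to T$, $k_\epsilon\Omega\to T_{y_0}$, and of the limits of the two-parameter family, together with the patching of the three angular regimes in part~(\ref{item_fomega2}). All of this reduces to elementary but slightly tedious point-by-point bookkeeping, invoking maximality of $l$ for the first convergence and the strict corner inequalities $\theta<\pi$ and $\theta'<\theta$ for the others.
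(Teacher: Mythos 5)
Your proof is correct, and it rests on the same two pillars as the paper's argument — Hausdorff continuity and boundedness of $(\Omega,x)\mapsto f_\Omega(x)$ from Corollary \ref{coro_bh} (\ref{item_bh2}), projective invariance of $f$, and the fact that $f\equiv 1$ on triangles (\S\ref{sec_canometric}) — but your normalization scheme is genuinely different. The paper fixes one triangle once and for all (for part (\ref{item_fomega1}) the triangle spanned by $l$ and an interior point, for part (\ref{item_fomega2}) a circumscribed triangle tangent to $\partial\Omega$ at $y_0$), uses the stabilizer of that triangle to send the moving point $x$ to a fixed basepoint $x_1$, and checks Hausdorff closeness of the transformed $\Omega$ to the fixed triangle by elementary containments (the sectors $D_{a,\lambda,\mu}$ in part (\ref{item_fomega1}), the ``meets $\Delta_1$ and $\Delta_2$'' criterion in part (\ref{item_fomega2})). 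You instead keep the tracked point moving and let the limit triangle vary: a transverse stretch fixing $L$ pointwise for part (\ref{item_fomega1}) (limit: the strip-triangle over $l$, where maximality of $l$ and compactness of $\overline\Omega$ in the chart are what make the limit exactly that triangle), an isotropic blow-up at the corner (limit: the tangent-wedge triangle, properly convex precisely because $\theta<\pi$), and an anisotropic blow-up for points near both $y_0$ and $l$ (subsequential limits: wedge triangles, degenerating to a quadrant-type triangle when the ratio tends to $0$, which one pins down by squeezing between the blow-ups of an inscribed triangle and of the tangent cone), concluding by joint continuity plus a compactness/subsequence argument. Your route buys explicit limit domains and uniformity for free from compactness; it costs the three-regime patching in part (\ref{item_fomega2}) and three separate Hausdorff-limit verifications, which the paper's circumscribed-triangle trick handles in one stroke. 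Two small repairs: your opening claim that the upper bound $f_\Omega\le K$ is ``immediate'' misreads the quantifier — $K>1$ is arbitrary, so the universal upper bound of Corollary \ref{coro_bh} (\ref{item_bh2}) need not be $\le K$ — but this is harmless since your actual argument gives $f_\Omega\to 1$ uniformly, hence both bounds; and the ratio window $[0,\tan\theta']$ for the anisotropic family is ill-posed when $\theta'\ge\pi/2$ and should simply be a small window $[0,\tan\delta]$ attached to the near-$l$ regime, since the isotropic blow-up already covers angles in $[\delta,\theta']$.
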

\begin{proof}
(\ref{item_fomega1}) Fix a distance $d$ on $\mathbb{RP}^2$ compatible with the topology. We define the Hausdorff distance $d_{\mathsf{Hausdorff}}$ between two properly convex open sets $\Omega_1$ and $\Omega_2$ as the usual Hausdorff distance between the closures $\overline{\Omega}_1$ and $\overline{\Omega}_2$ with respect to $d$.

Let $\Delta\subset\Omega$ be the open triangle spanned by $l$ and a point $x_0$ in $\Omega$ (see the first picture in Figure \ref{figure_prooffomega1}). Fix $x_1\in\Delta$.  Fix $K$ and $l'$ as in the statement. By Corollary \ref{coro_bh} (\ref{item_bh2}) and the last property in \S \ref{sec_canometric}, there exists a constant $\varepsilon>0$ such that for any properly convex open set $\Omega'$ satisfying
$$
d_{\mathsf{Hausdorff}}(\Delta, \Omega')<\varepsilon,
$$
we have $x_1\in \Omega'$ and 
$$
\frac{1}{K}\leq f_{\Omega'}(x_1)\leq K.
$$

We claim that there is a quadrilateral $V$ with edge $l'$ such that, for any $x\in V$, if we let $\Phi_{x_1,x}$ denote the unique projective transformation stabilizing $\Delta$ and mapping $x$ to $x_1$, then 
\begin{equation}\label{eqn_prooffomega1}
d_{\mathsf{Hausdorf}}(\Delta, \Phi_{x_1,x}(\Omega))<\varepsilon.
\end{equation}
Such a $V$  proves the required statement because $f_{\Omega}(x)=f_{\Phi_{x_1,x}(\Omega)}(x_1)$ by projective invariance while $\frac{1}{K}\leq f_{\Phi_{x_1,x}(\Omega)}(x_1)\leq K$ by (\ref{eqn_prooffomega1}) and the assumptions on $\varepsilon$ and $V$.

To prove the claim, we work with the affine chart $\mathbb{R}^2\subset\mathbb{RP}^2$ in which the points $x_0$, $x_1$ and the two endpoints of $l$ correspond to $(0,0)$, $(1,1)$ and $(0,\infty)$, $(\infty, 0)$, respectively.   So $\Delta$ is the first quadrant of $\mathbb{R}^2$, while the segments $l_1$ and $l_2$  joining $x_0$ and the two endpoints of $l'$ are rays in $\mathbb{R}^2$ with slopes $k_1$ and $k_2$, respectively, where $0<k_1<k_2$. See the second picture in Figure \ref{figure_prooffomega1}. Put
$$
\Delta'=\left\{(x^1,x^2)\in\mathbb{R}^2\ \big|\  k_1\leq \tfrac{x^2}{x^1}\leq k_2\right\}
$$

\begin{figure}[h]
\centering\includegraphics[width=4in]{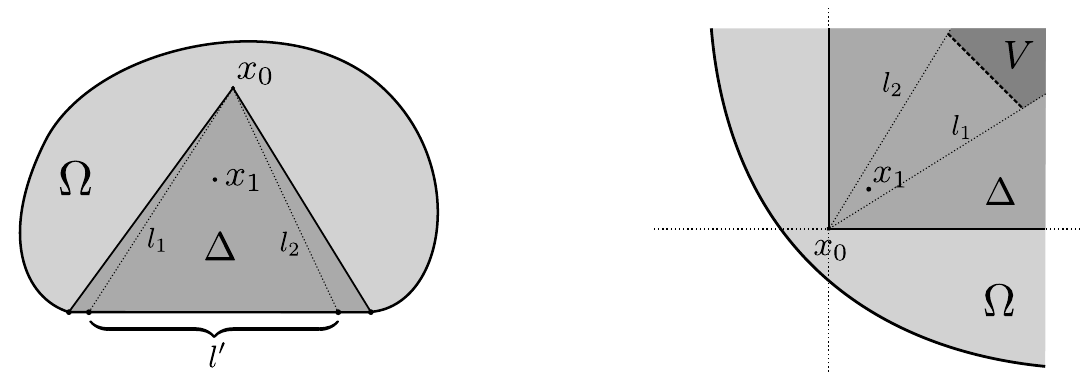}
\caption{Proof of Lemma \ref{lemma_fomega} (\ref{item_fomega1})}
\label{figure_prooffomega1}
\end{figure}

Given $a=(a^1,a^2)\in\mathbb{R}^2$ with $a^1,a^2<0$ and $0<\lambda,\mu<1$, we let $D_{a,\lambda,\mu}$ denote the convex region in $\mathbb{R}^2$ delimited by the two rays issuing from $a$ which slope $-\lambda$ and $-\mu^{-1}$, respectively. Take $x=(x^1,x^2)\in\Delta$. The previously defined projective transformation $\Phi_{x_1,x}$ restricted to the affine chart is the linear map
$$
\Phi_{x_1,x}(y^1,y^2)=\left(\frac{y^1}{x^1},\frac{y^2}{x^2}\right).
$$
One checks that 
\begin{equation}\label{eqn_prooffomega2}
\Phi_{x_1, x}(D_{a,\lambda,\mu})=D_{\left(\frac{a^1}{x^1},\frac{a^2}{x^2}\right), \frac{x^1}{x^2}\lambda,\frac{x^2}{x^1}\mu}.
\end{equation}
A crucial implication of this expression is that, when $x$ tends to infinity within $\Delta'$, the origin of the sector $\Phi_{x_1, x}(D_{a,\lambda,\mu})$ tends to $0$, while the slopes of its boundary rays remain  bounded.

Now the claim is a consequence of  the following assertions. \begin{enumerate}[(a)]
\item\label{item_claim1}
If $a'$ is close enough to $0$ and $\lambda',\mu'$ are small enough, then 
$$
d_{\mathsf{Hausdorf}}(\Delta,D_{a',\lambda',\mu'})<\varepsilon.
$$ 
\item\label{item_claim2}
For any $\lambda,\mu>0$, we can take $a$ far enough from $0$, such that $\Omega\subset D_{a,\lambda, \mu}$.
\end{enumerate}

These assertions can be easily verified by transferring back to the affine chart in the first picture of Figure \ref{figure_prooffomega1}. We omit detailed proofs. 

To prove the claim, we first fix $\lambda'$ and $\mu'$ as small as assertion (\ref{item_claim1}) requires. Set $\lambda=k_1^{-1}\lambda'$ and $\mu=k_2\mu'$. Using assertion (\ref{item_claim2}), we fix $a$ such that $\Omega\subset D_{a,\lambda, \mu}$. 
Now the expression (\ref{eqn_prooffomega2}) implies that 
%$M>0$ such that any $x$ belonging the open set
%$$
%V':=\{x=(x^1,x^2)\in\mathbb{R}^2\mid k_1<x^2/x^1<k_2,\ |x|>M\}
%$$
if we take $V\subset\Delta'$ far away enough from $0$, as in Figure \ref{figure_prooffomega1}, then for any $x\in V$, the origin $a''$ of the sector $D_{a'',\lambda'',\mu''}:=\Phi_{x_1,x}(D_{a,\lambda,\mu})$ is as close to $0$ as assertion (\ref{item_claim1}) requires, while the slopes satisfy 
$$
\lambda''=\frac{x^1}{x^2}\lambda\leq k_1^{-1}\lambda=\lambda',\quad\mu''=\frac{x^2}{x^1}\mu\leq k_2\mu=\mu'.
$$  
Thus the fact $\Omega\subset D_{a,\lambda,\mu}$ and assertion (\ref{item_claim1}) gives
$$
d_{\mathsf{Hausdorf}}(\Delta, \Phi_{x_1, x}(\Omega))\leq d_{\mathsf{Hausdorf}}(\Delta,\Phi_{x_1,x}(D_{a,\lambda,\mu}))=d_{\mathsf{Hausdorf}}(\Delta,D_{a'',\lambda'',\mu''})<\varepsilon
$$
as we wished.

(\ref{item_fomega2})  
In view of part (\ref{item_fomega1}), it is sufficient to prove that the inequality (\ref{eqn_fomega}) holds on an open triangle $V_0$ whose boundary contains $y_0$ as a vertex and contains some segment on $l$ as an edge, such that the angle of $V_0$ at $y_0$ is $\theta'$. See the first picture of Figure \ref{figure_prooffomega2}.
\begin{figure}[h]
\centering\includegraphics[width=4.4in]{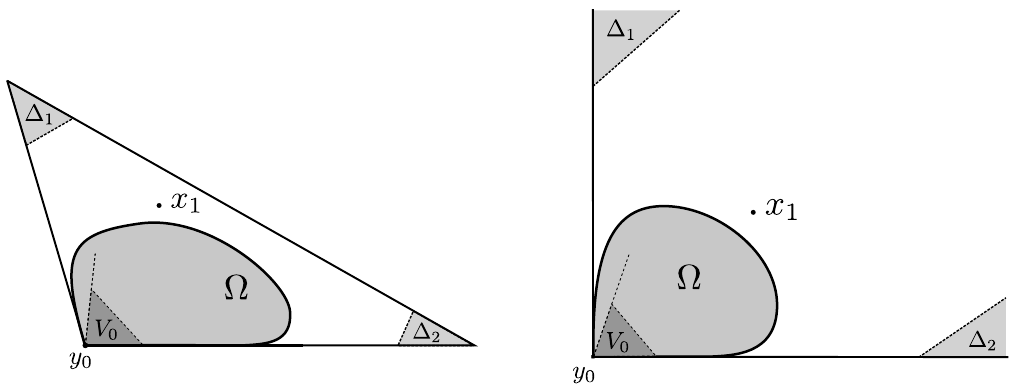}
\caption{Proof of Lemma \ref{lemma_fomega} (\ref{item_fomega2})}
\label{figure_prooffomega2}
\end{figure}

To this end, we first take an open triangle $\Delta$ containing $\Omega$ and tangent to $\Omega$ at $y_0$ as shown in the picture. Fix $x_1\in\Delta$. Let $\Phi_{x_1, x}$ and $\varepsilon$ be defined as in the proof of part (\ref{item_fomega1}). As in the proof of part (\ref{item_fomega1}), it is sufficient to show that (\ref{eqn_prooffomega1}) holds for any $x\in V_0$. 

Take small enough sub-triangles $\Delta_1, \Delta_2\subset\Delta$ as in the picture, such that for any convex subset $\Omega'\subset\Delta$ with $y_0\in\pa\Omega'$, we have $d_\mathsf{Hausdorff}(\Delta,\Omega')<\varepsilon$  whenever $\Omega'$ meets both $\Delta_1$ and $\Delta_2$.  We now only need to find a small enough $V_0$ such that for any $x\in V_0$, the convex set $\Phi_{x_1, x}(\Omega)$ meets $\Delta_1$ and $\Delta_2$, or equivalently, $\Omega$ meets $\Phi_{x, x_1}(\Delta_1)$ and $\Phi_{x, x_1}(\Delta_2)$. But this is easily done by working on an affine chart where $\Delta$ corresponds to the first quadrant (see the second picture of Figure \ref{figure_prooffomega2}), taking account of the fact that the slope of the ray delimiting  $\Phi_{x,x_1}(\Delta_1)$ is bounded because the slope of any $x\in V_0$ is bounded.
\end{proof}

%The above lemma easily implies the last statement in part (\ref{item_ptoc2}) of Theorem \ref{thm_ptoc}. We then deduce the first two statements using the following lemma and some deep results of A. Huber. 

The above lemma readily implies the last statement in part (\ref{item_ptoc2}) of Theorem \ref{thm_ptoc}, as we will see below. We need the following notion in proving the other statements.
\begin{definition}\label{def_complete}
 A Riemannian metric $g$ on $\Sigma$ is said to be \emph{complete at a puncture $p$} if for any sequence of points $(x_n)$ tending to $p$, the distance $d(x_n,y)$ tends to $+\infty$ as $n\rightarrow+\infty$. Here $y\in\Sigma$ is any reference point.
  \end{definition}
 It is easy to see that $g$ is complete in the usual sense if and only if it is complete at every puncture.

\begin{proof}[Proof of Theorem \ref{thm_ptoc} (\ref{item_ptoc2})]
Let $(\dev,\hol)$ be a developing pair of the convex projective structure and put $\Omega=\dev(\widetilde{\Sigma})$.  Fix $\tilde{p}\in\Fa{\Sigma,p}$.

We first prove the last statement. The developing map $\dev$ induces a diffeomorphism $\Sigma\overset\sim\rightarrow\Omega/\hol(\pi_1(\Sigma))$, whereas the Blaschke metric $g$ and Pick differential $\ve{b}$ are pullbacks of $g_\Omega$ and $\ve{b}_\Omega$ by this diffeomorphism. 
Therefore, given $K>1$, we only need to find an open set $V\subset\Omega$ where the inequality (\ref{eqn_fomega}) holds, such that the quotient map $\Omega\rightarrow\Omega/\hol(\pi_1(\Sigma))\cong\Sigma$  projects $V$ to punctured neighborhood of $p$.

Such $V$ can be constructed for geodesic ends, V-ends and broken geodesic ends respectively as follows. 
%(\ref{eqn_fomega}) holds on $V$ by construction, while $V$ projects to a punctured neighborhood of $p$ is easily checked by 
\begin{itemize}
\item 
Assume $\devbd{\tilde{p}}=l$ is a segment. Let $l'\subset \mathsf{int}(l)$ be a sub-interval such that 
$$
\bigcup_{n\in\mathbb{Z}}\hol_{\tilde{p}}{}^n(\mathsf{int}(l'))=\mathsf{int}(l).
$$
We can take a quadrilateral given by Lemma \ref{lemma_fomega} (\ref{item_fomega1}) to be the required $V$. 
\item
Assume $\devbd{\tilde{p}}=l_1\cup l_2$, where $l_1$ and $l_2$ are non-colinear segments. We can take a quadrilateral given by Lemma \ref{lemma_fomega} (\ref{item_fomega2}) with big enough $\theta'$ to be $V$.
\item
Assume $\devbd{\tilde{p}}=\bigcup_{k\in\mathbb{Z}}Y_k$ is a twisted $n$-gon.  $V$ is found by applying Lemma \ref{lemma_fomega} (\ref{item_fomega2}) to the $n+2$ edges $Y_0, Y_1,\cdots, Y_{n+1}$.
\end{itemize}
This completes the proof of the last assertion in (\ref{item_ptoc2}). 

\begin{lemma}
Equip the punctured disk $D^*=\{x\in\mathbb{R}^2\mid 0<|x|<1\}$ with a flat Riemannian metric $h$ and the underlying conformal structure. If $h$ is complete at $0$ and has the form $h=|\ve{b}|^\frac{2}{3}$ for some holomorphic cubic differential $\ve{b}$ on $D^*$ without zeros, then the conformal structure is cuspidal at $0$ and $\ve{b}$ has pole of order $\geq 3$ at $0$.
\end{lemma}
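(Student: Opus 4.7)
The plan is to pass to the universal cover and analyze the developing map of the flat structure associated to $\ve{b}$. Let $\pi\colon\widetilde{D^*}\to D^*$ be the universal cover taken with respect to the complex structure $\ac{J}$ underlying $h$. Since $\pi^*\ve{b}$ is a nowhere-zero holomorphic cubic differential on the simply connected $\widetilde{D^*}$, it admits a global holomorphic cube root, which integrates to a holomorphic function $\zeta\colon\widetilde{D^*}\to\mathbb{C}$ with $(\dif\zeta)^3=\pi^*\ve{b}$. Consequently $\pi^*h=|\dif\zeta|^2$, so $\zeta$ is a local isometry from the pullback metric to the Euclidean one.

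Let $\gamma$ generate $\pi_1(D^*)\cong\mathbb{Z}$. Since $\ve{b}$ is $\gamma$-invariant, its cube root transforms by a factor $\omega^k$ with $\omega=e^{2\pi\ima/3}$ and $k\in\{0,1,2\}$, yielding $\zeta\circ\gamma=\omega^k\zeta+C$ for some $C\in\mathbb{C}$. This splits into (i) $k=0$, where the deck action is a translation on $\mathbb{C}$, and (ii) $k\neq0$, where the affine action has a unique fixed point $\zeta_0=C/(1-\omega^k)$ and $\tilde\zeta:=\zeta-\zeta_0$ satisfies $\tilde\zeta\circ\gamma=\omega^k\tilde\zeta$.

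The key structural step is to show that $\ac{J}$ is cuspidal at $0$. I would appeal to the classical theorem of Huber \cite{huber} on complete Riemannian surfaces of finite total curvature, specialized to the flat case: a flat end complete at its puncture is conformally a punctured disk rather than an annulus. Since $h$ is complete only at the $0$-end, one localizes by extending $h$ across the far boundary of $D^*$ to obtain a globally complete flat surface of finite topology before applying Huber. This identifies a neighborhood of the puncture biholomorphically with $\{0<|z|<1\}$ sending $0$ to $0$, so that the universal cover becomes the left half-plane $\{\re(w)<0\}$ via $z=e^w$ with $\gamma$ acting as $w\mapsto w+2\pi\ima$. In this model $\zeta$ admits an explicit form: in case (i), $\zeta(w)-\alpha w$ with $\alpha=C/(2\pi\ima)$ is $\gamma$-invariant and descends to give $\zeta(w)=\alpha w+G(e^w)$ for a holomorphic $G$ on $\{0<|z|<1\}$; in case (ii), $\tilde\zeta^3$ is $\gamma$-invariant and descends to a holomorphic $H$ on $\{0<|z|<1\}$.

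The order of pole of $\ve{b}=(\dif\zeta)^3$ at $0$ is then read off from the Laurent behavior of $G$ or $H$: direct computation yields $3$ when $G$ is holomorphic at $0$ and $\alpha\neq0$; $3m+3\geq6$ when $G$ has a pole of order $m\geq1$; and $n+3\geq4$ when $H$ has a pole of order $n\geq1$. The remaining alternatives, namely $G$ removable with $\alpha=0$ or $H$ holomorphic (including with a zero) at $0$, force $|\dif\zeta|^2$ to give bounded-length radial paths to the puncture, contradicting completeness. Essential singularities for $G$ or $H$ are eliminated by combining Casorati--Weierstrass with completeness: such singularities would produce sequences along which $\zeta$ fails to tend to infinity, yielding bounded-length paths to $0$ in the pullback metric. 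The main obstacles will be justifying the cuspidality step via Huber, whose classical formulation requires global completeness so that a localized version or an explicit completing extension is needed, and translating pointwise information from Casorati--Weierstrass into path-length estimates sufficient to exclude essential singularities.
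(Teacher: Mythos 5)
Your route---passing to the developing map $\zeta$ of the flat structure, classifying the deck transformation as $\zeta\mapsto\omega^k\zeta+C$, and reading the pole order off the descended functions $G$ and $H$---is genuinely different from the paper's, and its computational core is sound: the pole counts $3$, $3m+3$, $n+3$ in the meromorphic cases are correct, the degenerate cases ($G$ removable with $\alpha=0$, $H$ holomorphic) are indeed incompatible with completeness, and your dichotomy even refines the lemma (translation holonomy with $\alpha\neq0$ gives order exactly $3$, nontrivial rotational part gives order $\geq 4$), echoing the paper's later trichotomy of ends. However, the two steps you flag as ``obstacles'' are exactly where the analytic content of the lemma lives, and one of them fails as proposed. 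Casorati--Weierstrass only produces sequences of points $z_n\to 0$ on which $G$ (or $H$) takes prescribed values; it gives no control of $\int|\zeta'(w)|\,|\dif w|$ along curves joining those points, so it cannot by itself produce a path of finite $|\ve{b}|^{\frac{2}{3}}$-length to the puncture. Bounded values along a sequence are a priori compatible with every path having infinite length, so excluding essential singularities of $G$ and $H$ requires an integrated, potential-theoretic (or minimum-modulus) argument, not a pointwise one. The cuspidality step has a smaller, repairable gap of the same flavor: Huber's finite-total-curvature theorem needs a globally complete metric, and your completion across the outer boundary must also be arranged to have finite total curvature before it applies; this is doable but is not carried out.

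For comparison, the paper settles precisely these two points with two theorems of Huber, with no recourse to the developing map. For cuspidality, it assumes the end is an annulus $\{1<|z|<R\}$ and applies the subharmonic-function criterion (Theorem 5 of \cite{huber_1}) to the harmonic function $u=\tfrac{2}{3}\log|b|$: its flux integral equals $-\tfrac{2}{3}\int_{|z|=r}\dif(\arg b)$, a constant multiple of an integer winding number, hence finite, so Huber provides a path of finite $e^{u}|\dz|^2$-length to the end, contradicting completeness---no completion of the metric is needed. For the pole order, it applies Satz 4 of \cite{huber} to $u=\tfrac{1}{3}\log|z^3b(z)|$: completeness verifies the hypotheses, $u$ extends superharmonically across $0$, and lower semicontinuity gives $|z^3 b|\geq c>0$ near $0$, which yields pole order $\geq 3$ and rules out essential singularities in one stroke---exactly the step your Casorati--Weierstrass argument cannot supply. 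If you wish to keep your developing-map framework, replace the C--W step by an argument of this type (e.g.\ Huber's Satz 4 applied to the harmonic function $\log|z\,f|$ with $f^3\dz^3=\ve{b}$), at which point your computation of the precise orders becomes a worthwhile addendum rather than a proof of the lemma.
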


As a consequence, $|\ve{b}|^\frac{2}{3}$ is complete at $p$ and does not have zeros near $p$. Let us show that the conformal structure $\ac{J}$ is cuspidal at $p$. Assume by contradiction that $\ac{J}$ is not cuspidal, then a punctured neighborhood of $p$ not containing zeros of $\ve{b}$ is conformally equivalent to an annulus $\{z\in\mathbb{C}\mid 1<|z|<R\}$ in such a way that points near $p$ correspond to points near the inner boundary $\{|z|=1\}$. Suppose $\ve{b}=b(z)\dz^3\neq 0$ on the annulus.  We make use of the following theorem due to A. Huber (a less general version of Theorem 5 in \cite{huber_1}).
\begin{theorem}[Huber]\label{thm_huber1}
Given real-valued subharmonic function $u$ defined on the annulus $\{1<|z|<R\}$, put
$$
\phi(u):=\lim_{r\rightarrow0^+}\int_{|z|=r}\frac{\pa u}{\pa \ve{n}}\dif s,
$$
where $\ve{n}$ is the inner pointing normal. The limit exists in $\mathbb{R}\cup\{+\infty\}$ by a result of F. Riesz.  If $\phi(u)<+\infty$, then there exists a locally rectifiable path on the annulus tending to the inner boundary $\{|z|=1\}$ with finite length under the  Riemannian metric $e^{u(z)}|\dz|^2$.
\end{theorem}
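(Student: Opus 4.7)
The plan is to decompose $u$ via the Riesz representation theorem into a harmonic part and a logarithmic potential, then exhibit a single radial direction along which the length integral is finite. First, I would translate the hypothesis $\phi(u) < +\infty$ into an assertion about the Riesz measure $\mu := \frac{1}{2\pi}\Delta u$, which is a positive Radon measure on the annulus because $u$ is subharmonic. Applying the divergence theorem to the sub-annulus $\{r < |z| < R_0\}$ for $1 < r < R_0 < R$ gives
\[
\int_{|z|=r}\frac{\pa u}{\pa\ve{n}}\,\dif s \;=\; 2\pi\,\mu(\{r<|z|<R_0\}) \;-\; \int_{|z|=R_0}\pa_r u\,\dif s.
\]
Passing to the limit $r \to 1^+$ and using that the outer flux is fixed, we see that $\phi(u) < +\infty$ is equivalent to $\mu(A_0) < +\infty$, where $A_0 := \{1 < |z| < R_0\}$.

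With $\mu$ finite on $A_0$, the Riesz decomposition theorem writes $u = h + v$ on $A_0$, where $v(z) := \int_{A_0}\log|z-w|\,\dif\mu(w)$ is the logarithmic potential of $\mu$ and $h$ is harmonic on $A_0$. Since $|z-w|$ is bounded above by $D := \mathrm{diam}(\overline{A_0})$ for all $z,w \in \overline{A_0}$, the pointwise estimate $v(z) \le \mu(A_0)\,\log D$ holds, so $e^{v/2}$ is uniformly bounded by a constant $C_v$ on $A_0$. Consequently it suffices to find a single angle $\theta_0 \in [0, 2\pi)$ for which
\[
\int_1^{R_0} e^{h(re^{i\theta_0})/2}\,dr \;<\; +\infty,
\]
for then the radial segment $r \mapsto re^{i\theta_0}$ (traversed from $r = R_0$ down to $r = 1$) has length at most $C_v$ times this quantity in the Riemannian metric $e^{u}|\dz|^2$ and tends to the inner boundary.

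To produce such a $\theta_0$, the strategy is to establish a boundary representation of $h$ at $\{|z|=1\}$ and invoke Fatou's theorem. The key input is that the circular means $M_h(r) := \frac{1}{2\pi}\int_0^{2\pi} h(re^{i\theta})\,d\theta$ remain bounded as $r \to 1^+$: indeed $M_u(r)$ is a convex function of $\log r$ whose right-derivative is bounded below precisely because $\phi(u) < +\infty$, and the classical mean-value formula $\frac{1}{2\pi}\int_0^{2\pi}\log|re^{i\theta}-w|\,d\theta = \log\max(r,|w|)$ combined with finiteness of $\mu(A_0)$ shows that $M_v(r)$ has a finite limit as $r \to 1^+$; hence so does $M_h(r) = M_u(r) - M_v(r)$. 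Once $h$ is identified as the Poisson-type integral of a finite signed Borel measure on $\{|z|=1\}$, the classical Fatou theorem yields a finite radial limit $h(e^{i\theta_0})$ for almost every $\theta_0$, and along any such radius the function $r \mapsto h(re^{i\theta_0})$ is continuous on $(1, R_0]$ with a finite limit at $r = 1^+$, hence bounded, so the radial integral is indeed finite.

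The main obstacle is the upgrade from boundedness of the first circular moment $M_h(r)$ to boundedness of the full $L^1$-norm $\sup_{r}\int_0^{2\pi}|h(re^{i\theta})|\,d\theta$, which is what is actually needed to invoke the boundary-measure representation. This upgrade requires a supplementary one-sided control, for instance by dominating the positive part $h^+$ using the pointwise upper bound on $u$ coming from its subharmonicity together with the pointwise upper bound on $v$ already established, thereby producing a harmonic majorant of $h$ with controlled flux and reducing the two-sided $L^1$-bound to the $L^1$-bound for $h^+$.
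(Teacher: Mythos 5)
A preliminary remark: the paper does not prove this statement at all --- it is imported verbatim as Theorem 5 of Huber's paper and used as a black box --- so your argument has to stand entirely on its own. Its first half does: the identification of $\phi(u)<+\infty$ with finiteness of the Riesz mass $\mu(A_0)$, the decomposition $u=h+v$ with $v$ the logarithmic potential, and the uniform upper bound $v\le\mu(A_0)\log D$ are all correct and correctly reduce the problem to finding a short radius for the harmonic part $h$. The difficulty is that everything after that point fails. The gap you flag in your last paragraph is real, and the repair you sketch rests on a false premise: a subharmonic function on the open annulus is bounded above only on compact subsets, and near the inner circle it can tend to $+\infty$. For instance $u(z)=\operatorname{Re}\frac{1}{z-1}$ is harmonic on $\{1<|z|<R\}$, has $\phi(u)$ finite (the flux of a harmonic function across $\{|z|=r\}$ is independent of $r$), yet is unbounded above as $z\to 1$ from within the annulus. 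So there is no ``pointwise upper bound on $u$ coming from its subharmonicity'' on $A_0$, and the proposed domination of $h^{+}$ collapses. Worse, the intermediate goal is itself unattainable: $h(z)=\operatorname{Re}\,e^{1/(z-1)}$ is harmonic on the annulus with zero flux but has $\int_0^{2\pi}|h(re^{i\theta})|\,d\theta\to+\infty$ as $r\to 1^{+}$, so a harmonic function satisfying all your hypotheses need not admit any representation as a Poisson integral of a finite measure on $\{|z|=1\}$, and the Fatou theorem you invoke is simply unavailable.

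There is a second, structural problem: your circular-mean step carries no information for the harmonic part. Writing $h=\operatorname{Re}F+c\log|z|$ with $F$ holomorphic on the annulus, one has $M_h(r)=\operatorname{Re}a_0+c\log r$ identically, so $M_h$ is bounded for \emph{every} harmonic $h$. Consequently, once you have absorbed the hypothesis into $\mu(A_0)<\infty$ and $v\le \mu(A_0)\log D$, what remains to be proved is the full strength of Huber's theorem for an arbitrary harmonic function on an annulus (note that in the examples above a.e.\ radius does work, but only because the singular behaviour is localized at one boundary point --- this is exactly what must be shown in general, and your outline supplies no mechanism for it). A workable route has to exploit harmonicity more seriously, e.g.\ via the holomorphic function $e^{F/2}$ and its primitive, or a length--area estimate on a suitable family of curves; a boundary-measure representation of $h$ is not the right tool. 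As it stands the proof is incomplete at its decisive step.
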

For the harmonic function $u(z)=\frac{2}{3}\log |b(z)|$, the integral in the above definition of $\phi(u)$ is
$$
\int_{|z|=r}\frac{\pa u}{\pa \ve{n}}\dif s=\frac{2}{3}\int_{|z|=r}\frac{\pa\log|b(z)|}{\pa\ve{n}}\dif s=-\frac{2}{3}\int_{|z|=r}\dif(\arg(b(z)),
$$
where the last equality follows from the fact that $\arg(b(z))$ is a harmonic conjugate of $\log|b(z)|$. Since $\frac{1}{2\pi}\int_{|z|=r}\dif(\arg(b(z))$ is an integer not depending on $r$, we can apply Theorem \ref{thm_huber1} and deduce that $|\ve{b}|^\frac{2}{3}$ is not complete at the puncture $p$, a contradiction. Thus $\ac{J}$ is cuspidal at $p$.

Let $z$ be a conformal local coordinate such that $\{z\mid 0<|z|<1\}$ corresponds to a punctured neighborhood of $p$ and $z=0$ corresponds to $p$.  Assume $\ve{b}=b(z)\dz^3$. It remains to be shown that $z=0$ is a pole of $b(z)$ of order at least 3. To this end, we apply another theorem of Huber (Satz 4 in \cite{huber}). 
\begin{theorem}[Huber]\label{thm_satz4}
Let $u$ be a $C^2$ super-harmonic function on  $\{z\mid 0<|z|<R\}$. Then $u$ can be extended to a super-harmonic function $\{z\mid |z|<R\}\rightarrow\mathbb{R}\cup\{+\infty\}$ if and only if the following conditions are satisfied 
\begin{enumerate}[(a)] 
\item The integral of $|\Delta u|$ on some neighborhood of $0$ is finite. 
\item $\int_{\gamma}\frac{e^{u(z)}}{|z|}|\dz|=+\infty$ for any path $\gamma$ tending to $0$.
\end{enumerate}
\end{theorem}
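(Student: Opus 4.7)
The plan is to reduce the extension problem to the Riesz decomposition of super-harmonic functions, which separates the local behavior into a measure-theoretic contribution and a harmonic contribution carrying a possible logarithmic singularity at the origin.

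First, I would write $u = h + v$ on $\{0<|z|<R\}$, where $h$ is harmonic and $v(z)=-\int \log|z-w|\,d\mu(w)$ is the Newtonian potential of $\mu:=-\tfrac{1}{2\pi}\Delta u\cdot\lambda$ (with $\lambda$ the planar Lebesgue measure). Super-harmonicity of $u$ makes $\mu\geq 0$, so condition (a) is equivalent to asserting that $\mu$ is a finite Radon measure on a neighborhood of $0$. Independently, since $h$ is harmonic on the punctured disk, elementary Fourier expansion (or conjugate-harmonic considerations) yields a unique decomposition $h(z)=c\log|z|+\tilde h(z)$ with $\tilde h$ harmonic on all of $\{|z|<R\}$ and $c\in\mathbb{R}$ the flux of $h$ through small circles around $0$. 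Thus on the punctured disk
\[
u(z)=c\log|z|+\tilde h(z)+v(z).
\]

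For necessity, suppose $u$ extends to a super-harmonic function $\bar u:\{|z|<R\}\to\mathbb{R}\cup\{+\infty\}$. The Riesz decomposition of $\bar u$ on the full disk produces a locally finite measure whose restriction to the punctured disk coincides with $\mu$; this immediately yields (a). For (b), I would observe that the isolated-singularity analysis of harmonic functions forces $c\leq 0$: a $c>0$ would make $u\to-\infty$ along rays, violating lower semi-continuity of a super-harmonic extension. Once $c\leq 0$, the estimate $|z|^c\geq 1$ near $0$ and boundedness of $\tilde h$ give $e^{u(z)}\geq c_0 e^{v(z)}\geq c_0'$ on any path $\gamma$ avoiding the polar set $\{v=+\infty\}$; combined with $\int_\gamma|z|^{-1}|dz|=+\infty$ for $\gamma$ tending to $0$, this yields (b). (Paths meeting the polar set make (b) even more transparent.)

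For sufficiency, assume (a) and (b). By (a), $v$ extends as a Newtonian potential on the full disk and is super-harmonic there, with $v(0)\in\mathbb{R}\cup\{+\infty\}$, and $\tilde h$ already extends harmonically. It remains to show $c\leq 0$, which will force the extension $\bar u:=c\log|z|+\tilde h+v$ to be super-harmonic with values in $\mathbb{R}\cup\{+\infty\}$. Suppose for contradiction that $c>0$. Along a radial path $\gamma_\theta(r)=re^{i\theta}$, the integral in (b) becomes $\int_0^{r_0} r^{c-1}e^{\tilde h(re^{i\theta})+v(re^{i\theta})}\,dr$. Since $\tilde h$ is bounded and $\mu$ is finite near $0$, averaging over $\theta\in[0,2\pi]$ and applying Fubini to $v$ shows that $\int_0^{2\pi} v(re^{i\theta})\,d\theta$ grows at most like $-\log r$ times the total mass of $\mu$ near $0$; hence for $c>0$ fixed, there exists a $\theta$ (in fact, almost every $\theta$) along which $\int_0^{r_0} r^{c-1}e^{v(re^{i\theta})}\,dr<\infty$, contradicting (b). Therefore $c\leq 0$, and $\bar u$ is the required super-harmonic extension.

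The main obstacle will be controlling the Newtonian potential $v$ along paths tending to $0$: $v$ can be $+\infty$ on a polar set, which a priori could meet every radial ray. The rigorous argument that a positive-measure set of rays avoids this thin set (or at worst meets it in a set on which $v$ grows integrably against $r^{c-1}\,dr$) is the delicate point; it rests on the local $L^1$-integrability of Newtonian potentials of finite measures, via a Fubini-type averaging. Everything else reduces to standard Riesz-decomposition bookkeeping.
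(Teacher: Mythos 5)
The paper itself offers no proof of this statement: it is quoted as Satz 4 of Huber's paper \cite{huber} and used as a black box, so your attempt can only be judged on its own terms. Your necessity direction is essentially sound (indeed it can be shortened: a super-harmonic extension is lower semicontinuous and $>-\infty$, hence bounded below near $0$, which gives (b) at once since $\int_\gamma |dz|/|z|=+\infty$ for any path tending to $0$; and the Riesz decomposition of the extension gives (a) together with $c\le 0$).

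The sufficiency direction, however, has two genuine gaps. First, the claimed decomposition $h=c\log|z|+\tilde h$ with $\tilde h$ harmonic across $0$ is false for a general harmonic function on a punctured disk: the Fourier expansion also contains negative-power terms, e.g.\ $h(z)=\mathrm{Re}(1/z)$ is harmonic (hence super-harmonic) on $\{0<|z|<R\}$, satisfies (a) trivially, and is not of your claimed form; it fails (b) only along suitable paths (e.g.\ the negative real axis, where $\int_0 e^{-1/r}r^{-1}dr<\infty$). So excluding these singular terms is exactly where condition (b) must do serious work, and your argument never addresses it -- you have assumed away the core of the theorem under the label ``elementary Fourier expansion.'' Second, even for the logarithmic coefficient, your Fubini step only controls the angular average of $v$, whereas what you need is finiteness of $\int_0^{r_0} r^{c-1}e^{v(re^{i\theta})}dr$ for some $\theta$ (or some path); Jensen's inequality goes the wrong way, so a bound on $\int_0^{2\pi}v(re^{i\theta})\,d\theta$ gives no control on $\int_0^{2\pi}e^{v(re^{i\theta})}\,d\theta$, and $e^v$ need not even be locally integrable when $\mu$ has concentrated mass $\ge 2$. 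The correct mechanism is quantitative: making the weighted integral diverge along \emph{every} path at radius scale $r$ despite the factor $r^{c-1}$ with $c>0$ forces a fixed amount of mass of $\mu$ at each dyadic scale, contradicting finiteness of $\mu$ near $0$; some argument of this type (or Huber's own) is needed both here and to kill the negative-power terms, and it is absent from your proposal.
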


Let $u: \{z\mid 0<|z|<1\}\rightarrow\mathbb{R}$ be the function defined by
$$
|b(z)|^{\frac{1}{3}}=\frac{e^{u(z)}}{|z|},
$$
or equivalently, $u(z)=\frac{1}{3}\log|z^3\,b(z)|$. Since $|\ve{b}|^\frac{2}{3}$ is complete at $p$ and $u$ is harmonic, conditions (a) and (b) in Theorem \ref{thm_satz4} are fulfilled, so we conclude that $u$ can be extended  to a super-harmonic function $\{z\mid |z|<1\}\rightarrow \mathbb{R}\cup\{+\infty\}$. By lower semi-continuity of super-harmonic functions, $|z^3\,b(z)|$ is bounded from below by a positive constant in vicinity of $z=0$. As a result, $b(z)$ has a pole of order at least $3$ at $z=0$.
\end{proof}

\subsection{Cuspidal hyperbolic metrics}\label{sec_cuspidal}
Two conformal Riemannian metrics are said to be \emph{conformally quasi-isometric} if their ratio has positive upper and lower bounds.
Lemma 5.2 in \cite{benoist-hulin} can be reformulated as follows. 

\begin{lemma}\label{lemma_bilip}
Let $g_1$ and $g_2$ be $C^{\infty}$ Riemannian metrics on $D^*:=\{x\in\mathbb{R}^2\mid 0<|x|<1\}$ such that $g_1$ and $g_2$ are conformal to each other and both satisfy the following conditions:
\begin{enumerate}[(i)]
\item\label{item_bilip1} complete at $0$ (see Definition \ref{def_complete});
\item\label{item_bilip2} the curvature is negatively pinched (\ie bounded from above and below by negative constants) around $0$.
\end{enumerate}
Then $g_1$ and $g_2$ are conformally quasi-isometric near $0$.
\end{lemma}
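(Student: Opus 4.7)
The plan is to compare each metric $g_i$ separately to the standard hyperbolic cusp metric $g_{\text{cusp}} := |dz|^2/(|z|\log|z|)^2$, the Poincar\'e metric of $D^*$, of constant Gaussian curvature $-1$. Writing $g_i = e^{2\phi_i}|dz|^2$ and $v_i := \phi_i - \phi_{\text{cusp}}$, the desired conformal quasi-isometry between $g_1$ and $g_2$ near $0$ will follow from the bound $|v_i| \leq C$ for $i=1,2$ on a punctured neighborhood of $0$, since then $|\phi_1 - \phi_2| = |v_1 - v_2|$ is bounded.

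For the upper bound on $v_i$, I would invoke the Ahlfors-Schwarz lemma. Shrink the domain to a punctured sub-neighborhood $U$ of $0$ on which the curvature pinching $\kappa_{g_i} \leq -a < 0$ holds. The rescaled metric $a\, g_i$ then has Gaussian curvature $\leq -1$, so Ahlfors' lemma on the hyperbolic Riemann surface $U$ (itself biholomorphic to a punctured disk) yields $a\, g_i \leq \sigma_U$, where $\sigma_U$ is the Poincar\'e metric of $U$. An explicit comparison of the two cusp-metric formulas shows $\sigma_U/g_{\text{cusp}} \to 1$ as $z\to 0$, giving $v_i \leq C_+$ on a smaller neighborhood of $0$.

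For the lower bound, which I expect to be the main obstacle, the plan is to apply Yau's generalized Schwarz lemma: for a holomorphic map $f:(M,g_M)\to (N,g_N)$ with $M$ complete K\"ahler of Ricci $\geq -K_1$ and $N$ Hermitian with holomorphic sectional curvature $\leq -K_2<0$, one has $f^*g_N\leq (K_1/K_2)\,g_M$. Applied to the identity map $(D^*, g_i)\to (D^*, g_{\text{cusp}})$, with Ricci bound $-A$ for $g_i$ and curvature $-1$ for $g_{\text{cusp}}$, this would give $g_{\text{cusp}}\leq A\,g_i$, i.e., $v_i \geq -\tfrac{1}{2}\log A$. The catch is that Yau's theorem requires $g_i$ to be complete \emph{globally} on $D^*$, whereas our hypothesis only supplies completeness at the puncture $0$. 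To remedy this, construct a modified metric $\hat g_i$ on $D^*$ by gluing $g_i$ (on a cuspidal neighborhood $\{|z|<r_0\}$) to a fixed complete background metric, e.g.\ $g_{\text{cusp}}$ itself (on $\{|z|>2r_0\}$), interpolated smoothly on the annulus $\{r_0 \leq |z| \leq 2r_0\}$ via a partition of unity. The resulting $\hat g_i$ coincides with $g_i$ near $0$, is smooth and complete on $D^*$, and has globally bounded curvature (each of the two pieces has bounded curvature, and a smooth interpolation on the compact annulus inherits a uniform bound). Yau's lemma applied to $\hat g_i$ then gives $g_{\text{cusp}} \leq A'\hat g_i$ on $D^*$, and restricting to the cuspidal neighborhood where $\hat g_i = g_i$ yields the desired lower bound $v_i \geq -C_-$.

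The principal subtlety in this argument lies in the construction of $\hat g_i$: one must choose the partition of unity so that the interpolated metric is positive definite, smooth, and admits a uniform lower bound on its Ricci (equivalently Gaussian) curvature. This is essentially routine, but requires a small care in the choice of scales. Once this is in place, combining the upper and lower bounds gives $|v_i| \leq \max(C_+, C_-)$ on a punctured neighborhood of $0$, whence the conformal quasi-isometry of $g_1$ and $g_2$ near $0$ follows.
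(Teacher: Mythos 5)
Your overall strategy---an Ahlfors--Schwarz upper bound against a reference hyperbolic metric, a Yau--Schwarz lower bound after completing $g_i$ by a gluing, and then comparing $g_1$ and $g_2$ through the reference---is sound, and it is essentially the mathematics behind Lemma 5.2 of Benoist--Hulin \cite{benoist-hulin}, which the paper simply cites and localizes by the same trick of extending $g_1,g_2$ to complete metrics (on $\mathbb{R}^2$). So in spirit you are re-proving the ingredient the paper outsources, with the same completion idea appearing in your construction of $\hat g_i$.

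There is, however, a genuine gap: from the first line you write $g_i=e^{2\phi_i}|dz|^2$, i.e.\ you assume that the conformal class of $g_1\sim g_2$ is the standard one on $D^*\subset\mathbb{C}$, and all your comparisons are made with the Poincar\'e metric of the standard punctured disk. The lemma only assumes that $g_1$ and $g_2$ are conformal \emph{to each other}; the conformal structure they induce near $0$ need not be of puncture type. It can be of annular type---for instance, pull back a hyperbolic funnel end to $D^*$ by a diffeomorphism: the resulting metric is complete at $0$ with curvature $\equiv-1$---and the paper relies on exactly this possibility in the dichotomy stated right after the lemma ($g$ conformally quasi-isometric to a hyperbolic metric of infinite volume). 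In the annular case $g_{\mathrm{cusp}}$ is not conformal to $g_i$, the identity map $(D^*,\hat g_i)\to(D^*,g_{\mathrm{cusp}})$ is not holomorphic, and both of your Schwarz-lemma steps (in particular the asymptotics $\sigma_U/g_{\mathrm{cusp}}\to 1$) break down. The repair is mild: take as the single reference the complete hyperbolic metric $\sigma_U$ of a fixed punctured neighborhood $U$ of $0$ \emph{with respect to the conformal structure of $g_1$ itself} (uniformize $U$ to a standard punctured disk or a round annulus first; the given coordinate $z$ need not be isothermal for $g_i$), prove $a\,g_i\le\sigma_U$ by Ahlfors--Schwarz and $\sigma_U\le A'\,\hat g_i$ by Yau with your gluing, and conclude that $g_1$ and $g_2$ are each comparable to $\sigma_U$, hence to each other, near $0$; the cusp-versus-funnel distinction then only enters in the discussion following the lemma, not in its proof.
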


Although the original statement in \cite{benoist-hulin} has global nature, we can  deduce the above local version from it by completing $g_1$ and $g_2$ to metrics on $\mathbb{R}^2$.

As a consequence of Lemma \ref{lemma_bilip}, any metric $g$ on $D^*$ satisfying conditions (\ref{item_bilip1}) and (\ref{item_bilip2}) belongs to either of the following conformally quasi-isometry classes.
\begin{itemize}
\item 
If the conformal structure underlying $g$ is cuspidal at $0$, then $g$ is conformally quasi-isometric to the hyperbolic metric of finite volume 
\begin{equation}\label{eqn_cusphyp}
\frac{4|\dz|^2}{|z|^2(\log |z|)^2}.
\end{equation}
Here we identify a punctured neighborhood of $p$ with $\{z\in\mathbb{C}\mid0<|z|<\varepsilon\}$.
The metric (\ref{eqn_cusphyp}) is what we call a \emph{cuspidal hyperbolic metric}. The  expression is obtained from the Poincar\'e metric on the upper-half plane $\mathbb{H}$ by identifying $\{0<|z|<\varepsilon\}$ with the quotient $\{\zeta\in\mathbb{H}\mid  \im(\zeta)>-\log\varepsilon\}/\mathbb{Z}$ via $z=\exp(\ima\zeta)$. We emphasis that a different choice of the coordinate $z$ gives rise to a different cuspidal hyperbolic metric, but Lemma \ref{lemma_bilip} implies that they are conformally quasi-isometric around $0$.
\vspace{4pt}
\item
Otherwise, a punctured neighborhood of $0$ is conformally equivalent to $\{z\in\mathbb{C}\mid 1<|z|<R\}$.
%in such a way that points of $D^*$ near $0$  corresponding to points near the inner circle $\{|z|=1\}$.
In this case, $g$ is conformally quasi-isometric to a hyperbolic metric of infinite volume.
\end{itemize}

\subsection{Cusps}\label{sec_cusps}
We briefly review here the proof of part (\ref{item_ptoc1}) of Theorem \ref{thm_ptoc} due to  Benoist and Hulin \cite{benoist-hulin}. The method is similar to the proof of part (\ref{item_ptoc2}).

\begin{proof}[Outline of proof of Theorem \ref{thm_ptoc} (\ref{item_ptoc1})]
Put $f=\kappa_g+1:\Sigma\rightarrow\mathbb{R}$ as before. We first prove the assertion about $\kappa_g$ by showing that, for any $\varepsilon>0$, there exists a punctured neighborhood $U$ of $p$ on which $|\fun|<\varepsilon$. 

%By Lemma \ref{lemma_bilip}, this would imply the last statement in Theorem \ref{thm_ptoc} (\ref{item_ptoc1}).

 The developed boundary $\devbd{\tilde{p}}$ consists of a single point $x_0$ by assumption. Under a suitable coordinate system of $\mathbb{RP}^2$, we can write
$$
x_0=\begin{bmatrix}1\\0\\0\end{bmatrix}, \quad \hol_{\tilde{p}}=\begin{pmatrix}
1&1&\frac{1}{2}\\
0&1&1\\
0&0&1
\end{pmatrix}.
$$
Then the holonomy $\hol_{\tilde{p}}$ preserves a family of conic curves $(C_\lambda)_{\lambda\in\mathbb{R}}$, where
$$
C_\lambda=\left\{\begin{bmatrix}
x\\
y\\
z
\end{bmatrix}\in\mathbb{RP}^2
\ \Big|\
y^2+\lambda z^2=2xz
\right\}.
$$
Let $E_\lambda\subset\mathbb{RP}^2$ denote the open ellipse with $\pa E_\lambda=C_\lambda$. 
Using convexity of $\Omega$ and invariance of $\Omega$ under $\hol_{\tilde{p}}$, one can show that there exists $\lambda_0<\lambda_1$ such that $E_{\lambda_0}\supset \Omega\supset E_{\lambda_1}$, see the following picture. By conjugating the developing pair by a $x_0$-fixing projective transformation, we can assume $\lambda_0=0$. 
%See the following illustrations in different affine charts. The second affine chart is $\{[x:y:1]\mid x,y\in\mathbb{R}\}=\mathbb{R}^2\subset\mathbb{RP}^2$, in which the $C_\lambda$'s are horizontal translates of a parabolic.
\begin{figure}[h]
\centering\includegraphics[width=1.8in]{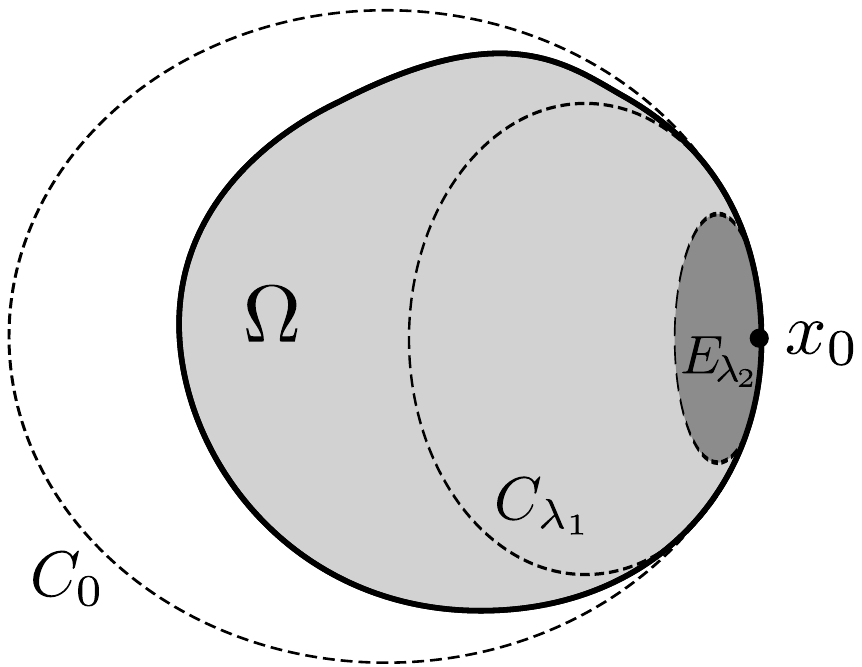}
%\caption{Proof of Lemma \ref{lemma_fomega} (\ref{item_fomega2})}
%\label{figure_prooffomega2}
\end{figure}

For any $\lambda_2\geq \lambda_1$, under the covering map $\Omega\rightarrow\Omega/\hol(\pi_1(\Sigma))\cong \Sigma$,  , the open set $E_{\lambda_2}$ projects to a punctured neighborhood of $p$. Hence, in order to prove the required statement, it is sufficient to show that if $\lambda_2$ is big enough then
\begin{equation}\label{eqn_ptoc1proof}
|f_\Omega(x)|<\varepsilon,\quad \forall x\in E_{\lambda_2}.
\end{equation}

To this end, let $G\subset\Aut(E_0)\subset\SL(3,\mathbb{R})$ denote the group of $x_0$-fixing orientation-preserving projective automorphisms of $E_0=E_{\lambda_0}$,
%Note that $G$ is a semi-direct product of its one-parameter subgroups
%$$
%H=\left\{h_\mu:=\begin{pmatrix}
%\mu^{-1/2}&&\\
%&1&\\
%&&\mu^{1/2}
%\end{pmatrix}
%\right\}_{\mu\in\mathbb{R}_+},\quad P=\left\{\begin{pmatrix}
%1&a&\frac{a^2}{2}\\
%0&1&a\\
%0&0&1
%\end{pmatrix}\right\}_{a\in\mathbb{R}}.
%$$
%We can view $E_0$ as the Beltrami-Klein model of hyperbolic plane and view each $C_\lambda$ with $\lambda>0$ as a horocycle attached at $x_0$. $P$ preserving each $C_\lambda$, while $H$ acts transitively on the family of horocycles $(C_\lambda)_{s>0}$.  Indeed, $h_{\mu}(C_\lambda)=C_{\mu\lambda}$.
which acts freely and transitively on $E_0$. Fix $x_1\in E_0$. For any $x\in E_0$, let $\Phi_{x_1, x}$ denote the unique element of $G$ sending $x$ to $x_1$.   
The required inequality (\ref{eqn_ptoc1proof}) is proved with a similar argument as in part (\ref{item_ptoc2}): 
on one hand, there exists $\delta>0$ such that for a properly convex open set $\Omega'\subset\mathbb{RP}^2$, 
$$
d_{\mathsf{Hausdorff}}(E_0, \Omega')<\delta\ \Longrightarrow \ x_1\in\Omega',\  |f_{\Omega'}(x_1)|<\varepsilon\,;
$$
on the other hand, we can choose $\lambda_2$ big enough such that 
$$
d_{\mathsf{Hausdorff}}(E_0, \Phi_{x_1,x}(\Omega))<\delta,\quad \forall x\in E_{\lambda_2}.
$$
This implies (\ref{eqn_ptoc1proof}) because $f_{\Omega}(x)=f_{\Phi_{x_1,x}(\Omega)}(x_1)$.
Thus we have shown that $\kappa_g(x)$ tends to $-1$ as $x\rightarrow p$.

It is proved in \cite{marquis} that a punctured neighborhood $U$ of $p$ has finite volume with respect to the Hilbert metric if and only if it $p$ is a cusp.
By Corollary \ref{coro_bh} (\ref{item_bh1}), $U$ has finite volume with respect to the Blaschke metric $g$ as well. Since $g$ has negatively pinched curvature on $U$, the discussions following Lemma \ref{lemma_bilip} imply that the conformal structure of $g$ is cuspidal at $p$ and $g$ is conformally quasi-isometric to a cuspidal hyperbolic metric around $p$. 

Let $z$ be a conformal local coordinate centered at $p$ and assume $\ve{b}=b(z)\dz^3$ near $p$.  Since $g$ is conformally quasi-isometric to the cuspidal hyperbolic metric (\ref{eqn_cusphyp})  while the ratio between $|\ve{b}|^\frac{2}{3}$ and $g$ is bounded from above by Corollary \ref{coro_bh} (\ref{item_bh2}), we have
$$
|b(z)|^\frac{2}{3}<\frac{C}{|z|^2(\log |z|)^2}
$$
whenever $|z|$ is small. It follows that $\ve{b}$ has a pole of order $\leq 2$ at $p$ and the proof is complete.
\end{proof}

\section{Local model for $(\Sigma,\ve{b})$ around poles of order $\geq 4$}\label{sec_model}
Let $\Sigma$ be a punctured Riemann surface of finite type, \ie $\Sigma$ is obtained from a closed Riemann surface $\overline{\Sigma}$ by removing finitely many punctures. Let $\ve{b}$ be a holomorphic cubic differential not vanishing constantly on $\Sigma$, such that each puncture is a removable singularity or a pole. 

It is well known that for each puncture $p$, there exists a conformal local coordinate $z$ of $\overline{\Sigma}$ centered at $p$ such that $\ve{b}$ has the following normal form.
\begin{equation}\label{eqn_normalform}
\ve{b}
=\begin{cases}
z^m\dz^3\quad  (m\in\mathbb{Z}\setminus3\mathbb{Z}_-)&\mbox{ if $p$ is a removable singularity or a }\\
&\mbox{pole of order not divisible by $3$,} \\[8pt]
\frac{R}{z^3}\dz^3\quad(R\in\mathbb{C}^*) &\mbox{if $p$ is a third order pole,}\\[8pt]
\left(\frac{1}{z^{l}}+\frac{A}{z}\right)^3\dz^3\quad (A\in\mathbb{C})  &\mbox{ if $p$ is a pole of order $3l$, where $l\geq 2$.}
\end{cases}
\end{equation}
See \cite{strebel} \S 6 for a proof for quadratic differentials, which readily adapts to cubic differentials. We always let $z$ denote such a coordinate and let $U=\{0<|z|<a\}$ denote a punctured neighborhood of $p$ where $z$ is defined.

This section is local in nature -- we only study around a pole $p$ of order $n+3\geq 4$. The goal is to elaborate a construction of a local model for $(\Sigma,\ve{b})$ used in Section \ref{sec_ctop} below.

\subsection{Special directions and sectors in $\T_p\overline\Sigma$}\label{sec_special}
Identifying $\T_p\overline\Sigma$ with $\mathbb{C}$ via the coordinate $z$ introduced above, 
% the $n$ equally spaced rays $d_{\k}$ ($\k\in\mathbb{Z}/n\mathbb{Z}$) in $\T_p\overline\Sigma$ issuing from the origin
we set, for each $\k\in\mathbb{Z}/n\mathbb{Z}$
\footnote{
We always use the boldfaced letter $\k$ to denote an element in $\mathbb{Z}/n\mathbb{Z}$ and let $k$ denote an integer in the congruence class $\k$. This will be convenient in Section \ref{sec_ctop} below.}, 
%,
%$$
%d_{\k}:=e^{2\pi\ima k/n}\mathbb{R}_{\geq 0}.
%$$
%We call these rays \emph{critical directions}.
% The sector delimited by the adjacent critical directions $d_{\k-1}$ and $d_\k$ is denoted by
$$
C_{\k}:=\left\{v\in \T_p\overline{\Sigma}\cong\mathbb{C}\,\Big|\, v\neq 0,\ \frac{2\pi(k-1)}{n}<\arg(v)<\frac{2\pi k}{n}\right\}
$$
and let $C_{\k,\k+1}$ denote the ray along which $C_\k$ and $C_{\k+1}$ are adjacent, \ie
$$
C_{\k,\k+1}:=e^{2\pi\ima k/n}\mathbb{R}_+.
$$
%Roughly speaking, the $D_\k$'s (resp. the $D_{\k, \k+1}$'s) gives rise to the vertices of a twisted $n$-gon, whereas critical directions corresponds to the edges.

%There is another type of distinguished directions in $\T_p\overline{\Sigma}$, defined as follows. 

For each $\k$, consider the three rays which divide $C_\k$ into four equally angled sectors. The two aside rays among the three are called \emph{unstable directions} and are denoted  by $U_\k^-$ and $U_\k^+$, respectively, in counter-clockwise order. In other words,
$$
U_\k^-:=e^{(4k-3)\pi\ima/2n}\mathbb{R}_+,\quad U_\k^+:=e^{(4k-1)\pi\ima/2n}\mathbb{R}_+.
$$

There are $2n$ unstable directions in total, dividing $\T_p\overline\Sigma\setminus\{0\}$ into $2n$ open sectors, called \emph{stable sectors}. 
We let $S_{\k}$ denote the stable sector contained in $C_{\k}$ and let $S_{\k,\k+1}$ denote the stable sector containing $C_{\k,\k+1}$. In other words,
$$
S_\k=\Big\{v\in\T_p\overline{\Sigma}\ \Big|\ \frac{(4k-3)\pi}{2n}<\arg(v)< \frac{(4k-1)\pi}{2n}\Big\},
$$
$$
S_{\k,\k+1}=\Big\{v\in\T_p\overline{\Sigma}\ \Big|\ \frac{(4k-1)\pi}{2n}<\arg(v)< \frac{(4k+1)\pi}{2n}\Big\}.
$$

The following figure illustrate the case $n=5$. 
\begin{figure}[h]
\centering
\includegraphics[width=1.9in]{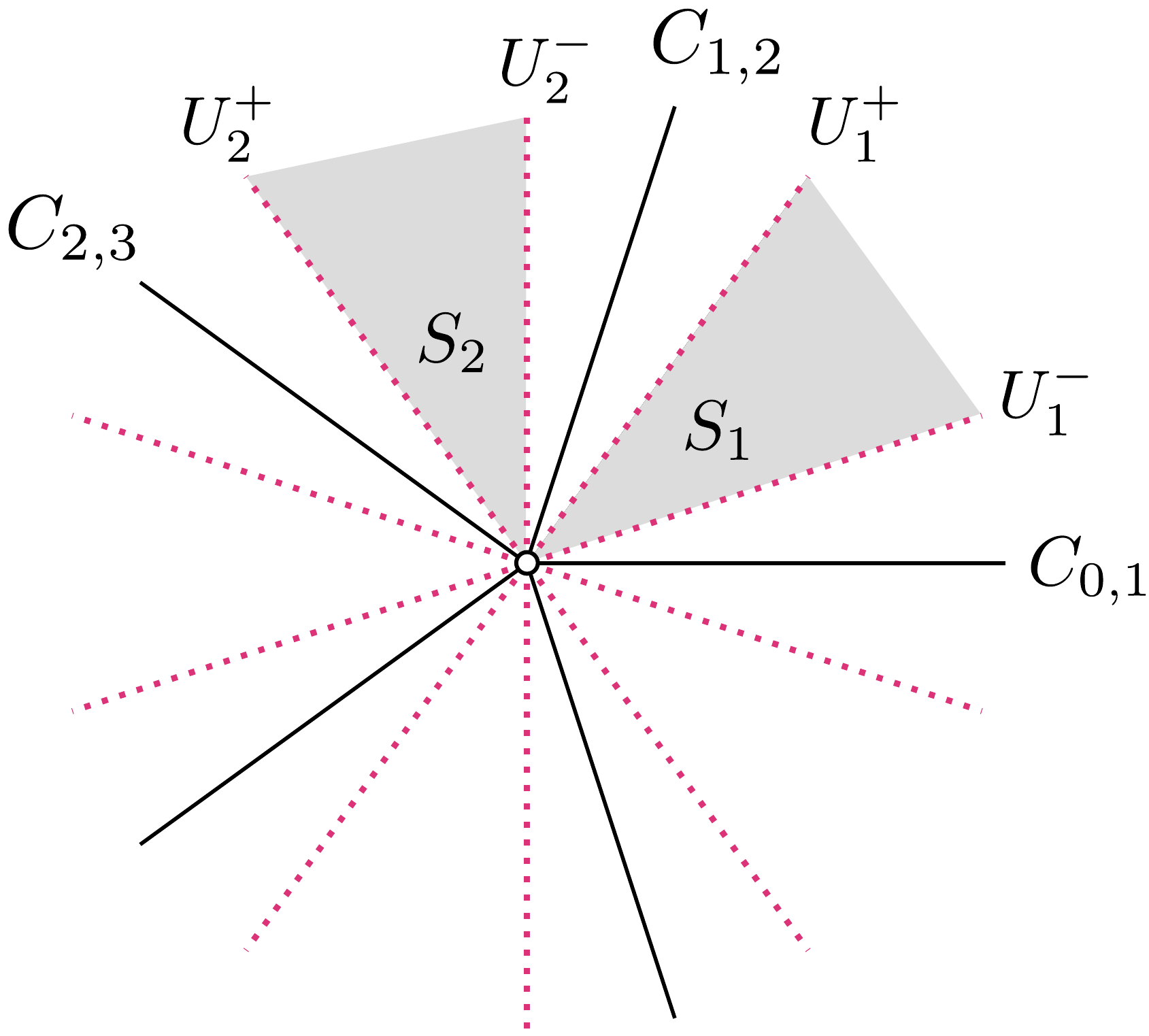}
\end{figure}
%The significance of unstable directions lies deeper: they are directions along which the parallel transport of the connections $\D$ and $\D_0$ (see \S \ref{sec_wang} and \S \ref{sec_wangass} for the definitions) are comparable. Unstable directions play similar roles as \emph{anti-Stokes directions} in the theory of irregular meromorphic connections \cite{}, although an explicit link between them is still to be uncovered.

Significance of these special directions and sectors will be clear in Section \ref{sec_ctop}. Let us roughly mention here that the $C_\k$'s correspond to the vertices of the twisted polygon asserted by Theorem \ref{intro_main}, while the $C_{\k,\k+1}$'s correspond to the edges.

\subsection{Natural half-planes}\label{sec_naturalhalf}
Let $\mathbf{H}:=\{\zeta\mid \re(\zeta)>0\}$ be the right half-plane. Following \cite{dumas-wolf}, we introduce certain conformal maps $\mathbf{H}\rightarrow U$, referred to as ``natural half-planes'',  which pull $\ve{b}$ back to the constant cubic differential $2\,\dzeta^3$. 

We first describe how these maps looks like before going into the construction. Actually we will build $n$ maps 
$$
\Phi_{\k}:\H\rightarrow U=\{0<|z|<a\}\subset\Sigma, \quad \k\in\mathbb{Z}/n\mathbb{Z}.
$$
The tangent cone of the region $\Phi_\k(\H)$ at $p$ is the sector $S_{\k-1,\k}\cup C_\k\cup S_{\k,\k+1}$, while the boundary $\Phi_\k(\pa\H)$ of the region is a curve asymptotic to the unstable directions $U_{\k-1}^+$ and $U_{\k+1}^-$. See the picture below, where the image of $\Phi_1$ is shown in the cases $n=1$, $2$ and $5$ respectively. 
\begin{figure}[h]
\centering
\includegraphics[width=4in]{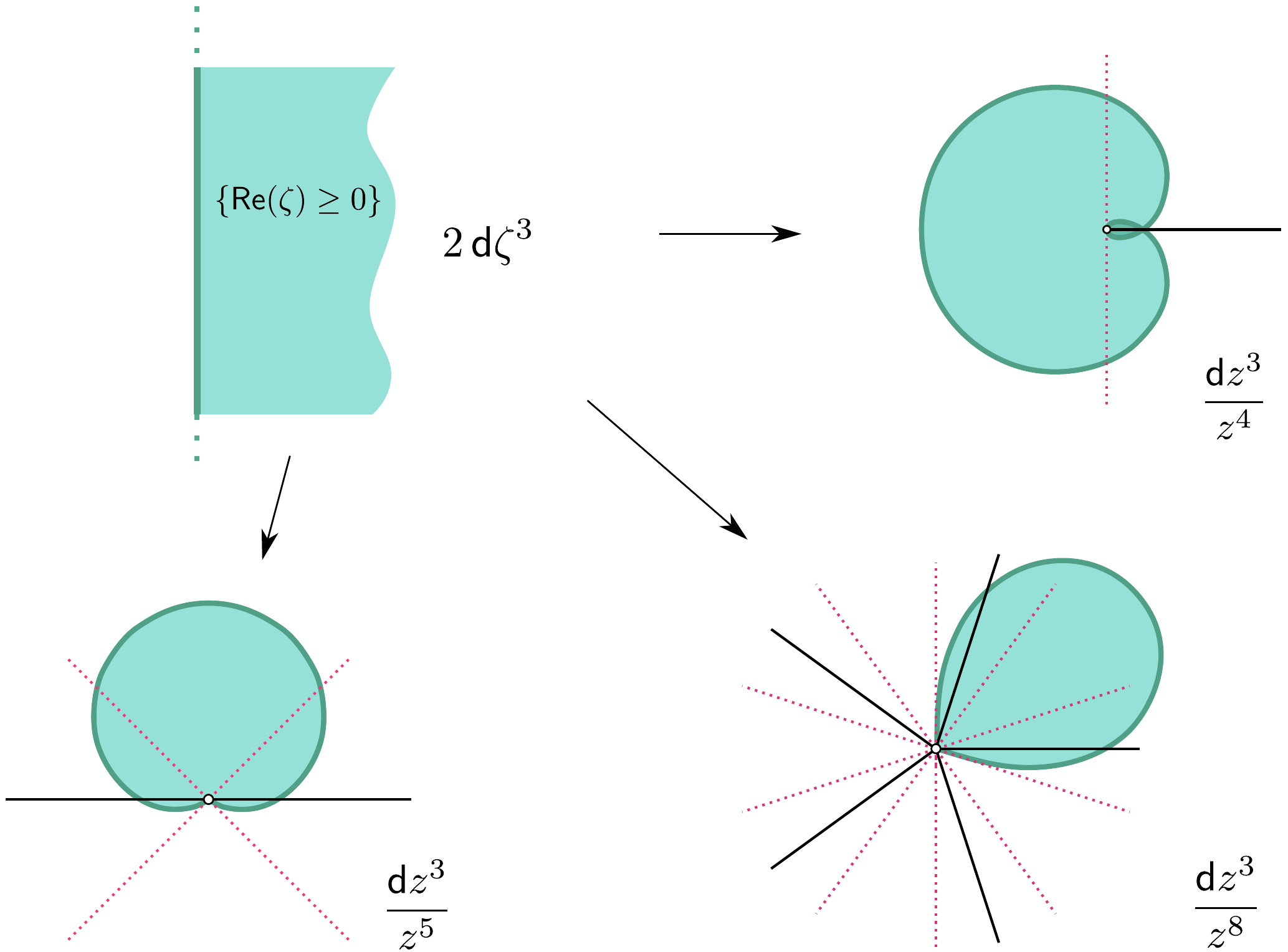}
\end{figure}

The $n=1$ case is special in that there is a single map $\Phi=\Phi_1$, which is not injective and the image covers a punctured neighborhood of $p$. When $n\geq 2$, each $\Phi_\k$ is injective and the union of images covers a punctured neighborhood.

To construct $\Phi_\k$, we first consider the case where $n$ is not divisible by $3$. By definition of the coordinate $z$ at the beginning of this section,  in this case we have $\ve{b}=\frac{\dz^3}{z^{n+3}}$.
We define
\footnote{
Here and below, we take the branch of $\log(\zeta)$ such that
$
-\pi<\im\left(\log(\zeta)\right)<\pi$
for any $\zeta\in\mathbb{C}\setminus\mathbb{R}_{\leq 0}$.
}
%$$
%\ve{b}=A z^{-(n+3)}\dz^3=2(2^{-\frac{1}{3}}z^{-\frac{n}{3}-1}\dz)^3=2\left(\dif\Big(-\frac{3\cdot2^{-\frac{1}{3}}}{n} z^{-\frac{n}{3}}\Big)\right)^3
%$$ 
%just take 
%$$
%\zeta=-\frac{3\cdot2^{-\frac{1}{3}}}{n}z^{-\frac{n}{3}}
%$$
\begin{equation}\label{eqn_chart}
\Phi_\k(\zeta):
%=\left(-\frac{\omega n}{3\cdot2^{-\frac{1}{3}}}\zeta\right)^{-\frac{3}{n}}
=\left(\frac{2^{\frac{1}{3}}n}{3}\right)^{-\frac{3}{n}}\exp\left(-\frac{3}{n}\log(\zeta+B)+\frac{(2k+1)\pi\ima}{n}\right),
\end{equation}
where $B>0$ is big enough such that $\Phi_\k(\H)\subset U$.

A straightforward computation shows that $\Phi_\k$ pulls $\ve{b}$ back to
$$
\Phi_\k^*(\ve{b})=2\,\dzeta^3.
$$ 
The computation can be done, for example, through the following equalities.
\begin{equation}\label{eqn_pull1}
\Phi_\k^*\left(\frac{\dz}{z}\right)=-\frac{3}{n}\cdot\frac{\dzeta}{(\zeta+B)},
\end{equation}
\begin{align}
\Phi_\k^*\left(\frac{1}{z^\frac{n}{3}}\right)&=\Phi_\k(\zeta)^{-\frac{n}{3}}
=\frac{2^\frac{1}{3}n}{3}\exp\left(\log(\zeta+B)-\frac{2k+1}{3}\pi\ima\right)\label{eqn_pull2}\\
&=\frac{2^\frac{1}{3}n}{3}e^{-\frac{2k+1}{3}\pi\ima}(\zeta+B).\nonumber
\end{align}

% $\tilde{\beta}$ satisfies  
%\begin{equation}\label{eqn_tildebeta}
%\tilde{\beta}(0)=0,\quad\lim_\tinf \tilde{\beta}(t)=\infty,\quad \theta(\tilde{\beta}):=\lim_\tinf \arg(\tilde{\beta}(t))\in \left(-\tfrac{\pi}{2},\tfrac{\pi}{2}\right).
%\end{equation}
%Moreover, the asymptotic argument $\theta(\tilde{\beta})$ is a linear function of $\vartheta(\beta)$,
%$$
%\theta(\tilde{\beta})=-\frac{n}{3}\vartheta(\beta)+...
%$$
%The constant term here is set to make $\theta(\tilde{\beta})$ run over $(-\tfrac{\pi}{2},\tfrac{\pi}{2})$ when $\vartheta(\beta)$ runs over (\ref{eqn_thetainzeta}).

When $n$ is a multiple of $3$, we can still construct $\Phi_\k$ with similar properties, as stated in the next proposition. Here we construct the maps on the closed half-plane $\CH=\{\re(\zeta)\geq 0\}$ rather than the open one in order to make sense of $\Phi_k(0)$.

\begin{proposition}[\textbf{Natural half-planes}]\label{prop_natural}
There exists $n$ conformal maps
$$
\Phi_\k:\CH\rightarrow U\subset\Sigma, \quad \k\in\mathbb{Z}/n\mathbb{Z}
$$
such that
\begin{enumerate}[(i)]
\item\label{item_natural1}
$\Phi_\k^*(\ve{b})=2\,\dzeta^3$.
%\item\label{item_natural2}
%$\Phi_\k$ can be extended to a conformal map from a bigger half-plane $\{\zeta\mid \re(\zeta)>-\epsilon\}$ ($\epsilon>0$) to $U$ which still satisfies (\ref{item_natural2}).

\item\label{item_natural3}
Given a parametrized path 
$$
\beta: [0,+\infty)\rightarrow\Sigma, \quad  \lim_\tinf \beta(t)=p,
$$ 
 if $\beta$ is asymptotic
 \footnote{Here the notion of ``asymptotic ray'' is the same as the one defined in the paragraph preceding Proposition \ref{prop_tit} (although in a different context).}
 to a ray $\mathbb{R}_{\geq 0}\cdot v\subset\T_p\overline{\Sigma}$ for some $v\in S_{k-1,k}\cup C_k\cup S_{k,k+1}$, then $\beta([t_0,+\infty))$ is contained in $\Phi_\k(\CH)$ for   sufficiently large $t_0$. Moreover, $\beta|_{[t_0,+\infty)}$ pulls back through $\Phi_\k$
to a path $\tilde{\beta} :[t_0,+\infty)\rightarrow\CH$ tending to $\infty$ with asymptotic argument
$$
\theta(\tilde{\beta}):=\lim_\tinf\arg(\tilde{\beta}(t))=-\frac{n}{3}\arg(v)+\frac{(2k+1)\pi}{3}.
$$

\item\label{item_natural4}
If $n\geq 2$, each $\Phi_\k$ is injective. The union of images $\bigcup_{\k\in\mathbb{Z}/n\mathbb{Z}}\Phi_\k(\H)$ is a punctured neighborhood of $p$. Moreover,
the pre-image of $\Phi_{\k}(\CH)\cap\Phi_{\k+1}(\CH)$ by $\Phi_{\k}$ and  $\Phi_{\k+1}$ are sectors of the form
$$
\left\{\zeta\in\H\,\Big|\, -\tfrac{\pi}{2}\leq\arg(\zeta- a)\leq-\tfrac{\pi}{6}\right\},\quad \left\{\zeta\in\H\,\Big|\, \tfrac{\pi}{6}\leq\arg(\zeta- b)\leq\tfrac{\pi}{2}\right\},
$$
respectively, for some $a, b\in\pa\H$, whereas the transition map $\Phi_{\k+1}\circ\Phi_{\k}^{-1}$ from the former sector to the latter is the map
\begin{equation}\label{equation_map}
\zeta\mapsto \omega \zeta-\omega a+b,\quad \omega=e^{2\pi \ima/3}.
\end{equation}

\item\label{item_natural5}
If $n=1$, the image of $\Phi=\Phi_1$ is a punctured neighborhood of $p$. Moreover, there are two disjoint sectors of the same form as in (\ref{item_natural4}), such that $\Phi(\zeta_1)=\Phi(\zeta_2)$ if and only if $\zeta_1$ and $\zeta_2$ lie in the two sectors respectively and $\zeta_1$ is sent to $\zeta_2$ by the map (\ref{equation_map}).

\end{enumerate}
\end{proposition}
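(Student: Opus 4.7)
The plan is to split the construction according to the two cases of the normal form (\ref{eqn_normalform}) for poles of order $\geq 4$, writing $\Phi_\k$ in closed form when $n\notin 3\mathbb{Z}$ and by means of the implicit function theorem when $n\in 3\mathbb{Z}$. When $n\notin 3\mathbb{Z}$, $\ve{b} = z^{-(n+3)}\dz^3$ and formula (\ref{eqn_chart}) defines the maps directly; property (i) then follows from the factorisation $\ve{b} = z^{-n}(\dz/z)^3$ combined with the pullback identities (\ref{eqn_pull1}) and (\ref{eqn_pull2}), whose product collapses to $2\,\dzeta^3$. When $n\in 3\mathbb{Z}$ (so $n = 3(l-1)$ with $l\geq 2$), the cube-rooted differential $2^{-1/3}\ve{b}^{1/3} = 2^{-1/3}(z^{-l}+Az^{-1})\,\dz$ is single-valued and exact, with antiderivative
$$F_\k(z) := -\tfrac{3}{2^{1/3}n}\,z^{-n/3} + \tfrac{A}{2^{1/3}}\log z + c_\k$$
defined on a sector $\{|z|<\varepsilon,\ \arg z\in I_\k\}$ once a branch of $\log z$ and a translating constant $c_\k$ are fixed. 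As $z\to 0$ the $z^{-n/3}$ term dominates the logarithmic correction, so $F_\k$ maps its sectoral domain biholomorphically onto a translated half-plane; we set $\Phi_\k := F_\k^{-1}$ composed with the normalising translation to land on $\CH$, and property (i) reduces to the tautological identity $(\dif F_\k)^3 = \ve{b}/2$.

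Property (ii) is an asymptotic argument calculation. In both cases the construction gives
$$\arg\Phi_\k(\zeta) = -\tfrac{3}{n}\arg(\zeta+B) + \tfrac{(2k+1)\pi}{n} + o(1)$$
as $|\zeta|\to\infty$ in $\CH$ (the $o(1)$ arising from the logarithmic term when $n\in 3\mathbb{Z}$) together with $|\Phi_\k(\zeta)| = O(|\zeta|^{-3/n})\to 0$. If $\arg\zeta\to\theta\in[-\pi/2,\pi/2]$ then $\arg\Phi_\k(\zeta) \to -(3/n)\theta + (2k+1)\pi/n$, which inverts to the formula stated in (\ref{item_natural3}). The asymptotic range of $\arg\Phi_\k$ as $\arg(\zeta+B)$ sweeps through $(-\pi/2,\pi/2)$ is then an arc of width $3\pi/n$ between the unstable directions $U_{\k+1}^-$ and $U_{\k-1}^+$, which is precisely $S_{\k-1,\k}\cup C_\k\cup S_{\k,\k+1}$.

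For property (iii), injectivity of $\Phi_\k$ on $\CH$ when $n\geq 2$ reduces to the injectivity of $\zeta\mapsto(\zeta+B)^{-3/n}$ on the closed right half-plane, which holds because the $\pi$-wide arc of arguments is rescaled by $3/n\leq 3/2$, keeping the total strictly under $2\pi$. Coverage of a punctured neighbourhood of $p$ by $\bigcup_\k\Phi_\k(\H)$ then follows from the total angular width $\sum_\k 3\pi/n = 3\pi > 2\pi$. For the transition maps, since both $\Phi_\k$ and $\Phi_{\k+1}$ pull $\ve{b}$ back to $2\,\dzeta^3$, the biholomorphism $\Phi_{\k+1}^{-1}\circ\Phi_\k$ between the overlap pre-images preserves the cubic differential and is therefore affine of the form $\zeta\mapsto\omega^j\zeta+c$ for some $j\in\{0,1,2\}$. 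Comparing the root-of-unity prefactors $e^{(2k\pm 1)\pi\ima/n}$ of consecutive $\Phi_\k$ and tracing the corresponding branch change of $z^{-n/3}$ forces $j=1$ and pins down $c$ geometrically. The overlap sectors $\{-\pi/2\leq\arg(\zeta-a)\leq-\pi/6\}$ and $\{\pi/6\leq\arg(\zeta-b)\leq\pi/2\}$ are then identified as the $\Phi_\k$- and $\Phi_{\k+1}$-preimages of an angular neighbourhood of the shared unstable direction $U_\k^+$ of half-width $\pi/(2n)$ on each side, pulled back through the exponent $-3/n$ which rescales them to arcs of half-width $\pi/6$. Property (iv) is handled identically, the single map $\Phi=\Phi_1$ on $\H$ failing to be injective because $3/n=3>2$; the same analysis identifies $\Phi(\zeta_1)=\Phi(\zeta_2)$ with the relation $\zeta_2 = \omega\zeta_1 + c$ on the appropriate pair of disjoint sectors.

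The hard part will be the careful bookkeeping of branch choices, notably the verification that the multiplier in the transition map is $\omega = e^{2\pi\ima/3}$ rather than $\omega^{-1}$. This depends on the convention for the principal branch of $\log(\zeta+B)$ in (\ref{eqn_chart}) and on the orientation of the overlap sectors, and will be checked by computing $(\zeta+B)/(\zeta'+B)$ as the unique cube root of unity whose argument falls in the admissible range dictated by the sector pair. In the case $n\in 3\mathbb{Z}$, one must additionally verify that the logarithmic term $\tfrac{A}{2^{1/3}}\log z$ in $F_\k$ and in $F_{\k+1}$ undergoes the same monodromy shift under the two branch choices, so that the transition remains exactly affine rather than only asymptotically so; this extra shift then contributes only to the additive constant $c$.
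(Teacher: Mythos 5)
For $n\notin 3\mathbb{Z}$ your construction is literally the paper's (formula (\ref{eqn_chart}) together with the pullback identities (\ref{eqn_pull1})--(\ref{eqn_pull2})), and your way of getting the transition maps --- a biholomorphism pulling $2\,\dzeta^3$ back to itself has $(\psi')^3\equiv 1$, hence is $\zeta\mapsto\omega^j\zeta+c$ --- is a legitimate, in fact slightly cleaner, substitute for the paper's argument via $|\ve{b}|^{\frac{2}{3}}$-geodesics. The genuine gap is in the case $3\mid n$, at the sentence ``as $z\to0$ the $z^{-n/3}$ term dominates the logarithmic correction, so $F_\k$ maps its sectoral domain biholomorphically onto a translated half-plane.'' Write $\xi$ for the dominant term $-\tfrac{3}{2^{1/3}n}z^{-n/3}$ (with your branch), so that $F_\k=\xi+C\log\xi+\mathrm{const}$ with $C$ a fixed multiple of $A$, and note that a sector of angular width $3\pi/n$ at $z=0$ corresponds to an exact half-plane in the $\xi$-variable. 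Along the two boundary rays of that half-plane $\re\xi$ stays bounded, while $\re(C\log\xi)=\re(C)\log|\xi|+O(1)$; if $\re(C)>0$ this tends to $+\infty$, so the $F_\k$-image of the boundary has unbounded real part and the image contains \emph{no} translate of $\CH$ whatsoever. In that case there is nothing to invert on a half-plane, and ``domination in modulus'' gives neither surjectivity onto a half-plane nor injectivity. This is precisely why the paper (following \cite{dumas-wolf}, Appendix A) first enlarges the domain to a sector $\H_\epsilon$ of opening $\pi+2\epsilon$: on $\pa\H_\epsilon$ one has $\re\xi\to-\infty$ linearly, which beats the logarithm and guarantees $F(\H_\epsilon)\supset\H+D$; but $\H_\epsilon$ is no longer convex, so injectivity of $\xi\mapsto\xi+C\log(\xi+B)$ has to be proved by the quantitative argument with the quasi-convexity constant $\lambda$ and the bound $|f-1|<\lambda^{-1}$ for $B$ large. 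Your sketch contains neither the enlargement nor an injectivity estimate, and without both the construction of $\Phi_\k$ for $3\mid n$ does not go through; once they are supplied, your route and the paper's are the same up to the change of variable $\xi$.

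Two smaller points. First, the overlap $\Phi_\k(\CH)\cap\Phi_{\k+1}(\CH)$ has tangent cone the stable sector $S_{\k,\k+1}$, which is bounded by $U_\k^+$ and $U_{\k+1}^-$ and centered on the ray $C_{\k,\k+1}$; it is not a symmetric angular neighbourhood of $U_\k^+$ as you describe. The total width $\pi/n$ (hence $\pi/3$ in the $\zeta$-chart) is the same, so your conclusion is unaffected, but the bookkeeping you postpone should be done against the correct cone. Second, ``total angular width $3\pi>2\pi$ implies coverage'' is only a heuristic about tangent cones; the paper's actual argument is that $\phi(\H)$ contains a half-plane $\H+D'$ and $\bigcup_\k\Psi_\k(\H+D')$ is a punctured neighbourhood of $p$, which again uses the half-plane containment discussed above.
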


The main consequence that we draw from the expression of $\theta(\tilde{\beta})$ in (\ref{item_natural3}) is the following table, which gives the range of $\theta(\tilde{\beta})$ when $v$ belongs to each of the special directions/sectors, respectively.

\vspace{5pt}

\begin{tabular}{|c||c|c|c||c|c|c|c|c|}
\hline
$v$&$C_{k,k+1}$&$C_k$&$C_{k-1,k}$&
$S_{k,k+1}$&$U_k^+$&$S_{k}$&$U_k^-$&$S_{k-1, k}$
\\[3pt]\hline
$\theta(\tilde{\beta})$&$-\tfrac{\pi}{3}$&$(-\tfrac{\pi}{3},\tfrac{\pi}{3})$&$\tfrac{\pi}{3}$&
$(-\tfrac{\pi}{2},\tfrac{\pi}{6})$&$-\tfrac{\pi}{6}$&$(-\tfrac{\pi}{6},\tfrac{\pi}{6})$&$\tfrac{\pi}{6}$&$(\tfrac{\pi}{6},\tfrac{\pi}{2})$\\[3pt] \hline
\end{tabular}
\vspace{5pt}

\begin{proof}
When $n$ is not divisible by $3$, we have already seen that the $\Phi_\k$ defined by (\ref{eqn_chart}) satisfies properties (\ref{item_natural1}). The other properties are also staightforward.

When $n$ is divisible by $3$, we put $n'=n/3$. The expression of $\ve{b}$ at the beginning of this section can be written as
$$
\ve{b}=\left(\frac{1}{z^{n'+1}}+\frac{A}{z}\right)^3\dz^3.
%=\left(\frac{1}{z^{3n'+3}}+\frac{3A}{z^{2n'+3}}\right)\dz^3.
$$
We use a construction from \cite{dumas-wolf} Appendix A. For $\epsilon>0$ small, put
$$
\H_{\epsilon}=\{\xi\in\mathbb{C}\mid \arg(\xi)\in(-\pi/2-\epsilon,\pi/2+\epsilon)\}.
$$
The key property of $\H_\epsilon$ is that any $\xi_1,\xi_2\in\H_\epsilon$ are joint by a path in $\H_\epsilon$ with length inferior to $\lambda|\xi_1-\xi_2|$, where $\lambda>1$ is a constant determined by $\epsilon$.

The map $\Phi_\k:\H\rightarrow U$ will be constructed as a composition
$$
\Phi_\k=\Psi_\k\circ\phi:\H\overset{\phi}\longrightarrow\H_\epsilon\overset{\Psi_\k}\longrightarrow U,
$$
where $\phi$ and $\Psi_\k$ are defined as follows
\begin{itemize}
\item $\Psi_\k$ is just the previously defined map (\ref{eqn_chart}) extended to $\H_\epsilon$ by the same \mbox{expression}, 
%$$
%\Psi_\k(\xi)
%%=\left(-\frac{\omega n}{3\cdot2^{-\frac{1}{3}}}\zeta\right)^{-\frac{3}{n}}
%=\left(\frac{n}{3\cdot2^{-\frac{1}{3}}}\right)^{-\frac{3}{n}}\exp\left(-\frac{3}{n}\log(\xi+B)+\frac{(2k+1)\pi\ima}{n}\right).
%$$
where we take $B$ big enough  to ensure $\Psi_\k(\H_\epsilon)\subset U$. Using (\ref{eqn_pull1}) and (\ref{eqn_pull2}), one checks that $\Psi_\k$ pulls $\ve{b}$ back to 
%$$
%\Psi^*(\ve{b})=2\left(1+\frac{3A}{2^\frac{1}{3}n}\cdot\frac{1}{\xi+B}\right)\dif\xi^3
%$$
$$
\Psi_\k^*(\ve{b})=2\left(1+\frac{C}{\xi+B}\right)^3\dif\xi^3
$$
for a constant $C\in\mathbb{C}$ depending on $A$ but not on $B$. Put $f(\xi):=1+\frac{C}{\xi+B}$.
We take $B$ further bigger such that $|f(\xi)-1|=\big|\frac{C}{\xi+B}\big|<\lambda^{-1}$ for any $\xi\in \H_\epsilon$. 
\item
Put $F(\xi):=\xi+C\log(\xi+B)$. This is a primitive of $f$.
We define $\phi$ as
$$
\phi(\zeta)=F^{-1}(\zeta+D)  
$$
for a big enough constant $D>0$ which we determine later.
\end{itemize}
In order for $\phi$ to make sense, we need to show that $F$ maps $\H_\epsilon$ injectively to a domain in $\mathbb{C}$ containing $\H+D$.

To check injectivity, assume by contradiction that $F(\xi_1)=F(\xi_2)$ for distinct $\xi_1, \xi_2\in\H_\epsilon$. Let $\gamma$ be a path going from $\xi_1$ to $\xi_2$ with length less than $\lambda|\xi_1-\xi_2|$ as mentioned above. Then
$$
0=F(\xi_2)-F(\xi_1)=\int_{\gamma}f(\xi)\dif\xi=\xi_2-\xi_1+\int_{\gamma}(f(\xi)-1)\dif\xi.
$$
But 
$$
\left|\int_\gamma(f(\xi)-1)\dif\xi\right|\leq \sup_{\xi\in\H_\epsilon}|f(\xi)-1|\cdot\mathsf{Length}(\gamma)< \lambda^{-1}\cdot \lambda|\xi_1-\xi_2|=|\xi_1-\xi_2|,
$$
a contradiction. $F$ is therefore injective on $\H_\epsilon$. 

Furthermore, $F(\H_\epsilon)$ contains $\H+D$ for some $D>0$ because $\lim_{\xi\rightarrow\infty}F(\xi)=\infty$ and $\sup_{\xi\in\pa\H_\epsilon}\re(F(\xi))<+\infty$. This finishes the construction of $\Phi_\k$.

We briefly outline the proofs of the properties (\ref{item_natural1}) to  (\ref{item_natural5}) for $\Phi_\k$. Property (\ref{item_natural1}) is an immediate consequence of the above expressions of $\Psi_\k^*(\ve{b})$ and $\phi$. Property (\ref{item_natural3}) \mbox{follows} from the explicit expression of $\Psi_\k$ and the fact that the map $\phi$ preserves asymptotic arguments. As for (\ref{item_natural4}), firstly, the injectivity follows from injectivity of both $\phi$ and $\Psi_\k$. Secondly, $\phi(\H)$ contains a half-plane of the form $\H+D'$ for some $D'>0$, but the union $\bigcup_{\k}\Psi_\k(\H+D')$ is a punctured neighborhood of $p$. Lastly, to prove the assertion about the pre-image of $\Phi_{\k}(\CH)\cap\Phi_{\k+1}(\CH)$, it is sufficient to show that the curve $\Phi_{\k+1}^{-1}(\Phi_{\k}(\pa\H))$ (resp. $\Phi_\k^{-1}(\Phi_{\k+1}(\pa\H))$  in $\CH$ is a ray with asymptotic argument $\tfrac{\pi}{3}$ (resp. $-\tfrac{\pi}{3}$). This curve is a ray because each $\Phi_\k(\pa\H)$ is a geodesic with respect to the flat metric $|\ve{b}|^\frac{2}{3}$,
% (this can be seen by noting that $\Phi_\k$ can be extended to a bigger half-plane $\{\zeta\geq -\epsilon\}$,
 %still pulling $\ve{b}$ back to $2\,\dzeta^3$),
 whereas the asymptotic argument $\pm\tfrac{\pi}{3}$ can be obtained using property (\ref{item_natural3}). Property (\ref{item_natural5}) is proved similarly as  (\ref{item_natural4}).
 \end{proof}

\begin{figure}[h]
\centering\includegraphics[width=5in]{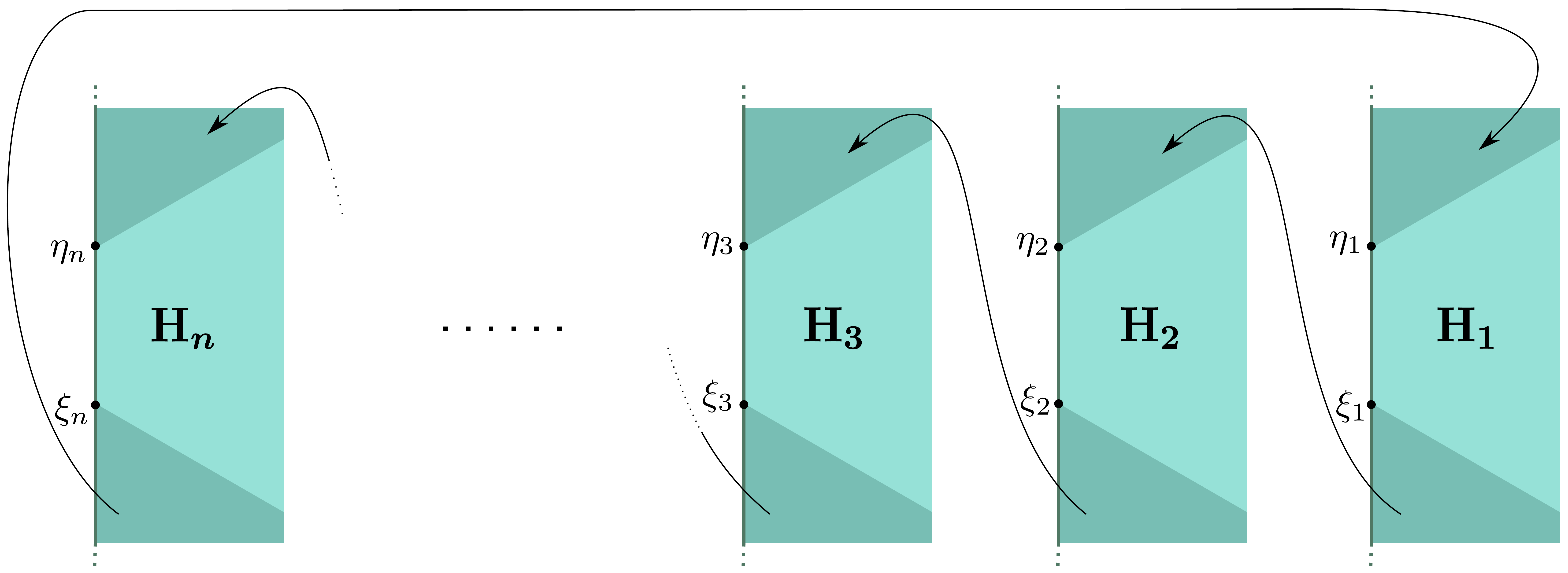}
\caption{A local model for $(\Sigma,\ve{b})$ by gluing $n$ copies of right half-planes: here $\mathbf{H}_{\k}$ is glued to $\mathbf{H}_{\k+1}$ by means of a map of the form $\zeta\mapsto e^{2\pi\ima/3}\zeta+\zeta_0$ which sends  $\xi_{\k}$ to $\eta_{\k+1}$. The cubic differential $2\,\dzeta^3$ is invariant under this map, hence defines a cubic differential on the glued surface $S$.}
\label{figure_glue}
\end{figure}

\subsection{A local model for $(\Sigma, \ve{b})$}\label{sec_local}
Proposition \ref{prop_natural} implies that we can build a local model of $(\Sigma,\ve{b})$ by patching together $n$ half-planes, each equipped with the  cubic differential $2\,\dzeta^3$. The following corollary gives a formal statement.

\begin{corollary}[\textbf{Local model}]
Let $\H_{\k}$ ($\k\in\mathbb{Z}/n\mathbb{Z}$) be  $n$ copies of the right half-plane. Then there exists  $\xi_\k, \eta_\k\in\pa\H_\k$ with $\im(\xi_\k)<0<\im(\eta_\k)$, such that a punctured neighborhood $U'\subset U$ endowed with the cubic differential $\ve{b}$ is isomorphic to the Riemann surface $S$ obtained by gluing the half-planes in the way described by Figure \ref{figure_glue}, endowed with the cubic differential $2\,\dzeta^3$.
\end{corollary}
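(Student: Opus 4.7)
The plan is to read Proposition \ref{prop_natural} as a gluing diagram. First I would invoke that proposition to obtain the conformal maps $\Phi_\k : \CH \to U$ for $\k \in \mathbb{Z}/n\mathbb{Z}$. Property (\ref{item_natural4}) (when $n \geq 2$) and property (\ref{item_natural5}) (when $n = 1$) describe precisely the pre-images of the overlaps $\Phi_\k(\CH) \cap \Phi_{\k+1}(\CH)$: through $\Phi_\k$ the pre-image is a sector based at some $\xi_\k \in \pa\H$ with $\im(\xi_\k) \leq 0$, through $\Phi_{\k+1}$ it is a sector based at some $\eta_{\k+1} \in \pa\H$ with $\im(\eta_{\k+1}) \geq 0$, and the transition $\Phi_{\k+1}^{-1}\circ\Phi_\k$ is the affine map $\zeta \mapsto \omega\zeta - \omega\xi_\k + \eta_{\k+1}$ with $\omega = e^{2\pi\ima/3}$.

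Next I would take $n$ copies $\H_\k$ of the right half-plane and form the quotient space $S$ of $\bigsqcup_\k \H_\k$ by the equivalence relation generated by these affine gluing maps, exactly as depicted in Figure \ref{figure_glue}. Each gluing map is a biholomorphism between open sub-sectors, so $S$ naturally carries the structure of a Riemann surface. Since $\omega^3 = 1$ and each transition map is affine with linear part $\omega$, the cubic differential $2\,\dzeta^3$ is invariant under every gluing; hence it descends to a well-defined holomorphic cubic differential $\ve{b}_S$ on $S$.

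Finally, the maps $\Phi_\k$ agree on overlaps by construction, so they assemble into a holomorphic map $\Phi : S \to U$. Property (\ref{item_natural1}) yields $\Phi^*\ve{b} = \ve{b}_S$. Surjectivity of $\Phi$ onto a punctured neighborhood $U' \subset U$ of $p$ is exactly the statement (in (\ref{item_natural4}) or (\ref{item_natural5})) that $\bigcup_\k \Phi_\k(\H)$ is such a neighborhood, while injectivity follows because the equivalence relation on $\bigsqcup_\k \H_\k$ was designed to match precisely the identifications forced by the $\Phi_\k$. Thus $\Phi$ is a biholomorphism $S \overset\sim\to U'$ intertwining the two cubic differentials, as required.

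The main subtlety lies in the $n = 1$ case, where a single half-plane must be self-glued via the two disjoint sectors identified in property (\ref{item_natural5}); the argument is the same but the equivalence relation identifies points within the single copy $\H_1$ rather than between distinct copies. Beyond this I do not anticipate a substantive obstruction — once the sectors, basepoints $\xi_\k, \eta_\k$, and transition formulas have been extracted from Proposition \ref{prop_natural}, the construction of $S$ and of $\Phi$ is essentially bookkeeping.
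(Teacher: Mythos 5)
Your proposal is correct and follows exactly the route the paper intends: the corollary is presented there as an immediate formal restatement of Proposition \ref{prop_natural}, with the gluing data (the sectors, the basepoints $\xi_\k,\eta_\k$, and the transition maps $\zeta\mapsto\omega\zeta-\omega\xi_\k+\eta_{\k+1}$) read off from properties (iv)--(v), just as you do. Your reading of the transition map as $\Phi_{\k+1}^{-1}\circ\Phi_\k$ is the intended one, and the assembly of the $\Phi_\k$ into a biholomorphism intertwining $\ve{b}$ with $2\,\dzeta^3$ is precisely the "bookkeeping" the paper leaves implicit.
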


Clearly, we can further identify the closure $\overline{U'}$ with the surface with boundary $\overline{S}$ obtained by gluing the closed half-planes $\CH_\k$ ($\k\in\mathbb{Z}/n\mathbb{Z}$) in the same way. In what follows, we view $\overline{S}$ and the $\CH_\k$'s as subsets of $\Sigma$.  This point of view is not honest in the $n=1$ case, where a quotient of $\CH$ rather than $\CH$ itself is subset of $\Sigma$. In fact, some arguments in \S \ref{sec_geq} below should indeed be adjusted when applied to this case.
However, the adjustment is just a matter of notations and thus we will not single out the $n=1$ case anymore.

\section{Solving Wang's equation}\label{sec_cv}

Let the Riemann surface $\Sigma$ and the cubic differential $\ve{b}$ be as at the beginning of Section \ref{sec_model}. In this section, we first prove statement (II) in the sketch of proof of Theorem \ref{intro_main} from the introduction. This is given by Theorem \ref{thm_vortex} and Proposition \ref{coro_non} below. We then establish some asymptotic estimates for $g$ around poles of order $\geq 3$ for later use.

\subsection{Existence and uniqueness}\label{sec_exanduq}
\begin{theorem}\label{thm_vortex}
$\,$ 
\begin{enumerate}
\item
There exists a unique complete conformal Riemannian metric $g$ on $\Sigma$ such that
\begin{itemize}
\item
$
\kappa_g=-1+2\|\ve{b}\|_g^2
$;
\item
in a punctured neighborhood of each pole of order $\geq3$ we have $a<\kappa_g<b$  for some constants $-1<a<b$;
\item\label{item_thmwang1}
$\kappa_g(x)\rightarrow-1$ as $x$ tends to a removable singularity or a pole of order $\leq 2$.
\end{itemize}
%\begin{enumerate}[(i)]
%\item\label{item_sov1}
%If $p$ is a pole of $\ve{b}$ of order $\geq3$, then $g$ is conformally quasi-isometric to the flat metric $|\ve{b}|^\frac{2}{3}$ around $p$.
%\item\label{item_sov2}
%If $p$ is a removable singularity or a pole of order $\leq2$, then $\kappa_g$ is negatively pinched around $p$.
%\end{enumerate} 
\item\label{item_thmwang2}
This metric $g$ furthermore satisfies 
\begin{itemize}
\item
$-1\leq\kappa_g\leq 0$;
\item
$\kappa_g(x)\rightarrow 0$ as $x$ tends to a pole of order $\geq 3$.
\end{itemize}
\end{enumerate}
\end{theorem}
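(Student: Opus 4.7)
The plan is to treat Wang's equation as a semilinear elliptic PDE on $\Sigma$. Fixing a conformal background metric $g_0$ and writing $g = e^{2u} g_0$, Wang's equation becomes
\begin{equation*}
\Delta_{g_0} u = \kappa_{g_0} + e^{2u} - 2 e^{-4u} \|\ve{b}\|^2_{g_0},
\end{equation*}
whose nonlinearity is strictly increasing in $u$. This monotonicity is the structural feature that drives both existence (via sub- and super-solutions) and uniqueness (via the maximum principle), following \cite{loftin_compactification, benoist-hulin, dumas-wolf}. I would choose $g_0$ adapted to each puncture: a cuspidal hyperbolic metric as in (\ref{eqn_cusphyp}) near a removable singularity or pole of order $\leq 2$, and the singular flat metric $2^{1/3}|\ve{b}|^{2/3}$ near a pole of order $\geq 3$, smoothly interpolated on the compact part.

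For existence, the asymptotics sought, namely $\kappa_g \to -1$ (where $\|\ve{b}\|^2_{g_0} \to 0$) and $\kappa_g \to 0$ with $2\|\ve{b}\|^2_g \to 1$ (where the model is already flat with the correct norm of $\ve{b}$), are precisely the conditions making $u \equiv 0$ an asymptotic near-solution at each puncture. Concretely, one constructs barriers $u_\pm$ that are small constants outside a large compact set, adjusted by bump functions so that $u_-$ (resp.\ $u_+$) is a sub- (resp.\ super-) solution. An exhaustion $\Sigma_n \uparrow \Sigma$ by relatively compact domains, together with Perron's method (or monotone iteration) with Dirichlet data matching the barriers, yields $u_n$ on $\Sigma_n$ with $u_- \leq u_n \leq u_+$; standard elliptic estimates and diagonal extraction produce a solution $u$ on $\Sigma$. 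The bounds on $u_\pm$ translate into the required control $a < \kappa_g < b$ near high-order poles and $\kappa_g \to -1$ near low-order ones, while completeness of $g$ follows because $u$ is bounded and $g_0$ is complete by construction.

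For uniqueness, suppose $g_1, g_2$ both solve the problem and write $g_2 = e^{2v} g_1$. Subtracting Wang's equations yields
\begin{equation*}
\Delta_{g_1} v = (e^{2v}-1) + 2\|\ve{b}\|^2_{g_1}\bigl(1 - e^{-4v}\bigr),
\end{equation*}
whose right-hand side has the same sign as $v$. The asymptotic conditions force $v$ to be bounded on $\Sigma$: at low-order poles, both $g_i$ have negatively pinched curvature and are complete at the puncture, so Lemma \ref{lemma_bilip} gives conformal quasi-isometry; at high-order poles, $a < \kappa_{g_i} < b$ pins $2\|\ve{b}\|^2_{g_i} = \kappa_{g_i}+1$ to a compact sub-interval of $(0, \infty)$, forcing each $g_i$ to be conformally quasi-isometric to $|\ve{b}|^{2/3}$. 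Since $g_1$ is complete with Ricci curvature bounded from below (by $-1$), the Yau--Omori maximum principle produces a sequence $x_n$ along which $v(x_n) \to \sup v$ and $\limsup \Delta_{g_1} v(x_n) \leq 0$. If $\sup v > 0$, the term $e^{2v(x_n)}-1$ alone forces the right-hand side to be bounded below by a positive constant along the sequence, a contradiction. Hence $\sup v \leq 0$ and symmetrically $\inf v \geq 0$, so $v \equiv 0$.

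For part (2), $\kappa_g \geq -1$ is immediate from $2\|\ve{b}\|^2_g \geq 0$. For the upper bound, writing $f := 2\|\ve{b}\|^2_g = \kappa_g + 1 \geq 0$ and using that $\log|b|$ is harmonic away from zeros of $\ve{b}$ together with the conformal curvature formula, one computes
\begin{equation*}
\Delta_g \log f = 6(f-1)
\end{equation*}
on the complement of the zero locus of $\ve{b}$. Since $\log f \to -\infty$ at those zeros and at poles of order $\leq 2$, $\sup \log f$ is attained either at an interior critical point (where the classical maximum principle gives $f \leq 1$) or approached along a sequence towards a pole of order $\geq 3$, where Yau--Omori again yields $\sup \log f \leq 0$. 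Finally, the convergence $\kappa_g \to 0$ at poles of order $\geq 3$ reflects the fact that the barriers $u_\pm$ from the existence step can be chosen to tend to $0$ at such punctures, whence $u \to 0$ there and $g$ is asymptotic to $2^{1/3}|\ve{b}|^{2/3}$. The main obstacle I anticipate is the barrier construction itself: matching the correct asymptotics at all puncture types simultaneously, especially near zeros of $\ve{b}$ adjacent to high-order poles where the model metric $|\ve{b}|^{2/3}$ degenerates.
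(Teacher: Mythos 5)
Your overall strategy (sub/super-solutions for existence, quasi-isometry plus Omori--Yau for uniqueness) is the same as the paper's, and your intrinsic argument for $\kappa_g\leq 0$ via $\Delta_g\log f=6(f-1)$ with $f=2\|\ve{b}\|_g^2$ is a correct and pleasant alternative to the paper's route (which gets $\kappa_g\le 0$ for free from $g\geq g_-\geq 2^{\frac{1}{3}}|\ve{b}|^{\frac{2}{3}}$). But there is a genuine gap at the very end: the assertion that ``the barriers $u_\pm$ from the existence step can be chosen to tend to $0$'' at poles of order $\geq 3$, hence $\kappa_g\to 0$ there. Your existence barriers are (essentially) constants near such poles, and a constant super-solution only gives $u\leq c$; it does not improve as you approach the puncture, and you cannot bootstrap decay by re-running the comparison on smaller punctured neighborhoods, because on the boundary of such a neighborhood $u$ is only known to be $\leq c$, not small, so any decaying comparison function fails to dominate $u$ on the boundary. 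Producing a \emph{global} super-solution that decays at the pole is real work: in the paper this is exactly Proposition \ref{coro_non}, where one takes $g_+'=e^{\beta\phi}g_0$ with $\phi=|z|^\alpha$ near the pole and verifies $L(g_+')\geq 0$ using $\Delta_{g_0}\phi\leq \frac{\alpha^2}{c}\phi$ and the elementary inequality $1-\frac{\alpha^2}{c}te^{-t}-e^{-3t}\geq 0$ for $\alpha$ small; uniqueness then identifies the resulting solution with $g$ and gives $0\leq u\leq\beta|z|^\alpha\to 0$. (An alternative repair, closer to Dumas--Wolf, is an interior estimate: once $0\leq u\leq C$ near the pole, apply a comparison on $|\ve{b}|^{\frac{2}{3}}$-disks of radius $r\to\infty$ centered at points approaching the puncture, as in Lemma \ref{lemma_loglambda}, to force $u\to 0$.) Either way, this step needs an actual argument, not just the assertion.

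A secondary, fixable weakness is your sub-solution. With a background $g_0$ that is an arbitrary smooth interpolation on the compact part, a (very negative) constant $u_-$ fails to be a sub-solution at points where $\ve{b}$ vanishes and $\kappa_{g_0}\geq 0$: there the right-hand side $\kappa_{g_0}+e^{2u_-}-2e^{-4u_-}\|\ve{b}\|^2_{g_0}$ is strictly positive no matter how negative the constant is, so $\Delta_{g_0}u_-\geq\mathrm{RHS}$ cannot hold; ``adjusted by bump functions'' glosses over this. The paper avoids the issue by taking $g_-=\max(g_\hyp,2^{\frac{1}{3}}|\ve{b}|^{\frac{2}{3}})$ (with infinite-volume hyperbolic metrics on $\Sigma_R$ in the $\mathbb{C}$, $\mathbb{C}^*$ cases): each piece satisfies $L\leq 0$ where smooth, and the max of the two is a distributional sub-solution by subharmonicity of the conformal factor. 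Your worry about zeros of $\ve{b}$ adjacent to high-order poles, on the other hand, is not a real obstacle: zeros are isolated, so the punctured neighborhood where $g_0=2^{\frac{1}{3}}|\ve{b}|^{\frac{2}{3}}$ can be chosen free of them.
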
 

%Note that condition (\ref{item_sov2}) implies $g$ is conformally quasi-isometric to a cuspidal hyperbolic metric as shown in \S \ref{sec_cuspidal}.

\begin{proof}
(\ref{item_thmwang1})
Given a conformal Riemannian metric $g$ on $\Sigma$, we put 
$$
L(g)=1+\kappa_g-2\|\ve{b}\|_g^2.
$$
The standard method of super/sub-solutions consists in the following lemma.
\begin{lemma}\label{lemma_supsub}
Assume that $g_+$ and $g_-$  are continuous conformal Riemannian metrics on $\Sigma$ in the Sobolev class $H^1$ such that $g_-\leq g_+$. If
$$
%\begin{equation}\label{eqn_supsub}
L(g_+)\geq 0,\quad L(g_-)\leq 0
%\end{equation}
$$
in the sense of distributions, then there exists a smooth conformal Riemannian metric $g$ such that 
$$
L(g)=0,\quad g_-\leq g\leq g_+.
$$  
\end{lemma}
To prove the lemma, we first take a smooth complete  background conformal metric $g_0$ and write any conformal metric as $g=e^ug_0$. One checks that
$$
-2e^uL(e^ug_0)=\Delta_0 u-F(x,u),
$$
where $\Delta_0$ is the Beltrami-Laplacian of the metric $g_0$ and the function $F: \Sigma\times\mathbb{R}\rightarrow\mathbb{R}$ is defined by 
$$
F(x,u)=2\kappa_{g_0}(x)-4e^{-2u}\|\ve{b}\|^2_{g_0}(x)+2e^u.
$$
Lemma \ref{lemma_supsub} follows from the method of super/sub-solutions applied to the equation $\Delta_0u-F(x,u)=0$ (see Theorem 9 in \cite{wan})).

\vspace{5pt}

We now construct $g_-$ and $g_+$. First assume $\Sigma$ has negative Euler characteristic, so that $\Sigma$ carries a unique complete hyperbolic conformal metric of finite volume, denoted by $g_\hyp$. We construct the required $g_-$  as
$$
g_-=\max(g_\hyp, 2^\frac{1}{3}|\ve{b}|^\frac{2}{3}).
$$
Using the expression (\ref{eqn_cusphyp}) of $g_\hyp$ around a puncture, we see that $g_-=g_\hyp$ around removable singularities or poles of order $\leq 2$, whereas $g_-=2^\frac{1}{3}|\ve{b}|^\frac{2}{3}$ around poles of order $\geq 3$.

%This is obvious if $p$ is a pole of order $\geq 3$. If $p$ is a removable singularity or pole of order $\leq 2$, we have $\kappa_{g_0}=-1$, while $\|\ve{b}\|_{g_0}^2$ tends to $0$ at $p$ because of the expressions
%$$
%g_0=g_\hyp=\frac{4|\dz|^2}{|z|^2\big(\log|z|\big)^2}\ , \quad b(z)=z^rf(z)\dz^3,\  \ r\geq -2,
%$$
%where $z$ is distinguished local coordinate\footnote{If we parametrize a punctured neighborhood of $p$ by $\zeta\in U/2\pi\mathbb{Z}$, where $U$ is a horodisk centered at $\infty$ in the Poincar\'e upper-half plane, then $z=\exp(\ima\zeta)$.} centered at $p$ and $f(z)$ is a non-vanishing holomorphic function. 

We have
$
L(g_\hyp)=-2\|\ve{b}\|_{g_\hyp}^2\leq 0
$
and 
$
L(2^\frac{1}{3}|\ve{b}|^\frac{2}{3})=0 
$
(away from the zeros of $\ve{b}$). So the required inequality $L(g_-)\leq 0$ holds on the open set where
$g_\hyp\neq 2^\frac{1}{3}|\ve{b}|^\frac{2}{3}$. 
It also holds in the distribution sense at the points where $g_\hyp=2^\frac{1}{3}|\ve{b}|^\frac{2}{3}$ because $g_-$ has non-positive curvature in the distribution sense\footnote{A conformal metric $g=e^u|\dz|^2$ has non-positive curvature in the distribution sense if and only if $u$ is a subharmonic function, whereas the maximum of two subharmonic functions is again subharmonic.}. 

In order to construct $g_+$, we take a smooth conformal metric $g_0$ which coincides with $g_-$ around punctures and satisfies $g_0\geq g_-$. The curvature $\kappa_{g_0}$ and  the pointwise norm $\|\ve{b}\|_{g_0}$ are continuous and are bounded around punctures, hence they are bounded all over $\Sigma$. Therefore, we can take $g_+:=\lambda g_0\geq g_-$ for a sufficiently large constant $\lambda>1$ such that
$$
L(g_+)=1+\lambda^{-1}\kappa_{g_0}-2\lambda^{-3}\|\ve{b}\|_{g_0}^2\geq 0.
$$
This finishes the construction of $g_\pm$ when $\Sigma$ has negative Euler characteristic.

We now suppose that $\Sigma$ has non-negative Euler characteristic, \ie $\Sigma$ is either $\mathbb{C}^*$ or $\mathbb{C}$. By the Riemann-Roch theorem, the cubic canonical  bundle $K^3$ of the Riemann sphere $\mathbb{CP}^1$ has degree $-6$, so we are in one of the following cases.
\begin{enumerate}[(a)]
\item\label{item_nonneg1}
$\Sigma=\mathbb{C}$ and the puncture $\infty$ is a pole of order $\geq 6$.
\item\label{item_nonneg2}
$\Sigma=\mathbb{C}^*$ and exactly one of the two punctures $0$ and $\infty$ is a pole of order $\geq 3$. We assume without loss of generality that $\infty$ is such a pole.
\item\label{item_nonneg3}
$\Sigma=\mathbb{C}^*$ and both punctures are poles of order $\geq 3$.
\end{enumerate}

For each $R>1$, we define an open set $\Sigma_R\subset\Sigma$ in the above cases  respectively by 
$$
\Sigma_R:=
\begin{cases}
\{|z|< R\}&\mbox{in case (\ref{item_nonneg1})},\\
\{0< |z|< R\}&\mbox{in case (\ref{item_nonneg2})},\\
\{R^{-1}<|z|<R\}&\mbox{in case (\ref{item_nonneg3})}.
\end{cases}
$$

Then $\Sigma_R$ carries a unique complete conformal hyperbolic metric $g_R$ (of infinite volume), expressed  by
$$
g_r=
\begin{cases}
\frac{4R^2|\dz|^2}{(R^2-|z|^2)^2}&\mbox{in case (\ref{item_nonneg1})},\\[10pt]
\frac{|\dz|^2}{|z|^2\left(\log \frac{|z|}{R}\right)^2}&\mbox{in case (\ref{item_nonneg2})},\\[14pt]
\frac{(\tfrac{\pi}{2})^2|\dz|^2}{\left(\log R\right)^2|z|^2\cos^2\!\left(\tfrac{\pi}{2}\frac{\log |z|}{\log R}\right)}&\mbox{in case (\ref{item_nonneg3})}.
\end{cases}
$$
%$$
%(\ref{item_nonneg1}) \ \frac{4r^2|\dz|^2}{(r^2-|z|^2)^2}\,, \quad (\ref{item_nonneg2})\ \frac{|\dz|^2}{|z|^2\left(\log \frac{r}{|z|}\right)^2}\ , \quad (\ref{item_nonneg3})\ \frac{(\tfrac{\pi}{2})^2|\dz|^2}{\left(\log r\right)^2|z|^2\cos^2\!\left(\tfrac{\pi}{2}\frac{\log |z|}{\log r}\right)}\ .
%$$
Here, the last two expressions are obtained from the standard hyperbolic metric $\frac{|\dzeta|^2}{(\im \zeta)^2}$ on the upper-half plane $\{\im\zeta>0\}$ and the standard hyperbolic metric $\frac{|\dzeta|^2}{\cos^2(\im\zeta)}$ on the band $\{ -\tfrac{\pi}{2}<\im\zeta<\tfrac{\pi}{2}\}$ by scaling and exponential.

Using these expressions, we check that $g_R\leq 2^\frac{1}{3}|\ve{b}|^\frac{2}{3}$ on the boundary of the subset $\Sigma_{\!\sqrt{R}}\subset\Sigma_R$ provided that $R$ is big enough. Fixing such a $R$, we define the required metric $g_-$ by
$$
g_-=
\begin{cases}
\max(g_R, 2^\frac{1}{3}|\ve{b}|^\frac{2}{3})&\mbox{ on } \Sigma_{\sqrt{R}},\\
2^\frac{1}{3}|\ve{b}|^\frac{2}{3}& \mbox{ outside } \Sigma_{\sqrt{R}}.
\end{cases}
$$
It satisfies $L(g_-)\leq 0$ in the distribution sense for the same reason as in the negative Euler characteristic case. The constructions of $g_0$ and $g_+=\lambda g_0$ are also the same as in the that case.

Now that the required $g_-$ and $g_+$ are obtained, we let $g$ be a metric satisfying $L(g)=0$ and $g_-\leq g\leq g_+$ produced by Lemma \ref{lemma_supsub}. Note that the curvature 
\begin{equation}\label{eqn_kappag}
\kappa_g=-1+2\|\ve{b}\|_g^2=-1+\left(\frac{2^\frac{1}{3}|\ve{b}|^\frac{2}{3}}{g}\right)^3 
\end{equation}
has bounds of the form $-1<a<\kappa_g<b$ on a subset of $\Sigma$ if and only if $g$ and $|\ve{b}|^\frac{2}{3}$ are conformally quasi-isometric on that subset.  Around poles of order $\geq 3$, this holds because $g_-$ and $g_+$ are constant multiples of $|\ve{b}|^\frac{2}{3}$. Note that it implies $g$ is complete at these poles (see Definition \ref{def_complete}) because $|\ve{b}|^\frac{2}{3}$ is.  

Around a removable singularity or a pole of order $\leq 2$, since we have $2^\frac{1}{3}|\ve{b}|^\frac{2}{3}=|z|^{-\frac{2}{3}}f(z)|\dz|^2$ for some bounded function $f(z)$, whereas $g_\hyp=g_-=\lambda^{-1}g_+$ has the expression (\ref{eqn_cusphyp}), the proportion between $2^\frac{1}{3}|\ve{b}|^\frac{2}{3}$ and $g$ tends to $0$ as $z\rightarrow0$, hence $\kappa_g$ tends to $-1$. Moreover, $g$ is complete at these poles because $g_\hyp$ is. 

Thus we have shown that $g$ satisfies all the requirements and have established the existence part of the theorem.

To prove uniqueness, we let $g'=e^wg$ be another  metric fulfilling the requirements. 
Note that $g$ and $g'$ are conformally quasi-isometric because, on one hand, around poles of order $\geq3$, both $g$ and $g'$ are conformally quasi-isometric to $|\ve{b}|^\frac{2}{3}$ as explained above; on the other hand, around removable singularities or poles of order $\leq 2$, it follows from Lemma \ref{lemma_bilip} that $g$ and $g'$ are conformally quasi-isometric as well. Thus  $w$ is a bounded function.

A computation yields
$$
\lap _gw=2(e^w-1)+4\|\ve{b}\|_{g}^2(1-e^{-2w}),
$$
where $\lap _g$ is the Beltrami-Laplacian of $g$. At the points where $w\geq 0$, we have
$$
\lap _gw\geq 2(e^w-1)\geq k w
$$ 
for some constant $k>0$ only depending on $M:=\sup_\Sigma w$

Let us prove $M\leq 0$. Since $g$ is complete and has curvature bounded from below,  we can apply the Omori-Yau maximum principle \cite{yau_max} to $(\Sigma, g)$ and conclude that for any $\epsilon>0$ there exists $x_\epsilon\in \Sigma$ such that 
$$
w(x_\epsilon)\geq M-\epsilon\,,\quad \Delta{}_gw(x_\epsilon)\leq \epsilon.
$$
Assuming by contradiction that $M>0$ and taking $\epsilon\in(0,M)$, we get
$$
\epsilon\geq \lap _gw(x_\epsilon)\geq kw(x_\epsilon)\geq k(M-\epsilon).
$$
This implies $M\leq \epsilon(1+k^{-1})$ for any $\epsilon\in (0, M)$, which is impossible. Thus we have shown that $M\leq 0$, or equivalently, $g'\leq g$. Exchanging the roles of $g$ and $g'$, we get $g'\geq g$ as well. This establishes the uniqueness.

\vspace{5pt}

(\ref{item_thmwang2}) In view of the expression (\ref{eqn_kappag}) of $\kappa_g$, we have $\kappa_g\leq 0$ because $g\geq g_-\geq 2^\frac{1}{3}|\ve{b}|^\frac{2}{3}$ whereas $\kappa_g\geq -1$ because $\|\ve{b}\|^2\geq 0$. 

The fact

\end{proof}

\subsection{Coarse estimate at poles of order $\geq 3$}
In the rest of this section, we let $g$ be the metric produced by Theorem \ref{thm_vortex}. Note that the above proof of the theorem yields bounds of $g$ in terms of $g_+$ and $g_-$:
\begin{itemize}
\item we have $g\geq g_-\geq 2^\frac{1}{3}|\ve{b}|^\frac{2}{3}$ on the whole $\Sigma$. By Wang's equation, this is equivalent to $\kappa_g\leq 0$.
\item around poles of $\ve{b}$ of order $\geq 3$, we have $g\leq g_+=\lambda\cdot 2^\frac{1}{3}|\ve{b}|^\frac{2}{3}$ for a constant $\lambda>1$.
\end{itemize}

The goal of this and the next subsection is to establish a finer upper bound of $g$ near poles of order $\geq 3$, which will be used in Section \ref{sec_ctop} below. The estimates will be stated in terms of the function $u:\Sigma\rightarrow\mathbb{R}\cup\{+\infty\}$ defined in \S \ref{sec_wang} by
$$
u=\log\left(\frac{g}{2^\frac{1}{3}|\ve{b}|^\frac{2}{3}}\right)=-\frac{1}{3}\log(2\|\ve{b}\|_g^2).
$$
The above mentioned bounds of $g$ implies that $u\geq 0$ on the whole $\Sigma$ and that $u$ is bounded from above around poles of order $\geq 3$.  In this subsection, we establishes the following

\begin{proposition}[\textbf{Coarse estimate}]\label{coro_non}
$u(x)$ tends to $0$ when $x$ tends to a pole of order $\geq 3$.
\end{proposition}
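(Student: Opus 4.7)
The plan is to work in the flat coordinate $\zeta$ on a natural half-plane $\mathbf{H}$ from \S\ref{sec_naturalhalf}, where $\ve{b} = 2\,\dzeta^3$ and therefore $g = 2 e^{u}|\dzeta|^2$ (abusing notation, $u$ stands for $u \circ \Phi_\k$). Inserting these into Wang's equation in coordinate form (\S\ref{sec_pscs}) reduces it to the scalar PDE
\[
\Delta u \;=\; 4\bigl(e^u - e^{-2u}\bigr) \;=:\; f(u),
\]
with $\Delta = 4\partial_\zeta\partial_{\bar\zeta}$. Theorem \ref{thm_vortex}(\ref{item_thmwang2}) gives $u \geq 0$, while the upper bound $g \leq g_+ = \lambda \cdot 2^{1/3}|\ve{b}|^{2/3}$ from the proof of that theorem furnishes a constant $M > 0$ with $u \leq M$ near $p$. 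Since $f$ is smooth, increasing on $[0,\infty)$ with $f(0)=0$ and $f'(0)=12$, there exists $c = c(M) > 0$ such that $f(s) \geq c\,s$ for all $s \in [0, M]$.

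The core estimate is a radial supersolution barrier. For each $R > 0$ set $v_R(r) := M\,I_0(\sqrt{c}\,r)/I_0(\sqrt{c}\,R)$ on the Euclidean disk $B_R \subset \mathbb{C}$, where $I_0$ is the modified Bessel function of index $0$. Then $\Delta v_R = c\,v_R$ and $0 \leq v_R \leq M$, so $\Delta v_R \leq f(v_R)$, i.e.\ $v_R$ is a classical supersolution, with $v_R = M \geq u$ on $\partial B_R$. Given $\zeta_0 \in \mathbf{H}$ with $\re \zeta_0 > R$, the disk $B_R(\zeta_0)$ embeds in $\mathbf{H}$, and the standard comparison principle (valid because $f' > 0$ throughout) yields
\[
u(\zeta_0) \;\leq\; v_R(0) \;=\; \frac{M}{I_0(\sqrt{c}\,R)} \;\longrightarrow\; 0 \quad \text{as } R \to +\infty.
\]

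It remains to transfer this bound to $\Sigma$ by showing that for every $q$ in a sufficiently small punctured neighborhood of $p$ one can choose $\k \in \mathbb{Z}/n\mathbb{Z}$ and $\zeta_0 = \Phi_\k^{-1}(q)$ with $\re \zeta_0$ arbitrarily large as $q \to p$. The identity $\theta(\tilde\beta) = -\tfrac{n}{3}\arg(v) + \tfrac{(2k+1)\pi}{3}$ from Proposition \ref{prop_natural}(\ref{item_natural3}) implies the relation $\theta_{\k+1} = \theta_\k + 2\pi/3$ between pullbacks in adjacent charts; since both must lie in $(-\pi/2, \pi/2)$ within the overlap $\Phi_\k(\mathbf{H}) \cap \Phi_{\k+1}(\mathbf{H})$, this forces $|\theta_\k| + |\theta_{\k+1}| = 2\pi/3$ and hence $\min_\k|\theta_\k| \leq \pi/3$, while in the singly-covered vertex sectors $C_\k$ the formula gives $|\theta_\k| < \pi/3$ directly. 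Choosing the minimizing $\k$ therefore produces $\re \zeta_0 \geq \tfrac{1}{2}|\zeta_0| \to +\infty$, and combined with the barrier estimate and $u \geq 0$ this gives $u(q) \to 0$ as $q \to p$. The main delicate point is verifying this covering near the unstable directions $U_\k^{\pm}$, where one of the two candidate pullbacks degenerates to $\pm\pi/2$ but the other remains comfortably inside $\mathbf{H}$, and this is what rescues the argument.
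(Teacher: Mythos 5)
Your core analytic step is fine (the Bessel barrier $v_R$ with $\Delta v_R=c\,v_R\le f(v_R)$, the comparison principle using strict monotonicity of $f$, and the bound $u(\zeta_0)\le M/I_0(\sqrt{c}\,R)$ are all correct, and the needed two-sided bound $0\le u\le M$ near the pole is available from $g_-\le g\le g_+$ in the proof of Theorem \ref{thm_vortex}, so there is no circularity). But there is a genuine gap in coverage: the proposition concerns poles of order $\ge 3$, while your whole geometric setup --- the natural half-planes $\Phi_\k$, the sectors $C_\k$, the unstable directions and the chart-overlap bookkeeping --- comes from \S\ref{sec_naturalhalf}, which is constructed only for poles of order $n+3\ge 4$. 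At a third-order pole, $\ve{b}=Rz^{-3}\dz^3$ and the flat metric $2^{\frac13}|\ve{b}|^{\frac23}$ makes a punctured neighborhood a half-infinite cylinder of \emph{fixed} circumference; flat disks embedded in $\Sigma$ near $p$ therefore have radius bounded by the injectivity radius, and your barrier only yields a constant bound $M/I_0(\sqrt{c}\,r_0)$, not decay. The repair is easy and in the spirit of your argument: lift $u$ to the covering half-plane $H$ of \S\ref{sec_half3}, where it is a translation-periodic solution of the same PDE and where points tending to $p$ lift to points whose distance to $\partial H$ tends to $+\infty$; then your disk barrier applies verbatim (this is exactly the mechanism of Lemma \ref{lemma_a1}). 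As written, though, your proof does not establish the statement at third-order poles.

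For poles of order $\ge 4$ your route is valid but genuinely different from the paper's proof of this proposition: the paper builds a refined global supersolution $g_+'=e^{\beta|z|^{\alpha}}\,2^{\frac13}|\ve{b}|^{\frac23}$, feeds it into Lemma \ref{lemma_supsub}, and invokes the uniqueness in Theorem \ref{thm_vortex} to get $0\le u\le\beta|z|^{\alpha}$, whereas you prove an interior estimate on large flat disks --- which is precisely the technique of the appendix (Lemmas \ref{lemma_loglambda}, \ref{lemma_a0}, \ref{lemma_a2}), there used for the sharper exponential bound; your version even yields a quantitative rate rather than mere convergence. One point to tighten: your chart-selection argument is phrased only for points approaching $p$ along rays with a well-defined asymptotic direction (and the identity $|\theta_\k|+|\theta_{\k+1}|=2\pi/3$ holds only in the overlap, with the right sign conventions); what the barrier needs is the uniform statement that for \emph{every} point $q$ near $p$ some $\Phi_\k^{-1}(q)$ lies at flat distance from $\partial\H$ tending to $+\infty$ as $q\to p$. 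This is true and is most cleanly extracted from the glued model of \S\ref{sec_local}, exactly as in the proof of Lemma \ref{lemma_a2}, where the largest embedded disk centered at $\zeta$ is shown to have radius at least $\tfrac12(|\zeta|-c)$; with that substitution, and the third-order-pole fix above, your argument goes through.
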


\begin{proof}
Let $g_-$ and $g_0$ be the same as in the proof of Theorem \ref{thm_vortex}. We shall construct another upper metric $g_+'$ instead of the metric $g_+$ used in that proof so as to get the required asymptotics.

For each pole $p$ of order $\geq 3$, take a conformal local coordinate $z$ centered at $p$ such that $g_0$ coincides with $2^\frac{1}{3}|\ve{b}|^\frac{2}{3}$ in $U_p:=\{0<|z|< 1\}$. We can assume that the $U_p$'s are disjoint with each other and are away from other punctures. Let $c>0$ be a constant such that
\begin{equation}\label{eqn_g0}
g_0=2^\frac{1}{3}|\ve{b}|^\frac{2}{3}\geq c\,|z|^{-2}|\dz|^2
\end{equation}
in each $U_p$. Put $U^\frac{1}{2}_p:=\{0<|z|<\frac{1}{2}\}\subset U_p$.

Fix a family of monotonically increasing smooth functions $f_\alpha: \mathbb{R}_+\rightarrow [0,1]$
parametrized by $\alpha\in(0,1]$ with
$$
f_\alpha(t)=
\begin{cases}
|t|^\alpha&t\in(0,\frac{1}{2})\\
1&t\geq 1
\end{cases}
$$
such that both $f_\alpha'(t)$ and $f_\alpha''(t)$ have upper bounds which are independent of $\alpha$. 

We now give the construction of $g_+'$. Let $\phi:\Sigma\rightarrow\mathbb{R}_+$ be a smooth function such that
$\phi(z)=f_\alpha(|z|)$ in each  $U_p$ and $\phi=1$ outside the $U_p$'s, where $\alpha\in(0,1]$ is to be determined. We then define 
$$
g_+'=e^{\beta \phi} g_0,
$$ 
where $\beta>0$ is also to be determined. By definitions, we have 
$$
g_+'\geq g_0\geq g_-.
$$ 
It remains to be shown that $L(g_+')\geq 0$ if $\alpha$ is sufficiently small and $\beta$ is sufficiently large.

In each $U_p$ we have $\kappa_{g_0}=0$ and $2\|\ve{b}\|_{g_0}^2=1$, hence
$$
L(g_+')=1+\kappa_{g_+'}-2\|\ve{b}\|_{g_+'}^2=1+e^{-\beta\phi}\left(\kappa_{g_0}-\frac{\beta}{2}\Delta_{g_0}\phi\right)-2\|\ve{b}\|^2_{g_0} e^{-3\beta\phi}
$$
$$=
\begin{cases}
1-\frac{\beta}{2}e^{-\beta\phi}\Delta_{g_0}\phi-e^{-3\beta\phi}&\mbox{ in each }U_p,\\[5pt]
1+e^{-\beta}\kappa_{g_0}-2e^{-3\beta}\|\ve{b}\|^2_{g_0}&\mbox{ outside the $U_p$'s}.
\end{cases}
$$

By the construction of $g_0$, the curvature $\kappa_{g_0}$ and the pointwise norm $\|\ve{b}\|_{g_0}$ are bounded, hence we can take a big enough $\beta$ such that $L(g_+')\geq 0$ outside the $U_p$'s. Moreover, within $U_p\setminus U_p^\frac{1}{2}$, we have $\phi\geq \frac{1}{2}$ whereas $\Delta_{g_0}\phi$ also has a upper bound independent of $\alpha$, therefore for $\beta$ big enough we have $L(g_+')\geq 0$ in $U_p\setminus U_p^\frac{1}{2}$ as well. 

Now we can fix $\beta$ such that $L(g_+')\geq 0$ outside the $U_p^\frac{1}{2}$'s. We proceed to determine $\alpha$ such that $L(g_+')\geq 0$ on the whole $\Sigma$.

Let $\Delta=4\paz\pabz$ be the usual Laplacian with respect to $z$. Using the expression of Laplacian in polar coordinates, we get
$$
\Delta \phi(z)
=f''_\alpha(|z|)+\frac{1}{|z|}f'_\alpha(|z|)=\alpha^2|z|^{\alpha-2}
$$
whenever $z\in U_p^\frac{1}{2}$. 
In view of (\ref{eqn_g0}), it yields
$$
\Delta_{g_0}\phi(z)\leq \frac{|z|^2}{c}\Delta\phi(z)=\frac{\alpha^2}{c}|z|^\alpha=\frac{\alpha^2}{c}\phi(z)
$$
for $z\in U_p^\frac{1}{2}$. Substituting this inequality into the above expression of $L(g_+')$   provides
$$
L(g_+')\geq 1-\frac{\alpha^2}{2c}\beta\phi\, e^{-\beta\phi}-e^{-3\beta\phi}, \ z\in U_p^\frac{1}{2}.
$$
We take $\alpha\in(0,1]$ small enough such that $1-\frac{\alpha^2}{c}te^{-t}-e^{-3t}\geq 0$ for any $t\geq 0$, so that $L(g_+')\geq 0$ on $U_p^\frac{1}{2}$, hence on the whole $\Sigma$.

Starting from $g_-$ and $g_+'$, Lemma \ref{lemma_supsub} yields a metric $g'$ satisfying Wang's equation and the inequalities $g_-\leq g'\leq g_+'$. But these inequalities imply that $g'$ satisfies the conditions 
%....(\ref{item_sov1}) and (\ref{item_sov2}) 
in Theorem \ref{thm_vortex} as well, and the uniqueness part of the theorem says that $g'$ is nothing but $g$. Therefore, near each pole $p$ of order $\geq 3$ we have
$$
0\leq u=\log\left(\frac{g}{2^\frac{1}{3}|\ve{b}|^\frac{2}{3}}\right)\leq \log\left(\frac{g_+'}{g_0}\right)=\beta\phi=\beta |z|^\alpha,
$$
hence $\lim_{z\rightarrow0} u(z)=0$.
\end{proof}

\subsection{Strong estimate at poles of order $\geq 3$}\label{sec_fine}
We now refine the work in the previous subsection to give much more delicate upper bounds of $u$ around poles of order $\geq 3$. By Proposition \ref{coro_non}, we can choose the punctured neighborhood $U$ at the beginning of Section \ref{sec_model} to be small enough such that $u\leq \frac{1}{2}$ on $U$.

First consider a pole $p$ of order $n+3\geq 4$. Take natural half-planes $\H_\k\subset U$, $\k\in\mathbb{Z}/n\mathbb{Z}$ as in \S \ref{sec_local}.  On each $\H_\k$ we can write
$$
g=2e^u|\dzeta|^2
$$
for the natural coordinate $\zeta$ on $\H_\k$ because $\ve{b}=2\,\dif\zeta^3$.

The next theorem, due to Dumas and Wolf, basically says that $u$ and its gradient decays exponentially with respect to the $|\ve{b}|^\frac{2}{3}$-distance from $\zeta$ to a fixed point. This is a far-reaching refinement of  Corollary \ref{coro_non}. 
%Intriguingly, the factor $|\zeta|^{-\frac{1}{2}}$ in the bound plays an essential role in applications (Theorem \ref{thm_compa} (\ref{item_ocompa3})).

\begin{theorem}[\textbf{Strong estimate at poles of order $\geq 4$}]\label{thm_fine}

There is a constant $C$ such that for each $\ve{k}\in\mathbb{Z}/n\mathbb{Z}$ and $\zeta\in\H_\k$, we have
$$
u(\zeta), |\pa_\zeta u(\zeta)|\leq C\,|\zeta|^{-\frac{1}{2}}e^{-2\sqrt{3}|\zeta|}.
$$
\end{theorem}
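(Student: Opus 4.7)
My plan is to derive the PDE satisfied by $u$ from Wang's equation, linearize at $u=0$, and run a comparison argument with modified Bessel functions. First, on each natural half-plane $\H_\k$, since $g = 2 e^u |\dzeta|^2$ and $\ve{b} = 2\,\dzeta^3$, the coordinate expression of Wang's equation given in \S \ref{sec_pscs} reduces (after an elementary calculation) to
$$
\Delta u = 4\bigl(e^u - e^{-2u}\bigr), \qquad \Delta := 4\,\pa_\zeta \pa_{\bar\zeta}.
$$
Taylor expanding, $4(e^u-e^{-2u}) = 12\,u - 6\,u^2 + O(u^3)$, so the linearization at $u=0$ is $\Delta u = 12\,u$. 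The radial, exponentially decaying fundamental solution of this linear equation is the modified Bessel function $K_0(2\sqrt{3}\,|\zeta|)$, whose asymptotic $K_0(r) \sim \sqrt{\pi/(2r)}\,e^{-r}$ makes the exponent $2\sqrt{3}$ and the prefactor $|\zeta|^{-1/2}$ the natural target.

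Next I would establish a suboptimal decay rate. By Proposition \ref{coro_non}, $u(\zeta)\to 0$ as $|\zeta|\to\infty$ on $\H_\k$, so for any $\epsilon \in (0,9)$ there is $R_\epsilon$ with $4(e^u-e^{-2u}) \geq (12-\epsilon)\,u$ on $\{|\zeta|\geq R_\epsilon\}\cap\H_\k$. Using that $u$ is bounded (by the choice of $U$ in \S \ref{sec_fine}), fix $M_\epsilon > 0$ large enough that $u \leq M_\epsilon K_0(\sqrt{12-\epsilon}\,|\zeta|)$ on the parabolic boundary $(\pa\H_\k\cup\{|\zeta|=R_\epsilon\})\cap\{|\zeta|\geq R_\epsilon\}$. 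The function $v := u - M_\epsilon K_0(\sqrt{12-\epsilon}\,|\zeta|)$ satisfies $\Delta v \geq (12-\epsilon) v$ in $\{|\zeta| > R_\epsilon\}\cap\H_\k$, vanishes at infinity, and is nonpositive on the parabolic boundary; the maximum principle (a positive interior max would contradict $\Delta v \geq (12-\epsilon)v > 0$ there) gives
$$
u(\zeta) \leq C_\epsilon\,|\zeta|^{-1/2}\,e^{-\sqrt{12-\epsilon}\,|\zeta|}.
$$

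To reach the \emph{sharp} exponent $2\sqrt{3}$ I would then bootstrap. Once the above bound holds with $\lambda = \sqrt{12-\epsilon} > \sqrt{3}$, the nonlinear defect
$$
\psi := 12\,u - 4\bigl(e^u - e^{-2u}\bigr) = 6\,u^2 + O(u^3) \geq 0
$$
decays like $e^{-2\lambda|\zeta|}$, strictly faster than $e^{-2\sqrt{3}|\zeta|}$. Rewriting Wang's equation as the inhomogeneous linear equation $(\Delta - 12)\,u = -\psi$ and using the Green's function $\tfrac{1}{2\pi}K_0(2\sqrt{3}\,|\zeta-\zeta'|)$ of $\Delta - 12$ on $\mathbb{C}$ (combined with reflection across $\pa\H_\k$ to treat the half-plane boundary), one represents $u$ as the sum of a boundary-data contribution and a bulk convolution with $\psi$. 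The boundary term decays like $C|\zeta|^{-1/2}e^{-2\sqrt{3}|\zeta|}$ by the Bessel asymptotic, and the bulk term decays at least that fast since $\psi$ decays strictly faster than the Green's function, yielding the sharp estimate for $u$. (An alternative, more hands-on approach is to build a tail-corrected supersolution $\bar w = C(|\zeta|+A)^{-1/2} e^{-2\sqrt{3}(|\zeta|+A)}$; for $A$ large one checks $(\Delta - 12)\bar w + 6\bar w^2 \leq 0$, so $\bar w$ dominates the nonlinear equation directly.)

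Finally, the gradient bound follows from interior elliptic regularity applied to $\Delta u = F(u)$ with $|F(u)| \leq C|u|$ on our bounded range: on any unit disk $D_1(\zeta_0)\subset\H_\k$, standard estimates give $|\pa_\zeta u(\zeta_0)| \leq C\|u\|_{L^\infty(D_1(\zeta_0))}$, and this inherits the decay bound for $u$ just proved (adjusting the constant for $|\zeta_0|$ small or near $\pa\H_\k$ by possibly applying estimates on half-disks). The main obstacle is the sharp exponent in the third step: the nonlinearity has the ``wrong sign'' ($F(u) \leq 12u$ for $u\geq 0$ small) so $K_0(2\sqrt{3}|\zeta|)$ is \emph{not} directly a supersolution of the nonlinear equation---without the bootstrap, comparison would only yield rates strictly below $2\sqrt{3}$.
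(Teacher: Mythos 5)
There is a genuine gap, and it is at the very first comparison step: you have no control of $u$ on $\pa\H_\k$ beyond boundedness and the qualitative fact $u\to 0$ from Proposition \ref{coro_non}. Your barrier argument requires $u \leq M_\epsilon K_0(\sqrt{12-\epsilon}\,|\zeta|)$ on the unbounded portion $\pa\H_\k\cap\{|\zeta|\geq R_\epsilon\}$ of the parabolic boundary, but the right-hand side decays exponentially along $\pa\H_\k$ while $u$ is not known a priori to decay at any rate there: $\pa\H_\k$ is not part of $\pa\Sigma$, it is an interior curve of the glued local model of \S \ref{sec_local}, and $u>0$ on it in general. For the same reason the later ``reflection across $\pa\H_\k$'' in the Green's function bootstrap is not available ($u$ satisfies no boundary condition on $\pa\H_\k$), and no argument confined to the single half-plane $\H_\k$ can prove the stated bound at points of $\H_\k$ close to $\pa\H_\k$ with $|\zeta|$ large, since by continuity that bound forces exponential decay of $u$ along $\pa\H_\k$ itself --- information that can only come from the neighbouring half-planes. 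This is precisely what the paper's proof supplies: $u$ is transplanted to the glued surface $X=\overline{\H''}\cup\CH\cup\overline{\H'}$ (Lemma \ref{lemma_a2}), where one first gets exponential decay with a suboptimal rate by applying the interior disk estimate of Lemma \ref{lemma_a0} to large disks of radius $\sim \tfrac12(|\zeta|-c)$ centered at points on or near $\pa\H_\k$ (such disks exist only because the surface extends past $\pa\H_\k$), and then obtains the sharp rate $|\zeta|^{-1/2}e^{-2\sqrt3|\zeta|}$ by applying the half-plane estimate of Lemma \ref{lemma_a1}~(\ref{item_a13}) (a $K_1$-Poisson representation of the solution of $\Delta h=12h$ with boundary data $2u$) on rotated half-planes $\H_\theta\subset X$ whose boundaries lie in the region already controlled; the gradient bound then follows as you say (Corollary \ref{coro_a}), again using full disks because $u$ lives beyond $\pa\H_\k$.

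Two further remarks on the interior part of your scheme. First, your parenthetical supersolution check is wrong: for $\bar w=C(|\zeta|+A)^{-1/2}e^{-2\sqrt3(|\zeta|+A)}$ a direct computation gives $(\Delta-12)\bar w=\tfrac{C}{4}(|\zeta|+A)^{-5/2}e^{-2\sqrt3(|\zeta|+A)}>0$, so $(\Delta-12)\bar w+6\bar w^2>0$ and $\bar w$ is not a supersolution of the nonlinear equation. Second, the $\epsilon$-loss-plus-bootstrap is unnecessary: the paper's device is to take $h$ solving the \emph{linear} equation $\Delta h=12h$ (built from $I_0$ on disks, or from the $K_1$-kernel on half-planes) and compare $u$ with $v=h-\tfrac12 h^2$; then $\Delta v-(12v-6v^2)=-|\nabla h|^2+6h^3(\tfrac{h}{4}-1)\leq 0$ while $4e^u-4e^{-2u}\geq 12u-6u^2$, so the maximum principle yields the sharp exponent $2\sqrt3$ in one step, exactly resolving the ``wrong sign'' obstacle you identify at the end.
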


\begin{proof}
In each $\H_\k$, Wang's equation for $g$ becomes $\lap u=4e^u-4e^{-2u}$, those we can apply results from the appendix.

Let  $X=\overline{\H''}\cup\CH\cup\overline{\H'}$ be the surface studied in Lemma \ref{lemma_a2} from the appendix. There is an obvious map $X\rightarrow \Sigma$ identifying $\CH''$, $\CH$ and $\CH'$ with $\CH_{\k-1}$, $\CH_\k$ and $\CH_{\k+1}$, respectively. The function $u$ pulls back through this map to a function on $X$ satisfying the hypotheses of Lemma \ref{lemma_a2}, hence the required estimates follows from Lemma \ref{lemma_a2} and Corollary \ref{coro_a}. 
\end{proof}

Let us proceed to the parallel result for $p$ a third order pole. As explained at the beginning of Section \ref{sec_model}, we take a conformal local coordinate $z$ around $p$ such that $\ve{b}=Rz^{-3}\dz^3$. Still let $U=\{0<|z|<a\}$ be a small enough punctured neighborhood where $u\leq \frac{1}{2}$.

\begin{theorem}[\textbf{Strong estimate at third order poles}]\label{thm_fine}
There is a constant $C$ such that for each 
$$
0\leq u(\zeta), |\pa_\zeta u(\zeta)|\leq C\,e^{-2\sqrt{3}|\zeta|}.
$$
\end{theorem}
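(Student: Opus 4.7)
The plan is to mirror the proof of Theorem~\ref{thm_fine} (the $\geq 4$ case) using a single natural half-plane attached to the third-order pole. Specifically, fix a cube root $\omega$ of $R/2$ and set $\zeta=\omega\log z$ on the universal cover of $U\setminus\{p\}$. This identifies the cover with a (rotated) half-plane on which $\ve{b}$ pulls back to $2\,\dzeta^3$ and $g$ to $2e^u|\dzeta|^2$, with $|\zeta|\to\infty$ corresponding to $z\to 0$. Composing with a Euclidean isometry I may assume this half-plane is $\H=\{\re\zeta>0\}$ and that the pole lies in the direction $\re\zeta\to+\infty$. The deck transformation $z\mapsto e^{2\pi\ima}z$ becomes $\zeta\mapsto\zeta+2\pi\ima\omega$, so $u$ descends to a $2\pi\ima\omega$-periodic solution of Wang's equation $\Delta u = 4(e^u-e^{-2u})$ on $\H$ (equivalently, to a solution on a half-cylinder). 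By Proposition~\ref{coro_non} we have $u\to 0$ as $|\zeta|\to\infty$ and $u\leq\tfrac12$ throughout.

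With this setup, the estimate reduces to a cylindrical analog of Lemma~\ref{lemma_a2}. The linearization at $u=0$ is $\Delta u = 12\,u$, which accounts for the decay rate $2\sqrt 3=\sqrt{12}$; since $u\leq\tfrac12$, a direct Taylor expansion gives $\Delta u \geq c\,u$ for some $c$ arbitrarily close to $12$ (say $c=11$) in a neighborhood of infinity. A supersolution on $\H$ of the form $v(\zeta)=Me^{-2\sqrt{3}\,\re(\zeta-\zeta_0)}$ (which is translation-invariant in the periodic direction) then dominates $u$ by the maximum principle, once $M$ and $\zeta_0$ are chosen using the coarse bound so that $v\geq u$ on the transverse line $\{\re\zeta=\re\zeta_0\}$. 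Since the periodic direction has bounded diameter, $|\zeta|-\re\zeta$ is bounded on the relevant part of $\H$, which converts the bound into $u(\zeta)\leq Ce^{-2\sqrt 3|\zeta|}$. The gradient estimate then follows by applying standard interior elliptic estimates to the semilinear equation on discs of fixed radius around $\zeta$, using that $u$ itself is exponentially small on such a disc.

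The main obstacle is the bookkeeping for the cylindrical barrier, namely verifying that a supersolution depending only on $\re\zeta$ actually controls $u$ everywhere on the half-cylinder (not merely on a single fundamental strip), and quantifying the gap between $\re\zeta$ and $|\zeta|$. The absence of the $|\zeta|^{-1/2}$ improvement compared to Theorem~\ref{thm_fine} reflects precisely the fact that here $\zeta$ parametrizes only one half-plane rather than three glued together as in the $\geq 4$ case, so one cannot appeal to the ``wider cone'' geometry of Lemma~\ref{lemma_a2}; a separate but simpler translation-invariant barrier analysis is what replaces it. Modulo this cylindrical variant of the appendix lemma, the argument is entirely parallel to the higher-order pole case.
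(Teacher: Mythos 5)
Your reduction to a translation-invariant solution of $\Delta u=4e^u-4e^{-2u}$ on a half-plane, followed by a barrier argument and interior elliptic estimates for the gradient, is exactly the paper's route (the paper quotes Lemma \ref{lemma_a1} (\ref{item_a12}) and Corollary \ref{coro_a} for this statement). However, there is a genuine gap at the barrier step. The function $v(\zeta)=Me^{-2\sqrt{3}\,\re(\zeta-\zeta_0)}$ is \emph{not} a supersolution of Wang's equation: it solves the linearization $\Delta v=12v$ exactly, and since $4e^v-4e^{-2v}<12v$ for $v>0$, it is a strict \emph{subsolution} of the nonlinear equation near $u=0$. Concretely, at a positive interior maximum of $u-v$ your inequalities give only $\Delta(u-v)\geq 11u-12v$, which need not be positive when $u>v$, so the maximum principle does not close. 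If you weaken the barrier to $Me^{-\sqrt{c}\,\re\zeta}$ with $c=11$ the comparison does work, but it yields only the rate $\sqrt{11}$ (more generally $2\sqrt{3}-\epsilon$ with a constant depending on $\epsilon$), not the sharp rate $2\sqrt{3}$ claimed in the statement -- and the sharp constant is not cosmetic: it is used later against eigenvalue growth exactly of size $e^{2\sqrt{3}t}$ in the unstable-direction analysis for $\re(R)=0$.

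The missing ingredient is the quadratic correction used in the appendix: take $h$ with $\Delta h=12h$ (e.g.\ $h=e^{-2\sqrt{3}\,\re\zeta}$, normalized to equal $1$ on the transverse line) and set $v=h-\tfrac12h^2$. One computes $\Delta v-(12v-6v^2)=-|\nabla h|^2+6h^3(\tfrac{h}{4}-1)\leq 0$, so $v$ is a supersolution of the truncated equation $\Delta w=12w-6w^2$, while $u\geq 0$ is a subsolution of it; since $x\mapsto 12x-6x^2$ is increasing on $[0,1]$ and both $u,v\leq\tfrac12$, the comparison at an interior maximum now gives a contradiction, and this is what produces the sharp exponent. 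You also need to justify that the supremum of $u-v$ is attained: periodicity reduces matters to a strip, and decay as $\re\zeta\to+\infty$ (your appeal to Proposition \ref{coro_non}, or the paper's interior disk estimate Lemma \ref{lemma_a0} via the Bessel function $I_0$) rules out escape to infinity; this part of your sketch is fine, as is the gradient estimate via Lemma \ref{lemma_gradientpoisson}. So the architecture is right, but as written the key barrier does not dominate $u$, and the theorem's rate is not reached without the correction term (or an equivalent bootstrap).
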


\section{From $\V_0(\Sigma)$ to $\P_0(\Sigma)$}\label{sec_ctop}
Still let the Riemann surface $\Sigma$ and the cubic differential $\ve{b}$ be as in the previous two sections. Moreover, let $g$ be the conformal metric produced by Theorem \ref{thm_vortex}, so that $(g,\ve{b})\in\V_0(\Sigma)$.

The goal of this section is to prove statement (III) in the sketch of proof of Theorem \ref{intro_main} from the introduction. Therefore we fix a base point $m\in\Sigma$ in order to  define Wang's developing pair $(\dev, \hol)$ associated to $(g,\ve{b})$ (\cf \S \ref{sec_wang}). Also fix a puncture $p$ and a point $\tilde{p}$ in the Farey set $\Fa{\Sigma, p}$ so as to define the developed boundary $\devbd{\tilde{p}}$ (\cf \S \ref{sec_farey}).

\subsection{Poles of order $\geq 4$}\label{sec_geq}

We first establish statement (III) when $p$ is a pole of order $n+3\geq 4$. Theorem \ref{thm_finding} below is a precise statement, where the required twisted polygon is found as developing limits of some particular paths on $\Sigma$, defined as follows.

\subsubsection{Special collections of paths converging to $p$}\label{sec_winding}

\begin{definition}\label{def_U}
Let $\C$ denote the set of oriented smooth paths $\beta$ on $\Sigma$ issuing from $m$, converging to $p$ and fulfilling the following requirements.
\begin{enumerate}[(i)]
\item\label{item_U1}
$\beta$ belongs to the class $\tilde{p}$ (see \S \ref{sec_farey} for the definition).
\item\label{item_U2}
$\beta$ is eventually 
\footnote{
An oriented path $\beta$ \emph{eventually} satisfying a condition means that a final portion of $\beta$, \ie the part of $\beta$ in some neighborhood of $p$, satisfies that condition.
}
a geodesic with respect to the flat metric $|\ve{b}|^\frac{2}{3}$.
\item\label{item_U3}
$\beta$ is asymptotic to some ray $\dr(\beta)=\mathbb{R}_+\cdot v\subset\T_p\overline{\Sigma}$ (\cf Proposition \ref{prop_natural}  for the notion of ``asymptotic ray''). 
\end{enumerate}
Two  paths $\beta, \beta'\in\C$ are said to be \emph{equivalent} if they satisfy
\begin{enumerate}[(a)]
\item\label{item_winding1}
$\beta$ eventually coincides $\beta'$;
 \item\label{item_winding2}
 $\beta$ can be homotoped to $\beta'$ by a homotopy which fixes the starting point $m$ and a common final portion of $\beta$ and $\beta'$. 
\end{enumerate}
\end{definition}
%\begin{remark}
%Using the local model of $(\Sigma,\ve{b})$ around poles of order $\geq 4$ given in \S \ref{sec_local}, one sees that every $|\ve{b}|^\frac{2}{3}$-geodesic converging to $p$ admits an asymptotic ray, hence condition (\ref{item_U3}) is actually dispensable. Nevertheless, it is indispensable for poles of order $3$.
%\end{remark}

\vspace{5pt}

Paths $\beta, \beta\in\C'$ merely satisfying condition (\ref{item_winding1}) are not necessarily equivalent basically because they can wrap around $p$ differently many times before becoming the same geodesic. An equivalence class can be pinned down by taking the ``winding number'' into account.

More precisely, we define the \emph{winding number} $\vartheta(\beta)\in\mathbb{R}$ of $\beta$ with respect to a reference path $\beta_0\in \C$ as follows. Since both $\beta_0$ and $\beta$ belong to the class $\tilde{p}$, we can deform  $\beta_0$ into $\beta$ through a continuous family of paths $\beta_s\in\C$ (where $s\in[0,1]$, $\beta_1=\beta$). Roughly speaking, $\vartheta(\beta)$ is the angle swept out by the asymptotic ray $\dr(\beta_s)$ as $s$ goes from $0$ to $1$. See Figure \ref{figure_winding} for some examples. In what follows, we always choose $\beta_0$ such that $\dr(\beta_0)=\mathbb{R}_+\subset\T_p\overline{\Sigma}\cong\mathbb{C}$ as in the figure.
\begin{figure}[h]
\centering\includegraphics[width=1.9in]{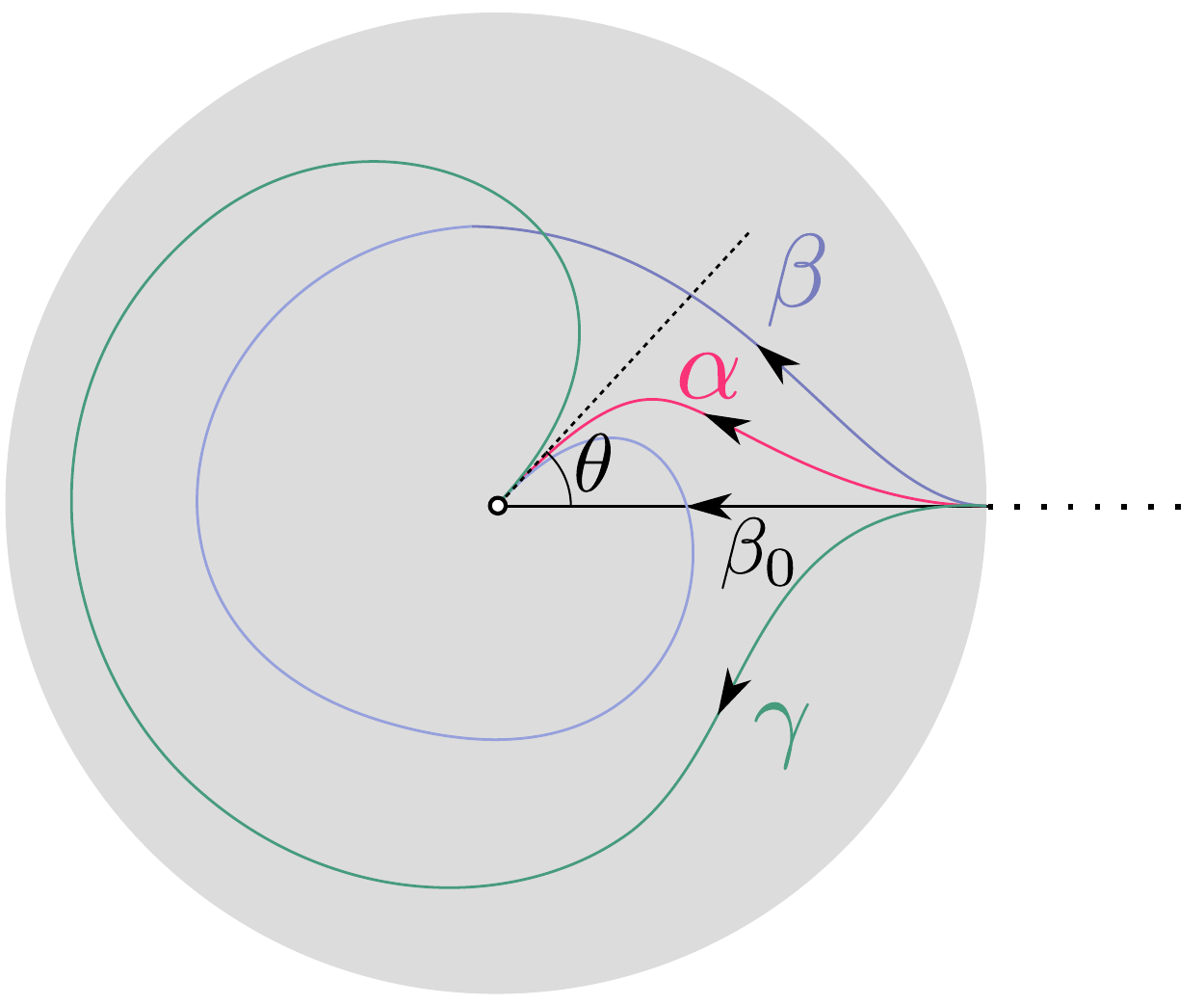}
\caption{Examples of winding numbers: with respect to $\beta_0$, we have $\vartheta(\alpha)=\theta$, $\vartheta(\beta)=\theta+2\pi$ and $\vartheta(\gamma)=\theta-2\pi$.
}
\label{figure_winding}
\end{figure}

In this paper, we content ourselves with the above intuitive definition of $\vartheta(\beta)$ and will make use of the following intuitively obvious facts.

\begin{enumerate}
\item\label{item_winding1}
$\vartheta(\beta)\equiv\arg(\dr(\beta))\!\mod2\pi$. 
\item\label{item_winding2}
Two paths $\beta, \beta'\in\C$ are equivalent if and only if $\beta$ eventually coincides with $\beta'$
and  $\vartheta(\beta)=\vartheta(\beta')$.
\item\label{item_winding3}
$\vartheta(\beta\cdot\gamma_{\tilde{p}})=\vartheta(\beta)-2\pi$ (see \S \ref{sec_farey} for the definition of $\gamma_{\tilde{p}}$).
\end{enumerate}

%\begin{remark}
%We will build below a local model of $(\Sigma, \ve{b})$ around $p$ as a \emph{$e^{2\pi\ima/3}$-translation surface} obtained by gluing a number of copies of the right half-plane $\H$ endowed with a constant cubic differential. Using this model, one easily proves the following facts, which will not be used in this paper.
%
%\begin{itemize}
%\item
%Condition (\ref{item_U3}) in Definition \ref{def_U} is dispensable because it is a consequence of condition (\ref{item_U2}). This is not true for poles of order $3$, see \S \ref{sec_leq} below.
%\item
%Given $\theta_0\in\mathbb{R}$, all equivalence classes of paths $\beta\in\C$ with $\vartheta(\beta)=\theta_0$ form a one-parameter family. Indeed, in the above mentioned model, each $\beta$ in this family is eventually a ray in $\H$ of the form 
%$$
%t\mapsto e^{\theta_0\ima}(t+s\ima),
%$$
%where $\theta_0\in (-\tfrac{\pi}{2},\tfrac{\pi}{2})$ is a constant depending on $\theta_0$ and $s$ is the parameter.
%\end{itemize}
%\end{remark}

We shall further introduce some particular subsets of $\C$.
\begin{definition}
For each $k\in\mathbb{Z}$, define
\begin{align*}
\C_k:=\big\{\beta\in\C\ \big|\ \tfrac{2(k-1)\pi}{n}<\vartheta(\beta)<\tfrac{2 k\pi}{n}\big\}, \quad
\C_{k,k+1}:=\big\{\beta\in\C\ \big|\ \vartheta(\beta)=\tfrac{2\pi k}{n}\},
\\[6pt]
\U_k^-:=\big\{\beta\in\C\ \big|\ \vartheta(\beta)=\tfrac{(4k-3)\pi}{2n}\big\},\quad \U_k^+:=\big\{\beta\in\C\ \big|\ \vartheta(\beta)=\tfrac{(4k-1)\pi}{2n}\},
\end{align*}
\begin{align*}
\Se_k=\big\{\beta\in\C\ \big|\  \tfrac{(4k-3)\pi}{2n}<\vartheta(\beta)< \tfrac{(4k-1)\pi}{2n}\big\},
\\[6pt]
\Se_{k,k+1}=\big\{\beta\in\C\ \big|\ \tfrac{(4k-1)\pi}{2n}<\vartheta(\beta)< \tfrac{(4k+1)\pi}{2n}\big\}.
\end{align*}
A path $\beta\in \C$ is said to be \emph{unstable} if it belongs to $\U_k^-$ or $\U_k^+$ for some $k$, otherwise it is said to be \emph{stable}.
\end{definition}
These definitions clearly stem from the special directions and sectors introduced in \S \ref{sec_special}. For example, given $\k\in\mathbb{Z}/n\mathbb{Z}$, the union $\bigcup_{k\in\k}\C_k$ is the set of all $\beta\in\C$ such that the asymptotic ray $\dr(\beta)$ is in $C_\k$; a unstable path is just path $\beta$ such that $\dr(\beta)$ is a unstable direction, \etc.

We proceed to single out a representatives in each equivalence class of paths $\beta\in\C$ in order to apply the local model built in \S \ref{sec_local} to the present study. First let us introduce some more notations, illustrated by the picture below.

\pt Concatenating the segment of $\pa\H_\k$ from $0$ to $\xi_\k$ and the segment of  $\pa\H_{\k+1}$ from $\eta_{\k+1}$ to $0$ (see \S \ref{sec_local} for the notations) yields a path going from $0\in \CH_\k$ to $0\in\CH_{\k+1}$, denoted by $\gamma_{\k,\k+1}$. 

\pt
Let $(\alpha_k)_{k\in\mathbb{Z}}$ be a family of paths such that $\alpha_k$ goes from $m$ to $0\in\CH_\k$ and $\alpha_{k+1}$ is homotopic to $\gamma_{k,k+1}\cdot \alpha_k$ for any $k$. 

\pt Let $\beta_0\in \C$ be the concatenation of $\alpha_0$ with the ray $e^{-\pi \ima/3}\mathbb{R}_{\geq 0}$ in $\H_{0}$. From on now, we take this $\beta_0$ as the reference path when considering winding numbers.
\begin{figure}[h]
\centering\includegraphics[width=2.9in]{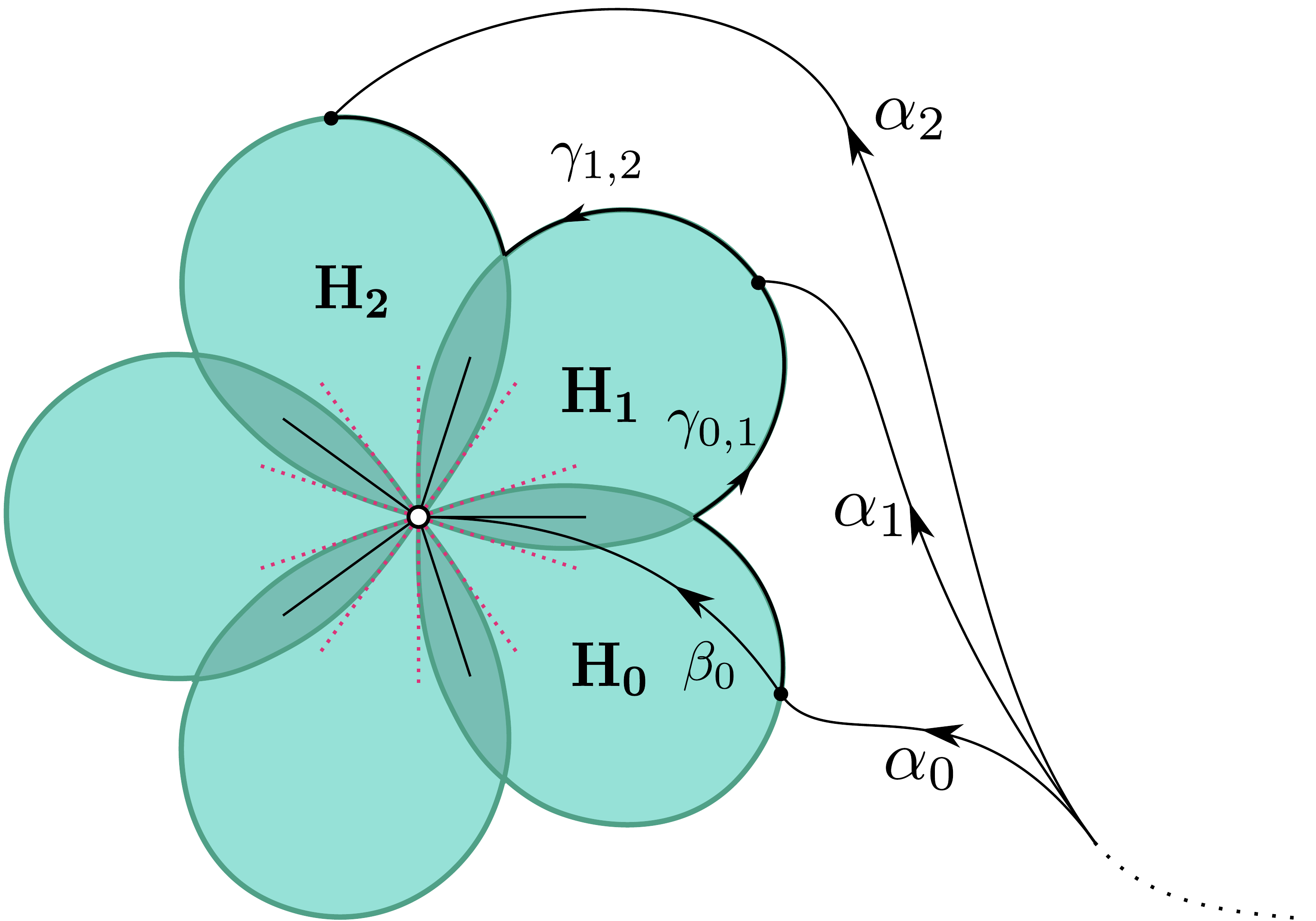}
\end{figure}

Using property (\ref{item_natural3}) in Proposition \ref{prop_natural} and the table following Proposition \ref{prop_natural}, we get the following lemma.
\begin{lemma}[\textbf{Reducing paths to a half-plane}]\label{lemma_alpha}
A path $\beta\in \C$ belongs to $\Se_{k-1,k}\cup\Se_k\cup\Se_{k,k+1}$ 
%$$
%\tfrac{2\pi k}{n}-\tfrac{\pi}{2n}<\vartheta(\beta)< \tfrac{2\pi(k+1)}{n}+\tfrac{\pi}{2n}
%$$ 
if and only if $\beta$ is equivalent to $\tilde{\beta}\cdot\alpha_k$ for some path $\tilde{\beta}$ in $\CH_\k$ such that $\tilde{\beta}$ is eventually a ray with asymptotic argument
$$
\theta(\tilde{\beta}):=\lim_\tinf \arg(\tilde{\beta}(t))\in (-\tfrac{\pi}{2},\tfrac{\pi}{2}).
$$
Moreover, $\beta$ belongs to each of the sets in the first row of the following table if and only if $\theta(\tilde{\beta})$ belongs to the interval or takes the specific value indicated by the second row.

\vspace{5pt}

\begin{tabular}{|c||c|c|c||c|c|c|c|c|}
\hline
$\beta$&$\C_{k,k+1}$&$\C_k$&$\C_{k-1,k}$&
$\Se_{k,k+1}$&$\U_k^+$&$\Se_{k}$&$\U_k^-$&$\Se_{k-1, k}$
\\[3pt]\hline
$\theta(\tilde{\beta})$&$-\tfrac{\pi}{3}$&$(-\tfrac{\pi}{3},\tfrac{\pi}{3})$&$\tfrac{\pi}{3}$&
$(-\tfrac{\pi}{2},\tfrac{\pi}{6})$&$-\tfrac{\pi}{6}$&$(-\tfrac{\pi}{6},\tfrac{\pi}{6})$&$\tfrac{\pi}{6}$&$(\tfrac{\pi}{6},\tfrac{\pi}{2})$\\[3pt] \hline
\end{tabular}
\end{lemma}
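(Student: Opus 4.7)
The plan is to derive this lemma as a translation of Proposition \ref{prop_natural}(iii) and its accompanying table into the language of equivalence classes in $\C$. The technical core is a winding-number bookkeeping along the family $(\alpha_k)_{k\in\mathbb{Z}}$; the geometric scaffolding is provided entirely by Proposition \ref{prop_natural}.

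For the ``only if'' direction, I would first note that if $\beta\in\Se_{k-1,k}\cup\Se_k\cup\Se_{k,k+1}$ then $\vartheta(\beta)$, and hence (by fact (\ref{item_winding1}) from the excerpt) $\arg(\dr(\beta))$ reduced modulo $2\pi$, lies in the angular range defining $S_{\k-1,\k}\cup C_\k\cup S_{\k,\k+1}\subset\T_p\overline\Sigma$. Proposition \ref{prop_natural}(iii) then produces $t_0$ with $\beta([t_0,+\infty))\subset\Phi_\k(\CH)$ and a pullback $\tilde\beta_1:=\Phi_\k^{-1}\circ\beta|_{[t_0,+\infty)}$ having well-defined asymptotic argument $\theta(\tilde\beta_1)\in(-\pi/2,\pi/2)$. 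Because $\Phi_\k^*\ve{b}=2\,\dzeta^3$, the flat metric $|\ve{b}|^{2/3}$ pulls back through $\Phi_\k$ to a constant multiple of the Euclidean metric on $\CH$, so the eventual $|\ve{b}|^{2/3}$-geodesic property of $\beta$ (condition (\ref{item_U2}) of Definition \ref{def_U}) translates into the assertion that $\tilde\beta_1$ is eventually a Euclidean straight line, hence eventually a ray. Prepending any path in $\CH_\k$ from $0\in\partial\CH_\k$ to $\tilde\beta_1(t_0)$ produces $\tilde\beta:[0,+\infty)\to\CH_\k$ with $\tilde\beta(0)=0$ such that $\tilde\beta\cdot\alpha_k$ eventually coincides with $\beta$. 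This reduces the direction to checking the winding-number identity $\vartheta(\tilde\beta\cdot\alpha_k)=\vartheta(\beta)$.

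Verifying this winding-number identity is the main technical obstacle, and I plan to handle it by establishing an affine formula
\[
\vartheta(\tilde\beta\cdot\alpha_k)=-\tfrac{3}{n}\theta(\tilde\beta)+c_k
\]
valid for every $k\in\mathbb{Z}$ and every ray $\tilde\beta\subset\CH_\k$ with $\theta(\tilde\beta)\in(-\pi/2,\pi/2)$. The coefficient $-3/n$ is read off from Proposition \ref{prop_natural}(iii): a continuous deformation of $\tilde\beta$ that changes $\theta(\tilde\beta)$ by $\Delta\theta$ changes $\arg(\dr(\Phi_\k\circ\tilde\beta))$, and therefore $\vartheta(\tilde\beta\cdot\alpha_k)$, by exactly $-\tfrac{3}{n}\Delta\theta$. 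The constants $c_k$ are then pinned down by two facts: the normalization $\vartheta(\beta_0)=0$ combined with $\beta_0=(e^{-\pi\ima/3}\mathbb{R}_{\geq 0})\cdot\alpha_0$ fixes $c_0=-\pi/n$; and the homotopy $\alpha_{k+1}\simeq\gamma_{k,k+1}\cdot\alpha_k$ shifts $c_k$ by exactly $2\pi/n$ as $k$ increases by $1$, since $\Phi_{\k+1}$ advances the angular image at $p$ by one slice $C_{\k+1}$ of opening $2\pi/n$ relative to $\Phi_\k$. A short induction on $|k|$ yields $c_k=(2k-1)\pi/n$. Combining this with Proposition \ref{prop_natural}(iii) simultaneously yields the equality $\vartheta(\tilde\beta\cdot\alpha_k)=\vartheta(\beta)$ needed for the ``only if'' direction, and gives the ``if'' direction by noting that $\vartheta(\tilde\beta\cdot\alpha_k)$ runs through $\bigl(\tfrac{(4k-5)\pi}{2n},\tfrac{(4k+1)\pi}{2n}\bigr)$ bijectively as $\theta(\tilde\beta)$ runs through $(-\pi/2,\pi/2)$.

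The table in the lemma is then an immediate specialization: the strictly decreasing affine bijection $\theta\mapsto -\tfrac{3}{n}\theta+\tfrac{(2k-1)\pi}{n}$ sends the ordered partition $(-\tfrac{\pi}{2},-\tfrac{\pi}{6})\cup\{-\tfrac{\pi}{6}\}\cup(-\tfrac{\pi}{6},\tfrac{\pi}{6})\cup\{\tfrac{\pi}{6}\}\cup(\tfrac{\pi}{6},\tfrac{\pi}{2})$ of $(-\tfrac{\pi}{2},\tfrac{\pi}{2})$ onto $\Se_{k,k+1}\cup\U_k^+\cup\Se_k\cup\U_k^-\cup\Se_{k-1,k}$, recovering the last five columns. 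Reducing the image modulo $2\pi$ to read $\arg(\dr(\beta))$ rather than $\vartheta(\beta)$, the sub-intervals $\{-\tfrac{\pi}{3}\}$, $(-\tfrac{\pi}{3},\tfrac{\pi}{3})$, $\{\tfrac{\pi}{3}\}$ of $(-\tfrac{\pi}{2},\tfrac{\pi}{2})$ correspond respectively to $C_{k,k+1}$, $C_k$, $C_{k-1,k}$, giving the first three columns and completing the proof.
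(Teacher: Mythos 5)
Your proposal is correct and takes essentially the paper's own route: the paper deduces this lemma directly from Proposition \ref{prop_natural}(iii) and the table following it, exactly as you do, and your affine winding-number formula $\vartheta(\tilde{\beta}\cdot\alpha_k)=-\tfrac{3}{n}\theta(\tilde{\beta})+\tfrac{(2k-1)\pi}{n}$ (pinned down by $\vartheta(\beta_0)=0$ and the homotopy $\alpha_{k+1}\simeq\gamma_{k,k+1}\cdot\alpha_k$) merely makes explicit the bookkeeping identifying the integer $k$ that the paper leaves implicit in its one-sentence proof. The only discrepancy is cosmetic: your interval $(-\tfrac{\pi}{2},-\tfrac{\pi}{6})$ for $\Se_{k,k+1}$ is the correct reading of the table entry printed as $(-\tfrac{\pi}{2},\tfrac{\pi}{6})$, which is evidently a typo.
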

\vspace{5pt}

\subsubsection{Statement of the main result}
Given $\beta\in\C$, we always let 
$$ 
[0,+\infty)\rightarrow \Sigma,\quad t\mapsto \beta(t)
$$
denote the parametrization of $\beta$ by arc-length with respect to the metric $|\ve{b}|^\frac{2}{3}$. Recall from \S \ref{sec_devlim} that $\beta_{[0,t]}:=\beta([0,t])$ denotes the truncation of length $t$ and $\dev(\beta_{[0,t]}^{-1})\in\mathbb{P}(E_m)$ denotes the developing image of the point in $\widetilde{\Sigma}_m$ represented by $\beta_{[0,t]}^{-1}$. The developing limit of $\beta$ is $\devlim(\beta):=\lim_\tinf\dev(\beta_{[0,t]}^{-1})\in\mathbb{P}(E_m)$ if the latter limit exists.

\begin{theorem}[\textbf{developed boundary at higher order poles}]\label{thm_devhigher}
The developing limit $\devlim(\beta)$ exists for any stable $\beta\in\C$ and admits the following descriptions.
\begin{enumerate}
\item\label{item_finding1}
There exists $X_k\in\mathbb{P}(E_m)$  such that  
$\devlim(\beta)=X_k$ for any $\beta\in\C_k$.

\item\label{item_finding2}
As $\beta$ runs over $\C_{k,k+1}$, the set of developing limits
$$
X_{k,k+1}^\circ:=\{\devlim(\beta)\}_{\beta\in\C_{k,k+1}}
$$
is the interior of a segment $X_{k,k+1}\subset\mathbb{P}(E_m)$ with endpoints $X_{k}$ and $X_{k+1}$. The adjacent segments $X_{k-1,k}$ and $X_{k, k+1}$ are not collinear.
\item\label{item_finding3}
The holonomy $\hol_{\tilde{p}}$ maps $X_k$ to $X_{k+n}$, thus $\bigcup_{k\in\mathbb{Z}}X_{k,k+1}$ is a $n$-gon twisted by $\hol_{\tilde{p}}$. This twisted $n$-gon and its accumulation points constitue $\devbd{\tilde{p}}$.
\end{enumerate}
\label{thm_finding}
\end{theorem}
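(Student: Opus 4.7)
The strategy is to compare the Wang developing pair of $(g,\ve{b})$ with the explicit model pair associated to the singular flat metric $(2^{1/3}|\ve{b}|^{2/3},\ve{b})$ from \S\ref{sec_wangass}, which on each natural half-plane $\mathbf{H}_{\k}$ from \S\ref{sec_local} coincides with the Ti\c{t}eica data of \S\ref{sec_titeica}. The developing limits of the Ti\c{t}eica map along paths going to infinity are computed explicitly by Proposition~\ref{prop_tit}. The essential input is the exponential decay of $u$ and $\pa u$ on $\mathbf{H}_{\k}$ furnished by Theorem~\ref{thm_fine}. The plan is to use this decay in an ODE-comparison argument in the spirit of \cite{dumas-wolf} to show that, along any stable ray, the Wang developing map is an asymptotically constant projective transform of the Ti\c{t}eica one, and then read off the developed boundary by transporting Proposition~\ref{prop_tit} through this transform.

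Fix $\k\in\mathbb{Z}/n\mathbb{Z}$ and let $\a,\a_0:\mathbf{H}_{\k}\times\mathbf{H}_{\k}\to\SL(3,\mathbb{R})$ denote the two-pointed parallel transport maps of $\D$ and $\D_0$ in the equilateral frame associated to $\zeta$. Setting $\mathsf{S}(\zeta):=\a(0,\zeta)\a_0(0,\zeta)^{-1}$, a direct computation from \eqref{eqn_d} and \eqref{eqn_wangcon0} gives $d\mathsf{S}=\mathsf{S}\cdot\Ad_{\a_0(0,\zeta)}(\mathsf{A}-\mathsf{A}_0)$, where the coefficients of $\mathsf{A}-\mathsf{A}_0$ are algebraic expressions in $\pa u,\bpa u,e^{\pm u}-1$ and therefore, by Theorem~\ref{thm_fine}, satisfy $|\mathsf{A}-\mathsf{A}_0|(\zeta)=O(|\zeta|^{-1/2}e^{-2\sqrt 3|\zeta|})$. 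By Lemma~\ref{lemma_alpha}, each stable $\beta\in\C$ with $\dr(\beta)$ in the closed sector $\overline{S_{k-1,k}\cup C_k\cup S_{k,k+1}}$ is equivalent to $\tilde\beta\cdot\alpha_k$ for a ray $\tilde\beta\subset\overline{\mathbf{H}}_{\k}$ with $\theta(\tilde\beta)\in(-\pi/2,\pi/2)\setminus\{\pm\pi/6\}$. From the eigenvalue analysis of \S\ref{sec_eigen}, each weight space $\mathbb{R}\cdot E_{ij}\subset\sl_3\mathbb{R}$ is preserved by $\Ad_{\a_0(0,\zeta)}$ with eigenvalue $\exp(\varpi_{ij}(\arg\zeta)|\zeta|)$, and $\varpi_{ij}(\theta)$ reaches its supremum $2\sqrt 3$ only at the one specific odd multiple of $\pi/6$ tabulated there. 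Combining these two estimates yields exponential integrability of the relevant components of the ODE integrand along $\tilde\beta$, which forces $\mathsf{S}(\tilde\beta(t))\to\mathsf{S}_\infty\in\SL(3,\mathbb{R})$.

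Let $M_{\k}\in\SL(3,\mathbb{R})$ denote the base change from the equilateral frame at $0\in\mathbf{H}_{\k}$ to the frame on $E_m$ underlying Wang's developing pair, obtained by parallel transport of $\D$ along $\alpha_k^{-1}$. Using the identity $\a_0(0,\zeta)\uni=\Dev_0(\zeta)$, one has
$$
\dev(\beta_{[0,t]})=\bigl[M_{\k}\,\mathsf{S}(\tilde\beta(t))\,\Dev_0(\tilde\beta(t))\bigr],
$$
so that taking $t\to+\infty$, using $\mathsf{S}\to\mathsf{S}_\infty$ and the projective convergence of $\dev_0(\tilde\beta(t))$, one obtains $\devlim(\beta)=(M_{\k}\mathsf{S}_\infty)\cdot\lim_\tinf\dev_0(\tilde\beta(t))$. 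When $\theta(\tilde\beta)\in(-\pi/3,\pi/3)$ (i.e.\ $\beta\in\C_k$), Proposition~\ref{prop_tit}(\ref{item_tit1}) identifies this limit as the single point $X_k:=(M_{\k}\mathsf{S}_\infty)\cdot\hat X_1$, which establishes (\ref{item_finding1}); when $\theta(\tilde\beta)=\pm\pi/3$ (i.e.\ $\beta\in\C_{k,k+1}$), Proposition~\ref{prop_tit}(\ref{item_tit2}) makes it trace out the open segment $(M_{\k}\mathsf{S}_\infty)\cdot\hat X_{12}^\circ$ with endpoints $X_k,X_{k+1}$, establishing (\ref{item_finding2}). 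Non-collinearity of adjacent segments is inherited from the Ti\c{t}eica triangle, and compatibility of the reductions into $\mathbf{H}_{\k}$ and $\mathbf{H}_{\k+1}$ on $\C_{k,k+1}$ is guaranteed by the transition formula of Proposition~\ref{prop_natural}(\ref{item_natural4}). Finally (\ref{item_finding3}) follows from $\hol$-equivariance of $\dev$: pre-concatenation with $\gamma_{\tilde p}$ decreases $\vartheta(\beta)$ by $2\pi$, shifting the index $k$ by $n$, so $\hol_{\tilde p}(X_k)=X_{k+n}$; the identification of $\devbd{\tilde p}$ with the twisted polygon together with its accumulation points then follows from Proposition~\ref{prop_devbdproperty}.

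The main obstacle is the convergence, and more delicately the appropriate uniformity, of $\mathsf{S}$. At each unstable direction $\theta=\pm\pi/6$ a specific weight $\varpi_{ij}(\theta)=2\sqrt 3$ cancels the decay of $\mathsf{A}-\mathsf{A}_0$ exactly, so the $|\zeta|^{-1/2}$ prefactor of Theorem~\ref{thm_fine} is what secures integrability off these directions, and individual entries of $\mathsf{S}$ may fail to extend continuously across $\pm\pi/6$. The resolution, following \cite{dumas-wolf}, is to note that only the columns $\mathsf{S}\cdot e_1$ and $\mathsf{S}\cdot e_2$ that act on the Ti\c{t}eica limits $\hat X_1$ and $\hat X_{12}^\circ$ enter the argument; for these columns only the weights $\varpi_{i1}(\theta),\varpi_{i2}(\theta)$ matter, and one checks by direct computation from the table in \S\ref{sec_eigen} that these stay strictly below $2\sqrt 3$ on the closed intervals of $\theta$ encountered for $\beta\in\C_k\cup\C_{k,k+1}$, which recovers uniform convergence of the relevant projective limit. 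Paths $\beta\in\U_k^{\pm}$, for which some relevant row-column pair fails this strict inequality, lie outside the scope of the present theorem and are expected to develop to the accumulation points of the twisted polygon.
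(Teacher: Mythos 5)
Your overall strategy is the paper's: reduce to the natural half-planes, compare Wang's developing data of $(g,\ve{b})$ with the \c{T}i\c{t}eica data of $(2^{1/3}|\ve{b}|^{2/3},\ve{b})$ via the ODE for $\mathsf{S}(\zeta)=\a(0,\zeta)\a_0(\zeta,0)$, and transport Proposition~\ref{prop_tit} through the limit. The convergence of $\mathsf{S}$ along each fixed stable ray is indeed obtained exactly as you say (this is part (\ref{item_ocompa1}) of Theorem~\ref{thm_compa}). The genuine gap is that you never prove the limit is \emph{independent of the path}, and your substitute argument in the last paragraph does not do the job. When you set $X_k:=(M_{\k}\mathsf{S}_\infty)\cdot\hat X_1$ you are implicitly assuming that $\mathsf{S}_\infty$ (or at least its action on the relevant vertex/edge of the model triangle) is the same for every stable $\beta\in\C_k$, and likewise that all $\beta\in\C_{k,k+1}$ produce one and the same comparison limit, so that varying the offset $s$ sweeps out the interior of a \emph{single} segment with endpoints $X_k$ and $X_{k+1}$. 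Stable paths in $\C_k$ have asymptotic arguments filling $(-\tfrac{\pi}{3},\tfrac{\pi}{3})\setminus\{\pm\tfrac{\pi}{6}\}$, i.e.\ they lie in three different stable sectors, and a priori $\mathsf{S}_\infty$ depends on the path. Your weight count (that $\varpi_{i1},\varpi_{i2}$ stay strictly below $2\sqrt3$ on the closed interval of arguments) at best yields uniform-in-$\theta$ convergence, hence continuity of the relevant column of $\mathsf{S}_\infty$ in the path; continuity is not constancy, and without constancy neither $X_k$ nor the segment statement in part (\ref{item_finding2}) is defined, nor can you identify the second endpoint of $X_{k,k+1}$ with the $X_{k+1}$ produced in the adjacent chart $\H_{\k+1}$.

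What is missing is precisely parts (\ref{item_ocompa2}) and (\ref{item_ocompa3}) of Theorem~\ref{thm_compa}, which the paper proves by a second, genuinely two-parameter ODE argument: one interpolates $\beta_s=(1-s)\beta_0+s\beta_1$ (resp.\ rotates $\beta_s(t)=te^{\pm\pi\ima s/3}$ across an unstable direction), differentiates the comparison quantity $Q(s,t)$ in $s$, and applies Lemma~\ref{lemma_ode} (Dumas--Wolf, Lemma B.2). Within a stable sector this gives $\lim_\tinf Q(1,t)=\id$, i.e.\ genuine path-independence; across the unstable directions $\pm\tfrac{\pi}{6}$ a Gaussian-concentration estimate on the single near-critical entry shows the two limits differ by an element of $G_k^{\mp}$, a unipotent which fixes the vertex $\hat X_{[k]}$ and pointwise fixes the adjacent edge. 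It is exactly these unipotent relations that make $X_k$ well defined, force non-collinearity of $X_{k-1,k}$ and $X_{k,k+1}$, and glue the two charts so that $X_{k,k+1}$ has endpoints $X_k$ and $X_{k+1}$; no single-path estimate along rays can substitute for them. (Two smaller points: off the unstable directions the exponential margin $e^{-\delta|\zeta|}$, not the $|\zeta|^{-1/2}$ prefactor, is what gives integrability — at $\theta=\pm\tfrac{\pi}{6}$ the prefactor alone is \emph{not} integrable; and for the last claim of part (\ref{item_finding3}) the paper does not rely only on Proposition~\ref{prop_devbdproperty} but on the case analysis of Lemma~\ref{lemma_rough} to see that a developed boundary containing a twisted polygon equals that polygon together with its accumulation points.)
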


The above theorem is a generalisation of Theorem 6.3 in \cite{dumas-wolf} by Dumas and Wolf. Our proof is an adaptation of theirs as well.

\subsubsection{Auxiliary developing limits}\label{sec_auxpoly}
We let $\dev_0: \widetilde{\Sigma}_m\rightarrow\mathbb{P}(E_m)$ be Wang's developing map associated to $(2^\frac{1}{3}|\ve{b}|^\frac{2}{3},\ve{b})$ (see \S \ref{sec_wangass}). Let $\devlima(\beta)$ denote the corresponding developing limit for $\beta\in\C$, \ie 
$$
\devlima(\beta):=\lim_\tinf\dev_0(\beta_{[0,t]}^{-1})
$$
if the limit exists.

The pair $(2^\frac{1}{3}|\ve{b}|^\frac{2}{3},\ve{b})$ is expressed as $(2|\dzeta|^2, 2\,\dzeta^3)$ on each half-plane $\H_\k$ in the local model. But Wang's developing map associated to the latter pair is well known as we have seen in \S \ref{sec_titeica}. Therefore, in order to determine all the $\devlima(\beta)$'s,  one only need to patch together the developing limits given by Proposition \ref{prop_tit} for each $\H_\k$.  We  introduce some notations in order to state the result.

Let $(e_1^\k,e_2^\k,e_3^\k)$ be the equilateral frame (see \S \ref{sec_wang} for the definition) of $E$ over $\CH_\k$ associated to the natural coordinate. 
Let $E_{0,\k}$ be the fiber of $E$ at $0\in \CH_\k$ and let $\Delta^\k\subset\mathbb{P}(E_{0,\k})$ be the triangle 
$$
\Delta^\k=\big\{\big[xe_1^\k(0)+ye_2^\k(0)+ze_3^\k(0)\big]\mid xyz\geq 0\big\}.
$$
Here $[v]$ stands for the projectivization of the vector $v$. As in \S \ref{sec_titeica}, we put
$$
X_i^\k=[e_i^\k(0)],\quad X_{ij}^\k=\big\{\big[(1-s)\,e_i^\k(0)+s\,e_j^\k(0)\big]\big\}_{s\in[0,1]}
$$
We emphasis that the ``$0$'' in the notation ``$e^\k_i(0)$'' means the origin of $\CH_\k$ and represents different points for different $\k$.

It turns out that for any $k\in\mathbb{Z}$,  the triangle $\Delta^\k$ gets mapped by the parallel transport $\a_0(\alpha_k^{-1}):\mathbb{P}(E_{0,\k})\rightarrow\mathbb{P}(E_m)$ to the same triangle in $\mathbb{P}(E_m)$. However, the map permutes the vertices according to $k$. To describe this precisely, we put
$$
\hat{X}_1:=\a_0(\alpha_1^{-1})X^1_1,\quad \hat{X}_2:=\a_0(\alpha_1^{-1})X^1_3,\quad\hat{X}_3:=\a_0(\alpha_1^{-1})X^1_2,
$$
$$
\hat{X}_{12}:=\a_0(\alpha_1^{-1})X^1_{13},\quad \hat{X}_{23}:=\a_0(\alpha_1^{-1})X^1_{23},\quad \hat{X}_{31}:=\a_0(\alpha_1^{-1})X^1_{12}.
$$
That is, the $\hat{X}_i$'s and $\hat{X}_{ij}$'s are the vertices and edges of the triangle $\a_0(\alpha_1^{-1})(\Delta^1)$ in $\mathbb{P}(E_m)$. Note that they are labelled differently from those of $\Delta^1$.

\begin{lemma}\label{lemma_mod3}
For any $k\in\mathbb{Z}$, let $[k]\in\{1,2,3\}$ denote the mod $3$ value of $k$. Then
\begin{align*}
\a_0(\alpha_k^{-1})X_1^\k=\hat{X}_{[k]},\quad
\a_0(\alpha_k^{-1})X_2^\k=\hat{X}_{[k-1]},\quad
\a_0(\alpha_k^{-1})X_3^\k=\hat{X}_{[k+1]},
\end{align*}
$$
\a_0(\alpha_k^{-1})X_{12}^\k=\hat{X}_{[k-1],[k]},\quad
\a_0(\alpha_k^{-1})X_{23}^\k=\hat{X}_{[k+1],[k-1]},\quad
\a_0(\alpha_k^{-1})X_{31}^\k=\hat{X}_{[k],[k+1]}.
$$
\end{lemma}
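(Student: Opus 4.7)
The plan is to track projective frames through a single ``transition cycle'' $\gamma_{\k,\k+1}$ and then iterate along $\alpha_k \sim \alpha_1 \cdot \gamma_{1,2} \cdots \gamma_{k-1,k}$. The key simplification is that on each $\CH_\k$ the flat connection $\D_0$, when written in the equilateral frame associated to the natural coordinate $\zeta$ on $\CH_\k$, is diagonal: this is exactly Eq.~(\ref{eqn_wangcon0}). Consequently, the parallel transport $\a_0$ along any path in $\CH_\k$ acts on $E$ by a diagonal matrix in the equilateral frame, and therefore fixes each projective class $[e_i^\k(\zeta)]$. In particular, if $\delta_\k^\mathrm{out}$ and $\delta_{\k+1}^\mathrm{in}$ denote the two boundary arcs that make up $\gamma_{\k,\k+1}$ (from $0$ to $\xi_\k$ in $\CH_\k$ and from $\eta_{\k+1}$ to $0$ in $\CH_{\k+1}$ respectively), then $\a_0(\delta_\k^\mathrm{out})X_i^\k = [e_i^\k(\xi_\k)]$ and $\a_0(\delta_{\k+1}^\mathrm{in})[e_j^{\k+1}(\eta_{\k+1})] = X_j^{\k+1}$.

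Thus the only real content is to understand how the equilateral frames on $\CH_\k$ and $\CH_{\k+1}$ compare at the gluing locus $\xi_\k \sim \eta_{\k+1}$. By Proposition~\ref{prop_natural}(iv), the transition $\Phi_{\k+1}^{-1}\circ\Phi_\k$ between the natural coordinates of the two half-planes is of the form $\zeta \mapsto \omega\zeta + c$ with $\omega = e^{2\pi\ima/3}$. Under such a change of coordinate, $\paz \mapsto \omega\,\pa_{\zeta'}$ and $\pabz \mapsto \omega^2\,\pa_{\bar\zeta'}$. Plugging this into the defining expression (\ref{eqn_frame})--(\ref{eqn_db}) for the equilateral frame and using $\omega^3=1$ gives the cyclic identification
\[
e_1^\k = e_2^{\k+1},\quad e_2^\k = e_3^{\k+1},\quad e_3^\k = e_1^{\k+1}
\]
at every point of the overlap. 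Combining this with the previous paragraph yields
\[
\a_0(\gamma_{\k,\k+1})X_i^\k = X_{\sigma(i)}^{\k+1},\qquad \a_0(\gamma_{\k,\k+1})X_{ij}^\k = X_{\sigma(i)\sigma(j)}^{\k+1},
\]
where $\sigma$ is the cyclic permutation $1\mapsto 2\mapsto 3\mapsto 1$.

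It remains to iterate. From $\alpha_{k+1} \sim \alpha_k \cdot \gamma_{k,k+1}$ and the multiplicativity of parallel transport we get $\a_0(\alpha_k^{-1}) = \a_0(\alpha_1^{-1})\,\a_0(\gamma_{1,2}^{-1})\cdots\a_0(\gamma_{k-1,k}^{-1})$, hence $\a_0(\alpha_k^{-1})X_i^\k = \a_0(\alpha_1^{-1})X_{\sigma^{1-k}(i)}^1$ and similarly for edges. Reading off cases from the definition of the $\hat{X}_j$ and $\hat{X}_{ij}$ (which encode precisely the action of the cyclic permutation $\sigma$ on $\a_0(\alpha_1^{-1})(\Delta^1)$) then matches all six identities in the lemma: e.g.\ for vertices one finds $\sigma^{1-k}(1) \equiv 2-k \pmod 3$, $\sigma^{1-k}(2) \equiv 3-k$, $\sigma^{1-k}(3) \equiv 4-k$, which translate into $[k]$, $[k-1]$, $[k+1]$ respectively. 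There is no real obstacle here: everything is an exercise in bookkeeping once the one diagonal-frame observation and the one cyclic-identification computation are in hand.
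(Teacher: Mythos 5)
Your proposal is correct and follows essentially the same route as the paper's proof: both hinge on the two facts that $\D_0$ is diagonal in the equilateral frame (so $\a_0$ along arcs inside each $\CH_\k$ preserves the projectivized frame sections) and that the transition $\zeta\mapsto\omega\zeta+c$ cyclically relabels the equilateral frame, $e_i^{\k}=e_{i+1}^{\k+1}$, which is exactly the paper's identity $e_i^{\k}=e_{i-1}^{\k-1}$. The only difference is organizational — you compose the one-step transitions explicitly via $\sigma^{1-k}$ where the paper phrases the same recursion as an induction on $k$ — and your bookkeeping with the relabeling built into the definitions of the $\hat{X}_i$, $\hat{X}_{ij}$ checks out.
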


\begin{proof}
When $k=1$, these are exactly the definitions. In order to prove for other $k$'s, we claim that 
\begin{align*}
\a_0(\alpha_{k}^{-1})[e_{i}^{\k}(0)]=\a_0(\alpha_{k-1}^{-1})[e_{i-1}^{\k-1}(0)]
\end{align*}
for any $k$ and $i$. Here the indices $i, i-1\in\{1,2,3\}$ are counted mod $3$. 
The required equalities then follow from the claim and the $k=1$ case by induction.

To prove the claimed, we rewrite it as
\begin{equation}\label{eqn_auxpoly3}
[e^\k_i(0)]=\a_0(\gamma_{k-1,k})[e^{\k-1}_{i-1}(0)].
\end{equation}
Here $\gamma_{k-1,k}\cong\alpha_k\cdot\alpha_{k-1}^{-1}$ is defined in \S \ref{sec_winding} as the concatenation of a segment $\gamma'\subset\pa\H_{\k-1}$ followed by a segment $\gamma''\subset \pa\H_\k$. Now (\ref{eqn_auxpoly3}) is a  consequence of the following two facts: first, the transition map from $\CH_{\k-1}$ to $\CH_\k$ is $\zeta\mapsto e^{2\pi\ima/3}\zeta+\zeta_0$, so a straightforward computation with the definition of equilateral frames shows that $e^\k_i=e^{\k-1}_{i-1}$ on $\CH_{\k-1}\cap\CH_\k$;
second, as we have seen in \S \ref{sec_titeica}, the matrix expressions of $\a_0$ are diagonal with respect to the equilateral frame, thus $\a_0(\gamma')$ (resp. $\a_0(\gamma'')$) preserves the sections $[e^{\k-1}_i]$ (resp. $[e^\k_i]$) of the projectivized bundle $\mathbb{P}(E)$. 
\end{proof}

\begin{proposition}[\textbf{Auxiliary developing limits}]\label{prop_auxpoly}
The developing limit $\devlima(\beta)$ exists for any $\beta\in\C$. Moreover,
 \begin{enumerate}
\item\label{item_aux1}
$\devlima(\beta)=\hat{X}_{[k]}$ for any $\beta\in\C_{k}$;
\item\label{item_aux2}
$\{\devlima(\beta)\}_{\beta\in\C_{k,k+1}}=\hat{X}^\circ_{[k],[k+1]}$.
\end{enumerate}
\end{proposition}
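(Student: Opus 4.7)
The plan is to reduce the proposition to Proposition \ref{prop_tit} by exploiting the observation of \S\ref{sec_wangass}: on each natural half-plane $\CH_\k$ the singular pair $(2^{\frac{1}{3}}|\ve{b}|^{\frac{2}{3}},\ve{b})$ agrees with the \c{T}i\c{t}eica pair $(2|\dzeta|^2, 2\,\dzeta^3)$, so the restriction of $\dev_0$ to $\CH_\k$ (relative to the base point $0\in\CH_\k$) is literally the \c{T}i\c{t}eica developing map onto $\Delta^\k\subset\mathbb{P}(E_{0,\k})$, whose asymptotic behaviour is entirely described by Proposition \ref{prop_tit}. Only a translation between base points remains.

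Given $\beta\in\C$, I use Lemma \ref{lemma_alpha} to represent $\beta$, up to the equivalence of Definition \ref{def_U}, as $\alpha_k$ prolonged by a ray $\tilde\beta\subset\CH_\k$ with $\theta(\tilde\beta)$ recorded in the table of that lemma. For $t$ large, the reversed truncation $\beta_{[0,t]}^{-1}$ is the concatenation of $\tilde\beta_{[0,t']}^{-1}$ (from $\tilde\beta(t')$ back to $0\in\CH_\k$) with $\alpha_k^{-1}$ (from $0$ back to $m$), for some $t'(t)\to+\infty$. Multiplicativity of parallel transport (\S\ref{sec_pt}) followed by projectivisation then produces
\[
\dev_0(\beta_{[0,t]})=\a_0(\alpha_k^{-1})\bigl(\dev_0^\k(\tilde\beta_{[0,t']})\bigr),
\]
where $\dev_0^\k:\CH_\k\rightarrow\mathbb{P}(E_{0,\k})$ is the \c{T}i\c{t}eica developing map on $\CH_\k$ and $\a_0(\alpha_k^{-1}):\mathbb{P}(E_{0,\k})\rightarrow\mathbb{P}(E_m)$ is a fixed projective isomorphism. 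Hence existence and identification of $\devlima(\beta)$ reduce to computing $\lim_\tinf\dev_0^\k(\tilde\beta(t'))$ via Proposition \ref{prop_tit}.

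For $\beta\in\C_k$ the table gives $\theta(\tilde\beta)\in(-\tfrac{\pi}{3},\tfrac{\pi}{3})=I_1$, so Proposition \ref{prop_tit}(\ref{item_tit1}) yields the limit $X_1^\k$, and Lemma \ref{lemma_mod3} rewrites $\a_0(\alpha_k^{-1})X_1^\k=\hat{X}_{[k]}$, proving (\ref{item_aux1}). For $\beta\in\C_{k,k+1}$ the table gives $\theta(\tilde\beta)=-\tfrac{\pi}{3}$, and Proposition \ref{prop_tit}(\ref{item_tit2}) shows that the limit exists and, as the translation parameter $s\in\mathbb{R}$ of the ray $\alpha_{-\pi/3,s}$ varies, sweeps out the interior of $X^\k_{31}$; applying $\a_0(\alpha_k^{-1})$ and the matching identity of Lemma \ref{lemma_mod3} delivers $\hat{X}^\circ_{[k],[k+1]}$, giving (\ref{item_aux2}).

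The only real care will be the bookkeeping between $k\in\mathbb{Z}$---which encodes the winding number of $\beta$ around $p$ via the reference path $\beta_0$---and its residue $\k\in\mathbb{Z}/n\mathbb{Z}$, which alone determines the half-plane $\CH_\k$ and the triangle $\Delta^\k$. Different integer lifts of the same $\k$ differ by a loop around $p$ and cyclically permute the vertex labels of $\a_0(\alpha_k^{-1})(\Delta^\k)\subset\mathbb{P}(E_m)$; Lemma \ref{lemma_mod3} precisely records this permutation through the mod-$3$ reduction $[k]$, so once that lemma is cited nothing further remains.
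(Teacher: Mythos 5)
Your proposal is correct and follows essentially the same route as the paper: reduce via Lemma \ref{lemma_alpha} to a ray $\tilde\beta$ in $\CH_\k$, use the parallel-transport relation $\devlima(\beta)=\a_0(\alpha_k^{-1})\devlima(\tilde\beta)$, compute $\devlima(\tilde\beta)$ from Proposition \ref{prop_tit} ($X_1^\k$ for $\theta\in(-\tfrac{\pi}{3},\tfrac{\pi}{3})$, the interior of $X_{31}^\k$ swept out when $\theta=-\tfrac{\pi}{3}$), and relabel via Lemma \ref{lemma_mod3}. The winding-number bookkeeping you flag is exactly what Lemmas \ref{lemma_alpha} and \ref{lemma_mod3} encode, so nothing further is needed.
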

\begin{proof}
By Lemma \ref{lemma_alpha}, $\beta\in \C_k$ if and only if $\beta=\tilde{\beta}\cdot\alpha_k$, where the path $\tilde{\beta}\subset\CH_\k$ is eventually a ray with asymptotic argument $\theta\in(-\tfrac{\pi}{3},\tfrac{\pi}{3})$. The developing limit $\devlima(\tilde{\beta})\in\mathbb{P}(E_{0,\k})$, defined using the developing map with respect to the base point $0\in\CH_\k$, is related to $\devlima(\beta)$ by 
$$
\devlima(\beta)=\a_0(\alpha_k^{-1})\devlima(\tilde{\beta})
$$
Proposition \ref{prop_tit} gives $\devlima(\tilde{\beta})=X_1^\k$. Using Lemma \ref{lemma_mod3}, we get
\begin{equation}\label{eqn_auxpoly1}
\devlima(\beta)=\a_0(\alpha_{k}^{-1})X_1^\k=\hat{X}_{[k]}
\end{equation}
for any $\beta\in \C_k$, as required. This is proofs statement (\ref{item_aux1}).

Similarly, if $\beta\in \C_{k,k+1}$ then $\theta=-\tfrac{\pi}{3}$ and Proposition \ref{prop_tit} gives $\left\{\devlima(\tilde{\beta})\right\}_{\beta\in \C_{k,k+1}}\!\!=X^\k_{31}$. Again, we use Lemma \ref{lemma_mod3} to get statement (\ref{item_aux2}).
\end{proof}

\subsubsection{Comparing parallel transports}\label{sec_compa}

Now that the $\devlima(\beta)$'s are known, in order to find $\devlim(\beta)$, we compare the parallel transports $\a_0$ and $\a$ which generate $\dev_0$ and $\dev$, respectively.

\begin{definition}\label{def_compa}
${}$
\begin{enumerate}
\item
Given $\beta\in\C$, we define the \emph{comparison transformation} 
%(between the parallel transports of $\D$ and $\D_0$) 
along the truncated path $\beta_{[0,t]}$ as the projective transformation
\begin{equation}\label{eqn_pbetadef}		
P_\beta(t):=\para(\beta_{[0,t]}^{-1})\a_0(\beta_{[0,t]})\in \SL(E_m).
\end{equation}
Denote $P_\beta:=\lim_\tinf P_\beta(t)\in\SL(E_m)$ if the limit exists.

\item
For each $k$, we define one-parameter unipotent subgroups $G_k^\pm\subset\SL(E_m)$ as follows. Endow $E_m$ with a basis whose projection to $\mathbb{P}(E_m)$ is the triple of points $(\hat{X}_{[k]},\hat{X}_{[k-1]}, \hat{X}_{[k+1]})$ (see \S \ref{sec_auxpoly} for the notation). Writing elements in  $\SL(E_m)$ as matrices under this basis, we define
$$
G_k^-
:=
\left\{
\begin{pmatrix}
1&&*\\
&1&\\
&&1
\end{pmatrix}
\right\}
,\quad
G_k^+
:=
\left\{
\begin{pmatrix}
1&*&\\
&1&\\
&&1
\end{pmatrix}
\right\}.
$$
Let $\Pi^\pm_k$ denote the projection $\SL(E_m)\rightarrow \SL(E_m)/G_k^\pm$. 
\end{enumerate}
\end{definition}

%One can reformulate Lemma 6.4 and Lemma 6.5 in \cite{dumas-wolf} as saying that $P_\beta$ is well-defined and is invariant under deformations for stable $\beta$'s, whereas the transition of $P_\beta$ between $\Se_k$ and $\Se_{k,k+1}$ (resp. between $\Se_{k,k+1}$ and $\Se_{k+1}$) is given by an element in $G_k^-$ (resp. $G_k^+$). We give below a formal statement and an almost self-contained proof, adapted from the original one, for the sake of completeness.

The following theorem is a reformulation of Lemma 6.4 and Lemma 6.5 in \cite{dumas-wolf}. We include the proof here for the sake of completeness.
\begin{theorem}[\textbf{Limit of comparison transformation}]\label{thm_compa}
${}$
\begin{enumerate}
\item\label{item_ocompa1}
The limit in (\ref{eqn_pbetadef}) exists if $\beta$ is stable.
\item\label{item_ocompa2}
Given $k$, any $\beta\in\Se_{k}$ (resp. $\beta\in\Se_{k,k+1}$) gives rise to the same $P_\beta$, which we denote by $P_k$ (resp. $P_{k,k+1}$).
\item\label{item_ocompa3}
$P_{k-1,k}^{-1}P_{k}$ and $P_{k,k+1}^{-1}P_{k}$ belongs to $G_k^-$ and $G^+_k$, respectively.
\end{enumerate}
\end{theorem}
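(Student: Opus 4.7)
The plan is to derive an ODE for $P_\beta(t)$ and analyse its convergence using the strong decay estimates of Theorem \ref{thm_fine}. In a fixed global frame, writing $T(t):=\para(\beta_{[0,t]})$ and $T_0(t):=\a_0(\beta_{[0,t]})$ so that $P_\beta(t)=T(t)^{-1}T_0(t)$, the two parallel transport equations $\dot T=-A(\dot\beta)T$ and $\dot T_0=-A_0(\dot\beta)T_0$ give after a direct computation
$$\dot P_\beta(t) = P_\beta(t)\cdot Q(t), \qquad Q(t):=\Ad_{T_0(t)^{-1}}\bigl[(A-A_0)(\dot\beta(t))\bigr],$$
where $A, A_0$ are the connection $1$-forms of $\D$ and $\D_0$. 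Hence the limit $P_\beta$ exists as soon as $\int_0^\infty\|Q(t)\|\,dt<\infty$. For statement (\ref{item_ocompa1}), Lemma \ref{lemma_alpha} lets me replace $\beta$ by an equivalent $\tilde\beta\cdot\alpha_k$ with $\tilde\beta\subset\CH_\k$ eventually a ray of argument $\theta_0\in(-\tfrac{\pi}{2},\tfrac{\pi}{2})\setminus\{\pm\tfrac{\pi}{6}\}$. Expressing $\D-\D_0$ in the equilateral frame on $\H_\k$ (where $\ve{b}=2\,\dzeta^3$), every entry of $(A-A_0)(\dot{\tilde\beta})$ is bounded by $C(|u|+|\pa u|+|\bpa u|)$, which by Theorem \ref{thm_fine} decays as $C|\tilde\beta(t)|^{-1/2}e^{-2\sqrt{3}|\tilde\beta(t)|}$; meanwhile the explicit diagonal form of $\a_0$ on $\H_\k$ from \S \ref{sec_titeica} implies that $\Ad_{\a_0(0,\tilde\beta(t))^{-1}}$ scales the equilateral matrix unit $E_{ij}$ by $\exp(-\varpi_{ij}(\arg\tilde\beta(t))|\tilde\beta(t)|)$. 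Since $\theta_0$ is not an odd multiple of $\tfrac{\pi}{6}$, \S \ref{sec_eigen}(\ref{item_eigen1}) gives $\max_{i\ne j}|\varpi_{ij}(\theta_0)|<2\sqrt{3}$ strictly, so each entry of $Q(t)$ is $O(|t|^{-1/2}e^{-\varepsilon t})$ for some $\varepsilon>0$ and hence integrable.

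For statement (\ref{item_ocompa2}), the strategy is to compare $\tilde P_{\theta_0}(R)$ and $\tilde P_{\theta_1}(R)$ for two rays in the same stable sector of $\CH_\k$ by transporting between their endpoints. Using flatness of $\D,\D_0$ on the simply connected $\CH_\k$, a direct manipulation yields the factorisation
$$\tilde P_{\theta_1}(R)=\tilde P_{\theta_0}(R)\cdot \Ad_{\a_0(Re^{\ima\theta_0},0)^{-1}}\!\bigl[\Lambda(R)\bigr],$$
where $\Lambda(R):=\para(Re^{\ima\theta_1},Re^{\ima\theta_0})^{-1}\a_0(Re^{\ima\theta_1},Re^{\ima\theta_0})$ compares the transports along the arc $\{Re^{\ima\phi}\}_{\phi\in[\theta_0,\theta_1]}$. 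Applying the same ODE analysis along this arc, and using that Theorem \ref{thm_fine} holds uniformly on the arc since the arc lies at distance $R$ from the origin, one checks that for each $(i,j)$ the entry of $\Ad_{\a_0(Re^{\ima\theta_0},0)^{-1}}[\Lambda(R)]$ in the equilateral frame decays exponentially in $R$: the $e^{-2\sqrt{3}R}$ factor dominates both the amplification $e^{\varpi_{ij}(\theta_0)R}$ coming from the outer adjoint and the amplification $e^{\varpi_{ij}(\phi)|R(e^{\ima\phi}-e^{\ima\theta_0})|}$ coming from the inner adjoint along the arc, precisely because the entire arc sits in one stable sector where $|\varpi_{ij}|<2\sqrt{3}$. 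Hence $\Ad[\Lambda(R)]\to I$ and $\tilde P_{\theta_0}=\tilde P_{\theta_1}$.

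Statement (\ref{item_ocompa3}) follows by the same factorisation applied with $\theta_0$ and $\theta_1$ lying in adjacent stable sectors, so that the arc crosses exactly one unstable direction. At $\phi=\tfrac{\pi}{6}$ (for the pair $\Se_{k-1,k}$ and $\Se_k$), the table of \S \ref{sec_eigen}(\ref{item_eigen2}) singles out the unique pair $(i,j)=(1,3)$ with $\varpi_{ij}(\tfrac{\pi}{6})=2\sqrt{3}$; the estimates above still give exponential decay of every entry of $\Ad[\Lambda(R)]$ except possibly the $(1,3)$ entry, which may survive to a nonzero limit. Through the identification of $(e_1^\k,e_2^\k,e_3^\k)$ with $(\hat X_{[k]},\hat X_{[k-1]},\hat X_{[k+1]})$ provided by Lemma \ref{lemma_mod3}, the $(1,3)$ position is exactly the one generating $G_k^-$, yielding $P_{k-1,k}^{-1}P_k\in G_k^-$. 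The same argument at $\phi=-\tfrac{\pi}{6}$, whose critical pair is $(1,2)$ corresponding to $G_k^+$, gives $P_{k,k+1}^{-1}P_k\in G_k^+$.

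The main obstacle will be the delicate entry-by-entry bookkeeping of competing exponentials along the arc in (\ref{item_ocompa2}) and (\ref{item_ocompa3}): one must verify that the $e^{-2\sqrt{3}R}$ decay from Theorem \ref{thm_fine} dominates the amplifications from \emph{both} adjoints uniformly along the arc, and in the unstable-crossing case confirm that the predicted entry survives to a finite limit rather than diverging. This uses crucially that $2\sqrt{3}$ is simultaneously the sharp exponent in the strong estimate of $u$ and the extremal value of $\varpi_{ij}$, so that the integrability threshold for $Q(t)$ is met precisely on the complement of the unstable directions and is saturated exactly at those directions.
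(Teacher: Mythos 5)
Your part (\ref{item_ocompa1}) is essentially the paper's own argument: the ODE $\tfrac{\dif}{\dif t}P_\beta(t)=P_\beta(t)\,\Ad_{\a_0(0,\beta(t))}\!\left[(A-A_0)(\dot\beta(t))\right]$, integrability of the driving term from Theorem \ref{thm_fine} together with the spectral bound $\varpi(\theta)<2\sqrt{3}$ of \S\ref{sec_eigen} off the unstable directions, and the Dumas--Wolf ODE lemma. (Small slip: your $Q(t)$ equals $\Ad_{T_0(t)^{-1}}=\Ad_{\a_0(0,\beta(t))}$, so the weight on $E_{ij}$ is $e^{+\varpi_{ij}(\arg\beta(t))|\beta(t)|}$, not $e^{-\varpi_{ij}}$; this is harmless since $\varpi_{ji}=-\varpi_{ij}$.)

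For (\ref{item_ocompa2}) and (\ref{item_ocompa3}) your factorisation $P(\zeta_1)=P(\zeta_0)\cdot\Ad_{\a_0(0,\zeta_0)}[\Lambda(R)]$ is correct, but the decay argument you propose --- that $e^{-2\sqrt{3}R}$ ``dominates both'' the outer amplification $e^{\varpi_{ij}(\theta_0)R}$ and the inner amplification along the chord --- fails as stated. If one first bounds $\Lambda(R)-\id$ (whose entries are amplified along the arc by factors up to $e^{2\sqrt{3}|\zeta_0-\gamma(\phi)|}$, with $|\zeta_0-\gamma(\phi)|$ as large as $2R\sin(|\theta_1-\theta_0|/2)$) and then applies the outer conjugation, the combined exponential rate is $\varpi(\theta_0)+4\sqrt{3}\sin(|\theta_1-\theta_0|/2)-2\sqrt{3}$, which is strictly positive for instance when $\theta_0$ is near $\tfrac{\pi}{6}$ and $\theta_1$ near $-\tfrac{\pi}{6}$ inside the same stable sector; so the domination you need at your self-identified ``main obstacle'' is simply false in that form. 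The missing idea is that the two adjoints must be \emph{composed}, not multiplied as worst cases: $\Ad_{\a_0(0,\zeta_0)}\circ\Ad_{\a_0(\zeta_0,\gamma(\phi))}=\Ad_{\a_0(0,\gamma(\phi))}$, so after conjugating each infinitesimal factor at the moving point of the arc the driving term is bounded entrywise by $C\,R^{1/2}e^{(\varpi_{ij}(\phi)-2\sqrt{3})R}$, uniformly subcritical on any arc contained in one stable sector --- this is exactly the paper's $M(s,t)$, obtained there by interpolating between the two paths and differentiating in the interpolation parameter, and it is what makes (\ref{item_ocompa2}) go through (one also needs the easy extra step comparing an eventual ray with offset to the ray through the origin, which you elide). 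For (\ref{item_ocompa3}) two further inputs are left open in your sketch: at the crossing of the unstable direction the $(1,3)$ (resp.\ $(1,2)$) term is only critical, and one needs the Laplace-type identity $\varpi_{13}(\phi)-2\sqrt{3}=-4\sqrt{3}\sin^2\!\big(\tfrac{\phi-\pi/6}{2}\big)$ to see that its integral over the arc stays \emph{bounded}; and to conclude that the limit is exactly of the form $\exp(rE_{13})$, rather than merely that this entry ``may survive'' (it might a priori oscillate or the limit might fail to be unipotent), one needs a parametrized ODE limit statement such as the paper's Lemma \ref{lemma_ode}, combined with the observation that $W(R)=P(\zeta_0)^{-1}P(\zeta_1)$ already converges by part (\ref{item_ocompa1}) applied to both rays. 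Your identification of the surviving positions $(1,3)$ and $(1,2)$ with $G_k^-$ and $G_k^+$ via Lemma \ref{lemma_mod3} is correct; once the corrections above are made, your route coincides with the paper's proof.
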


%Note that a reformulation of the last statement is $\Pi_k^-(P_k)=\Pi_k^-(P_{k-1,k})$ and $\Pi_k^+(P_k)=\Pi_k^+(P_{k,k+1})$

We first reformulate the problem in terms of the local model from \S \ref{sec_local}. 

We fix $k$ henceforth and denote the half-plane $\CH_\k$ simply by $\CH$. To prove the theorem, we can restrict our attention to paths $\beta\in\Se_{k}\cup \Se_{k, k+1}\cup \Se_{k+1}$. By Lemma \ref{lemma_alpha}, each such $\beta$ is equivalent to a composition $\tilde{\beta}\cdot\alpha_k$ for some path $\tilde{\beta}$ in  $\CH$ issuing from $0$. We shall study alternatively the comparison transformation $P_{\tilde{\beta}}(t)\in\SL(E_0)$ of $\tilde{\beta}$, defined with respect to the base point $0\in \CH$. 

Let
$
\a, \a_0: \CH\times \CH\rightarrow \SL(3,\mathbb{R})
$
be the two-pointed parallel transport maps of the connections $\D=\dif+A$ and $\D_0=\dif+A_0$ (see \S \ref{sec_wang} and \S \ref{sec_wangass} for the notation), respectively, under the equilateral frame $(e_1, e_2, e_3)$ of $E$ over $\CH$. The matrix representative of $P_{\tilde{\beta}}(t)$ under the basis  $(e_1(0), e_2(0), e_3(0))$  is
$$
P_{\tilde{\beta}}(t)=\a(0,\tilde{\beta}(t))\a_0(\tilde{\beta}(t),0).
$$
Here $\tilde{\beta}(t)$ is the parametrization of $\tilde{\beta}$ by arc-length with respect to $|\ve{b}|^\frac{2}{3}$.
Note that the original comparison transformation that we wish to study is given by 
\begin{equation}\label{eqn_pbetadef2}
P_\beta(t+t_0)=\a(\alpha_k^{-1})\,P_{\tilde{\beta}}(t)\,\a_0(\alpha_k),
\end{equation}
where $t_0$ is the  $|\ve{b}|^\frac{2}{3}$-length of $\alpha_k$.

Clearly $P_\beta(t)$ converges if and only if $P_{\tilde{\beta}}(t)$ does. In view of Lemma \ref{lemma_alpha}, we can restate part (\ref{item_ocompa1}), (\ref{item_ocompa2})  and (\ref{item_ocompa3})  of the proposition as statements (\ref{item_compa1}'), (\ref{item_compa2}') and (\ref{item_compa3}') below, respectively. In these statements,  $\tilde{\beta}:[0,+\infty)\rightarrow\CH$
is any path satisfying $\tilde\beta(0)=0$, $|\tfrac{\dif}{\dif t}\tilde{\beta}(t)|\equiv1$ and 
$$
\tilde\beta(t)=e^{\theta\ima}+\zeta_0, \quad \forall t\geq M
$$ 
for some $\zeta_0\in\CH$, $\theta=\theta(\tilde{\beta})\in(-\tfrac{\pi}{2}, \tfrac{\pi}{2})$ and $M>0$. The paths $\tilde{\beta}_0$ and $\tilde{\beta}_1$ are under the some hypotheses. 

\vspace{5pt}
\begin{enumerate}[(1')]
\item\label{item_compa1}
%Given a path parametrized by arc-length
%$$
%\tilde{\beta} :[0,+\infty)\rightarrow\CH
%$$ 
%such that $\tilde{\beta}(0)=0$ and $\tilde{\beta}([T,+\infty))$ is a ray when $T$ is sufficiently large, 
If $\theta(\tilde{\beta})$ belongs to either of the intervals
$\left(-\tfrac{\pi}{2},-\tfrac{\pi}{6}\right)$, $\left(-\tfrac{\pi}{6},\tfrac{\pi}{6}\right)$ or $\left(\tfrac{\pi}{6},\tfrac{\pi}{2}\right)$, then the limit
$P_{\tilde{\beta}}:=\lim_\tinf P_{\tilde{\beta}}(t)\in\SL(E_0)$ exists.

\item\label{item_compa2}
Given $\tilde{\beta}_0$ and $\tilde{\beta}_1$, if $\theta(\tilde{\beta}_0)$ and $\theta(\tilde{\beta}_1)$ are both in one of the intervals $\left(-\tfrac{\pi}{2},-\tfrac{\pi}{6}\right)$, $\left(-\tfrac{\pi}{6},\tfrac{\pi}{6}\right)$ or $\left(\tfrac{\pi}{6},\tfrac{\pi}{2}\right)$, then 
$P_{\tilde{\beta}_0}=P_{\tilde{\beta}_1}$.
\item\label{item_compa3}
$$
P_{\tilde{\beta}_1}^{-1}P_{\tilde{\beta}_0}=
\begin{cases}
\begin{pmatrix}
1&&*\\
&1&\\
&&1
\end{pmatrix}
\mbox{ if } \theta(\tilde{\beta}_0)\in \left(-\tfrac{\pi}{6},\tfrac{\pi}{6}\right) \mbox{ and }\theta(\tilde{\beta}_1)\in\left(\tfrac{\pi}{6},\tfrac{\pi}{2}\right),\\[20pt]
\begin{pmatrix}
1&*&\\
&1&\\
&&1
\end{pmatrix}
\mbox{ if }\theta(\tilde{\beta}_0)\in \left(-\tfrac{\pi}{6},\tfrac{\pi}{6}\right) \mbox{ and }\theta(\tilde{\beta}_1)\in\left(-\tfrac{\pi}{2},-\tfrac{\pi}{6}\right).
\end{cases}
$$
\end{enumerate}
Statement (\ref{item_compa3}') is equivalent to (\ref{item_ocompa3}) because, on one hand, $P_{\beta_1}^{-1}P_{\beta_0}$ is conjugate to $P_{\tilde{\beta}_1}^{-1}P_{\tilde{\beta}_0}$ through  $\a_0(\alpha_k^{-1})$ by (\ref{eqn_pbetadef2}); on the other hand, by Lemma \ref{lemma_mod3}, the basis used in the definition of $G_k^\pm$  is the $\a_0(\alpha_k^{-1})$-translates of the basis  $(e^\k_0(0),e^\k_1(0),e^\k_2(0))$ used in  (\ref{item_compa3}').

%For any $\beta_0,\beta_1\in \Se_{k}\cup\Se_{k,k+1}\cup\Se_{k+1}$, we have $P_{\beta_1}^{-1}P_{\beta_0}=\a_0(\alpha_k^{-1})P_{\tilde{\beta}_1}^{-1}P_{\tilde{\beta}_0}\,\a_0(\alpha_k)$. Thus an equivalent statement of the part of  (\ref{item_ocompa3}) concerning $P_k^{-1}P_{k,k+1}$ is that, if $\beta_0\in\Se_{k,k+1}$ and $\beta_1\in\Se_k$, then $P_{\tilde{\beta}_1}^{-1}P_{\tilde{\beta}_0}$ is a unipotent element in $\SL(E_0)$ stabilizing the line in $\mathbb{P}(E_0)$ spanned by $[e_1^\k(0)]$ and $[e_3^\k(0)]$ while pointwise fixing the line spanned by $[e_1^\k(0)]$ and $[e_2^\k(0)]$. But such an element is exactly one with the first matrix expression in (\ref{item_compa3}'). The part concerning $P_{k+1}^{-1}P_{k,k+1}$ is similar.

\vspace{8pt}

We proceed to prove the statements (\ref{item_compa1}'), (\ref{item_compa2}') and (\ref{item_compa3}'). The idea is to take derivatives of the $\SL(3,\mathbb{R})$-valued functions in question, resulting in linear ODEs, and then apply certain asymptotic result for such ODEs to get the required limit. 

In the following proof, we re-denote $\tilde{\beta}$ by $\beta$ for tidiness. This is not to be confused with the $\beta$ from the original statement of Theorem \ref{thm_compa}.

 \begin{proof}
(\ref{item_compa1}')
We compute the derivative of $P_\beta(t)$ using the last property of two-pointed parallel transport maps given in \S \ref{sec_pt}, obtaining
\begin{align*}
\tfrac{\dif}{\dif t}P_\beta(t)&=\a(0,\beta(t))A(\dot{\beta}(t))\a_0(\beta(t),0)-\a(0,\beta(t))A_0(\dot{\beta}(t))\a_0(\beta(t),0)\\
&=P_\beta(t)\Ad_{\a_0(0,\beta(t))}\left(A(\dot{\beta}(t))-A_0(\dot{\beta}(t))\right)=P_\beta(t) N(t),
\end{align*}
where we put 
$$N(t):=\Ad_{\a_0(0,\beta(t))}(A(\dot{\beta}(t))-A_0(\dot{\beta}(t))).$$
A result on asymptotics of linear ODEs (Lemma B.1 in \cite{dumas-wolf}) then ensures that $\lim_\tinf P_\beta(t)$ exists if
\begin{equation}\label{eqn_condi}
\int_0^{+\infty}\|N(t)\|\dif t<+\infty.
\end{equation}
Here $\|\cdot\|$ denotes a matrix norm.

The expressions of $\D$ and $\D_0$ from (\ref{eqn_d}) and (\ref{eqn_wangcon0}) yield
\begin{align*}
A-A_0=\Q^{-1}
\begin{pmatrix}
\pa u&(e^{-u}-1)\dbzeta&0\\[7pt]
(e^{-u}-1)\dzeta&\bpa u&0\\[7pt]
(e^u-1)\dbzeta&(e^u-1)\dzeta&0
\end{pmatrix}
\Q,
\end{align*}
where the function $u$ is as in \S \ref{sec_fine}.
Therefore, Lemma \ref{thm_fine} and the assumption that $|\dot{\beta}(t)|=1$ imply  
\begin{equation}\label{eqn_condi1}
\|A(\dot{\beta}(t))-A_0(\dot{\beta}(t))\|\leq C\,|\beta(t)|^{-\frac{1}{2}}e^{-2\sqrt{3}|\beta(t)|}, \quad \forall\  t\geq 0
\end{equation}

On the other hand, by virtue of the discussion on the spectral radius $\Ad_{\a_0(0,\zeta)}$ in \S \ref{sec_eigen}, our assumption on $\theta(\beta)=\lim_\tinf \arg(\beta(t))$ implies that 
\begin{equation}\label{eqn_condi2}
\rho(\Ad_{\a_0(0,\beta(t))})=e^{\varpi(\arg(\beta(t)))|\beta(t)|}\leq e^{(2\sqrt{3}-\delta)|\beta(t)|}
\end{equation}
for some constant $\delta>0$ when $t$ is sufficiently large.

Combining the estimates  (\ref{eqn_condi1}) and (\ref{eqn_condi2}), we get
$$
\left\|\Ad_{\a_0(0,\beta(t))}\left(A(\dot{\beta}(t))-A_0(\dot{\beta}(t))\right)\right\|\leq C|\beta(t)|^{-\frac{1}{2}}e^{-\delta|\beta(t)|}\leq C't^{-\frac{1}{2}}e^{-\delta t},
$$
where the second equality is because $t-\mu\leq |\beta(t)|\leq t$ for some constant $\mu$, as implied by the hypotheses on $\beta$. The required condition (\ref{eqn_condi}) follows.

(\ref{item_compa2}')
For $t\geq 0$ and $s\in[0,1]$, put 
%The interval $[\theta_0, \theta_1]$ is contained in either $(-\tfrac{\pi}{2},-\tfrac{\pi}{6})$, $(-\tfrac{\pi}{6}, \tfrac{\pi}{6})$ or $(\tfrac{\pi}{2}, \tfrac{\pi}{6})$ by assumption. 
\begin{align*}
\beta_s(t)&:=(1-s)\beta_0(t)+s\beta_1(t)\in \CH
\end{align*}
It is easy to see that there exist $\lambda>1$ and $\mu>0$ such that 
\begin{equation}\label{eqn_abst}
\frac{1}{\lambda}t- \mu\leq |\beta_s(t)|\leq t,\quad \big|\tfrac{\pa}{\pa s}\beta_s(t)\big|\leq \lambda t+\mu
\end{equation}
for any $s$ and $t$. Put
\begin{align}\label{eqn_defq}
Q(s,t)&:=\a_0(0,\beta_s(t))\a(\beta_s(t),\beta_0(t))\a_0(\beta_0(t),0)\in \SL(3,\mathbb{R}).
\end{align}
Note that 
$$
Q(1,t)=\a_0(0,\beta_1(t))\a(\beta_1(t),0)\a(0,\beta_1(t))\a_0(\beta_0(t),0)=P_{\beta_1}(t)^{-1}P_{\beta_0}(t).
$$
Therefore $P^{-1}_{\beta_1}P_{\beta_0}$ is the limit of $Q(1,t)$ as $t\rightarrow+\infty$. By part (\ref{item_compa1}'), this limit exists if neither $\theta(\beta_0)$ nor $\theta(\beta_1)$ equals $\pm\tfrac{\pi}{6}$.

In order to evaluate the limit of $Q(1,t)$, we compute $\tfrac{\pa}{\pa s}Q(s,t)$ similarly as above, obtaining
 \begin{align*}
\tfrac{\pa}{\pa s}Q(s,t)&=\a_0(0,\beta_s(t))A_0\left(\tfrac{\pa}{\pa s}\beta_s(t)\right)\a(\beta_s(t),\beta_0(t))\a_0(\beta_0(t),0)\\
&\quad-\a_0(0,\beta_s(t))A\left(\tfrac{\pa}{\pa s}\beta_s(t)\right)\a(\beta_s(t),\beta_0(t))\a_0(\beta_0(t),0)\\
&=\Ad_{\a_0(0,\beta_s(t))}\left(A_0\left(\tfrac{\pa}{\pa s}\beta_s(t)\right)-A\left(\tfrac{\pa}{\pa s}\beta_s(t)\right)\right)  Q(s,t).
\end{align*}
Let us set
\begin{align}\label{eqn_defm}
M(s,t):=\Ad_{\a_0(0,\beta_s(t))}\left(A_0\left(\tfrac{\pa}{\pa s}\beta_s(t)\right)-A\left(\tfrac{\pa}{\pa s}\beta_s(t)\right)\right),
\end{align}
so that 
$
\tfrac{\pa}{\pa s}Q(s,t)=M(s,t)Q(s,t)
$.
This can be viewed as a family of ODEs on the interval $[0,1]$ depending on the parameter $t\in[0,+\infty)$. The following 
asymptotic result applies.

\begin{lemma}\label{lemma_ode}
Let $Q(s,t)$ and $M(s,t)$ be smooth functions on $[0,1]\times[0,+\infty)$ taking values in $\GL(3,\mathbb{R})$ and $\gl_3\mathbb{R}$, respectively, satisfying the following conditions:
\begin{enumerate}[(i)]
\item
$\tfrac{\pa}{\pa s}Q(s,t)=M(s,t)Q(s,t)$.
\item
$Q(0,t)=\id$ for any $t$.
\item 
There exist $X\in\frak{gl}_3\mathbb{R}$ and a real-valued function $f(s,t)$ such that \begin{equation}\label{eqn_b21}\tag{a}
\lim_\tinf \max_{s\in[0,1]}\|M(s,t)-f(s,t)X\|=0,
\end{equation}
\begin{equation}\label{eqn_b22}\tag{b}
\sup_{t\in[0,+\infty)}\int_0^1|f(s,t)|\dif s<+\infty.
\end{equation}
\end{enumerate}
Then we have
$$
\lim_\tinf \left\|Q(1,t)-\exp\left(X\int_0^1f(s,t)\dif s\right)\right\|=0.
$$
\end{lemma}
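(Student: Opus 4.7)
The natural approach is to separate $Q(s,t)$ into an explicit ``main part'' governed by the limiting matrix $X$ and a ``remainder'' that we show tends to the identity. First I would introduce the comparison solution
\[
\tilde{Q}(s,t):=\exp\!\left(X\int_0^s f(\sigma,t)\,\dif\sigma\right),
\]
which by construction satisfies the ODE $\pa_s\tilde{Q}(s,t)=f(s,t)X\,\tilde{Q}(s,t)$ with initial condition $\tilde{Q}(0,t)=\id$. Hypothesis (b) gives a constant $C$ such that $\int_0^1|f(s,t)|\dif s\leq C$ for all $t$, hence
\[
\sup_{s\in[0,1],\,t\geq 0}\|\tilde{Q}(s,t)\|\leq e^{C\|X\|},\quad \sup_{s\in[0,1],\,t\geq 0}\|\tilde{Q}(s,t)^{-1}\|\leq e^{C\|X\|}.
\]

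Next, set $R(s,t):=\tilde{Q}(s,t)^{-1}Q(s,t)$. A direct computation using (i) and the ODE for $\tilde{Q}$ yields
\[
\pa_sR(s,t)=\tilde{Q}(s,t)^{-1}\bigl(M(s,t)-f(s,t)X\bigr)\tilde{Q}(s,t)\,R(s,t),\quad R(0,t)=\id.
\]
Denote $\tilde{M}(s,t):=\tilde{Q}(s,t)^{-1}\bigl(M(s,t)-f(s,t)X\bigr)\tilde{Q}(s,t)$. The uniform bounds on $\tilde{Q}^{\pm 1}$ combined with assumption (\ref{eqn_b21}) give
\[
\epsilon(t):=\max_{s\in[0,1]}\|\tilde{M}(s,t)\|\,\leq\, e^{2C\|X\|}\max_{s\in[0,1]}\|M(s,t)-f(s,t)X\|\,\longrightarrow\,0\quad(t\to+\infty).
\]

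Now I would apply an elementary Gronwall-type argument to the linear ODE satisfied by $R$: the integral equation $R(s,t)=\id+\int_0^s\tilde{M}(\sigma,t)R(\sigma,t)\dif\sigma$ together with $\max_s\|\tilde{M}(\cdot,t)\|\leq\epsilon(t)$ gives $\|R(s,t)\|\leq e^{s\epsilon(t)}$, and then
\[
\|R(1,t)-\id\|\,\leq\,\int_0^1\|\tilde{M}(\sigma,t)\|\,\|R(\sigma,t)\|\,\dif\sigma\,\leq\,\epsilon(t)\,e^{\epsilon(t)}\,\longrightarrow\,0.
\]
Multiplying on the left by $\tilde{Q}(1,t)$ (which is uniformly bounded) gives $\|Q(1,t)-\tilde{Q}(1,t)\|\to 0$, which is exactly the desired conclusion since $\tilde{Q}(1,t)=\exp\bigl(X\int_0^1f(s,t)\dif s\bigr)$.

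The only mildly delicate step is the uniform bound on $\tilde{Q}^{\pm 1}$: this is where hypothesis (b) is used, and it is essential because without such a bound the conjugation $\tilde{Q}^{-1}(M-fX)\tilde{Q}$ could a priori blow up faster than $M-fX$ decays. Everything else is a routine Gronwall estimate, and I do not anticipate any further obstacle.
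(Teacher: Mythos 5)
Your proof is correct. The comparison solution $\tilde{Q}(s,t)=\exp\bigl(X\int_0^s f(\sigma,t)\,\dif\sigma\bigr)$ does satisfy $\pa_s\tilde{Q}=f(s,t)X\tilde{Q}$ because the matrices $X\int_0^s f$ for different $s$ all commute (they are scalar multiples of the fixed matrix $X$); the uniform bound $\|\tilde{Q}^{\pm1}\|\leq e^{C\|X\|}$ from hypothesis (b) is exactly what makes the conjugated error term $\tilde{M}=\tilde{Q}^{-1}(M-fX)\tilde{Q}$ inherit the uniform decay of (a); and the Gronwall step on $R=\tilde{Q}^{-1}Q$, run through the integral equation $R(s,t)=\id+\int_0^s\tilde{M}R$, is sound (with any submultiplicative matrix norm, which you should state you are using). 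The only cosmetic caveat is that the lemma does not specify regularity of $f$ in $s$; since (b) presupposes integrability, $\tilde{Q}$ is absolutely continuous in $s$ and your argument goes through verbatim with the ODE holding almost everywhere, and in the paper's applications $f$ is in fact smooth.

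Your route differs from the paper's in that the paper gives no argument at all: it declares the lemma ``an immediate consequence of Lemma B.2 in Dumas--Wolf,'' i.e. it outsources the asymptotic ODE analysis to an external result tailored to families of linear ODEs with a distinguished rank-one limiting direction. Your decomposition $Q=\tilde{Q}R$ with an explicit exponential main term and a Gronwall-controlled remainder reproves that black box in a self-contained, elementary way, and in fact in slightly greater generality (no structure on $X$ is used beyond boundedness). What the citation buys the paper is brevity and consistency with the Dumas--Wolf framework it borrows throughout Section 7; what your argument buys is transparency about exactly which hypotheses do what — (b) controls the size of the comparison flow, (a) kills the remainder — which is worth keeping even if only as a remark.
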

\vspace{10pt}
This lemma is an immediate consequence of Lemma B.2 in \cite{dumas-wolf}.

We need to prove $\lim_\tinf Q(1,t)=\id$. Using the above lemma, it is sufficient to show that 
\begin{equation}\label{eqn_proofcompa2}
\lim_\tinf \max_{s\in[0,1]}\|M(s,t)\|=0.
\end{equation}
But this is similar to the proof of part (\ref{item_compa1}'): on one hand, Lemma \ref{thm_fine} yields
\begin{equation}\label{eqn_a0ast}
\left\|A_0\left(\tfrac{\pa}{\pa s}\beta_s(t)\right)-A\left(\tfrac{\pa}{\pa s}\beta_s(t)\right)\right\|\leq C\,|\beta_s(t)|^{-\frac{1}{2}}e^{-2\sqrt{3}|\beta_s(t)|}\left|\tfrac{\pa}{\pa s}\beta_s(t)\right|,
\end{equation}
whereas the discussion on $\rho(\Ad_{\a_0(0,\zeta)})$ in \S \ref{sec_eigen} and the assumption that $\theta(\beta_0)$ and $\theta(\beta_1)$ are both in either $(-\tfrac{\pi}{2}, -\tfrac{\pi}{6})$, $(-\tfrac{\pi}{6}, \tfrac{\pi}{6})$ or $(\tfrac{\pi}{6}, \tfrac{\pi}{2})$ yield 
$$
\rho(\Ad_{\a_0(0,\beta_s(t))})\leq e^{(2\sqrt{3}-\delta)|\beta_s(t)|}.
$$
Eq.(\ref{eqn_proofcompa2}) now follows from the above two  inequalities and (\ref{eqn_abst}).

 (\ref{item_compa3}') First assume
$$
\theta(\beta_0)\in\left(-\tfrac{\pi}{6}, \tfrac{\pi}{6}\right),\quad \theta(\beta_1)\in\left(\tfrac{\pi}{6}, \tfrac{\pi}{2}\right).
$$
 Part (\ref{item_compa2}') implies that $P_{\beta_1}^{-1}P_{\beta_0}$ is the same for any such $\beta_0$ and $\beta_1$, thus we can choose 
$$
\beta_0(t)=t,\quad \beta_1(t)=t\,e^{\tfrac{\pi}{3}\ima}.
$$

For $t\geq 0$ and $s\in[0,1]$, put
$$
\beta_s(t):=t\,e^{\tfrac{\pi}{3}s\ima}.                                                                                                                                        
$$
Then define $Q(s,t)$ and $M(s,t)$ in the same way as above.
%(\ref{eqn_defq}) and (\ref{eqn_defm}), 

We still have $\tfrac{\pa}{\pa s}Q(s,t)=M(s,t)Q(s,t)$ and $P^{-1}_{\beta_1}P_{\beta_0}=\lim_\tinf Q(1,t)$.  We shall apply Lemma \ref{lemma_ode} again to obtain the latter limit.

The estimate (\ref{eqn_a0ast}) still holds and gives
\begin{equation}\label{eqn_a0ast2}
\left\|A_0\left(\tfrac{\pa}{\pa s}\beta_s(t)\right)-A\left(\tfrac{\pa}{\pa s}\beta_s(t)\right)\right\|\leq C\sqrt{t}\,e^{-2\sqrt{3}t}.
\end{equation}

One the other hand, with the notations from \S \ref{sec_eigen}, the eigenvalue of $\Ad_{\a_0(0,\beta_s(t))}$ on $E_{ij}$ is 
$$\exp\big(\varpi_{ij}(\arg(\beta_s(t)))|\beta_s(t)|\big)=\exp\big(\varpi_{ij}(\tfrac{\pi}{3}s)t\big).$$
Since $\tfrac{\pi}{3}s$ takes values in the interval $[0,\tfrac{\pi}{3}]$ in which the only odd multiple of $\frac{\pi}{6}$ is $\frac{\pi}{6}$ itself, fact (\ref{item_eigen2}) in \S \ref{sec_eigen} implies that, for any $(i,j)\neq (1,3)$, the eigenvalue on $E_{ij}$ is bounded by $e^{(2\sqrt{3}-\delta)t}$ for some $\delta>0$. In view of (\ref{eqn_a0ast2}), we conclude that
$$
\max_{s\in[0,1]}\|M(s,t)-M_{13}(s,t)E_{13}\|\leq C\,\sqrt{t}\,e^{-\delta t},
$$
where we let $M_{13}(s,t)$ denote the $(1,3)$ entry of $M(s,t)$. Thus condition (\ref{eqn_b21}) in Lemma \ref{lemma_ode} is satisfied for $X=E_{13}$ and $f=M_{13}$. Moreover, by definition,
\begin{align*}
\varpi_{13}(\theta)=2\re(e^{\theta\ima}-e^{2\pi\ima/3}e^{\theta\ima})=2(\cos(\theta)-\cos(\theta+\tfrac{2\pi}{3}))=2\sqrt{3}\cos(\theta-\tfrac{\pi}{6}).
\end{align*}
Therefore
\begin{align*}
|M_{13}(s,t)|&\leq C\sqrt{t}\,e^{-2\sqrt{3}\, t\left(1-\cos(\theta-\tfrac{\pi}{6})\right)}=C\sqrt{t}\,e^{-4\sqrt{3}\, t\sin^2\left(\frac{\theta-\pi/6}{2}\right)}\leq C\sqrt{t}\,e^{-c\, t(s-\frac{1}{2})^2}
\end{align*}
for some constants $c>0$. Up to a constant factor, the last term above, viewed as a function of $s$ parametrized by $t$, is a Gaussian distribution with average $\frac{1}{2}$ and with variance tending to $0$ as $t\rightarrow+\infty$. Hence its integral over $[0,1]$ tends to a constant. Thus condition (\ref{eqn_b22}) is satisfied as well.

Applying  Lemma \ref{lemma_ode}, we conclude that if we let $(t_n)$ be a sequence in $\mathbb{R}_{\geq 0}$ tending to $+\infty$ such that $\int_0^1f(s,t_n)\dif s$ admits a limit $r\in\mathbb{R}$ as $n\rightarrow+\infty$ (such a sequence exists because $|\int_0^1f(s,t_n)\dif s|\leq \int_0^1|f(s,t_n)|\dif s$ is bounded as we have just seen), then 
$\lim_{n\rightarrow+\infty}Q(1,t_n)=\exp(rE_{13})$. But we already know that $Q(1,t)$ converges, so we obtain $P^{-1}_{\beta_1}P_{\beta_0}=\lim_\tinf Q(1,t)=\exp(rE_{13})$ as required.

In the case $\theta(\beta_0)\in\left(-\tfrac{\pi}{2}, -\tfrac{\pi}{6}\right)$,  $\theta(\beta_1)\in\left(-\tfrac{\pi}{6}, \tfrac{\pi}{6}\right)$, we apply the same argument to
 $$
\beta_0(t)=t,\quad \beta_1(t)=t\,e^{-\tfrac{\pi}{3}\ima},\quad\beta_s(t)=t\,e^{-\tfrac{\pi}{3}s\ima}.
$$
Now $\arg(\beta_s(t))=-\frac{\pi}{3}s$ takes values in $[-\frac{\pi}{3},0]$, in which the only odd multiple of $\frac{\pi}{6}$ is $-\frac{\pi}{6}$. Fact (\ref{item_eigen2}) in \S \ref{sec_eigen} now implies that the only eigenvalue not controlled by $e^{2\sqrt{3}-\delta}$ is the one on $E_{12}$. Applying  Lemma \ref{lemma_ode} to $X=E_{12}$ and $f=M_{12}$ similarly as above, we obtain $P^{-1}_{\beta_1}P_{\beta_0}=\lim_\tinf Q(1,t)=\exp(rE_{12})$ as required.
\end{proof}

\subsubsection{Proof of Theorem \ref{thm_finding}}\label{sec_prooffinding}
We now deduce Theorem \ref{thm_finding} from Proposition \ref{prop_auxpoly} and Theorem \ref{thm_compa}.

By definition of Wang's developing map (see \S \ref{sec_wang}), if the limits in the definitions of $P_\beta$ and $\devlima(\beta)$ both exist, then 
\begin{align*}
\devlim(\beta)&=\lim_\tinf\para(\beta_{[0,t]}^{-1})\underline{1}_{\pi(\beta_{[0,t]}^{-1})}=\lim_\tinf\para(\beta_{[0,t]}^{-1})\a_0(\beta_{[0,t]}^{-1})\a_0(\beta_{[0,t]}^{-1})\underline{1}_{\pi(\beta_{[0,t]}^{-1})}\\
&=\lim_\tinf P_\beta(t)\dev_0(\beta_{[0,t]}^{-1})=P_\beta(\devlima(\beta)).
\end{align*}
Here, recall that $\pi:\widetilde{\Sigma}_m\rightarrow \Sigma$ is the projection and $\underline{1}$ is the canonical section of $E$.

As $\beta$ runs over each of the sets given in the first row of the following table, Proposition \ref{prop_auxpoly} and Theorem \ref{thm_compa} guarantee existence of both limits and provide a description of $P_\beta$ and the set formed by the $\devlima(\beta)$'s, as shown in the second and third row of the table, respectively
\footnote{
Here and below, by an abuse of notation, we do not distinguish the point $\hat{X}_{[k]}$ and the set with one element formed by this point.
}.

\vspace{5pt}

%\begin{tabular}{|c|c|c|c|c|c|c|c|}
%\hline
%$\beta$&$\C_{k-1,k}$&$\C_{k}\cap\Se_{k-1,k}$&$\U_k^-$&$\Se_{k}$&$\U_k^+$&$\C_{k}\cap \Se_{k, k+1}$&$\C_{k,k+1}$
%\\[4pt]\hline
%$P_\beta$&$P_{k-1,k}$&$P_{k-1,k}$&N/A&$P_k$&N/A&$P_{k,k+1}$&$P_{k,k+1}$
%\\[4pt]\hline
%$\{\devlima(\beta)\}$&$\hat{X}_{[k-1],[k]}^\circ$&\multicolumn{5}{|c|}{$\hat{X}_{[k]}$}&$\hat{X}_{[k],[k+1]}^\circ$
%\\[3pt]\hline
%\end{tabular}

\begin{tabular}{|c|c|c|c|c|c|}
\hline
$\beta$&$\C_{k-1,k}$&$\C_{k}\cap\Se_{k-1,k}$&$\Se_{k}$&$\C_{k}\cap \Se_{k, k+1}$&$\C_{k,k+1}$
\\[4pt]\hline
$P_\beta$&$P_{k-1,k}$&$P_{k-1,k}$&$P_k$&$P_{k,k+1}$&$P_{k,k+1}$
\\[4pt]\hline
$\{\devlima(\beta)\}$&$\hat{X}_{[k-1],[k]}^\circ$&\multicolumn{3}{|c|}{$\hat{X}_{[k]}$}&$\hat{X}_{[k],[k+1]}^\circ$
\\[3pt]\hline
\end{tabular}

\vspace{5pt}

Stable paths belonging to $\C_k$ correspond to the three columns in the middle where $\devlima(\beta)=\hat{X}_{[k]}$. Theorem \ref{thm_compa}  (\ref{item_ocompa3}) implies that  both $P_{k-1,k}^{-1}P_k$ and $P_{k,k+1}^{-1}P_k$ fixes $\hat{X}_{[k]}$, thus $\devlim(\beta)=P_\beta(\devlima(\beta))$ is the point 
$$
X_k:=P_{k-1,k}(\hat{X}_{[k]})=P_k(\hat{X}_{[k]})=P_{k,k+1}(\hat{X}_{[k]})
$$
for any stable $\beta\in\C_k$. Part (\ref{item_finding1}) of the theorem is proved.

%In order to prove part (\ref{item_finding1}) of the theorem, it remains to be shown that $\devlim(\beta)=X_k$ for $\beta\in\U_k^\pm$ as well.
%
%To this end, we make use of the $G_k^\pm$-valued function $\Xi_k^\pm(t)$ produced by Proposition \ref{prop_gpm}. For any $\beta\in\U_k^\pm$, we can write
%\begin{align*}
%\devlim(\beta)=\lim_\tinf P_\beta(t)\,\Xi_k^\pm(t)^{-1}  \lim_\tinf \Xi_k^\pm(t)\dev_0(\beta_{[0,t]}^{-1}).
%\end{align*}
%The first limit is $P_k$ by construction. Since the growth rate of $\Xi_k^\pm(t)$ is controlled by $\sqrt{t}$ while $\dev_0(\beta_{[0,1]}^{-1})$ converges exponentially fast to $\hat{X}_{[k]}$, one easily sees that the second limit above is $\hat{X}_{[k]}$. Thus $\devlim(\beta)=P_k(\hat{X}_{[k]})=X_k$ as required. The proof of part (\ref{item_finding1}) is now complete.

Part (\ref{item_finding2}) is proved by setting
\begin{align*}
X_{k-1,k}&:=P_{k-1,k}(\hat{X}_{[k-1],[k]})=P_k(\hat{X}_{[k-1],[k]}),\\
X_{k,k+1}&:=P_{k,k+1}(\hat{X}_{[k],[k+1]})=P_k(\hat{X}_{[k],[k+1]}),
\end{align*}
where the second equalities in both lines follows from the fact that $P_{k-1,k}^{-1}P_k$ and $P_{k,k+1}^{-1}P_k$ pointwise fix the segments $\hat{X}_{[k-1],[k]}$ and $\hat{X}_{[k],[k+1]}$, respectively, as \mbox{implied} by Theorem \ref{thm_compa} (\ref{item_ocompa3}). Thus we get 
$$
\{\devlim(\beta)\}_{\beta\in\C_{k-1,k}}=\{P_\beta(\devlima(\beta))\}_{\beta\in\C_{k-1,k}}=P_{k-1,k}(\hat{X}_{[k-1],[k]}^\circ)=X_{k-1,k}^\circ
$$
as required, and similarly $\{\devlim(\beta)\}_{\beta\in\C_{k,k+1}}=X_{k,k+1}^\circ$. Moreover, $X_{k-1,k}$ and $X_{k,k+1}$ are non-collinear segments sharing the endpoint $X_k=P_k(\hat{X}_{[k]})$ because they are $P_k$-translates of the segments $\hat{X}_{[k-1],[k]}$ and $\hat{X}_{[k],[k+1]}$, which are non-collinear and share the endpoint $\hat{X}_{[k]}$ by construction.

To prove the first statement of part (\ref{item_finding3}), we note that, by property (\ref{item_winding3}) of winding numbers in \S \ref{sec_winding},
 $\beta$ belongs to $\C_{k+n}$ if and only if $\beta\cdot \gamma_{\tilde{p}}$ belongs to  $\C_{k}$. 
%On the other hand, viewing the truncated path $\beta_{[0,t]}$ as a point in $\widetilde{\Sigma}_m$, the composition $\gamma_{\tilde{p}}\cdot\beta_{[0,t]}=(\gamma_{\tilde{p}}\cdot\beta)_{[0,t]}$ is just the translate of $\beta_{[0,t]}$ by the deck action of $\gamma_{\tilde{p}}\in\pi_1(\Sigma, m)$, so equivariance of the developing map
Using $\hol$-equivariance of $\dev$, we get,  for any $\beta\in\C_{k+n}$ 
$$
X_{k}=\devlim(\beta\cdot\gamma_{\tilde{p}})=
\lim_\tinf \dev(\gamma_{\tilde{p}}^{-1}\cdot\beta_{[0,t]}^{-1})
=
\lim_\tinf\hol_{\tilde{p}}^{-1}(\dev(\beta_{[0,t]}^{-1}))=\hol_{\tilde{p}}^{-1}(X_{k+n})
$$
as required. 

Finally, looking at each case in Lemma \ref{lemma_rough} respectively, one sees that, in general, if a developed boundary $\devbd{\tilde{p}}$ contains some polygon $C$ twisted by $\hol_{\tilde{p}}$, then the whole $\devbd{\tilde{p}}$ is the union of $C$ and the accumulation points of $C$. The last statement in part (\ref{item_finding3}) follows.

\subsection{Poles of order $3$}\label{sec_leq}
Now assume that $p$ is a third order pole. We can suppose either $\Sigma= \mathbb{C}^*$ or $\Sigma$ has negative Euler characteristic, because if $\Sigma=\mathbb{C}$ then the pole $\infty$ has order at least $6$. Let $R\in\mathbb{C}^*$ be the residue of $\ve{b}$ at $p$.
%because the line bundle $K^3$ over $\mathbb{CP}^1$ has degree $-6$.

%The \emph{residue} of $\ve{b}$ at $p$, denoted by $R\in\mathbb{C}^*$, is by definition the coefficient of the degree $-3$ term in the Laurent expansion of $\ve{b}$ with respect to a conformal local coordinate centered at $p$. One readily checks that $R$ does not depend on the choice of coordinate (this is not true when the pole has order $\geq 4$). 

In this case, statement (III) from the introduction is contained in Theorem \ref{thm_pole3} below. 
As an intermediate step in the proof, we determine the end holonomy $\hol_{\tilde{p}}$ in Theorem \ref{thm_holo}. This covers Theorem \ref{intro_thm2} from the introduction.

\subsubsection{Statements of the main results}
We define the collection of paths $\C$ using the same Definition \ref{def_U} as in  the discussion of higher order poles. The counterpart to Theorem \ref{thm_finding} in the present case is the following
%Note that each $\beta\in\C$ is eventually a ray in $H/\sim$ along a direction perpendicular to $\pa H$.

%Unlike the case of poles of order $\geq 4$, now Condition (\ref{item_U3}) in Definition \ref{def_U} does not follow from the other two conditions. Indeed, there exists $|\ve{b}|^\frac{2}{3}$-geodesics converging to $p$ not along any direction, namely, rays in $H$ which are not perpendicular to $\pa H$. Seen in the coordinate $z$, these are spirals converging to $p$ while wrapping around $p$ infinitely many times.
\begin{theorem}[\textbf{developed boundary at third order poles}]\label{thm_pole3}
${}$
\begin{enumerate}
%\item\label{item_pole31} If $R=0$ then $p$ is a cusp.
\item\label{item_pole33}
If $\re(R)<0$ then $p$ is a V-end. $\devlim(\beta)$ is the saddle fixed point of the hyperbolic holonomy $\hol_{\tilde{p}}$ for any $\beta\in\C$.
\item\label{item_pole34}
If $\re(R)>0$ then $p$ is a geodesic end with hyperbolic holonomy. Let $x_+$ and $x_-$ denote the attracting and repelling fixed points of $\hol_{\tilde{p}}$, respectively.
\begin{itemize}
\item if $\im(R)<0$ then $\devlim(\beta)=x_+$ for any $\beta\in\C$.

\item if $\im(R)>0$ then $\devlim(\beta)=x_-$ for any $\beta\in\C$.

\item if $R\in\mathbb{R}_+$ then $\devlim(\beta)$ exists for any $\beta\in\C$ and $\{\devlim(\beta)\}_{\beta\in \C}$ is the interior of a segment joining $x_+$ and $x_-$.
\end{itemize}

\item\label{item_pole32}
If $R\in\ima\mathbb{R}^*$ then $p$ is a geodesic end with quasi-hyperbolic or planar 
holonomy and $\devlim(\beta)$ is the double fixed point of $\hol_{\tilde{p}}$ for any $\beta\in\C$.
\end{enumerate}
\end{theorem}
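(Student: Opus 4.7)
The plan is to mirror Section \ref{sec_geq}: work on the universal cover of a punctured neighborhood of $p$ via a natural flattening coordinate, run the comparison-transformation machinery using the strong estimate of \S\ref{sec_fine}, and combine with the Titeica model. The universal cover is now a single half-plane with translation action, which simplifies the geometry compared to the higher-order case but introduces a subtle constraint on the allowed asymptotic directions of lifted paths.

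Fix a cube root $\mu$ of $R/2$. The map $\zeta \mapsto z = \exp(\zeta/\mu)$ realizes $\H_\mu := \{\re(\zeta/\mu) < -c\}$ as the universal cover of a punctured neighborhood of $p$, pulls $\ve{b}$ back to $2\,\dzeta^3$, and realizes $\gamma_{\tilde p}$ as the translation $\zeta \mapsto \zeta + 2\pi\ima\mu$. In this coordinate $(2^{\frac{1}{3}}|\ve{b}|^{\frac{2}{3}},\ve{b})$ is the Titeica data of \S\ref{sec_titeica}, so $\dev_0$, the auxiliary parallel transport $\a_0$, and the auxiliary end holonomy $\hol_0(\gamma_{\tilde p}) = \a_0(0,2\pi\ima\mu) = \diag(e^{-4\pi\mu_1}, e^{-4\pi\mu_2}, e^{-4\pi\mu_3})$ are all explicit, with the $\mu_j$'s the imaginary parts of the three cube roots of $R/2$. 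The key observation is that Definition \ref{def_U}'s asymptotic-ray requirement is very restrictive here: a geodesic lift $\zeta(t) = \zeta_0 + tv$ maps to $z(t) = z_0\, e^{tv/\mu}$, whose argument winds at rate $\im(v/\mu)$, so $\dr(\beta)$ exists iff $v$ is a negative real multiple of $\mu$, forcing the single value $\theta(\tilde\beta) = \pi + \arg\mu$ for every $\beta \in \C$. Proposition \ref{prop_tit} then reads off $\devlima(\beta)$ from this single $\theta$.

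The strong estimate $u,\, |\pa_\zeta u| \leq C e^{-2\sqrt{3}|\zeta|}$ from \S\ref{sec_fine} has the same form as the one for higher-order poles, so the proof of Theorem \ref{thm_compa} transfers: the comparison transformation $P_\beta(t) := \para(\beta_{[0,t]}^{-1})\a_0(\beta_{[0,t]})$ converges, and the identity $\devlim(\beta) = P_\beta(\devlima(\beta))$ holds. Comparing $\beta$ with its translate under $\gamma_{\tilde p}$ shows that $\hol_{\tilde p}$ has eigenvalues $e^{-4\pi\mu_j}$ (Theorem \ref{intro_thm2}), and this immediately classifies $\hol_{\tilde p}$ as hyperbolic when $R \notin \ima\mathbb{R}^*$ and quasi-hyperbolic or planar otherwise. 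For each case of $R$ it remains to pick the cube root $\mu$, compute $\theta_v = \pi + \arg\mu$, read off $\devlima(\beta)$ from Proposition \ref{prop_tit}, and apply $P_\beta$: for $R \in \mathbb{R}_+$ the angle $\theta_v = \pi$ sits on the $I_2/I_3$ boundary and $\devlima$ ranges over $X_{23}^\circ$ by part (\ref{item_tit2}) of Proposition \ref{prop_tit}, giving the open segment between $x_+$ and $x_-$; for $\re R > 0$ with $\im R \neq 0$ the angle tilts into $I_2$ or $I_3$ and $\devlim$ pins down to the attractor (if $\im R < 0$) or repeller (if $\im R > 0$) by a magnitude count; for $R \in \ima\mathbb{R}^*$ two $\mu_j$'s coincide and the target $X_j$ lies in the double-eigenvalue subspace, giving the double fixed point (with Lemma \ref{lemma_planar} covering the planar annulus subcase); for $\re R < 0$ the choice $\arg\mu = \pm\tfrac{\pi}{3}$ places $\theta_v$ inside some $I_j$ with $\omega^{1-j}\mu$ real, so $\mu_j = 0$ and $X_j$ is the middle-magnitude (saddle) fixed point, whence Proposition \ref{prop_classification}(\ref{item_endclas3}) upgrades the saddle-valued $\devlim$ to a V-end.

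The hard part will be two pieces of bookkeeping. First, the eigenvalue-to-fixed-point matching across the case split, especially for $\re R < 0$, where identifying the saddle requires the non-obvious cube-root choice $\arg\mu = \pm\tfrac{\pi}{3}$ so that $\omega^{1-j}\mu \in \mathbb{R}$ and hence $\mu_j = 0$ for the relevant $j$. Second, the case $R \in \ima\mathbb{R}^*$ places $\theta_v$ precisely at an odd multiple of $\tfrac{\pi}{6}$, the ``unstable direction'' at which the Lemma B.1-style convergence argument for $P_\beta$ breaks down because the spectral radius $\rho(\Ad_{\a_0(0,\zeta)}) = e^{2\sqrt{3}|\zeta|}$ exactly balances the exponential decay of $\|A - A_0\|$; handling this requires the refined ODE technique alluded to in \S\ref{sec_sketch} (adapted from \cite{dumas-wolf}), yielding enough stabilization of $P_\beta$ along the repeated-eigenvalue subspace to pin $\devlim(\beta)$ on the double fixed point.
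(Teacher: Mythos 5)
Your overall strategy is the same as the paper's (flattening coordinate realizing the end as a half-plane modulo translation, the \c{T}i\c{t}eica model for $\dev_0$, comparison transformations $P_\beta(t)$ controlled by the strong estimate, holonomy eigenvalues $e^{-4\pi\mu_j}$), and your observation that condition (iii) of Definition \ref{def_U} forces a single asymptotic argument $\theta=\pi+\arg\mu$ is exactly the reduction the paper makes. But there is a genuine gap at the step where you pass from the values of $\devlim(\beta)$ to the \emph{type of end}. In the cases $\re(R)>0$, $\im(R)\neq 0$ and $R\in\ima\mathbb{R}^*$ (non-planar), the $\devlim(\beta)$'s detect a single fixed point of $\hol_{\tilde p}$, and knowing that this one point lies in $\devbd{\tilde{p}}$, together with the conjugacy class of $\hol_{\tilde p}$, does \emph{not} determine $\devbd{\tilde{p}}$: for hyperbolic holonomy the boundary could a priori be the principal segment, the letter ``V'', or an invariant strictly convex curve (case (a) of Lemma \ref{lemma_rough}), and for quasi-hyperbolic holonomy it could be $\pa\Omega\setminus I^\circ$ (case (b)); all of these contain $x_\pm$ or the double fixed point. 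The paper closes this by a further analysis absent from your proposal: it shows that the developed path $t\mapsto\dev(\beta_{[0,t]}^{-1})$ is \emph{asymptotic to} the principal segment $I$ (transporting the asymptotic-edge information of Proposition \ref{prop_tit}\,(\ref{item_tit1}) through $P_\beta(t)$, resp.\ through the corrected $P_\beta(t)\Xi(t)^{-1}$), and then proves a lemma that the existence of such an asymptotic path forces $\devbd{\tilde{p}}=I$, i.e.\ a geodesic end. Without this (or an equivalent argument), your conclusion ``geodesic end'' in parts (\ref{item_pole34}) with $\im(R)\neq0$ and (\ref{item_pole32}) is unsupported. (For $\re(R)<0$ your appeal to Proposition \ref{prop_classification}\,(\ref{item_endclas3}) is fine, and for $R\in\mathbb{R}_+$ the easy case check suffices.)

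Two further points. First, for $R\in\ima\mathbb{R}^*$ you correctly flag that the Lemma B.1-type argument fails because $\theta$ is an odd multiple of $\tfrac{\pi}{6}$, but you defer the fix entirely; the paper needs a new ODE lemma producing an unbounded unipotent correction $\Xi(t)$ (with entry $O(t)$) such that $P_\beta(t)\Xi(t)^{-1}$ converges, must re-derive the holonomy formula with $\Xi$ inserted (so that $\hol_{\tilde p}$ is quasi-hyperbolic with the double fixed point at $P'_\beta(X_3)$), and must check separately that $\Xi(t)\dev_0(\beta_{[0,t]}^{-1})\rightarrow X_3$ before concluding $\devlim(\beta)=P'_\beta(X_3)$; none of this is ``bookkeeping''. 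Second, your identification of the saddle for $\re(R)<0$ via ``$\arg\mu=\pm\tfrac{\pi}{3}$, so $\omega^{1-j}\mu$ is real and $\mu_j=0$'' is incorrect except when $R\in\mathbb{R}_-$: for $\im(R)\neq0$ no cube root of $R/2$ is real and no $\mu_j$ vanishes. The correct argument is an eigenvalue-ordering check showing that the vertex $X_3$ hit by $\devlima$ carries the \emph{intermediate} eigenvalue whenever $\re(R)<0$, which is what the paper's case table records.
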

In part (\ref{item_pole32}), although a planar end holonomy has two double fixed points, only one of them is in the developed boundary and $\devlim(\beta)$ is supposed to be this one. Note that planar holonomy only occurs in case (\ref{item_tri2}) of Proposition \ref{prop_triangle} (see Lemma \ref{lemma_planar}). Theorem \ref{thm_pole3} can actually be proved by direct calculations in this case.

%Loftin has obtained many statements in Theorem \ref{thm_pole3} more or less inexplicitly in \cite{loftin_compactification}.

The statements concerning $\devlim(\beta)$ will be proved using similar ideas as in our treatment of higher order poles: we first investigate the ``auxiliary developing limits'' given by Wang's developing map associated to $(2^\frac{1}{3}|\ve{b}|^\frac{2}{3}, \ve{b})$; then we compare the actual developing limits with the auxiliary ones by studying the comparison transformation $P_{\beta}$ defined in \S \ref{sec_compa}. 

However, while a serious investigation of unstable directions
is avoidable for poles of order $\geq4$, it is inevitable for third order poles. In fact, when $\re(R)=0$, all directions that we are concerned with are ``unstable''. A improvement of the ODE techniques used earlier will be developed in order to tackle this situation.

Another complication for third order poles is that the information about $\devlim(\beta)$ does not determine $\devbd{\tilde{p}}$ immediately. In fact, when $R\notin\mathbb{R}_{\geq 0}$, as stated in Theorem \ref{thm_pole3}, the $\devlim(\beta)$'s only detect a single point, while $\devbd{\tilde{p}}$ is expected to have a continuum of points. 
Our idea to determine $\devbd{\tilde{p}}$ is to further study the asymptotic direction along which the path $\dev(\beta_{[0,t]}^{-1})$ converges to $\devlim(\beta)$.

\subsubsection{Local model for $(\Sigma,\ve{b})$}\label{sec_half3}
As at the beginning of \S \ref{sec_model}, let $z$ be a coordinate around $p$
such that $\ve{b}$ has the normal form 
\begin{equation}\label{eqn_b3}
\ve{b}=Rz^{-3}\dz^3.
\end{equation}
Let $U=\{0<|z|<a\}\subset\Sigma$ be a punctured neighborhood of $p$ where $z$ is defined. Since a dilation of $z$ does not change the expression (\ref{eqn_b3}), we can assume $a>1$.

By virtue of the expression (\ref{eqn_b3}), one easily constructs a local model of $(\Sigma, \ve{b})$ in the spirit of \S \ref{sec_local} using the exponential map: pick a cubic root  $(R/2)^\frac{1}{3}$ of $R/2$ and consider the rotated half-plane 
$$
H=-(R/2)^\frac{1}{3}\H,
$$ 
where $\H$ is the right half-plane as before. 

The map
$H\rightarrow U$, $\zeta\mapsto\exp\big((R/2)^{-\frac{1}{3}}\zeta\big)$ 
is invariant under the translation
\begin{equation}\label{eqn_trans}
H\rightarrow H,\quad\zeta\mapsto \zeta+2\pi\ima(R/2)^\frac{1}{3}
\end{equation}
and pulls  $\ve{b}$ back to $2\,\dzeta^3$. Let $H/\sim$ denote the quotient of $H$ by this translation.
We can identify the punctured neighborhood $\{0<|z|<1\}$ of $p$, endowed with the cubic differential $\ve{b}$, with $(H/\sim,\ 2\,\dzeta^3)$. Furthermore, we identify $\{0<|z|\leq 1\}\subset\Sigma$ with $\overline{H}/\sim$.

As before, assume that the metric $g$ is expression in $\overline{H}/\sim$ as
$$
g=2e^u|\dzeta|^2.
$$
Then $u$ satisfies Wang's equation $\lap u=4e^u-4e^{-2u}$. Corollary \ref{coro_non}  implies that $u$ is non-negative and bounded.

The asymptotic estimate for $u$ in Theorem \ref{thm_fine} no longer holds, but we have a slightly weaker estimate. Indeed, since $u$ can be viewed as a function on $\overline{H}$ invariant under the translation (\ref{eqn_trans}), we can apply Lemma \ref{lemma_a1} (\ref{item_a12}) and Corollary \ref{coro_a} from the appendix and get the following
\begin{lemma}[\textbf{Strong estimate for third order pole}]\label{lemma_3estimate}
There  is a constant $C$ such that
$$
0\leq u(\zeta), |\pa_\zeta u(\zeta)|\leq C e^{-2\sqrt{3}\ \dist(\zeta,\pa H)}
$$
for any $\zeta\in H$. Here $\dist(\zeta,\pa H)$ denotes the distance from $\zeta$ to $\pa H$ with respect to the metric $|\dzeta|^2$.
\end{lemma}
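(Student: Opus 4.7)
The plan is to view $u$ through the local model as a function on the closed half-plane $\overline H$ that is nonnegative, bounded by $M:=\sup u<\infty$, invariant under the translation $\zeta\mapsto\zeta+2\pi\ima(R/2)^{1/3}$, and solves the semilinear equation $\lap u=4e^u-4e^{-2u}$. Proposition \ref{coro_non} moreover gives $u(\zeta)\to 0$ as $d(\zeta):=\dist(\zeta,\pa H)\to+\infty$. The aim is to upgrade this qualitative decay to the exponential rate asserted in the statement.

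The correct rate emerges from linearization. Writing $f(u):=4e^u-4e^{-2u}$ one has $f(u)=12u-6u^2+O(u^3)$ near $0$, so the natural barrier $\phi(\zeta):=Ae^{-2\sqrt{3}\,d(\zeta)}$ satisfies $\lap\phi=12\phi$ exactly, because $d$ is affine with unit gradient. The exponent $2\sqrt{3}=\sqrt{12}$ appearing in the statement is precisely the decay rate for positive solutions of the linearized equation $\lap u=12u$ on a half-plane.

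The main step is a super/sub-solution argument. First pick $u_*>0$ small enough that $f(u)\leq 12u$ on $[0,u_*]$ (possible since $f''(0)=-12<0$). By Proposition \ref{coro_non}, there exists $D_0>0$ with $u\leq u_*$ on $\Omega:=\{d(\zeta)>D_0\}$. Choose $A$ large enough that $\phi\geq u_*$ on $\{d=D_0\}$. The difference $w:=u-\phi$ then satisfies $\lap w-c(\zeta)\,w=(c(\zeta)-12)\phi\leq 0$ on $\Omega$, where $c(\zeta):=f(u(\zeta))/u(\zeta)\in(0,12]$; moreover $w\leq 0$ on $\pa\Omega$ and $w(\zeta)\to 0$ as $d(\zeta)\to+\infty$. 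The maximum principle for the operator $\lap-c$ with $c>0$ on the unbounded domain $\Omega$ then forces $w\leq 0$, i.e.\ $u(\zeta)\leq Ae^{-2\sqrt{3}\,d(\zeta)}$ on $\Omega$. The estimate extends to the strip $\{d\leq D_0\}$ by enlarging $A$ using the uniform bound $u\leq M$. For the gradient bound one combines this pointwise estimate with standard interior $C^1$ elliptic estimates applied on unit disks centred at $\zeta$ (legitimate since $f$ is smooth and $u$ is uniformly bounded), giving $|\pa_\zeta u(\zeta)|\leq Ce^{-2\sqrt{3}\,d(\zeta)}$.

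The one delicate point is the maximum principle on the unbounded region $\Omega$ in the absence of classical boundary data at infinity; since $w$ is bounded and tends to $0$ there, this is standard. Indeed this analytic core, together with the passage from the pointwise to the gradient estimate, is exactly what Lemma \ref{lemma_a1}(\ref{item_a12}) and Corollary \ref{coro_a} in the appendix are designed to package. Once the hypotheses (non-negativity, uniform boundedness, translation-invariance, and the specific nonlinearity) are checked, invoking these two results completes the proof.
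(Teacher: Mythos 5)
Your reduction to the half-plane model and your closing appeal to Lemma \ref{lemma_a1}~(\ref{item_a12}) and Corollary \ref{coro_a} is exactly the paper's proof, and the hypotheses you list (non-negativity, boundedness, translation invariance, Wang's equation for $u$) are indeed all that those results need. The problem lies in the ``analytic core'' you sketch as the main step: the comparison there points the wrong way. Because $4e^u-4e^{-2u}\le 12u$ for small $u\ge 0$ (the nonlinearity lies \emph{below} its linearization), your $w=u-\phi$ satisfies $(\lap-c)w\le 0$, i.e.\ $w$ is a \emph{supersolution} of $\lap-c$; the maximum principle for such an operator controls the negative part of $w$, not the positive part, so it does not force $w\le 0$. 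Equivalently, $\phi=Ae^{-2\sqrt{3}\,d}$ is not an upper barrier for the nonlinear equation, since $\lap\phi=12\phi>4e^{\phi}-4e^{-2\phi}$ for small $\phi>0$. A concrete counterexample to the comparison you invoke: with $c\equiv 12$ and $s=d(\zeta)-D_0$, the function $w=e^{-as}-e^{-bs}$ with $a<2\sqrt{3}<b$ satisfies $(\lap-12)w\le 0$ on $\{d>D_0\}$, vanishes on $\{d=D_0\}$, tends to $0$ at infinity, yet is strictly positive inside. Repairing the sign by using $f(u)\ge(12-\delta)u$ for $u$ small only yields the rate $\sqrt{12-\delta}$, i.e.\ $2\sqrt{3}-\epsilon$, not the sharp exponent in the statement.

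The appendix obtains the sharp rate precisely by correcting the barrier quadratically: with $h=e^{-2\sqrt{3}\,\re\zeta}$ one takes $v=h-\tfrac12 h^2$, which satisfies $\lap v\le 12v-6v^2$, and uses the opposite-direction inequality $4e^u-4e^{-2u}\ge 12u-6u^2$ for $u\ge 0$; the maximum-principle contradiction then goes through. Note also that the proof of Lemma \ref{lemma_a1}~(\ref{item_a12}) does not use the decay of $u$ at infinity from Proposition \ref{coro_non}: it uses the boundary bound $u\le\tfrac12$ together with translation invariance and the disk estimate of Lemma \ref{lemma_a0} to confine a putative positive maximum of $u-v$ to a compact rectangle. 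So your plan is fine if, after checking the hypotheses, you simply cite Lemma \ref{lemma_a1}~(\ref{item_a12}) and Corollary \ref{coro_a}; but the barrier argument you offer in their place has a genuine gap at the maximum-principle step.
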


\subsubsection{Notations}
We slightly simplify the settings and introduce some notations for the proof of Theorem \ref{thm_pole3}. 

First of all, recall that the definition of $H$ depends on a choice of the cubic root $(R/2)^\frac{1}{3}$. For the sake of determinancy, we now only let $(R/2)^\frac{1}{3}$ denote the root satisfying $$\arg((R/2)^\frac{1}{3})\in(-\tfrac{\pi}{6}, \tfrac{\pi}{2}].$$

The choices of the base point $m\in\Sigma$ and the point $\tilde{p}\in\Fa{\Sigma, p}$ are not essential for the statement of Theorem \ref{thm_pole3}.
%: if the theorem holds for one choice, it holds for any other as well.
Thus we work henceforth with the following simplest choices: 
\begin{itemize}
\item 
Take the origin $0\in\overline{H}/\sim$ of the local model to be the base point, so that Wang's developing map $\dev$ takes values in $\mathbb{P}(E_0)$, where $E_0$ denotes the fiber of the vector bundle $E\rightarrow\Sigma$ at the base point. 
\item
 Let $\tilde{p}\in\Fa{\Sigma,p}$ be represented by the ray $\alpha$ in $\overline{H}/\sim$ issuing from $0$ and perpendicular to $\pa H$. 
 \end{itemize}
 
 With this choice of $\tilde{p}$, we take $\gamma_{\tilde{p}}$ (see \S \ref{sec_farey} for the definition) to be the loop whose lift to $\overline{H}$ is the segment on $\pa H$ going from $0$ to $-2\pi\ima(R/2)^\frac{1}{3}$.

Any $\beta\in\C$ is equivalent (in the sense of Definition \ref{def_U}) to a path in $\overline{H}/\sim$ which issues from $0$ and eventually becomes a ray. 
The ray is parallel to $\alpha$, otherwise $\beta$ is a spiral viewed in the coordinate $z$ and does not fulfill condition (\ref{item_U3}) in Definition \ref{def_U}.  We do not need to distinguish equivalent paths, thus we re-define $\C$ as
$$
\C=\big\{\beta:[0,+\infty)\rightarrow \overline{H}\ \big|\ \beta(0)=0, \ |\dot{\beta}(t)|\equiv1,\  \beta(t)=e^{\theta \ima}t+\zeta_0 \mbox{ for $t$ big enough} \big\},
$$
where $\theta=\arg(-(R/2)^\frac{1}{3})$.

The equilateral frame of $\T\overline{H}\oplus\underline{\mathbb{R}}$ (see \S \ref{sec_wang} for the definition) associated to the natural coordinate $\zeta$ is invariant under translation, hence defines a frame $(e_1, e_2, e_3)$ of $E$ over $\overline{H}/\sim$.  Let $E_0$ denote the fiber of $E$ at $0\in\overline{H}$. In what follows, whenever we write an element of $\SL(E_0)$ as a metric, it is with respect to the basis $(e_1(0), e_2(0), e_3(0))$. Finally, as in \S \ref{sec_titeica}, put 
$$
X_i=[e_i(0)], \quad X_{ij}:=\left\{\big[(1-s)\,e_i(0)+s\,e_j(0)\big]\right\}_{s\in[0,1]}
$$

\subsubsection{Auxiliary developing limits: the case $\re(R)\neq 0$}\label{sec_devbdrerneqzero}
Let $(\dev_0, \hol_0)$ be Wang's developing pair associated to $(2^\frac{1}{3}|\ve{b}|^\frac{2}{3}, \ve{b})$ (\cf \S \ref{sec_wangass}) and let $\devlima(\beta)$ be the corresponding developing limit as in \S \ref{sec_auxpoly}.

Within  $\overline{H}/\sim$, the pair $(2^\frac{1}{3}|\ve{b}|^\frac{2}{3}, \ve{b})$ is expressed under the natural coordinate $\zeta$ of $\overline{H}$ as $(2|\dzeta|^2, 2\,\dzeta^3)$, thus the connection $\D_0$, the parallel transport $\a_0$ and the map $\dev_0$ have the same expressions as in the \c{T}i\c{t}eica example. The investigation in \S \ref{sec_titeica} yields the next proposition.

\begin{proposition}\label{prop_aux3}
Put $\omega:=e^{2\pi\ima/3}$ and
$$
\lambda_1:=e^{-4\pi\im\big((R/2)^\frac{1}{3}\big)},\quad  \lambda_2:=e^{-4\pi\im\big(\omega^2(R/2)^\frac{1}{3}\big)},\quad \lambda_3:=e^{-4\pi\im\big(\omega(R/2)^\frac{1}{3}\big)}.
$$
Then
\begin{enumerate}
\item\label{item_aux31} 
The end holonomy $\hol_0(\gamma_{\tilde{p}})$ is
$$
\hol_0(\gamma_{\tilde{p}})=
\begin{pmatrix}
\lambda_1&&\\
&\lambda_2&\\
&&\lambda_3
\end{pmatrix}.
$$
\item\label{item_aux32}
The developing limit $\devlim_0(\beta)\in\mathbb{P}(E_0)$ exists for any $\beta\in\C$ and we have
$$
\{\devlim_0(\beta)\}_{\beta\in \C}=
\begin{cases}
\big\{X_2\big\}&\mbox{ if } \re(R)>0, \ \im(R)<0,\\
X_{23}^\circ&\mbox{ if }  R\in\mathbb{R}_+,\\
\big\{X_3\big\}&\mbox{ otherwise.}\\
\end{cases}
$$
\end{enumerate}
\end{proposition}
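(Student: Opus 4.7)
The plan is to reduce both parts to a direct computation in the Țițeica model of \S \ref{sec_titeica}. Inside the chart $\overline{H}/\!\sim$ the pair $(2^{\frac{1}{3}}|\ve{b}|^{\frac{2}{3}},\ve{b})$ takes the form $(2|\dzeta|^2, 2\,\dzeta^3)$, so under the equilateral frame associated to $\zeta$ the connection $\D_0$ and its two-pointed parallel transport $\a_0$ are given by exactly the formulas (\ref{eqn_wangcon0}) and (\ref{eqn_tit}). Because this frame is translation-invariant, it descends to a global frame of $E$ on $\overline{H}/\!\sim$ and provides a canonical identification between $E_0$ and the fibre over any lift of $0$ to $\overline{H}$.

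For part (\ref{item_aux31}), I would lift $\gamma_{\tilde p}$ to the segment of $\partial H$ from $0$ to $-2\pi\ima(R/2)^{\frac{1}{3}}$ and plug directly into (\ref{eqn_tit}). Using the elementary identity $2\re(-2\pi\ima\, w)=4\pi\im(w)$, this produces
$$\a_0\bigl(0,\,-2\pi\ima(R/2)^{\tfrac{1}{3}}\bigr)=\diag\!\Bigl(e^{4\pi\im((R/2)^{\tfrac{1}{3}})},\,e^{4\pi\im(\omega^2(R/2)^{\tfrac{1}{3}})},\,e^{4\pi\im(\omega(R/2)^{\tfrac{1}{3}})}\Bigr).$$
The holonomy $\hol_0(\gamma_{\tilde p})$ is obtained by inverting this matrix, reflecting the direction in which $\gamma_{\tilde p}$ is traversed under the conventions of \S \ref{sec_wang}, and this yields precisely $\diag(\lambda_1,\lambda_2,\lambda_3)$.

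For part (\ref{item_aux32}), each $\beta\in\C$ is equivalent (in the sense already used in \S \ref{sec_winding}) to a path in $\overline H$ starting at $0$ and eventually following a ray whose asymptotic argument is $\theta:=\arg(-(R/2)^{\frac{1}{3}})$, since representatives are perpendicular to $\partial H$ at infinity by construction of $\alpha$. As $\dev_0$ is built from the diagonal parallel transport (\ref{eqn_tit}), Proposition \ref{prop_tit} applies verbatim and determines $\devlim_0(\beta)$ from $\theta$ alone, with different choices of offset $\zeta_0$ in the case $\theta=\pi$ realizing every point of $X_{23}^\circ$ via part (\ref{item_tit2}) of that proposition. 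The branch convention $\arg((R/2)^{\frac{1}{3}})\in(-\tfrac{\pi}{6},\tfrac{\pi}{2}]$ places $\theta$ in $(\tfrac{5\pi}{6},\tfrac{3\pi}{2}]$, and a short case analysis confirms the three sub-cases of the statement: $\theta\in(\tfrac{5\pi}{6},\pi)\subset I_2$ (limit $X_2$) exactly when $\re(R)>0$ and $\im(R)<0$; $\theta=\pi$ (limits filling $X_{23}^\circ$) exactly when $R\in\mathbb{R}_+$; and $\theta\in(\pi,\tfrac{3\pi}{2}]\subset I_3$ (limit $X_3$) in all remaining cases.

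The only delicate point is orientation/sign bookkeeping in part (\ref{item_aux31}), ensuring the minus sign in $\lambda_i=\exp(-4\pi\im(\cdots))$ arises correctly from how $\gamma_{\tilde p}$ is oriented; the case analysis in part (\ref{item_aux32}) is routine, and it is automatic from the branch restriction that $\theta$ never lands at a Țițeica boundary value $\pm\tfrac{\pi}{3}$ modulo $2\pi$ (where Proposition \ref{prop_tit} would fail to give a single-point limit set inconsistent with the statement).
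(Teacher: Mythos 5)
Your proposal is correct and follows essentially the same route as the paper: reduce to the \c{T}i\c{t}eica model via the translation-invariant equilateral frame, read off the holonomy from (\ref{eqn_tit}) along the lift of $\gamma_{\tilde{p}}$, and get the developing limits from Proposition \ref{prop_tit} with the same case analysis on $\theta=\arg(-(R/2)^{\frac{1}{3}})\in(\tfrac{5\pi}{6},\tfrac{3\pi}{2}]$. The only cosmetic difference is in part (\ref{item_aux31}): the paper writes $\hol_0(\gamma_{\tilde{p}})=\a_0(-2\pi\ima(R/2)^{\frac{1}{3}},0)$ directly, whereas you compute $\a_0(0,-2\pi\ima(R/2)^{\frac{1}{3}})$ and invert; these agree by the relation $\a_0(x,y)=\a_0(y,x)^{-1}$ (the inversion reflects the argument-order convention of the two-pointed transport rather than a reversal of $\gamma_{\tilde{p}}$), so your sign bookkeeping is fine.
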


%In the last statement, the notion of ``converging along a direction'' is just the one introduced in \S \ref{sec_winding} in a different context. The \emph{tangent cone} of $\Delta$ at a vertex $x$ is by definition the open sector in $\T_x\mathbb{RP}^2$ delimited by the two segments issuing from $x$.

\begin{proof}
(\ref{item_aux31}) The lift of the loop $\gamma_{\tilde{p}}$ to $\overline{H}$ goes from $0$ to $-2\pi\ima(R/2)^\frac{1}{3}$, hence, in terms of the two-pointed parallel transport map $\a_0(\,\cdot\,,\,\cdot\,): \overline{H}\times\overline{H}\rightarrow\SL(3,\mathbb{R})$ with respect to the equilateral frame, 
$$
\hol_0(\gamma_{\tilde{p}})=\a_0(-2\pi\ima(R/2)^\frac{1}{3}, 0).
$$
The expression of $\a_0(\,\cdot\,,\,\cdot\,)$ given in \S \ref{sec_titeica} then yields the required result.

(\ref{item_aux32}) In the three cases of the required equality,  $\theta=\arg(-(R/2)^\frac{1}{3})=\lim\arg(\beta(t))$ takes values in $\big(\frac{5\pi}{6},\pi\big)$, $\{\pi\}$ and $\big[\pi, \frac{3\pi}{2}\big]$, respectively. In the first and last cases, the required equality follows from Proposition \ref{prop_tit} (\ref{item_tit1}), whereas in the second case it follows from Proposition \ref{prop_tit} (\ref{item_tit2}).

\end{proof}

The hyperbolic/planar nature of $\hol_0(\gamma_{\tilde{p}})$, as well as
the attracting/repelling/ saddle nature of each fixed point, depends on relative largeness of the eigenvalues, as summarized in the following table.

\vspace{8pt}

\begin{tabular}{|c|c|c|c|c|c|c|}
\hline
$\arg(R)$&$(-\tfrac{\pi}{2},0)$&0&$(0,\tfrac{\pi}{2})$&$(\tfrac{\pi}{2},\frac{3\pi}{2})$&$-\tfrac{\pi}{2}$&$\tfrac{\pi}{2}$
\\[4pt]\hline
$\arg(-(R/2)^\frac{1}{3})$&$(\frac{5\pi}{6},\pi)$&$\pi$&$(\pi,\frac{7\pi}{6})$&$(\frac{7\pi}{6},\frac{3\pi}{2})$&$\frac{3\pi}{2}$&$\frac{7\pi}{6}$
\\[4pt]\hline
$X_1$&\multicolumn{3}{|c|}{$0$}&$-$&$-$&$-$
\\[4pt]\hline
$X_2$&\multicolumn{3}{|c|}{$+$}&$+$&$+$&$+$
\\[4pt]\hline
$X_3$&\multicolumn{3}{|c|}{$-$}&$0$&$+$&$-$
\\[4pt]\hline
$\{\devlima(\beta)\}_{\beta\in\C}$&$X_2$&$X_{23}^\circ$&\multicolumn{4}{|c|}{$X_3$}
\\[5pt]\hline
\end{tabular}
\vspace{8pt}

Each column of the table stands for a case with $\arg(R)$ contained in the interval given in the first row; a ``$+$'' (resp. ``$-$'', ``$0$'') sign on the row of $X_i$ means that $\lambda_i$ is the largest (resp. the smallest, intermediate) amongst the three eigenvalues. Thus two ``$-$'' or two ``$+$'' on the same column means $\hol_0(\gamma_{\tilde{p}})$ is planar.

From the table we read off the following
\begin{corollary}\label{coro_pole3} 
${}$
%\begin{enumerate}
%\item\label{item_pole3p11}
If $\re(R)\neq 0$ then $\hol_0(\gamma_{\tilde{p}})$ is hyperbolic. Let $\hat{X}_+$, $\hat{X}_-$ and $\hat{X}_0$ denote the attracting, repelling and saddle fixed points of $\hol_0(\gamma_{\tilde{p}})$, respectively. Then the $\devlima(\beta)$'s can be described as follows.
\begin{itemize}
\item If $\re(R)<0$, then $\devlima(\beta)=\hat{X}_0$ for any $\beta\in\C$.

\item If $R\in\mathbb{R}_+$, then $\{\devlima(\beta)\}_{\beta\in \C}$ is the interior of a segment in $\mathbb{P}(E_m)$ joining $\hat{X}_+$ and $\hat{X}_-$.

\item If $\re(R)>0$ and $\im(R)< 0$, then $\devlima(\beta)=\hat{X}_+$ for any $\beta\in\C$.

\item If $\re(R)>0$ and $\im(R)>0$, then $\devlima(\beta)=\hat{X}_-$ for any $\beta\in\C$.
\end{itemize}
%\item\label{item_pole3p12}
%If $R\in\ima\mathbb{R}^*$ then $\hol_0(\gamma_{\tilde{p}})$ is planar. The isolated fixed point of $\hol_0(\gamma_{\tilde{p}})$ is $[e_1(0)]$ (resp. $[e_2(0)]$) if $R\in\ima\mathbb{R}_-$ (resp. $R\in\ima\mathbb{R}_+$), while $\devlima(\beta)=[e_3(0)]$ for any $\beta\in\C$.
%\end{enumerate}
\end{corollary}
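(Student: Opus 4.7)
The plan is to derive Corollary \ref{coro_pole3} directly from Proposition \ref{prop_aux3} by reading off the ordering of the eigenvalues $\lambda_1, \lambda_2, \lambda_3$ of $\hol_0(\gamma_{\tilde{p}})$ as a function of $\arg(R)$. By Proposition \ref{prop_aux3}(\ref{item_aux31}), $\hol_0(\gamma_{\tilde{p}})$ is already diagonal in the basis $(e_1(0),e_2(0),e_3(0))$, with eigenvalues $\lambda_j = e^{-4\pi\mu_j}$. So everything reduces to two ingredients: (a) verifying that the three $\mu_j$ are pairwise distinct whenever $\re(R)\neq 0$, so that $\hol_0(\gamma_{\tilde{p}})$ is hyperbolic; and (b) identifying which of the three points $X_j = [e_j(0)]$ is attracting, repelling, or saddle in each of the four sub-cases, and then matching against Proposition \ref{prop_aux3}(\ref{item_aux32}).

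For (a), write $(R/2)^{1/3}=re^{\ima\phi}$ with $r>0$ and $\phi\in(-\pi/6,\pi/2]$ (the branch fixed in \S \ref{sec_half3}), so that $\mu_1 = r\sin\phi$, $\mu_2 = r\sin(\phi-2\pi/3)$, $\mu_3 = r\sin(\phi+2\pi/3)$. Sum-to-product shows that $\mu_i = \mu_j$ for some $i\neq j$ is equivalent to $\phi$ being a zero of one of $\cos\phi$, $\cos(\phi-\pi/3)$, $\cos(\phi+\pi/3)$; the only solutions lying in $(-\pi/6,\pi/2]$ are $\phi=\pi/6$ and $\phi=\pi/2$, both corresponding to $R\in \ima\mathbb{R}^*$, i.e.\ $\re(R)=0$. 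Hence when $\re(R)\neq 0$ the three $\mu_j$ are all distinct, $\hol_0(\gamma_{\tilde{p}})$ is hyperbolic, and the attracting, saddle, and repelling fixed points $\hat{X}_+, \hat{X}_0, \hat{X}_-$ are precisely the $X_j$ corresponding to the smallest, intermediate, and largest $\mu_j$ respectively.

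For (b), partition the locus $\re(R)\neq 0$ into the four sub-cases of the corollary and compute the $\mu_j$-ordering in each. For instance, $\re(R)<0$ corresponds to $\phi\in(\pi/6,\pi/2)$; a direct evaluation (e.g.\ at $\phi=\pi/3$, and then by continuity and the non-degeneracy from (a) throughout the interval) gives $\mu_2<\mu_3<\mu_1$, so $X_3 = \hat{X}_0$, and combined with Proposition \ref{prop_aux3}(\ref{item_aux32}) (which says $\devlima(\beta) = X_3$ in this range) we get $\devlima(\beta)=\hat{X}_0$. The three remaining sub-cases -- $R\in\mathbb{R}_+$ with $\phi=0$, $\re(R)>0$ with $\im(R)<0$ giving $\phi\in(-\pi/6,0)$, and $\re(R)>0$ with $\im(R)>0$ giving $\phi\in(0,\pi/6)$ -- each yield the ordering $\mu_2<\mu_1<\mu_3$, hence $X_2 = \hat{X}_+$ and $X_3 = \hat{X}_-$; the answers $\{X_2\}$, $X_{23}^\circ$, $\{X_3\}$ supplied by Proposition \ref{prop_aux3}(\ref{item_aux32}) then translate into $\{\hat{X}_+\}$, the interior of the segment $\hat{X}_+\hat{X}_-$, and $\{\hat{X}_-\}$ respectively. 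No single step is difficult; the only real obstacle is the bookkeeping of which cubic root of $R/2$ receives which label, which is kept transparent by the rotating-tripod visualization of \S \ref{sec_eigen}.
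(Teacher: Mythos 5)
Your proposal is correct and follows essentially the same route as the paper: combine the explicit diagonal form of $\hol_0(\gamma_{\tilde{p}})$ and the list of $\devlima(\beta)$'s from Proposition \ref{prop_aux3} with a case-by-case comparison of the three exponents $\mu_j$ as $\arg(R)$ varies, which is exactly the content of the eigenvalue table the paper reads the corollary off from. Your sum-to-product computation merely makes explicit the verification of that table (distinctness of the $\mu_j$ when $\re(R)\neq 0$ and the orderings in each sector), and your orderings and identifications of $\hat{X}_\pm,\hat{X}_0$ agree with the paper's.
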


%
%Recall that $\dev_0(\beta)$ denotes the curve in $\Omega$ parametrized by
%$t\mapsto \dev_0(\beta_{[0,t]}^{-1})$ (see \S \ref{sec_devlim} for the notation).
%As mentioned earlier, knowing the limit of $\dev_0(\beta)$ is not enough for our purpose and we shall further study the asymptotic direction of $\dev_0(\beta)$ at the limit. Combining the above table and the statement in Proposition \ref{prop_tit} (\ref{item_tit1}) on asymptotics, we get
%
%\begin{lemma}\label{lemma_x23}
%\pt  If $\re(R)>0$ and $\im(R)\neq0$, then $\dev_0(\beta)$ is asymptotic to $X_{23}$.
%
%\pt If $R\in\ima\mathbb{R}_-$ (resp. $\ima\mathbb{R}_+$)  then $\dev_0(\beta)$ is asymptotic to $X_{31}$ (resp. $X_{23}$).
%\end{lemma}

\subsubsection{Comparing parallel transports: the case $\re(R)\neq 0$}
As in \S \ref{sec_compa}, using the equilateral frame and two-pointed parallel transport map, the comparison transformation $P_\beta(t):=\a(\beta_{[0,t]}^{-1})\a_0(\beta_{[0,t]})\in \SL(E_0)$ is expression as
$$
P_\beta(t)=\a(0,\beta(t))\,\a_0(\beta(t), 0).
$$

The following proposition is a counterpart to Theorem \ref{thm_compa}.\begin{proposition}\label{prop_rera}
Suppose $\re(R)\neq 0$. Then the limit $P_\beta:=\lim_\tinf P_\beta(t)$ exists and takes the same value $P\in\SL(E_0)$ for any $\beta\in\C$.
\end{proposition}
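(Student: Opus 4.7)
The plan is to mirror the strategy of Theorem \ref{thm_compa}: turn the convergence problem into a linear ODE by differentiating $P_\beta(t)$, and then apply the asymptotic result Lemma B.1 of \cite{dumas-wolf}. A direct computation using the two-pointed parallel transport identities of \S \ref{sec_pt} gives $\frac{d}{dt}P_\beta(t)=P_\beta(t)N(t)$ with $N(t):=\Ad_{\a_0(0,\beta(t))}\bigl((A-A_0)(\dot\beta(t))\bigr)$, so existence of $P_\beta=\lim_\tinf P_\beta(t)$ follows once $\int_0^{+\infty}\|N(t)\|\,\dif t<+\infty$ is established. The necessary tools are the explicit forms (\ref{eqn_d}) and (\ref{eqn_wangcon0}) of $\D$ and $\D_0$ under the equilateral frame (which show $A-A_0$ is linear in $u$, $\pa u$ and $e^{\pm u}-1$), the decay estimate for $u$ given by Lemma \ref{lemma_3estimate}, and the spectral radius formula $\rho(\Ad_{\a_0(0,\zeta)})=e^{\varpi(\arg\zeta)|\zeta|}$ from \S \ref{sec_eigen}.

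For the existence part, any $\beta\in\C$ is eventually of the form $\beta(t)=e^{\theta\ima}t+\zeta_0$ with $\theta=\arg(-(R/2)^{\frac{1}{3}})$ pointing perpendicularly into $H$, so $|\beta(t)|=t+O(1)$ and $\dist(\beta(t),\pa H)=t+O(1)$. The key observation is that $\re(R)\neq0$ is equivalent to $\theta$ not being an odd multiple of $\pi/6$ (the only such values in the admissible range $(\tfrac{5\pi}{6},\tfrac{3\pi}{2}]$ are $\tfrac{7\pi}{6}$ and $\tfrac{3\pi}{2}$, corresponding to $R\in\ima\mathbb{R}_+$ and $R\in\ima\mathbb{R}_-$ respectively), so by continuity $\varpi(\arg\beta(t))\leq2\sqrt{3}-\delta$ for some $\delta>0$ and $t$ large. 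Combining Lemma \ref{lemma_3estimate} (which yields $\|(A-A_0)(\dot\beta(t))\|\leq Ce^{-2\sqrt{3}\,t}$ for large $t$) with this spectral bound then gives $\|N(t)\|\leq C'e^{-\delta t}$, which is integrable; the contribution of the bounded initial portion of $\beta$ to the integral is evidently finite.

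Next I would prove independence of $P_\beta$ on $\beta\in\C$ by following the argument of statement (\ref{item_compa2}') verbatim. Given $\beta_0,\beta_1\in\C$, the linear interpolation $\beta_s(t):=(1-s)\beta_0(t)+s\beta_1(t)$ stays in $\overline{H}$ by convexity, and one defines $Q(s,t)$ and $M(s,t)$ as in (\ref{eqn_defq}) and (\ref{eqn_defm}) so that $Q(1,t)=P_{\beta_1}(t)^{-1}P_{\beta_0}(t)$. Since both paths eventually become translates of the same perpendicular ray $e^{\theta\ima}t$, the derivative $\tfrac{\pa}{\pa s}\beta_s(t)=\beta_1(t)-\beta_0(t)$ remains bounded in $t$, and the estimates on $|\beta_s(t)|$, $\arg\beta_s(t)$ and $\dist(\beta_s(t),\pa H)$ hold uniformly in $s\in[0,1]$. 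The same exponential argument then yields $\max_{s\in[0,1]}\|M(s,t)\|\to0$ as $\tinf$, and applying Lemma \ref{lemma_ode} with $X=0$ and $f\equiv0$ gives $Q(1,t)\to\id$, i.e.\ $P_{\beta_0}=P_{\beta_1}$.

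The main obstacle I anticipate is conceptual rather than technical: one must see clearly why the argument breaks down precisely when $\re(R)=0$. At $\theta\in\{\tfrac{7\pi}{6},\tfrac{3\pi}{2}\}$ the spectral radius of $\Ad_{\a_0(0,\beta(t))}$ attains its maximum $e^{2\sqrt{3}\,t}$, exactly cancelling the $e^{-2\sqrt{3}\,t}$ decay of $\|(A-A_0)(\dot\beta(t))\|$ along the dominant eigendirection, so the crude perturbative bound on $\|N(t)\|$ is no longer integrable. This is the ``unstable direction'' phenomenon highlighted in the introduction, and the refined ODE technique required to handle $R\in\ima\mathbb{R}^*$ lies outside the scope of this proposition.
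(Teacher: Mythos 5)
Your proposal is correct and follows essentially the same route as the paper: differentiate $P_\beta(t)$ to get $\dot P_\beta=P_\beta N$, bound $\|N(t)\|$ by combining the strong estimate of Lemma \ref{lemma_3estimate} with the spectral-radius bound $e^{(2\sqrt{3}-\delta)t}$ valid because $\theta=\arg(-(R/2)^{1/3})$ avoids odd multiples of $\tfrac{\pi}{6}$ when $\re(R)\neq0$, invoke Lemma B.1 of \cite{dumas-wolf} for existence, and use the linear interpolation $\beta_s$ with $Q(s,t)$, $M(s,t)$ and Lemma \ref{lemma_ode} for independence of $\beta$. Your explicit choice $X=0$, $f\equiv0$ giving $Q(1,t)\to\id$ is exactly the intended application (the paper's ``$\lim Q(1,t)=0$'' is a typo for $\id$), so no gap remains.
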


\begin{proof}
The proof is basically the same as parts (\ref{item_ocompa1}) and (\ref{item_ocompa2}) of  Theorem \ref{thm_compa}, so we only give here a sketch.

The derivative of $P_\beta(t)$ is given by
$
\tfrac{\dif}{\dif t}P_\beta(t)=P_\beta(t)N(t)$, 
where
$$N(t)=\Ad_{\a_0(0,\beta(t))}(A(\dot{\beta}(t))-A_0(\dot{\beta}(t))).
$$ 
Note that $|\beta(t)|\leq t$ because $\beta(0)=0$ and $|\dot{\beta}(t)|\equiv1$. The asymptotic argument $\theta=\lim_\tinf\arg(\beta(t))=\arg(-(R/2)^\frac{1}{3})$
is not an odd multiple of $\tfrac{\pi}{6}$ because $\re(R)\neq 0$. By virtue of fact (\ref{item_eigen1}) in \S \ref{sec_eigen}, the spectral radius of $\Ad_{\a_0(0,\beta(t))}$ is controlled by $e^{(2\sqrt{3}-\delta)t}$ for some $\delta>0$, while  Lemma \ref{lemma_3estimate} implies that $\|A(\dot{\beta}(t))-A_0(\dot{\beta}(t))\|$ is controlled by $e^{-2\sqrt{3}\,t}$.
So $\|N(t)\|$ decays exponentially, hence $P_\beta(t)$ converge by Lemma B.1 in \cite{dumas-wolf}.

In order to prove that $P_{\beta_1}^{-1}P_{\beta_0}=\id$ for any $\beta_0,\beta_1\in \C$, define $\beta_s(t)$ and $Q(s,t)$ in the same way in the proof of Theorem \ref{thm_compa}, so that we have $P_{\beta_1}^{-1}P_{\beta_0}=\lim_\tinf Q(1,t)$ and 
$$
\tfrac{\pa}{\pa s}Q(s,t)=M(s,t)Q(s,t),
$$
where
$$
M(s,t)= \Ad_{\a_0(0, \beta_s(t))}\left(A_0\big(\tfrac{\pa}{\pa s}\beta_s(t)\big)-A\big(\tfrac{\pa}{\pa s}\beta_s(t)\big)\right).
$$
Noting that $\left|\tfrac{\pa}{\pa s}\beta_s(t)\right|=|\beta_0(t)-\beta_1(t)|$ is bounded, Lemma \ref{lemma_3estimate} again implies $\|A(\dot{\beta}(t))-A_0(\dot{\beta}(t))\|=O(e^{-2\sqrt{3}\,t})$, while the spectral radius of $\Ad_{\a_0(0,\beta_s(t)}$ is $O(e^{(2\sqrt{3}-\delta)t})$ for the same reason as above. Therefore $\max_{s\in[0,1]}\|M(s,t)\|$ decays exponentially and Lemma \ref{lemma_ode} gives $\lim_\tinf Q(1,t)=0$ as we wished.
\end{proof}

\subsubsection{Comparing parallel transports: the case $\re(R)= 0$}
Suppose $\re(R)= 0$, so that $\hol_0(\gamma_{\tilde{p}})\in\SL(E_0)$ is planar.

In this case, every $\beta\in\C$ is eventually a ray with asymptotic argument $\frac{7\pi}{6}$ or $\frac{3\pi}{2}$. Such rays are ``unstable'' in the same sense as in the case of higher order poles, that is, the proof for existence of $\lim_\tinf P_\beta(t)$ based on ODE asymptotics (Proposition \ref{prop_rera}) fails. However,  modifying the method, we can still obtain some asymptotic information on $P_\beta(t)$, as stated in the next proposition.

Let $G\subset\SL(E_0)$ denote the subgroup consisting of unipotent elements which commute with $\hol_0(\gamma_{\tilde{p}})$ and fix the point $\devlima(\beta)=X_3$. In terms of \mbox{matrices} under the basis $(e_1(0), e_2(0), e_3(0))$, 
$$
G=
\begin{cases}
\left\{
\begin{pmatrix}
1&&\\
&1&\\
&*&1
\end{pmatrix}
\right\}
&\mbox{ if } R\in\ima\mathbb{R}_-,
\\[25pt]
\left\{
\begin{pmatrix}
1&&\\
&1&\\
*&&1
\end{pmatrix}
\right\}
&\mbox{ if } R\in\ima\mathbb{R}_+.
\end{cases}
$$
Let $\Pi: \SL(E_0)\rightarrow\SL(E_0)/G$ be the projection.

\begin{proposition}\label{prop_rerb}
There exists a $G$-valued function $\Xi(t)$ on $[0,+\infty)$ with the following properties.
\begin{itemize}
\item The only non-vanishing off-diagonal entry of $\Xi(t)$, denoted by $F(t)$, is a primitive of a bounded smooth function. In particular, $|F(t)|=O(t)$.
\item For any $\beta\in\C$, $P_\beta(t)\,\Xi(t)^{-1}$ admits a limit in $\SL(E_0)$,  denoted by $P'_\beta$. 
\end{itemize}
Moreover, the left $G$-coset $\Pi(P'_\beta)\in \SL(E_0)/G$ is independent of $\beta$.
\end{proposition}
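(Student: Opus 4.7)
The plan is to imitate the ODE machinery of Proposition~\ref{prop_rera}, inserting a gauge transformation that absorbs the non-integrable component of $N(t)$ responsible for divergence in the unstable case. Since $\re(R) = 0$, every $\beta \in \C$ has asymptotic argument $\theta := \arg(-(R/2)^{\frac{1}{3}})$ equal to an odd multiple of $\tfrac{\pi}{6}$, so by fact~(\ref{item_eigen2}) of \S\ref{sec_eigen} there is a unique index pair $(i,j)$ with $\varpi_{ij}(\theta) = 2\sqrt{3}$, and this pair generates the Lie algebra of $G$ from Definition~\ref{def_compa}. Writing $\varpi_{ij}(\phi) = 2\sqrt{3}\cos(\phi - \theta)$ (from the explicit form in \S\ref{sec_eigen}) and using $\re(e^{-\ima\theta}\zeta) = \dist(\zeta,\partial H)$ for $\zeta \in H$ yields the sharp identity $\varpi_{ij}(\arg\zeta)|\zeta| = 2\sqrt{3}\,\dist(\zeta,\partial H)$, which combines with Lemma~\ref{lemma_3estimate} to give the split
$$
N_\beta(t) = c_\beta(t)\,E_{ij} + R_\beta(t), \qquad |c_\beta(t)| = O(1), \quad \|R_\beta(t)\| = O(e^{-\delta t}),
$$
for some $\delta>0$, valid for every $\beta \in \C$.

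Fix $\beta_0 := \alpha$ as reference, put $c_0(t) := c_{\beta_0}(t)$, $F(t) := \int_0^t c_0(s)\,\dif s$, and $\Xi(t) := I + F(t) E_{ij} \in G$. Since $E_{ij}^2 = 0$ one has $\Xi'\Xi^{-1} = c_0 E_{ij}$ and $\Xi$ commutes with $E_{ij}$, so the ODE $\tfrac{\dif}{\dif t}P_{\beta_0} = P_{\beta_0} N_{\beta_0}$ reduces, for $Q_0(t) := P_{\beta_0}(t)\Xi(t)^{-1}$, to $\tfrac{\dif}{\dif t}Q_0 = Q_0\bigl(\Xi R_{\beta_0}\Xi^{-1}\bigr)$. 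The driver has norm $O(t^2 e^{-\delta t})$ and is integrable, so Lemma~B.1 of \cite{dumas-wolf} yields the limit $P'_{\beta_0} = \lim_\tinf Q_0(t)$.

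For arbitrary $\beta \in \C$, the interpolation $\beta_s := (1-s)\beta_0 + s\beta$ and the matrix $Q(s,t) := \a_0(0,\beta_s(t))\,\a(\beta_s(t),\beta_0(t))\,\a_0(\beta_0(t),0)$, which satisfies $Q(1,t) = P_\beta(t)^{-1}P_{\beta_0}(t)$, give the ODE $\partial_s Q = M(s,t) Q$ with analogous split $M = \tilde c(s,t) E_{ij} + \tilde R(s,t)$, where $\tilde c$ is bounded and $\|\tilde R\| = O(e^{-\delta t})$ uniformly in $s\in[0,1]$. The gauge substitution $Q(s,t) = \exp(a(s,t) E_{ij})\,\tilde Q(s,t)$ with $a(s,t) := \int_0^s \tilde c(s',t)\,\dif s'$ cancels the dominant term, leaving $\partial_s \tilde Q = \bigl(e^{-aE_{ij}}\,\tilde R\,e^{aE_{ij}}\bigr)\tilde Q$ whose driver is still $O(e^{-\delta t})$ uniformly in $s$ since $|a| = O(1)$; hence $\tilde Q(1,t) \to I$. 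This already forces $\Pi(Q(1,t)) \to \Pi(I)$, and hence $\Pi(P'_\beta) = \Pi(P'_{\beta_0})$, establishing the last assertion of the proposition.

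Upgrading $\Pi$-convergence to genuine convergence of $P_\beta(t)\Xi(t)^{-1}$ — equivalently, convergence of $a(1,t) = \int_0^1 \tilde c(s,t)\,\dif s$ — is the main obstacle. Using the sharp identity we write $\tilde c(s,t) = e^{2\sqrt{3}\,\dist(\beta_s(t),\partial H)}\,\tilde\Phi_{ij}(\beta_s(t))$, where $\tilde\Phi_{ij}$ is built linearly from $u$ and its first derivatives; the crude bound $\tilde\Phi_{ij} = O(e^{-2\sqrt{3}\,\dist})$ from Lemma~\ref{lemma_3estimate} only yields $\tilde c = O(1)$. A sharper expansion of $u$ is needed: linearizing Wang's equation at $u = 0$ gives $\lap u = 12u + O(u^2)$, and separating variables on the tangentially-periodic quotient $\overline H/\!\sim$ (period $L := 2\pi|(R/2)^{\frac{1}{3}}|$) shows that only the constant tangential Fourier mode decays at the borderline rate $e^{-2\sqrt{3}\,\dist}$, every nonzero mode decaying at a strictly faster rate $e^{-\lambda\,\dist}$ with $\lambda := \sqrt{12 + (2\pi/L)^2} > 2\sqrt{3}$. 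Therefore $\tilde\Phi_{ij}(\zeta) = \tilde B_{ij}\,e^{-2\sqrt{3}\,\dist(\zeta,\partial H)} + O(e^{-\lambda\,\dist})$ for a constant $\tilde B_{ij}$, which yields $\tilde c(s,t) = \tilde B_{ij} + O(e^{-(\lambda-2\sqrt{3})t})$ uniformly in $s$ and $a(1,t) \to \tilde B_{ij}$; consequently $P_\beta(t)\Xi(t)^{-1} \to (I - \tilde B_{ij} E_{ij})\,P'_{\beta_0}$. The delicate step is justifying this sharp expansion of $u$, which should follow by revisiting the appendix estimates underlying Lemma~\ref{lemma_3estimate} with a Fourier decomposition in the tangential variable.
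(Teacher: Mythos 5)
Your first two steps coincide with the paper's own argument: splitting $N_\beta(t)$ into a bounded multiple of $E_{ij}$ plus an exponentially decaying remainder, integrating the $(i,j)$-entry along the reference path to build $\Xi(t)$, and checking that $P_{\beta_0}(t)\,\Xi(t)^{-1}$ converges is exactly the paper's Lemma \ref{lemma_new} specialised to this situation, and your $s$-gauge substitution $Q=\exp(a(s,t)E_{ij})\widetilde{Q}$ is a repackaging of the paper's application of Lemma \ref{lemma_ode} to the conjugated interpolation $\Xi_{\beta_0}(t)Q(s,t)\Xi_{\beta_0}(t)^{-1}$ (your boundedness of $\tilde c$ is the paper's Gaussian-type bound on $M_{31}$). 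Two local slips: the coset identity $\Pi(P'_\beta)=\Pi(P'_{\beta_0})$ cannot be ``established'' at a point where $P'_\beta$ is not yet known to exist, and the limit you announce at the very end should be $P'_{\beta_0}\exp(-\tilde B_{ij}E_{ij})$, i.e.\ with the $G$-factor multiplying $P'_{\beta_0}$ on the right, not on the left.

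The genuine gap is the step you flag yourself: existence of $\lim_\tinf P_\beta(t)\Xi(t)^{-1}$ for $\beta\neq\beta_0$ is reduced to convergence of $a(1,t)=\int_0^1\tilde c(s,t)\,\dif s$, and you obtain this only from an unproved sharp expansion $u=B_0\,e^{-2\sqrt{3}\,\dist(\cdot,\,\pa H)}+O(e^{-\lambda\,\dist})$ together with matching asymptotics for $\pa_\zeta u$. Lemma \ref{lemma_3estimate} is a one-sided bound; promoting it to zero-mode dominance with a constant leading coefficient requires a genuinely new analysis (linearisation of Wang's equation with control of the quadratic error on the half-cylinder, Fourier decomposition with boundary data that is merely bounded, and justification that the expansion survives differentiation), none of which appears in the paper or in your proposal, so as written the proof is conditional at its crucial point. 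The paper shows this refinement is unnecessary: it runs your $\beta_0$-argument for \emph{every} $\beta$, producing a per-path gauge $\Xi_\beta(t)=\exp(F_\beta(t)E_{ij})$ with $P_\beta(t)\Xi_\beta(t)^{-1}$ convergent, then shows (again by Lemma \ref{lemma_ode}, using only the boundedness you already have for $\tilde c$) that $\Xi_{\beta_0}(t)P_{\beta_1}(t)^{-1}P_{\beta_0}(t)\Xi_{\beta_0}(t)^{-1}$ has its limit in $G$, and finally exploits that $G$ is a closed abelian one-parameter group, via the factorisation $\Xi_{\beta_1}P_{\beta_1}^{-1}P_{\beta_0}\Xi_{\beta_0}^{-1}=(\Xi_{\beta_1}\Xi_{\beta_0}^{-1})(\Xi_{\beta_0}P_{\beta_1}^{-1}P_{\beta_0}\Xi_{\beta_0}^{-1})$, to conclude simultaneously that one fixed $\Xi$ serves for all $\beta$ and that $P_{\beta_1}'{}^{-1}P'_{\beta_0}\in G$. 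So if you wish to keep your route you must actually prove the mode expansion of $u$ (a substantial addition, not a ``revisit'' of the appendix); otherwise the way to close the argument with the estimates already available is the per-path gauge plus the group structure of $G$, as in the paper.
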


%Note that the asymptotic condition on the matrix norm $\|\Xi(t)\|$ is just a condition on the only non-vanishing off-diagonal entry of $\Xi(t)$.

The proof is based on the following generalisation of Lemma B.1 in \cite{dumas-wolf}.
\begin{lemma}\label{lemma_new}
Let $P(t)$ be a $\SL(3,\mathbb{R})$-valued $C^1$-function and $N(t)$ a  $\sl(3,\mathbb{R})$-valued \mbox{continuous} function on $[0,+\infty)$ such that
$\dot{P}(t)=P(t)N(t)$.
Denote the $(i,j)$-entry of $N(t)$ by $N_{ij}(t)$. Let $f(t):=N_{i_0j_0}(t)$ be an off-diagonal entry and $F(t)$ be a primitive of $f(t)$. Suppose
$$
\int_0^{+\infty}\left|N_{ij}(t)\cdot F(t)^l\right|\dif t<+\infty,
$$
for any $(i,j)\neq (i_0,j_0)$ and $l=0,1,2$. Then $P(t)$ is asymptotic to 
$$
\Xi(t):=\exp\big(F(t)E_{i_0j_0}\big)
$$ 
in the sense that $P(t)\,\Xi(t)^{-1}$ admits a limit in $\SL(3,\mathbb{R})$ as $\tinf$.
\end{lemma}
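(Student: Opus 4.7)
The plan is to reduce the statement to the standard integrability criterion Lemma B.1 of \cite{dumas-wolf} (the case $l = 0$) by means of a right-multiplicative ``gauge transformation'' that absorbs the single potentially non-integrable entry $f(t) = N_{i_0 j_0}(t)$ of $N(t)$. Since $i_0 \neq j_0$, the matrix unit $E := E_{i_0 j_0}$ satisfies $E^2 = 0$, so $\Xi(t) = I + F(t) E$ with inverse $\Xi(t)^{-1} = I - F(t) E$, and a direct computation using $E^2 = 0$ yields $\tfrac{d}{dt}\Xi(t)^{-1} = -f(t) E$. Setting $Q(t) := P(t)\Xi(t)^{-1}$, the ODE $\dot P = P N$ transforms into $\dot Q = Q M$, where
\begin{equation*}
M(t) := \Xi(t)\bigl(N(t) - f(t) E\bigr)\Xi(t)^{-1}.
\end{equation*}
Note that the entry $f E$ is removed from $N$ \emph{before} the conjugation by $\Xi$ is applied; it is this cancellation that drives the whole argument.

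It then remains to verify $\int_0^{+\infty}\|M(t)\|\,dt < +\infty$, after which Lemma B.1 of \cite{dumas-wolf} applied to $\dot Q = Q M$ gives convergence of $Q(t) = P(t)\Xi(t)^{-1}$, which is the conclusion. Put $\tilde N := N - f E$, which agrees with $N$ except that the $(i_0,j_0)$-entry is replaced by $0$. Using $E^2 = 0$ to collapse all higher-order terms, one has
\begin{equation*}
M = \tilde N + F(E\tilde N - \tilde N E) - F^2\, E\tilde N E.
\end{equation*}
The products $E\tilde N$, $\tilde N E$ and $E\tilde N E$ only involve the entries $\tilde N_{j_0\,\cdot}$, $\tilde N_{\,\cdot\, i_0}$ and $\tilde N_{j_0 i_0}$ respectively; hence every entry of $M(t)$ is a finite linear combination of terms of the form $N_{ij}(t)\, F(t)^l$ with $(i,j) \neq (i_0,j_0)$ and $l \in \{0,1,2\}$. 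By hypothesis each such term lies in $L^1([0,+\infty))$, so $\|M(t)\|$ is absolutely integrable.

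The principal subtlety is the bookkeeping assertion that no product $F(t)^l \cdot N_{i_0 j_0}(t)$ survives in $M$: this holds structurally because $\tilde N_{i_0 j_0} = 0$ by construction and $E^2 = 0$ prevents any further interaction of the removed entry with powers of $F$. This is precisely why the hypothesis is imposed for $l \in \{0,1,2\}$ and for every $(i,j) \neq (i_0,j_0)$, which matches exactly what the expansion of $M$ produces. Once this bookkeeping is in place, \cite{dumas-wolf}, Lemma B.1 finishes the proof.
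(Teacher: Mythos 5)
Your proposal is correct and is essentially the paper's own argument: you set $Q=P\,\Xi^{-1}$, observe $\dot Q=QM$ with $M=\Xi N\Xi^{-1}-\dot\Xi\,\Xi^{-1}$ (your $\Xi(N-fE)\Xi^{-1}$ is the same matrix, since $\Xi E\Xi^{-1}=E$ and $\dot\Xi\,\Xi^{-1}=fE$), check via $E^2=0$ that every entry of $M$ is a combination of $N_{ij}F^l$ with $(i,j)\neq(i_0,j_0)$, $l\le 2$, and invoke Lemma B.1 of Dumas--Wolf. The only difference is that you spell out the bookkeeping the paper leaves as "a straightforward computation," and your expansion and index check are accurate.
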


\begin{proof}
We compute the derivative of 
$R(t):=P(t)\,\Xi(t)^{-1}$:
\begin{align*}
\dot{R}=\dot{P}\,\Xi^{-1}-P\,\Xi^{-1}\,\dot{\Xi}\,\Xi^{-1}
%=PN\,\Xi^{-1}-P\,\Xi^{-1}\,\dot{\Xi}\,\Xi^{-1}
=R(\,\Xi N\,\Xi^{-1}-\dot{\Xi}\,\Xi^{-1}).
\end{align*}
A straightforward computation shows that each entry of $\,\Xi N\,\Xi^{-1}-\dot{\Xi}\,\Xi^{-1}$ is a sum of terms of the form $N_{ij}(t)\cdot F(t)^l$ with $(i,j)\neq (i_0,j_0)$ and $l=0,1,2$. Therefore  
$$
\int_0^{+\infty}\|\,\Xi N\,\Xi^{-1}-\dot{\Xi}\,\Xi^{-1}\|\dif t<+\infty
$$
by assumption. Applying Lemma B.1 from \cite{dumas-wolf}, we conclude that $R(t)$ admits a limit.
\end{proof}
\vspace{7pt}
\begin{proof}[Proof of Proposition \ref{prop_rerb}]
We will actually prove the following assertions.
\begin{enumerate}
\item
\label{item_rer1}
For each $\beta\in \C$, there exists $\Xi_\beta(t)\in G$ satisfying the two requirements. 
\item\label{item_rer2}
 Fix a $\Xi_\beta(t)$ as above for each $\beta\in\C$ and set $P'_\beta:=\lim_\tinf P_\beta(t)\,\Xi_\beta(t)^{-1}$. Then $$\Xi_{\beta_0}(t) P_{\beta_1}(t)^{-1}P_{\beta_0}(t)\,\Xi_{\beta_0}(t)^{-1}$$ admits a limit in $G$ for any  $\beta_0,\beta_1\in\C$.
\end{enumerate}
To deduce the proposition from these assertions, note that 
$$
\Xi_{\beta_1}(t) P_{\beta_1}(t)^{-1}P_{\beta_0}(t)\,\Xi_{\beta_0}(t)^{-1}=\Big(
\Xi_{\beta_1}(t)\Xi_{\beta_0}(t)^{-1}\Big)\Big( \Xi_{\beta_0}(t)P_{\beta_1}(t)^{-1}P_{\beta_0}(t)\,\Xi_{\beta_0}(t)^{-1}\Big).
$$
The left-hand side converges to $P'_{\beta_1}{\!\!}^{-1}P'_{\beta_0}$ by  (\ref{item_rer1}), while the first factor on the right-hand side is in $G$ and the second factor  converges to an element in $G$ by (\ref{item_rer2}). Thus, we deduce two consequences: $\Xi_{\beta_1}(t)\Xi_{\beta_0}(t)^{-1}$ converges, and $P'_{\beta_1}{\!\!}^{-1}P'_{\beta_0}$ is contained in $G$. The first consequence implies that we can take the same $G$-valued function $\Xi(t):=\Xi_{\beta_0}(t)$ as $\Xi_\beta(t)$ for every $\beta\in \C$; the second implies the last statement in the proposition. 

\vspace{5pt}

In order to prove the assertions, we define $P_\beta(t)$, $N(t)$, $Q(s,t)$ and $M(s,t)$ in the same way as in the proof of Proposition \ref{prop_rera} and follow the same line of arguments.

First consider the case $R\in\ima\mathbb{R}_+$, so that $\theta=\arg(-(R/2)^\frac{1}{3})=\frac{7\pi}{6}$. 
Looking at the exponential growth rate of each eigenvalue of $\Ad_{\a_0(0,\beta(t))}$ as in the proof of Proposition  \ref{thm_compa} (\ref{item_ocompa3}), we see that the biggest eigenvalue is asymptotic to $e^{2\sqrt{3}\,t}$ and is associated to the  eigenspace $\mathbb{R}E_{31}$, while all the other eigenvalues are controlled by $e^{(2\sqrt{3}-\delta)t}$. Therefore, every entry of $N(t)$ except the $(3,1)$-one decays to $0$ exponentially while $f(t):=N_{31}(t)$ is bounded. We let $F(t)$ be a primitive of $f(t)$ and define
$$
\Xi_\beta(t):=\exp(F(t)E_{31}).
$$ 
Then $P_\beta(t)\,\Xi_\beta(t)^{-1}$ converges by Lemma \ref{lemma_new} as required. Assertion (\ref{item_rer1}) is proved.

Put
$$
Q'(s,t)=\Xi_{\beta_0}(t) Q(s,t)\, \Xi_{\beta_0}(t)^{-1},\quad M'(s,t)=\Xi_{\beta_0}(t) M(s,t)\, \Xi_{\beta_0}(t)^{-1}.
$$
Assertion (\ref{item_rer2}) just says that $Q'(1, t)$ has a limit in $G$ as $\tinf$.  But we have
$$
\tfrac{\pa}{\pa s}Q'(s,t)=M'(s,t)Q'(s,t).
$$

Similarly as in the proof of Proposition \ref{thm_compa} (\ref{item_ocompa3}),
applying Lemma \ref{lemma_ode} to $X=E_{31}$ and $f=M'_{31}$, we see that assertion (\ref{item_rer2}) holds if $M'(s,t)$ fulfills the following conditions:
\begin{enumerate}[(a)]
\item\label{item_m1} 
If $(i,j)\neq (3,1)$ then $|M'_{ij}(s,t)|=O(e^{-\delta t})$  for some $\delta>0$;
\item\label{item_m2} 
$\sup_{t\in[0,+\infty)}\int_0^1|M'_{31}(s,t)|\dif s<+\infty $
\end{enumerate}

Using the expression
$$
\Xi_{\beta_0}(t)=
\begin{pmatrix}
1&&\\
&1&\\
O(t)&&\ \  1
\end{pmatrix},
$$
we see that $M'(s,t)$ satisfies the above conditions if and only if $M(s,t)$ does. 
It is thus sufficient to verify (\ref{item_m1}) and (\ref{item_m2}) for $M(s,t)$ instead of $M'(s,t)$. 

As before, by Lemma \ref{lemma_3estimate} we have 
\begin{equation}\label{eqn_xx}
\|A_0\big(\tfrac{\pa}{\pa s}\beta_s(t)\big)-A\big(\tfrac{\pa}{\pa s}\beta_s(t)\|=O(e^{-2\sqrt{3}\, t}).
\end{equation}

To analyze the effect of $\Ad_{\a_0(0,\beta_s(t))}$, we put $\theta_{s,t}:=\arg(\beta_s(t))$.  With the notations from \S \ref{sec_eigen}, the eigenvalue of $\Ad_{\a_0(0,\beta_s(t))}$ on $E_{ij}$ is 
$$
\lambda_{ij}(s,t):=\exp(\varpi_{ij}\big(\theta_{s,t})|\beta_s(t)|\big).
$$ 
Since $\beta_0(t)$ and $\beta_1(t)$ are parallel rays with asymptotic argument $\frac{7\pi}{6}$ for $t$ sufficiently large, $\theta_{s,t}$ converges to $\frac{7\pi}{6}$ uniformly in $s$. 
Noting that $|\beta_s(t)|\leq t$, fact (\ref{item_eigen2}) in  \S \ref{sec_eigen} implies that if $(i,j)\neq (3,1)$ then
$\lambda_{ij}(s,t)\leq e^{(2\sqrt{3}-\delta)t}$.
Combining this with (\ref{eqn_xx}), we get condition (\ref{item_m1}). 

As for $\lambda_{31}(s,t)$, we have
$$
\varpi_{31}(\theta)=2\re(e^{2\pi\ima/3}e^{\theta\ima}-e^{\theta\ima})=2\big(\cos(\theta+\tfrac{2\pi}{3})-\cos(\theta)\big)=2\sqrt{3}\cos(\theta-\tfrac{7\pi}{6}).
$$
Let $\alpha$ be the ray parallel to $\beta_0$ passing through $0$ and let 
 $y_i$ be the signed distance from $\alpha$ to $\beta_i$  ($i=0,1$). We have
$$
\big|\theta_{s,t}-\tfrac{7\pi}{6}\big|\geq c\,\big|\tan(\theta_{s,t}-\tfrac{7\pi}{6})\big|\approx c\tfrac{(1-s)y_0+sy_1}{t}
$$
for some constant $c>0$ when $t$ is sufficiently large. Therefore,
$$
-2\sqrt{3}+\varpi_{31}(\theta_{s,t})=-2\sqrt{3}(1-\cos(\theta-\tfrac{7\pi}{6}))=-4\sqrt{3}\sin^2\big(\tfrac{\theta_{s,t}-\frac{7\pi}{6}}{2}\big)\leq -c'\big(\tfrac{(1-s)y_0+sy_1}{t}\big)^2
$$
for some $c'$. Combining these with (\ref{eqn_xx}), we get
\begin{align*}
|M_{31}(s,t)|&\leq C\,e^{\lambda_{ij}(s,t)}e^{-2\sqrt{3}\,t}\leq Ce^{(-2\sqrt{3}+\varpi_{31}(\theta_{s,t}))t}\\
&\leq C\exp\left(-\frac{c'}{t}\big[(1-s)y_0+sy_1\big]^2\right).
\end{align*}
The last term, viewed as a family of functions of $s$ parametrized by $t$, converges uniformly to $1$ as $\tinf$. This implies property  (\ref{item_m2}) and concludes the proof of assertion (\ref{item_rer2}) in the case $R\in\ima\mathbb{R}_+$.

The proof for  the case $R\in\ima\mathbb{R}_-$ is almost identical, the only difference being that $\theta_{s,t}$ now tends to $\frac{3\pi}{2}$, hence the fastest-growing eigenvalue  is $\lambda_{32}(s,t)$, as the table in \S \ref{sec_eigen} shows. 
\end{proof}

\subsubsection{Determining the end holonomy}
%The end holonomy $\hol_{\tilde{p}}$ can be recovered from the auxiliary end holonomy $\hol_0(\gamma_{\tilde{p}})$ and the limit of comparison transformation $P_\beta$, as stated in the next lemma. 
%
%
%
%\begin{lemma}\label{lemma_pp}
%Let $\beta,\tilde{\beta}\in\C$ be such that $\tilde{\beta}$ is equivalent to $\beta\cdot\gamma_{\tilde{p}}$. If $\re(R)\neq 0$ then 
%$$\hol_{\tilde{p}}=P_{\beta} \cdot\hol_0(\gamma_{\tilde{p}})\cdot P_{\tilde{\beta}}^{-1}.$$
%If $\re(R)=0$, then 
%$\hol_{\tilde{p}}=P'_{\beta} \cdot\hol_0(\gamma_{\tilde{p}})\cdot P'_{\tilde{\beta}}{\!}^{-1}$.
%\end{lemma}
From the above obtained information on comparison transformations, we can derive an expression of $\hol_{\tilde{p}}$, as stated in the next theorem. It contains Theorem \ref{intro_thm2} from the introduction. 

Here $P$ and $P_\beta'$ are given by Proposition \ref{prop_rera} and Proposition \ref{prop_rerb}. 
\begin{theorem}[\textbf{Holonomy of third order pole}]\label{thm_holo}
${}$
\begin{enumerate}
\item\label{item_coroholo1}
If $\re(R)\neq0$ then the $\lambda_i$'s  are distinct and 
$$
\hol_{\tilde{p}}=P
\begin{pmatrix}
\lambda_1&&\\
&\lambda_2&\\
&&\lambda_3
\end{pmatrix}
P^{-1};
$$
\item\label{item_coroholo2}
If $R\in\ima\mathbb{R}_-$ then $\lambda_1<\lambda_2=\lambda_3$ and there is $a\in\mathbb{R}$ such that for any $\beta\in \C$,
$$
\hol_{\tilde{p}}=P'_\beta
\begin{pmatrix}
\lambda_1&&\\
&\lambda_2&\\
&a&\lambda_2
\end{pmatrix}
 P'_\beta{\!}^{-1}.
$$
\item\label{item_coroholo3}
If $R\in\ima\mathbb{R}_+$ then $\lambda_1=\lambda_3<\lambda_2$ and there is $a\in\mathbb{R}$ such that for any $\beta\in \C$,
$$
\hol_{\tilde{p}}=P'_\beta
\begin{pmatrix}
\lambda_1&&\\
&\lambda_2&\\
a&&\lambda_1
\end{pmatrix}
P'_\beta{\!}^{-1}.
$$
\end{enumerate}
\end{theorem}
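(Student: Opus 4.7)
The proof will deduce the three formulas for $\hol_{\tilde{p}}$ from the asymptotics of the comparison transformation $P_\beta(t)$ established in Propositions \ref{prop_rera} and \ref{prop_rerb}, together with a functional equation relating $P_\beta$ and $P_{\gamma_{\tilde{p}}\cdot\beta}$ coming from deck invariance. The key set-up is the following. In the local model $\overline{H}$, both connections $\D$ and $\D_0$ are invariant under the translation $\zeta\mapsto\zeta+c$ with $c:=-2\pi\ima(R/2)^{1/3}$, because the equilateral frame, the function $u$, and the cubic differential $\ve{b}=2\,\dzeta^3$ all descend to $\overline{H}/\!\sim$. Consequently, the two-pointed parallel transports satisfy $\a(\zeta_1+c,\zeta_2+c)=\a(\zeta_1,\zeta_2)$ and similarly for $\a_0$. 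Moreover, since the loop $\gamma_{\tilde{p}}$ lifts to the segment from $0$ to $c$, the discussion of parallel transport vs.\ holonomy gives $\hol_{\tilde{p}}=\a(0,c)$ and $\hol_0(\gamma_{\tilde{p}})=\a_0(0,c)$ as matrices in the equilateral frame. For $\beta\in\C$, the concatenation $\beta':=\gamma_{\tilde{p}}\cdot\beta$ has a lift in $\overline{H}$ whose endpoint is $\beta(t-T)+c$ for $t\geq T$, where $T$ is the length of $\gamma_{\tilde{p}}$. A direct computation using the translation invariance then yields the identity
$$P_{\beta'}(t)=\a(0,c)\,\a(0,\beta(t-T))\,\a_0(\beta(t-T),0)\,\a_0(c,0)=\hol_{\tilde{p}}\cdot P_\beta(t-T)\cdot \hol_0(\gamma_{\tilde{p}})^{-1}. \qquad (*)$$

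When $\re(R)\neq 0$, Proposition \ref{prop_rera} gives $\lim P_\beta(t)=\lim P_{\beta'}(t)=P$. Passing to the limit in $(*)$ yields $P=\hol_{\tilde{p}}\,P\,\hol_0(\gamma_{\tilde{p}})^{-1}$, hence $\hol_{\tilde{p}}=P\diag(\lambda_1,\lambda_2,\lambda_3)P^{-1}$. The distinctness of the $\lambda_i$ reduces to showing that the three quantities $\im(\zeta),\im(\omega\zeta),\im(\omega^2\zeta)$ (with $\zeta=(R/2)^{1/3}=re^{\ima\phi}$, $\phi\in(-\pi/6,\pi/2]$) are pairwise distinct; a direct check shows equality of any two of them forces $\phi\in\{-\pi/6,\pi/6,\pi/2\}$, i.e.\ $\re(R)=0$.

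When $R\in\ima\mathbb{R}$, Proposition \ref{prop_rerb} gives $P_\beta(t)\Xi(t)^{-1}\to P'_\beta$ with $\Xi(t)\in G$, and $\Pi(P'_{\beta'})=\Pi(P'_\beta)$, so $P'_{\beta'}=P'_\beta\cdot g_0$ for some $g_0\in G$. The crucial observation is that, by the very definition of $G$, the unbounded matrix $\Xi(t)$ commutes with $\hol_0(\gamma_{\tilde{p}})$. Writing $P_\beta(t-T)=(P'_\beta+o(1))\Xi(t-T)$ and $P_{\beta'}(t)=(P'_{\beta'}+o(1))\Xi(t)$, the identity $(*)$ becomes, after commuting $\hol_0^{-1}$ past $\Xi(t)$,
$$(P'_{\beta'}+o(1))\,\Xi(t)\,\Xi(t-T)^{-1}=\hol_{\tilde{p}}(P'_\beta+o(1))\hol_0(\gamma_{\tilde{p}})^{-1}.$$
Since the right-hand side converges and the left-hand side is $(P'_\beta g_0+o(1))\exp((F(t)-F(t-T))E_{ij})$, the quantity $F(t)-F(t-T)$ must converge to some $\delta\in\mathbb{R}$, giving $\hol_{\tilde{p}}=P'_\beta\cdot\hol_0(\gamma_{\tilde{p}})\exp((a_0+\delta)E_{ij})\cdot{P'_\beta}^{-1}$ (where $g_0=\exp(a_0E_{ij})$, and once more $\exp(\delta E_{ij})\in G$ commutes with $\hol_0$). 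A matrix computation using $\hol_0(\gamma_{\tilde{p}})E_{31}=\lambda_1 E_{31}$ (resp.\ $\hol_0(\gamma_{\tilde{p}})E_{32}=\lambda_2 E_{32}$) converts this to the stated block form with parameter $a:=\lambda_1(a_0+\delta)$ (resp.\ $a:=\lambda_2(a_0+\delta)$). The identifications of eigenvalues $(\lambda_1=\lambda_3<\lambda_2$ for $R\in\ima\mathbb{R}_+$, and $\lambda_1<\lambda_2=\lambda_3$ for $R\in\ima\mathbb{R}_-$) follow by direct substitution of $(R/2)^{1/3}=re^{\ima\pi/6}$ (resp.\ $ri$) into the definitions of the $\lambda_i$.

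The main conceptual obstacle is the unboundedness of $\Xi(t)$ in the planar cases: one cannot simply take limits on both sides of $(*)$. The resolution is the observation that $\Xi(t)$ lies in the centralizer of $\hol_0(\gamma_{\tilde{p}})$ by construction of $G$, so $\Xi(t)$ commutes through $\hol_0^{-1}$ and the remaining $\Xi(t)\Xi(t-T)^{-1}$ involves only the \emph{difference} $F(t)-F(t-T)$, which is automatically bounded and in fact convergent by the constancy of $\hol_{\tilde{p}}$.
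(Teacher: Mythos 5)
Your proposal is correct and takes essentially the paper's route: the paper obtains the same functional equation $P_{\tilde\beta}(t+t_0)=\hol_{\tilde p}^{-1}P_\beta(t)\,\hol_0(\gamma_{\tilde p})$ (your $(*)$) by homotoping the circular loops $\gamma_t$ around $\{|z|=|\beta(t)|\}$ rather than by translation invariance in the lifted model, and then argues exactly as you do — passing to the limit when $\re(R)\neq0$, and when $\re(R)=0$ commuting $\Xi(t)\in G$ past $\hol_0(\gamma_{\tilde p})$ and using boundedness of $\Xi(t)\Xi(t+t_0)^{-1}$ (the paper settles for a convergent subsequence where you observe genuine convergence of $F(t)-F(t-T)$). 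The one slip is your identification $\hol_{\tilde p}=\a(0,c)$, which with the paper's conventions should be $\a(c,0)$, but this is harmless: it leaves the conjugation formula of part (1) unchanged and only flips the sign of the unspecified constant $a$ in parts (2) and (3).
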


Remark that if statements (\ref{item_coroholo2}) and (\ref{item_coroholo3})  hold for one $\beta\in \C$ then they hold for any other $\beta$ as well, because changing $\beta$ amounts to right-multiplying $P'_\beta$ by an element of $G$, but any element of $G$ commutes with 
$$
\begin{pmatrix}
\lambda_1&&\\
&\lambda_2&\\
&a&\lambda_2
\end{pmatrix}
\ \big(\mbox{ resp. }
\begin{pmatrix}
\lambda_1&&\\
&\lambda_2&\\
a&&\lambda_2
\end{pmatrix}
\ \big)
$$
when $R\in\ima\mathbb{R}_-$ (resp. $R\in\ima\mathbb{R}_+$).

\begin{proof}
In this proof we work on the punctured neighborhood $U=\{0<|z|<a\}$ of $p$ directly rather than using the local model $H/\sim$.

%Recall that $\beta_{[0,t]}$ denotes the initial portion of $\beta$ of length $t$ (with respect to the flat metric $|\ve{b}|^\frac{2}{3}$), which goes from the base point $m$ to $\beta(t)$.

Recall that $t\mapsto\beta(t)$ is the parametrization of $\beta$ by arc-length with respect to $|\ve{b}|^\frac{2}{3}$ and we have, by definition,
\begin{equation}\label{eqn_holostudy1}
P_\beta(t)=\a(\beta_{[0,t]}^{-1})\a_0(\beta_{[0,t]}).
\end{equation}

For each $t$, let $\gamma_t$ denote the oriented loop based at $\beta(t)$ such that $\gamma_t$ runs over the circle \mbox{$\{|z|=|\beta(t)|\}$} clockwise. The loop $\beta_{[0,t]}^{-1}\cdot\gamma_t\cdot\beta_{[0,t]}$ is homotopic to $\gamma_{\tilde{p}}$, thus
\begin{equation}\label{eqn_holostudy2}
\hol_0(\gamma_{\tilde{p}})=\a_0(\beta_{[0,t]}^{-1})\a_0(\gamma_t)\a_0(\beta_{[0,t]}).
\end{equation}

Put $\tilde{\beta}:=\beta\cdot\gamma_{\tilde{p}}$. Suppose $\gamma_{\tilde{p}}$ has length $t_0$ with respect to $|\ve{b}|^\frac{2}{3}$. Then $\tilde{\beta}_{[0,t+t_0]}$ is homotopic to $\beta_{[0,t]}\cdot\gamma_t$, thus 
\begin{equation}\label{eqn_holostudy3}
P_{\tilde{\beta}}(t+t_0)^{-1}=\a_0(\tilde{\beta}^{-1}_{[0,t+t_0]})\a(\tilde{\beta}_{[0,t+t_0]})=\a_0(\beta^{-1}_{[0,t]}\cdot\gamma_t^{-1})\a(\gamma_t\cdot\beta_{[0,t]}).
\end{equation}

Concatenating (\ref{eqn_holostudy1}), (\ref{eqn_holostudy2}) and (\ref{eqn_holostudy3}), we obtain, for any $t$,
$$
P_\beta(t)\cdot\hol_0(\gamma_{\tilde{p}})\cdot P_{\tilde{\beta}}(t+t_0)^{-1}=\a(\beta_{[0,1]}^{-1}\cdot\gamma_t\cdot \beta_{[0,1]})=\hol_{\tilde{p}}.
$$
When $\re(R)\neq 0$, letting $t$ tend to $+\infty$ and using Proposition \ref{prop_rera}, we get part (\ref{item_coroholo1}) of the theorem.

 When $\re(R)=0$, we take the $G$-valued function $\Xi(t)$ provided by Proposition \ref{prop_rerb} into account. Members of $G$ commute with $\hol_0(\gamma_{\tilde{p}})$ by definition, hence (\ref{eqn_holostudy3}) yields
\begin{equation}\label{eqn_holostudy4}
P_\beta(t)\,\Xi(t)^{-1}\,\cdot\hol_0(\gamma_{\tilde{p}})\,\Xi(t)\,\Xi(t+t_0)^{-1}\cdot\Xi(t+t_0)P_{\tilde{\beta}}(t+t_0)^{-1}=\hol_{\tilde{p}}.
\end{equation}
The non-vanishing off-diagonal entry of $\Xi(t)$ is a primitive of some bounded function, so the non-vanishing off-diagonal entry of $\Xi(t)\,\Xi(t+t_0)^{-1}$ is bounded. As a result, $\Xi(t)\,\Xi(t+t_0)^{-1}$  converges to some $\Xi_0\in G$ at least along a subsequence $(t_n)\subset [0,+\infty)$. Letting $t$ tend to $+\infty$ along this subsequence in (\ref{eqn_holostudy4}), we get
$$
\hol_{\tilde{p}}=P'_\beta\cdot\hol_0(\gamma_{\tilde{p}})\, \Xi_0\cdot P'_{\tilde{\beta}}{\!}^{-1}.
$$
The last statement in Proposition \ref{prop_rerb} implies $P'_{\tilde{\beta}}=P'_{\beta}\,\Xi_1^{-1}$ for some $\Xi_1\in G$, so we further get 
$$
\hol_{\tilde{p}}=P'_\beta\cdot\hol_0(\gamma_{\tilde{p}})\, \Xi_0\,\Xi_1\cdot P'_{\beta}{\!}^{-1}.
$$
This establishes the required expressions in part (\ref{item_coroholo2}) and (\ref{item_coroholo3}) of the theorem.
\end{proof}
%Clearly, the above lemma is just a consequence of the definitions and does not particularly require $p$ to be a pole of order $3$. However, when $p$ is a pole of order $3$ it yields a full description of $\hol_{\tilde{p}}$ as in the following lemma, which implies Theorem \ref{intro_thm2} from the introduction.

\begin{remark} 
Eq. (\ref{eqn_holostudy3}) has nothing to do with the order of pole, hence it also provides some information on $\hol_{\tilde{p}}$ for higher order poles. Suppose that $p$ is a pole of order $\geq 4$ and let $\beta\in\C$ be a stable path. Taking the limit $\tinf$ in (\ref{eqn_holostudy3}), we see that $\hol_{\tilde{p}}$ is conjugate to $\hol_0(\gamma_{\tilde{p}})\cdot P_\beta^{-1}P_{\tilde{\beta}}$. Assuming $\beta\in\Se_{0,1}$ and applying \mbox{Theorem} \ref{thm_compa} (\ref{item_ocompa3}) consecutively, we can write $P_\beta^{-1}P_{\tilde{\beta}}$ as a product $u_1^-u_1^+u_2^-u_2^+\cdots u_n^-u_n^+$, where $u_k^\pm\in G_k^\pm$. 
\end{remark}

\subsubsection{Proof of Theorem \ref{thm_pole3}}
If $\re(R)\neq 0$
%let $P\in\SL(E_0)$ be as in Proposition \ref{prop_rera} (\ref{item_3compa2}). 
then Corollary \ref{thm_holo} (\ref{item_coroholo1}) says that $\hol_{\tilde{p}}$ is conjugate to $\hol_0(\gamma_{\tilde{p}})$ through $P$, whereas $\devlim(\beta)$ is the translate of $\devlima(\beta)$ by $P$ because 
$
\devlim(\beta)=P_\beta(\devlim_0(\beta))
$
(see \S \ref{sec_prooffinding}). Therefore, the statements about $\devlim(\beta)$ in parts (\ref{item_pole33}) and (\ref{item_pole34}) of Theorem \ref{thm_pole3} are consequences of Corollary \ref{coro_pole3}.

\vspace{6pt}

If $\re(R)\in\ima\mathbb{R}^*$, the above argument fails because $P_\beta=\lim P_\beta(t)$ does not make sense. But we can use the $\Xi(t)$ provided by Proposition \ref{prop_rerb} to remedy it.
Using the expressions
$$
\dev_0(\beta_{[0,t]}^{-1})=
\begin{bmatrix}
e^{2\re(\beta(t))}\\[3pt]
e^{2\re(\omega^2\beta(t))}\\[3pt]
e^{2\re(\omega\beta(t))}
\end{bmatrix},\quad
\Xi(t)=
\begin{pmatrix}
1&&\\
&1&\\
O(t)&&\ 1
\end{pmatrix}
\mbox{ or }
\begin{pmatrix}
1&&\\
&1&\\
&O(t)&\ 1
\end{pmatrix}
$$
and,  for $t$ sufficiently large,
$$
\beta(t)=
\begin{cases}
e^{\frac{3\pi\ima}{2}}t+\zeta_0&\mbox{ if }R\in\ima\mathbb{R}_-,\\[4pt]
e^{\frac{7\pi\ima}{6}}t+\zeta_0&\mbox{ if }R\in\ima\mathbb{R}_+,
\end{cases}
$$
we see that 
$$
\lim_\tinf\Xi(t)\dev_0(\beta_{[0,t]}^{-1})=X_3
$$
%Since $\dev_0(\beta_{[0,t]}^{-1})$ converges exponentially fast to $\devlima(\beta)$ while $\Xi(t)$ has sub-linear growth,  $\Xi(t)\dev_0(\beta_{[0,t]}^{-1})$ converges to $\devlim(\beta)$ as well. 
(a short reasoning: $\dev_0(\beta_{[0,t]}^{-1})$ converges exponentially fast to $\devlima(\beta)=X_3$, hence \mbox{operating} it by the sub-linearly growing $\Xi(t)$ does not effect the limit).
Therefore,
\begin{align*}
\devlim(\beta)&=\lim_\tinf P_\beta(t)\dev_0(\beta_{[0,t]}^{-1})=\lim_\tinf P_\beta(t)\,\Xi(t)^{-1} \lim_\tinf \Xi(t)\dev_0(\beta_{[0,t]}^{-1})\\
&=P'_\beta(X_3).
\end{align*}
But this is exactly the double fixed point of $\hol_{\tilde{p}}$ as Theorem \ref{thm_holo}  implies.

%In view of the matrix expression of $\hol_{\tilde{p}}$ in Theorem \ref{thm_holo},  is the double fixed point if $\hol_{\tilde{p}}$ is quasi-hyperbolic and lies on the fixed line if $\hol_{\tilde{p}}$ is planar.

\vspace{6pt}

It remains to prove the statements about the type of end.

In the case $R\in\mathbb{R}_+$, $\hol_{\tilde{p}}$ is hyperbolic and the $\devlim(\beta)$'s form the interior of the \mbox{principal} segment $I$, but a case-by-case check for simple ends and non-simple ends (see Proposition \ref{prop_classification} and Lemma \ref{lemma_rough}) shows that the only situation where $\devbd{\tilde{p}}$ contain a point in the interior of $I$ is when $\devbd{\tilde{p}}=I$.

In the case $\re(R)<0$, where $\hol_{\tilde{p}}$ is hyperbolic and $\devlim(\beta)$ is the saddle fixed point, Proposition \ref{prop_classification} (\ref{item_endclas3}) implies that $p$ is a V-end.

In the remaining cases, if $\hol_{\tilde{p}}$ is planar, of course the type of end is immediately determined by Lemma \ref{lemma_planar}. Otherwise, we study in addition  the asymptotic direction of the path $t\mapsto\dev(\beta_{[0,t]}^{-1})$, obtaining the following proposition.
%Here, given a projective transformation $a\in\SL(3,\mathbb{R})$ fixing a point $x\in\mathbb{RP}^2$, by an \emph{invariant open cone} in $\T_x\mathbb{RP}^2$ with respect to $a$, we   mean a minimal open sector in $\T_x\mathbb{RP}^2$ based at $0$ preserved by the action of $a$.  We are only concerned with the following  cases:
%\begin{itemize}
%\item $a$ is hyperbolic 
%\item $a$ is quasi-hyperbolic and $x$ is the double fixed point.
%\item $a$ is planar and $x$ is on the fixed line.
%\end{itemize}
%In each case, there are exactly two lines passing through $x$ which are stabilized by $a$, giving rise to two one-dimensional subspaces of $\T_x\mathbb{RP}^2$. An invariant open cone is just one of the four sectors delimited by both subspaces.

\begin{proposition}
Suppose $\re(R)\geq0$, $\im(R)\neq0$ and assume that $\hol_{\tilde{p}}$ is not planar. Let $I$ denote the principal segment of $\hol_{\tilde{p}}$. Then  for any $\beta\in\C$, the path $t\mapsto\dev(\beta_{[0,t]}^{-1})$ converges to an endpoint of $I$ and is asymptotic to $I$.
\end{proposition}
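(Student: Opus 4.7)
The convergence assertion is immediate from Theorem \ref{thm_pole3}, so my plan concentrates on the asymptotic direction. The strategy is to exploit the factorisation $\dev(\beta_{[0,t]}^{-1}) = P_\beta(t)\,\dev_0(\beta_{[0,t]}^{-1})$, or when $\re(R)=0$ the refined version $\dev(\beta_{[0,t]}^{-1}) = \bigl(P_\beta(t)\Xi(t)^{-1}\bigr)\bigl(\Xi(t)\,\dev_0(\beta_{[0,t]}^{-1})\bigr)$. By Propositions \ref{prop_rera} and \ref{prop_rerb}, the left factor converges respectively to $P$ and $P'_\beta$, so I only need to determine the tangent direction of approach of the auxiliary path to its limit vertex and then push it forward by $P$ or $P'_\beta$.

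The auxiliary path has the \c{T}i\c{t}eica form from \S\ref{sec_titeica},
$$\dev_0(\beta_{[0,t]}^{-1}) = \bigl[e^{2\re(\beta(t))} : e^{2\re(\omega^2\beta(t))} : e^{2\re(\omega\beta(t))}\bigr],$$
and $\beta(t) = te^{\ima\theta} + O(1)$ with $\theta = \arg(-(R/2)^{1/3})$. Setting $\alpha_i(\theta) := 2\cos\bigl(\theta + 2\pi(1-i)/3\bigr)$, the largest $\alpha_{i_0}$ selects the limit vertex $X_{i_0}$ and the second-largest $\alpha_{j_0}$ selects the tangent edge $X_{i_0 j_0}$. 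A direct trigonometric check in the four sub-cases -- with $\theta \in (5\pi/6,\pi)$, $(\pi,7\pi/6)$, $3\pi/2$, $7\pi/6$ respectively -- yields the tangent edge $X_{23}$ in the first, second, and fourth sub-cases, and $X_{13}$ in the third.

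I will then transport these tangent edges by $P$ or $P'_\beta$ and identify them with the line carrying $I$. When $\re(R)>0$, the eigenvalue table in \S\ref{sec_devbdrerneqzero} combined with Theorem \ref{thm_holo}\,(\ref{item_coroholo1}) identifies $P(X_2)$ and $P(X_3)$ with the extreme fixed points of $\hol_{\tilde{p}}$ -- i.e.\ the endpoints of $I$ -- so $P(X_{23})$ lies on the line through them. When $R\in\ima\mathbb{R}_-$ (resp.\ $\ima\mathbb{R}_+$), Theorem \ref{thm_holo}\,(\ref{item_coroholo2}) (resp.\ (\ref{item_coroholo3})) pins down $P'_\beta(X_3)$ as the double fixed point and $P'_\beta(X_1)$ (resp.\ $P'_\beta(X_2)$) as the simple fixed point. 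The correction factor $\Xi(t)$ in the $\re(R)=0$ cases only adds to one homogeneous coordinate a term of size $O(t)$ times a sub-dominant coordinate; this is exponentially dominated by the leading coordinate, so it affects neither the limit vertex nor the tangent edge.

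The step I expect to be the most delicate is the passage from \emph{tangent to the line through the endpoints of $I$} to \emph{asymptotic to $I$ itself} in the sense of \S\ref{sec_titeica}, since a priori the curve could accumulate along the opposite arc of the same projective line. I will dispose of this by invoking convexity of $\Omega$: the curve $\dev(\beta_{[0,t]}^{-1})$ lies entirely in $\Omega$, and in each sub-case $p$ is a geodesic end with $\devbd{\tilde{p}} = I \subset \partial\Omega$ (Proposition \ref{prop_classification}\,(\ref{item_endclas2}) together with Lemma \ref{lemma_rough}); so any narrow sector at the endpoint containing a portion of $I$ meets $\Omega$ in a full neighborhood of the tangent line, forcing the curve to eventually enter the sector.
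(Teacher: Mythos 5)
Your proposal follows the paper's own route almost step for step: the factorisation $\dev(\beta_{[0,t]}^{-1})=P_\beta(t)\,\dev_0(\beta_{[0,t]}^{-1})$ (refined by $\Xi(t)$ when $\re(R)=0$), the \c{T}i\c{t}eica asymptotics of the auxiliary path, and the push-forward by $P$ resp.\ $P'_\beta$ are exactly the paper's argument, and your bookkeeping is correct: the tangent edge is $X_{23}$ for $\theta\in(\tfrac{5\pi}{6},\pi)$, $(\pi,\tfrac{7\pi}{6})$ and $\theta=\tfrac{7\pi}{6}$, and $X_{31}$ for $\theta=\tfrac{3\pi}{2}$ (all still covered by Proposition \ref{prop_tit}~(\ref{item_tit1})), and the $O(t)$ entry of $\Xi(t)$ multiplies a sub-dominant coordinate and so changes neither the limit vertex nor the tangent edge, which is precisely the paper's ``short reasoning''. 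Note also that the push-forward fact gives asymptoticity to a \emph{specific} segment joining the two relevant fixed points, not merely tangency to the line through them, so the only residual issue is whether that segment is $I$ or the complementary arc of the same projective line.

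The genuine gap is in how you dispose of that residual issue. You invoke ``in each sub-case $p$ is a geodesic end with $\devbd{\tilde{p}}=I\subset\pa\Omega$'', citing Proposition \ref{prop_classification}~(\ref{item_endclas2}) and Lemma \ref{lemma_rough}; but at this stage of the argument the type of end is not known --- this proposition is exactly the input (together with the lemma that follows it in the paper) used to \emph{prove} that $p$ is a geodesic end, so assuming it is circular. Proposition \ref{prop_classification}~(\ref{item_endclas2}) is only an equivalence and cannot produce ``geodesic end'' from the holonomy type, and Lemma \ref{lemma_rough} shows that hyperbolic holonomy is compatible with a non-simple end (case (\ref{item_rough1})) or a V-end, and quasi-hyperbolic holonomy with case (\ref{item_rough2}); knowing only that $\devlim(\beta)$ is a fixed point of $\hol_{\tilde{p}}$ excludes none of these, so $\devbd{\tilde{p}}=I$ is not available. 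The paper instead argues without any knowledge of the end type: the curve $t\mapsto\dev(\beta_{[0,t]}^{-1})$ lies in $\Omega$ and is asymptotic to the push-forward segment joining the attracting and repelling (resp.\ double and simple) fixed points, and since the arc complementary to the principal segment meets $\overline{\Omega}$ only at its endpoints, no path contained in $\Omega$ can be asymptotic to it; hence the push-forward segment must be $I$ itself. Replacing your final paragraph by this observation repairs the proof; also, your appeal to ``any narrow sector at the endpoint containing a portion of $I$ meets $\Omega$'' addresses the wrong question --- what must be excluded is asymptoticity to the opposite ray, not verified for the ray along $I$.
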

\begin{proof}
The proof is based on the following simple fact: if $t\mapsto \xi(t)$ is a parametrized curve in $\mathbb{RP}^2$ converging to $x_0\in\mathbb{RP}^2$ and asymptotic to a ray $l$ issuing from $x_0$, and $a(t)$ is a continuous $\SL(3,\mathbb{R})$-valued function converging to $a_0\in\SL(3,\mathbb{R})$, then the curve $t\mapsto a(t)(\xi(t))$ converges to $a_0(x_0)$ and is asymptotic to $a_0(l)$.

First consider the case $\re(R)>0$, $\im(R)> 0$, so that $\arg(-(R/2)^\frac{1}{3})\in(\pi,\frac{7\pi}{6})$. Proposition \ref{prop_tit} (\ref{item_tit1}) says that  $t\mapsto\dev_0(\beta_{[0,t]}^{-1})$ is asymptotic to $X_{23}$ while converging to $X_3$. By the above mentioned fact, $t\mapsto \dev(\beta_{[0,t]}^{-1})=P_\beta(t)(\dev(\beta_{[0,t]}^{-1}))$ converges to $P(X_3)$ and is asymptotic to $P(X_{23})$. But the matrix expression of $\hol_{\tilde{p}}$ in Theorem \ref{thm_holo} and the table in \S \ref{sec_devbdrerneqzero} shows that $P(X_{23})$ is a segment joining the attracting and repelling fixed points of $\hol_{\tilde{p}}$, hence can only be the principal segment (otherwise there could not be a path in $\Omega$ asymptotic to it).

The proof for the case $\re(R)>0$, $\im(R)< 0$ is similar.

Let us treat the case $R\in\ima\mathbb{R}_-$.
%$\hol_{\tilde{p}}$ is quasi-hyperbolic and $\devlim_0(\beta)$ is the double fixed point $[e_3(0)]$ for any $\beta\in\C$. We only need to show that $\dev_0(\beta)$ is asymptotic to some a segment 
Using the expressions of $\dev_0(\beta_{[0,t]}^{-1})$ and $\Xi(t)$ exhibited earlier, we not only see that the path
$$
t\mapsto\xi_\beta(t):=\Xi(t)\dev_0(\beta_{[0,t]}^{-1})
$$
converges to $X_3$, but also see that it is asymptotic to $X_{31}$. Noting that
$$
\dev(\beta_{[0,t]}^{-1})=P_\beta(t)\,\Xi(t)^{-1} (\xi_\beta(t)),
$$
we apply the above mentioned fact  and conclude that $t\mapsto\dev(\beta_{[0,t]}^{-1})$ is \mbox{asymptotic} to $P'_\beta(X_{31})$. The latter is a segment  joining the two fixed points of $\hol_{\tilde{p}}$ as Theorem \ref{thm_holo} implies, hence it can only be the principal segment.
\end{proof}

In view of this proposition, the following lemma implies that 
%$p$ is a geodesic end when $\hol_{\tilde{p}}$ is hyperbolic and $\devlim(\beta)$ is the attracting/repelling fixed point (\ie the case $\re(R)>0$, $\im(R)\neq 0$) or when $\hol_{\tilde{p}}$ is quasi-hyperbolic (\ie the case $\re(R)=0$), 
$p$ is a geodesic end in the remaining cases, completing the discussion of third order poles.

\begin{lemma}
Let $(\dev,\hol)$ be a convex projective structure on $\Sigma$ such that $\hol_{\tilde{p}}$ is either hyperbolic or quasi-hyperbolic. Let $I$ be the principal segment of $\hol_{\tilde{p}}$.
Suppose there exists a path $t\mapsto\beta(t)\in\Sigma$ issuing from $m$ and converging to $p$, such that $t\mapsto\dev(\beta_{[0,t]}^{-1})$ converges to an endpoint of $I$ and is asymptotic to $I$. Then $p$ is a geodesic end.
\end{lemma}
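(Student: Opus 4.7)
The strategy is to upgrade the single asymptotic point $\devlim(\beta) = x \in \{x_+, x_-\}$ into the full principal segment: I will show that the entire interior $I^\circ$ lies inside $\devbd{\tilde{p}}$, and then eliminate all other possibilities from the classification of ends to conclude that $\devbd{\tilde{p}} = I$.

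First I would reduce to a pure dynamics computation. Pick an affine chart in which $x$ is the origin, the other endpoint of $I$ is at infinity along the $c$-axis, and $I = \{(0,c) : c \ge 0\}$. Under a suitable basis, $\hol_{\tilde{p}}$ becomes either $\diag(\lambda_+, \lambda_0, \lambda_-)$ (hyperbolic) or the $3\times 3$ Jordan block $\begin{pmatrix}\lambda&1&\\&\lambda&\\&&\mu\end{pmatrix}$ with $\lambda^2\mu=1$ (quasi-hyperbolic). In either case, pick the generator ($\hol_{\tilde{p}}$ or $\hol_{\tilde{p}}^{-1}$) that expands the $c$-axis and contracts the $b$-axis, and call it $a$; set $y_t := \dev(\beta_{[0,t]}^{-1}) = (b_t, c_t)$. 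The hypotheses $\devlim(\beta) = x$ and ``asymptotic to $I$'' translate into $c_t \to 0$ and $b_t/c_t \to 0$. A direct calculation shows that $a^{n}(b, c) = (\lambda_1^n b + O_n(b^2,bc), \lambda_2^n c + O_n(\cdot))$ with $\lambda_2 > \lambda_1 > 1$ in the hyperbolic case and $\lambda_2 = (\lambda/\mu)^{\pm 1} = \lambda^{\pm 3}$, $\lambda_1 = \lambda^{\pm 1}$ in the quasi-hyperbolic case, giving in both cases $\log\lambda_1/\log\lambda_2 < 1$. Given any target $c^* > 0$, choose $n_t$ so that $\lambda_2^{n_t} c_t = c^*$; then $n_t \to +\infty$ while $\lambda_1^{n_t} b_t = (c^*/c_t)^{\log\lambda_1/\log\lambda_2} b_t$, which tends to $0$ because $b_t/c_t \to 0$ and the exponent is $<1$. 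Thus $a^{n_t}(y_t) \to (0, c^*) \in I^\circ$.

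Next I would promote this limit into a statement about $\devbd{\tilde{p}}$. For any exhausting family $(U_i)$ of punctured neighborhoods of $p$ with $\tilde{p}$-lifts $\widetilde{U}_i$, the set $\dev(\widetilde{U}_i) \subset \Omega$ is $\hol_{\tilde{p}}$-invariant and contains $y_t$ for all $t$ sufficiently large. Hence it contains the entire orbit $\{a^{n_t}(y_t)\}_t$, and by the previous paragraph its closure contains $(0, c^*)$. Since this holds for every $i$ and every $c^* > 0$, Definition \ref{def_db} yields $I^\circ \subset \devbd{\tilde{p}}$, and by closedness $I \subset \devbd{\tilde{p}}$.

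Finally I would eliminate every non-geodesic possibility using Proposition \ref{prop_classification} and Lemma \ref{lemma_rough}, keeping in mind that $\hol_{\tilde{p}}$ is hyperbolic or quasi-hyperbolic so that the cusp and parabolic annulus cases are a priori excluded. A V-end would have $\devbd{\tilde{p}} = \overline{x_+ x_0} \cup \overline{x_- x_0}$, which meets $I^\circ$ only at the empty set. Case (\ref{item_rough1}) of Lemma \ref{lemma_rough} forces $\devbd{\tilde{p}} \setminus \{x_+, x_-\}$ to lie in an open triangle $\Delta$ whose closed edge opposite to $x_0$ is exactly $I$; but $I^\circ \subset \pa \Delta$, contradicting our inclusion. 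Case (\ref{item_rough2}) of the lemma says $\devbd{p} = \pa\Omega \setminus I^\circ$, again incompatible with $I^\circ \subset \devbd{\tilde{p}}$. Therefore $\devbd{\tilde{p}}$ must be a segment, namely a principal segment of $\hol_{\tilde{p}}$, and $p$ is a geodesic end by Proposition \ref{prop_classification}(\ref{item_endclas2}).

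The main obstacle I expect is the dynamics step in the quasi-hyperbolic case: the Jordan block contributes linear-in-$n$ cross terms to $\hol_{\tilde{p}}^n$, so the iterates of $(b_t, c_t)$ must be controlled carefully enough that the $b$-component still beats the $c$-component as $n_t \to \infty$. Fortunately the ratio $\log\lambda_1/\log\lambda_2 = 1/3$ is bounded away from $1$, so after absorbing the polynomial factors, the exponential contraction along the $b$-direction still wins, and the qualitative conclusion matches the hyperbolic case.
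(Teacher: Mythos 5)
Your strategy coincides with the paper's: push the developed trajectory out along $I$ by powers of $\hol_{\tilde{p}}$, deduce that $\devbd{\tilde{p}}$ meets $I^\circ$, and then run the case-by-case elimination through Proposition \ref{prop_classification} and Lemma \ref{lemma_rough}. Your elimination step is complete and correct, and the hyperbolic half of the dynamics is right (modulo the harmless point that $n_t$ must be an integer, which is fixed by choosing $t$ adapted to $n$, or by settling for accumulation points in a multiplicative window $[c^*,\lambda_2 c^*]$).

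The genuine gap is the quasi-hyperbolic case, which is exactly the case the lemma is invoked for when $R\in\ima\mathbb{R}^*$. In your affine chart ($x$ at the origin, the other endpoint of $I$ at infinity along the $c$-axis), a quasi-hyperbolic $\hol_{\tilde{p}}$ does \emph{not} act with two distinct rates $\lambda^{\pm 1},\lambda^{\pm 3}$. If $x$ is the simple fixed point, the action is
$$
(b,c)\ \longmapsto\ (\lambda^{3}b,\ \lambda^{3}c+\lambda^{2}b),
$$
a single $2\times2$ Jordan block: both directions have the same exponential rate $\lambda^{3}$, and the cross term in $a^{n}$ is $n\lambda^{3n-1}b_t$, of \emph{larger} order than the diagonal term. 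With your normalization $\lambda^{3n_t}c_t=c^*$ the longitudinal coordinate of $a^{n_t}(y_t)$ is $c^*\bigl(1+n_t\lambda^{-1}b_t/c_t\bigr)$ with $n_t\sim|\log c_t|$, and asymptoticity to $I$ only gives $b_t/c_t\to0$, not $b_t/c_t=o(1/|\log c_t|)$; so the claimed limit $a^{n_t}(y_t)\to(0,c^*)$ is unjustified, and the mechanism you invoke (``exponent ratio $1/3$ beats the polynomial factor'') does not exist in this chart. The statement is nevertheless salvageable, and your step 3 needs less than you try to prove: a single point of $I^\circ$ in $\devbd{\tilde{p}}$ already forces $\devbd{\tilde{p}}=I$. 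Two repairs: (i) since the line spanned by $I$ supports $\Omega$, the transverse coordinate $b_t$ has constant sign, so the ratio (transverse)/(longitudinal) of $a^{n}(y_t)$ is at most $b_t/c_t$ \emph{uniformly in} $n\ge0$; normalizing by the longitudinal coordinate itself (take the first $n$ for which it exceeds $c^*$) produces accumulation points of $\bigcup_{n}\hol_{\tilde{p}}^{n}(C)$ in $I^\circ$, which suffices. (ii) If $x$ is the double fixed point, the transverse dynamics is parabolic, $a^{-n}(b,c)=\bigl(\tfrac{b}{1-nb/\lambda},\ \tfrac{\lambda^{3n}c}{1-nb/\lambda}\bigr)$, and there $n_tb_t\to0$ \emph{is} automatic because $b_t=o(c_t)$ and $c_t|\log c_t|\to0$. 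With one of these corrections your argument becomes a detailed version of the paper's proof, which simply asserts this dynamical fact (that $\Ac_{k\to\pm\infty}(\hol_{\tilde{p}}^{k}(C))=I$) and then applies the same classification check.
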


We remark that, in contrast to the situation described in the lemma, when $p$ is a V-end or a non-simple end with hyperbolic holonomy (\cf Lemma \ref{lemma_rough}), one can still construct $\beta$ such that $x=\devlim(\beta)$ is an endpoint of the principal segment --  just construct the developed path $t\mapsto\dev(\beta_{[0,t]}^{-1})$ first and then take the quotient by $\pi_1(\Sigma)$. However, $\dev(\beta_{[0,t]}^{-1})$ should 
be asymptotic to the segment joining $x$ and the saddle fixed point, otherwise $\beta$ does not converges to $p$. 

\begin{proof}
By definition of $\devbd{\tilde{p}}$, it is sufficient to prove that, for any punctured neighborhood $U$ of $p$, the closure of $\dev(\widetilde{U})$ contains $I$.

Let $C$ denote the trajectory of $\dev(\beta_{[0,t]}^{-1})$ in $\dev(\widetilde{U})$. The assumption that $C$ is asymptotic to $I$ and dynamical properties of (quasi-)hyperbolic projection transformations imply that either the accumulation set $\underset{k\rightarrow +\infty}{\Ac}(\hol_{\tilde{p}}^k(C))$ or $\underset{k\rightarrow -\infty}\Ac(\hol_{\tilde{p}}^k(C))$ (see \S \ref{subsubsec_devbd} for the notation) is $I$. Since $\dev(\widetilde{U})$ is $\hol_{\tilde{p}}$-invariant, 
the $\hol_{\tilde{p}}^k(C)$'s are contained in $\dev(\widetilde{U})$, hence the accumulation set $I$ is in the closure of $\dev(\widetilde{U})$, as required.
\end{proof}

\vspace{10pt}
\subsection{Poles of order $\leq 2$}
The following theorem completes the proof of statement (III) from the introduction, hence finishes the proof of Theorem \ref{intro_main}.
\begin{theorem}[\textbf{Loftin, Benoist-Hulin}]
Suppose $p$ is a pole of order $\leq 2$ or a removable singularity of $\ve{b}$. Then $p$ is a cusp of the convex projective structure.
\end{theorem}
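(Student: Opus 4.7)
The plan is to translate the statement into a statement about Hilbert volume and then reduce to comparison with a cuspidal hyperbolic metric. By Marquis's theorem (recalled in the introduction), $p$ is a cusp of the convex projective structure if and only if a punctured neighborhood of $p$ has finite volume with respect to the Hilbert metric. So it suffices to establish that this Hilbert volume is finite, given that $(g,\ve{b}) \in \V_0(\Sigma)$ and $p$ is a removable singularity or pole of order $\leq 2$ of $\ve{b}$.

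By Corollary \ref{coro_bh} (\ref{item_bh1}), the Hilbert volume form $\nu^{\mathsf{H}}_{\Omega}$ and the Blaschke volume form $\nu_{\Omega}$ are comparable up to multiplicative constants (bounded above and below uniformly on $\mathfrak{C}_*$). Since the diffeomorphism $\Sigma \overset{\sim}{\to} \Omega/\hol(\pi_1(\Sigma))$ induced by $\dev$ identifies the Hilbert and Blaschke volumes on $\Sigma$ with the pushforwards of $\nu^{\mathsf{H}}_\Omega$ and $\nu_\Omega$, it is enough to show that the Blaschke metric $g$ has finite volume on a punctured neighborhood of $p$.

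For this I would invoke the Benoist--Hulin Lemma \ref{lemma_bilip}. By the defining conditions of $\V_0(\Sigma)$, the conformal structure $\ac{J}$ is cuspidal at $p$ and $\kappa_g(x) \to -1$ as $x \to p$. Combined with the global bounds $-1 \leq \kappa_g \leq 0$ from Theorem \ref{thm_vortex} (\ref{item_thmwang2}), this yields that on a sufficiently small punctured neighborhood $U$ of $p$ the curvature is negatively pinched, say $-1 \leq \kappa_g \leq -\tfrac{1}{2}$. Since $g$ is also complete at $p$, Lemma \ref{lemma_bilip} applies: on a possibly smaller punctured neighborhood, $g$ is conformally quasi-isometric to the cuspidal hyperbolic metric $\tfrac{4\,|\dz|^2}{|z|^2(\log|z|)^2}$ in a chart centered at $p$. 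A direct computation shows that this cuspidal hyperbolic metric has finite area near $z=0$, so $g$ has finite area on a punctured neighborhood of $p$, completing the chain.

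The step I expect to be most delicate is verifying the hypotheses of Lemma \ref{lemma_bilip} cleanly; in particular one must ensure that the pinching bound on $\kappa_g$ is genuinely two-sided on some neighborhood of $p$ (the upper bound $\kappa_g \leq -\tfrac{1}{2}$ follows from $\kappa_g \to -1$, while the lower bound $\kappa_g \geq -1$ is automatic), and that completeness at $p$ is preserved when passing to a smaller neighborhood. Once this is in place, the rest is a routine chain of citations: Lemma \ref{lemma_bilip} gives quasi-isometry to a cuspidal hyperbolic metric, finite area of the cuspidal hyperbolic metric near the puncture is elementary, Corollary \ref{coro_bh} transfers finiteness from Blaschke to Hilbert volume, and Marquis's criterion then identifies $p$ as a cusp.
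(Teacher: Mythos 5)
Your proof is correct, and it follows the same overall strategy as the paper's second proof (attributed to Benoist--Hulin): show that a punctured neighborhood of $p$ has finite Hilbert volume and invoke Marquis's characterization of cusps. Where you differ is in how the finite-volume input is produced. The paper's second proof notes that the $|\ve{b}|^{\frac{2}{3}}$-area of a punctured neighborhood is finite precisely when the pole has order $\leq 2$ and then passes to the Hilbert volume via Corollary \ref{coro_bh}; you instead establish finiteness of the Blaschke volume directly, using the defining conditions of $\V_0(\Sigma)$ ($\kappa_g\to-1$ at $p$, cuspidal conformal structure, completeness at $p$) together with $-1\leq\kappa_g\leq 0$ from Theorem \ref{thm_vortex} to get negatively pinched curvature near $p$, and then Lemma \ref{lemma_bilip} to conclude that $g$ is conformally quasi-isometric to the finite-area cuspidal hyperbolic metric (\ref{eqn_cusphyp}); Corollary \ref{coro_bh} (\ref{item_bh1}) then transfers finiteness to the Hilbert volume. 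This is in effect the reverse-direction analogue of the argument the paper runs in the proof of Theorem \ref{thm_ptoc} (\ref{item_ptoc1}), and it is self-contained given Theorem \ref{thm_vortex}; if anything it is more explicit than the paper's one-line volume comparison, since Corollary \ref{coro_bh} (\ref{item_bh2}) only bounds $\|\ve{b}\|_g$ from above, so finiteness of the $|\ve{b}|^{\frac{2}{3}}$-area does not by itself dominate the Blaschke area --- one needs something like your quasi-isometry, or the even shorter observation that the barrier construction in the proof of Theorem \ref{thm_vortex} already gives $g\leq\lambda g_\hyp$ near $p$ with $g_\hyp$ of finite volume there. Note also that the paper offers a genuinely different first proof via Loftin's computation that the eigenvalues of $\hol_{\tilde{p}}$ are all $1$, followed by the case analysis of Lemma \ref{lemma_rough} and a degree argument to exclude the annulus case; your volume argument bypasses the holonomy and that case analysis entirely, because Marquis's criterion is an equivalence.
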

This theorem was proved by Loftin \cite{loftin_compactification} and Benoist-Hulin \cite{benoist-hulin} using \mbox{different} methods. We outline both (short) proofs for convenience of the reader. Note that \cite{loftin_compactification} only treats the case where $\Sigma$ has negative Euler characteristic, so we add to it a discussion for annuli.

\begin{proof}[First proof]
Lofin \cite{loftin_compactification} showed that if $p$ is pole of order $\leq 2$ then the eigenvalues of $\hol_{\tilde{p}}$ are all $1$ (in fact, he proved Theorem \ref{intro_thm2} from the introduction for all poles of order $\leq 3$ in a unified way, the result that we quote here being the $R=0$ case), so $\hol_{\tilde{p}}$ is parabolic by the classification in \S \ref{sec_auto}. 

Let us show that $p$ is a cusp of the convex projective structure. A case-by-case check for simple ends and non-simple ends (see Proposition \ref{prop_classification} and Lemma \ref{lemma_rough}) shows that the only situation where $\hol_{\tilde{p}}$ is parabolic and $p$ is not a cusp is case (\ref{item_rough3}) of Lemma \ref{lemma_rough}. Suppose by contradiction that this is the case, so that we can assume that $\Sigma=\mathbb{C}^*$, $p=0$ and the other puncture $\infty$ is a cusp. But $\infty$ is a pole of order $\geq 4$ because the cubic canonical  bundle $K^3$ of $\mathbb{CP}^1$ has degree $-6$. Our earlier result says that $\infty$ is a polygonal end, a contradiction.
\end{proof}

\begin{proof}[Second proof]

A straightforward computation shows that a punctured neighborhood $U$ of $p$ has finite volume with respect to the metric $|\ve{b}|^\frac{2}{3}$ if and only if $p$ is a removable singularity or a pole of order $\leq 2$. In this case, $U$ has finite volume with respect to the Hilbert metric as well by Corollary \ref{coro_bh} (\ref{item_bh1}). But ends with finite Hilbert volume are exactly cusps as shown in \cite{marquis}.
\end{proof}

%\section{Comments and questions}
%\subsection{Moduli spaces}
%
%
%\subsection{Duality}
%An important  property of Wang's equation (\ref{eqn_vortex}), which we have not made use of before, is that if the pair $(g,\ve{b})$ satisfies the equation then for any $\theta\in\mathbb{R}$, the pair $(g,e^{\theta\ima}\ve{b})$ satisfies the equation as well. This yields a natural action of the circle $S^1=\{e^{\theta\ima}\}$ on $\mathcal{W}(\Sigma)\cong\P(\Sigma)$.
%
%Given a developing pair $(\dev,\hol)$ of the convex projective structure corresponding to $(g,\ve{b})\in\mathcal{W}(\Sigma)$
%
% $(g,\ve{b})$  and that corresponding to $(g,e^{\theta\ima}\ve{b})$

%\subsection{Classification of cusps}
%This

%\subsection{Projective structures on infinite volume hyperbolic surfaces}

\appendix
\section{Estimates for solutions to the equation $\Delta u=4e^u-4e^{-2u}$}
In this appendix, we study bounded non-negative smooth  solutions to the PDE
\begin{equation}\label{eqn_eqn}
\Delta u=4e^u-4e^{-2u}
\end{equation}
%subject to the bounds
% \begin{equation}\label{eqn_eqnbound}
% 0\leq u\leq \frac{1}{2}.
% \end{equation}

 We consider those $u$ defined first on a disk, then on a half-plane, and finally on a certain flat surface $X$ obtained by gluing three half-planes. $X$ appears as a part of the local model for $(\Sigma,\ve{b})$ constructed in \S \ref{sec_local}. In each setting, $\Lap{}=4\,\paz\pabz$ is the usual Laplacian with respect to the natural coordinate $z$ on each half-plane. 

Many results here are adaptations of estimates in \S 5.4 of \cite{dumas-wolf} despite of the different normalisations. However, in \emph{loc. cit.} only those $u$ bounded by an explicit positive constant (in our normalisation of the equation, the bound is $\frac{1}{2}$) are studied because the bound is need when .... 

Put $D_r:=\{\zeta\in\mathbb{C}\mid |\zeta|<r\}$.
\begin{lemma}\label{lemma_loglambda}
For any $\lambda>1$ and $r>0$, if $u\in C^2(D_r)\cap C^0(\overline{D}_r)$ satisfies Eq.(\ref{eqn_eqn}) in $D_r$ and has the bounds
$$
0\leq u\leq \log\left(\frac{\lambda^3-1}{4\lambda}r^2+\lambda\right)
$$ 
then 
$$
u(0)\leq \log \lambda.
$$
\end{lemma}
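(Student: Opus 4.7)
The plan is to prove this by a barrier argument based on the maximum principle for the semilinear elliptic equation $\Delta u = F(u)$ with $F(u) = 4e^u - 4e^{-2u}$, exploiting that $F'(u) = 4e^u + 8e^{-2u}$ is strictly positive, so that $F$ is monotone increasing. The barrier I would use is the radial function
$$v(\zeta) := \log\!\left(c|\zeta|^2 + \lambda\right), \qquad c := \frac{\lambda^3-1}{4\lambda},$$
whose boundary value on $\partial D_r$ is exactly the upper bound for $u$ assumed in the hypothesis, and whose value at the origin is $v(0) = \log \lambda$.

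First I would verify that $v$ is a supersolution, i.e.\ $\Delta v \leq F(v)$ on $D_r$. Using $\Delta|\zeta|^2 = 4$ and the standard formula $\Delta \log f = \Delta f / f - |\nabla f|^2/f^2$, a direct computation with $f := c|\zeta|^2 + \lambda$ collapses to
$$\Delta v = \frac{4c\lambda}{f^2},$$
while $F(v) = 4f - 4/f^2$. The supersolution inequality is therefore equivalent to $f^3 \geq 1 + c\lambda$, and since $f \geq \lambda$ throughout $D_r$, it suffices to check $\lambda^3 \geq 1 + c\lambda$, i.e.\ $c \leq (\lambda^3-1)/\lambda$. The choice $c = (\lambda^3-1)/(4\lambda)$ makes this hold with room to spare.

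Next I would invoke the comparison principle. The hypothesis on $u$ on $\partial D_r$ together with the explicit value of $v$ on $\partial D_r$ gives $u \leq v$ on the boundary. Setting $w := u - v$, one has
$$\Delta w \;\geq\; F(u) - F(v) \;=\; F'(\xi)\,w$$
at each point (for some intermediate $\xi$), with $F'(\xi) > 0$. If $w$ attained a positive interior maximum at $p$, then at $p$ we would have $w(p) > 0$, so $\Delta w(p) \geq F'(\xi)w(p) > 0$, contradicting $\Delta w(p) \leq 0$. Hence $w \leq 0$ throughout $D_r$; evaluating at the origin gives $u(0) \leq v(0) = \log \lambda$.

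There is essentially no hard obstacle here: the only nontrivial step is choosing the constant $c$, and the factor $1/4$ in $(\lambda^3-1)/(4\lambda)$ is dictated by requiring the boundary value to equal $\log(cr^2+\lambda)$ while still leaving the supersolution inequality $c \leq (\lambda^3-1)/\lambda$ comfortably satisfied. The monotonicity of $F$ is what makes the comparison principle apply cleanly; the upper bound $u \geq 0$ in the hypothesis plays no role in the argument (and could be dropped), though it is of course consistent with the eventual application to the function $u = \log(g/(2^{1/3}|\ve{b}|^{2/3}))$.
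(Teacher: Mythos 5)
Your proof is correct and follows essentially the same route as the paper: the identical barrier $\log\bigl(\tfrac{\lambda^3-1}{4\lambda}|\zeta|^2+\lambda\bigr)$, verification that it is a supersolution, and a maximum-principle comparison at a putative positive interior maximum of $u-v$, using strict monotonicity of $x\mapsto 4e^x-4e^{-2x}$. Your side remark that the lower bound $u\geq 0$ is not actually used (you call it the ``upper bound'', a harmless slip) is also accurate, and your supersolution computation correctly keeps the factors of $4$ that the paper's write-up drops.
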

\begin{proof}
Put $\mu=\frac{\lambda^3-1}{4\lambda}$.  A straightforward computation shows that  
$$
U(z):=\log(\lambda+\mu |z|^2) 
$$
is a supersolution to Eq.(\ref{eqn_eqn}), \ie
\begin{equation}\label{eqn_supsol}
\Delta U\leq e^U-e^{-2U}.
\end{equation}

We now use the maximum principle to prove $u\leq U$ on $D_r$, which implies the required result.  By assumption, we have $u\leq U$ on $\pa D_r$. Assume by contradiction that $u>U$ somewhere in $D_r$, then $u-U$ takes positive maximum at some point $z_0\in D_r$. By (\ref{eqn_supsol}) and the strict monotonicity of the function $x\mapsto e^x-e^{-2x}$, at the point $z_0$ we have 
$$
0\geq \Delta (u-U)=e^u-e^{-2u}-\Delta U\geq \big(e^u-e^{-2u}\big)-\big(e^U-e^{-2U}\big)>0,
$$
a contradiction.
\end{proof}

\begin{lemma}\label{lemma_a0}
There exists a constant $C$ such that for any $r\geq 1$ and  any function $u\in C^2(D_r)\cap C^0(\overline{D}_r)$ satisfying (\ref{eqn_eqn}) in $D_r$, we have 
$$
u(0)\leq Cr^\frac{1}{2}e^{-2\sqrt{3}r}.
$$
\end{lemma}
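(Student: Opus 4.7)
Plan: The equation $\Delta u = 4(e^u - e^{-2u})$ linearizes at $u=0$ to $\Delta u = 12u$, whose radial solutions are modified Bessel functions decaying precisely at rate $\sqrt{12} = 2\sqrt{3}$, exactly the exponent claimed in the lemma. The proof therefore couples an a priori upper bound on $u$ (to linearize) with a Bessel-function comparison (to extract the exponential decay), iterated on nested disks to recover the sharp exponent. Throughout I work under the implicit standing assumption $u \geq 0$, which holds in all applications because $u = \log\!\bigl(g/(2^{1/3}|\ve{b}|^{2/3})\bigr) \geq 0$ by Theorem \ref{thm_vortex}.

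First I secure a universal upper bound $u \leq M_0$ on the concentric disk $D_{r-1} \subset D_r$, with $M_0$ independent of $u$ and $r$: this follows by applying the supersolution $U(z) = \log(\lambda + \mu|z-z_0|^2)$ from the proof of Lemma \ref{lemma_loglambda} on unit disks centered at arbitrary $z_0 \in D_{r-1}$ and bootstrapping the resulting halving inequality $u(z_0) \leq \tfrac{1}{2}\sup_{\partial D_1(z_0)} u + \log 2$. (For the paper's own use one may simply take $M_0 = 1/2$ by the choice of neighborhood preceding Lemma \ref{lemma_3estimate}.) Next I linearize: the convexity inequality $e^{3u} - 1 \geq 3u$ for $u \geq 0$ yields $f(u) \geq 12u\,e^{-2u} \geq \alpha u$ on any region where $u \leq M$, with $\alpha := 12 e^{-2M}$. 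Since $V_\alpha(z) := I_0(\sqrt{\alpha}|z|)$ satisfies $\Delta V_\alpha = \alpha V_\alpha$ and is smooth with $V_\alpha(0) = 1$, the maximum principle for the operator $\Delta - \alpha$ (with sign-definite zeroth-order term) applied to $u$ and the rescaling $M\cdot V_\alpha(z)/V_\alpha(\rho)$ on any disk $D_\rho$ where $u \leq M$ on $\overline{D}_\rho$ delivers the pointwise bound
\begin{equation*}
u(z) \;\leq\; M\,\frac{I_0(\sqrt{\alpha}|z|)}{I_0(\sqrt{\alpha}\rho)},\qquad z \in \overline{D}_\rho.
\end{equation*}

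To recover the sharp rate $2\sqrt{3}$ I iterate on the nested disks $D_{r-1} \supset D_{r-2} \supset \cdots \supset D_{r-n}$: evaluating the above bound at $|z| = r-k-1$ and using the Bessel ratio asymptotic $I_0(x+1)/I_0(x) \sim e$ produces $M_{k+1} \leq C M_k e^{-\sqrt{\alpha_k}}$ with $\alpha_k := 12 e^{-2M_k}$, so $M_k$ decays essentially geometrically (at rate $\approx e^{-2\sqrt{3}} \approx 0.031$) while $\alpha_k \to 12$. Choosing $n = \lceil \log(M_0 r)/(2\sqrt{3})\rceil$, a final application of the comparison on $D_{r-n}$ combined with the asymptotic $I_0(x) \sim e^x/\sqrt{2\pi x}$ gives $u(0) \leq C M_n (r-n)^{1/2} e^{-\sqrt{\alpha_n}(r-n)}$. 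The cumulative deficit in the exponent relative to $2\sqrt{3}r$ equals $2\sqrt{3}\bigl(\sum_{k<n} M_k + M_n(r-n)\bigr)$; the geometric series $\sum M_k$ is $O(M_0)$, and the choice of $n$ guarantees $M_n(r-n) = O(1)$. Absorbing these bounded corrections into $C$ yields the claimed bound; for small $r$ the iteration is unnecessary, since $u(0) \leq M_0 \leq C r^{1/2} e^{-2\sqrt{3}r}$ holds on any bounded range of $r$ after enlarging $C$.

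The hardest part will be this iterative bookkeeping, namely verifying that the cumulative exponent deficit $2\sqrt{3}\sum_k M_k$ remains bounded by a universal constant independent of $r$. This relies on the comfortable geometric decay rate $e^{-2\sqrt{3}}$ and on uniform control of the Bessel-ratio corrections $I_0(x+1)/I_0(x) = e^{-1}\bigl(1 + O(1/x)\bigr)$, valid as soon as the running argument $x = \sqrt{\alpha_k}(r-k)$ is large, which is precisely where the iteration is needed.
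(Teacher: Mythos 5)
Your proposal is correct in substance but follows a genuinely different route from the paper. The paper's proof is a one-shot comparison: for $u\ge 0$ the right-hand side of (\ref{eqn_eqn}) satisfies $4e^u-4e^{-2u}\ge 12u-6u^2$, and the explicit function $v=h-\tfrac12h^2$, with $h(\zeta)=I_0(2\sqrt3\,|\zeta|)/I_0(2\sqrt3\,r)$ solving $\Delta h=12h$ and $h=1$ on $\partial D_r$, is checked to be a supersolution via $\Delta v-(12v-6v^2)=-|\nabla h|^2+6h^3(h/4-1)\le 0$; the maximum principle then gives $u\le v$ directly (using the standing bound $u\le\tfrac12$ on $\partial D_r$, which the lemma's statement leaves implicit but which the paper's proof also needs), so $u(0)\le h(0)=1/I_0(2\sqrt3\,r)\le Cr^{1/2}e^{-2\sqrt3 r}$ with the sharp exponent appearing at once. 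You instead use only the linear lower bound $4e^u-4e^{-2u}\ge 12e^{-2M}u$ where $u\le M$, which alone gives the non-sharp rate $\sqrt{12e^{-2M}}$, and you recover $2\sqrt3$ by iterating the comparison over $O(\log r)$ nested disks. This can be made to work, and its advantage is that one never has to guess the supersolution $h-\tfrac12h^2$ that absorbs the quadratic term; the cost is exactly the bookkeeping you flag, and there one point must be done precisely as you hint: the per-step factor has to be $e^{-\sqrt{\alpha_k}}\bigl(1+O(1/(r-k))\bigr)$, coming from $I_0(x-s)/I_0(x)=e^{-s}\sqrt{x/(x-s)}\,\bigl(1+O(1/x)\bigr)$ with $x=\sqrt{\alpha_k}(r-k)$, and not a fixed constant $C_0>1$; with a fixed $C_0>1$ and $n\asymp\log r$ steps you would accumulate $C_0^{\,n}\asymp r^{\epsilon}$, so $M_n(r-n)$ would no longer be $O(1)$ and the final bound would degrade to $r^{1/2+\epsilon}e^{-2\sqrt3 r+O(r^{\epsilon})}$, missing the statement. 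Since your $n$ is $O(\log r)$ and every radius used stays comparable to $r$, the refined ratio estimate does keep the cumulative multiplicative correction bounded and your exponent accounting $2\sqrt3\bigl(\sum_{k<n}M_k+M_n(r-n)\bigr)=O(1)$ goes through. Two small repairs: $I_0(x+1)/I_0(x)\sim e$, not $e^{-1}$ (you want the reciprocal ratio); and in your preliminary step the number of halving iterations depends on $\sup u$, so the universal starting bound $M_0$ really comes from the standing hypothesis $0\le u\le\tfrac12$ rather than from the bootstrap alone --- which is harmless, since that hypothesis is in force wherever the lemma is applied and is used by the paper's own proof as well.
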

\begin{proof}
A key property of the right-hand side of Eq.(\ref{eqn_eqn}) is that, viewed as a function in $u\geq 0$, its second-order Taylor expansion has non-negative remaining terms:
$$
4e^u-4e^{-2u}-(12u-6u^2)>0 \mbox{ for any }u\geq 0.
$$
As a result, for any non-negative function $v$ on $D$ satisfying
\begin{equation}\label{eqn_v}
\Lap{}v\leq 12v-6v^2 \mbox{ in }D,\ v= \frac{1}{2}  \mbox{ on }\pa D,
\end{equation} 
the maximum principle implies $u\leq v$ on the whole $\overline{D}$: otherwise, $u-v$ is non-positive on $\pa D$ but positive somewhere in $D$, hence takes positive maximum at a point $\zeta_0\in D$. But at $\zeta_0$ we have
$$
\Lap{}(u-v)\geq 4e^{u}-4e^{-2u}-(12v-6v^2)> 4e^{v}-4e^{-2v}-(12v-6v^2)\geq 0,
$$
contradicting the maximality.

It is therefore sufficient to find, for every $r\geq 1$, a function $v$ as above which fulfills the required bound
\begin{equation}\label{eqn_v0}
v(0)\leq C r^\frac{1}{2}e^{-2\sqrt{3}r}.
\end{equation}

To this end, consider the modified Bessel function of the first kind
\begin{equation}\label{eqn_bessel}
I_0(x)=\frac{1}{\pi}\int_0^\pi e^{x\cos\theta}\dif\theta=\frac{e^{x}}{\sqrt{2\pi x}}(1+O(x^{-1})).
\end{equation}
The positive function $h\in C^2(\mathbb{C})$ defined by
$$
h(\zeta)=\frac{I_0(2\sqrt{3}\,|\zeta|)}{I_0(2\sqrt{3} \,r)}
$$
satisfies 
$$
\Delta h=12 h
$$
and we have $h=1$ on $\pa D$ and $h< 1$ on $D$. We take $v$ to be
$$
v=h-\frac{1}{2}h^2.
$$
A computation shows that
$$
\Delta v-(12v-6v^2)=-|\nabla h|^2+6h^3\left(\frac{h}{4}-1\right)\leq 0
$$
on $D$. Thus $v$ satisfies the requirement (\ref{eqn_v}), whereas the bound (\ref{eqn_v0})
follows from the asymptotic expansion of $I_0$ (the second equality in (\ref{eqn_bessel})). 
\end{proof}

The next lemma is proved with the same method.
\begin{lemma}\label{lemma_a1}
Let $H\subset\mathbb{C}$ be a half-plane (\ie a region whose boundary is a straight line) and let $u\in C^2(H)\cap C^0(\overline{H})$ be a function satisfying (\ref{eqn_eqn}) 
%...and (\ref{eqn_eqnbound}) 
in $H$. 
\begin{enumerate}
%\item\label{item_a11}
%There exists a constant $C$ such that
%$$
%u(\zeta)\leq C\, \dist(\zeta,\pa H)^\frac{1}{2}\, e^{-2\sqrt{3}\,\dist(\zeta,\pa H)}
%$$
%for any $z\in H$ with $\dist(z,\pa{H})\geq 1$.
%Here $\dist(\zeta,\pa{H})$ is the distance from $\zeta$ to $\pa H$, measured by the metric $|\dzeta|^2$.
\item\label{item_a12}
If $u$ is invariant under a translation $\zeta\mapsto\zeta+a$ which preserves $H$, then 
$$
u(\zeta)\leq C e^{-2\sqrt{3}\,\dist(\zeta,\pa H)} 
$$
for any $\zeta\in H$ and some constant $C$ independent of $u$.
\item\label{item_a13}
If $u|_{\pa H}$ is integrable and $\lim_{|\zeta|\rightarrow+\infty}u(\zeta)=0$, then there exists a constant $C''$ only depending on the upper bound of $u$ and the $L^1$ norm of $u|_{\pa H}$, such that
$$
u(\zeta)\leq C' \dist(\zeta,\pa H)^{-\frac{1}{2}}e^{-2\sqrt{3}\,\dist(\zeta,\pa H)}
$$
for any $\zeta \in H$ and some constant $C'$ only depending on the integral of $u$ on $\pa H$.
\end{enumerate}
\end{lemma}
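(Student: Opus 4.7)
The approach for both parts is the supersolution technique already used for Lemma~\ref{lemma_a0}. The Taylor expansion at $0$ and successive differentiation give
\[
4e^v - 4e^{-2v} \geq 12v - 6v^2 \quad \text{for all } v \geq 0;
\]
hence whenever $h$ is a non-negative solution of $\Delta h = 12h$ satisfying $h \leq 4$, the function $v := h - h^2/2$ is a supersolution of Eq.~(\ref{eqn_eqn}), since the same computation used for Lemma~\ref{lemma_a0} gives
\[
\Delta v - (12 v - 6 v^2) = -|\nabla h|^2 + 3 h^3 (h/2 - 2) \leq 0.
\]
A standard maximum-principle comparison then yields $u \leq v$ on the domain in question, provided the comparison holds on its boundary and both functions decay to $0$ at infinity: at a positive interior maximum $\zeta_0$ of $u - v$, strict monotonicity of $s \mapsto 4e^s - 4e^{-2s}$ combined with the supersolution inequality for $v$ gives $(u - v)''(\zeta_0) > 0$, contradicting the maximum condition.

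\textbf{Part (\ref{item_a12}).} After a Euclidean motion I may assume $H = \{\re\zeta > 0\}$ and that the translation $\zeta \mapsto \zeta + a$ is vertical, so $u = u(x)$ with $x = \re\zeta$ satisfies the ODE $u''(x) = 4e^u - 4e^{-2u}$ on $[0,\infty)$. First I show $u$ is non-increasing with limit $0$ at infinity: convexity ($u'' \geq 0$) makes $u'$ non-decreasing; boundedness forces the limit $u'(\infty) \leq 0$, so $u' \leq 0$ throughout and $u$ decreases to some $L \geq 0$; if $L > 0$ then $u''(x) \to 4(e^L - e^{-2L}) > 0$, driving $u' \to +\infty$, a contradiction. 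Applying Lemma~\ref{lemma_a0} on the unit disk centered at any $\zeta$ with $\re \zeta \geq 1$ gives a universal upper bound $u(\zeta) \leq \varepsilon_0 := C\,e^{-2\sqrt{3}}$. Setting $h(x) := 2\varepsilon_0\, e^{-2\sqrt{3}(x - 1)}$ and $v := h - h^2/2$, one has $h \leq 2\varepsilon_0 \leq 1 \leq 4$ on $[1,\infty)$ (so $v$ is a supersolution), and $v(1) = h(1)(1 - h(1)/2) \geq u(1)$. The maximum-principle comparison on $[1,\infty)$ (with $u, v \to 0$ at infinity) then gives $u(x) \leq v(x) \leq h(x)$ for $x \geq 1$; on $[0,1]$ the trivial bound $u \leq \sup u$ absorbs into the final constant, producing the stated estimate.

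\textbf{Part (\ref{item_a13}).} I would use the 2D analogue with $h$ built from the Poisson kernel for $-\Delta + 12$ on $H = \{\re\zeta > 0\}$; set $x := \re\zeta$. The fundamental solution of $-\Delta + 12$ on $\mathbb{R}^2$ is $\tfrac{1}{2\pi} K_0(2\sqrt{3}\,|z|)$ (modified Bessel function of the second kind, with $K_0(r) \sim \sqrt{\pi/(2r)}\,e^{-r}$ as $r \to \infty$); the method of images gives the Dirichlet Green's function $G_H(\zeta, \eta) = \tfrac{1}{2\pi}\bigl[K_0(2\sqrt{3}|\zeta - \eta|) - K_0(2\sqrt{3}|\zeta - \bar\eta|)\bigr]$ and hence the Poisson kernel
\[
P(\zeta, iy_0) = \frac{2\sqrt{3}\, x}{\pi\,|\zeta - iy_0|}\, K_1\!\bigl(2\sqrt{3}\,|\zeta - iy_0|\bigr).
\]
Using $K_1' = -K_0 - K_1/r$, a direct computation shows $y_0 \mapsto P(\zeta, iy_0)$ is maximized at $y_0 = \im\zeta$, and the asymptotics of $K_1$ give $\sup_{y_0} P(\zeta, iy_0) \leq C\, x^{-1/2}\, e^{-2\sqrt{3}\, x}$. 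Define $h(\zeta) := 2\int_{\mathbb{R}} P(\zeta, iy_0)\, u(iy_0)\, dy_0$; then $\Delta h = 12 h$ on $H$, $h|_{\pa H} = 2 u|_{\pa H}$, and $h \to 0$ at infinity. The identity $\int_{\mathbb{R}} P(\zeta, iy_0)\, dy_0 = e^{-2\sqrt{3}\, x}$ --- the Poisson integral of the constant $1$, which by $y$-translation invariance solves the ODE $w'' = 12 w$ on $[0,\infty)$ with $w(0) = 1$, $w(\infty) = 0$, giving $w(x) = e^{-2\sqrt{3}\, x}$ --- produces the global bound $h(\zeta) \leq 2\|u|_{\pa H}\|_\infty\, e^{-2\sqrt{3}\, x}$, so $h \leq 4$ under the assumed bound on $u$. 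Then $v := h - h^2/2$ is a supersolution of Eq.~(\ref{eqn_eqn}) with $v|_{\pa H} = 2u(1 - u) \geq u|_{\pa H}$ (using $u|_{\pa H} \leq 1/2$), and the maximum principle yields $u \leq v \leq h \leq 2\,\sup_{y_0} P(\zeta, iy_0) \cdot \|u|_{\pa H}\|_{L^1} \leq C'\, x^{-1/2}\, e^{-2\sqrt{3}\, x}\, \|u|_{\pa H}\|_{L^1}$.

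\textbf{Main obstacle.} The technical heart of the argument is part~(\ref{item_a13}): deriving the Poisson kernel $P$ explicitly, establishing its $y_0$-monotonicity, and proving the clean identity $\int P\, dy_0 = e^{-2\sqrt{3}\, x}$, all of which rely on standard but delicate Bessel function identities. A subsidiary obstacle in both parts is that the supersolution construction requires $h \leq 4$, i.e.\ a smallness assumption on $u$; when this is not a priori available, one first applies Lemma~\ref{lemma_a0} on unit disks to shrink $u$ to a universally small value away from $\pa H$ (as in part~(\ref{item_a12}) above), and then absorbs the remaining boundary strip into the overall constant.
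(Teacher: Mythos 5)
Your part (\ref{item_a12}) has a genuine gap. Invariance under a \emph{single} translation $\zeta\mapsto\zeta+a$ preserving $H$ only makes $u$ periodic in the direction parallel to $\pa H$; it does not make $u$ a function of $\re(\zeta)$ alone (in the application, Lemma \ref{lemma_3estimate}, $u$ is exactly such a periodic, genuinely two-variable function). So the step ``so $u=u(x)$'', the reduction to the ODE $u''=4e^u-4e^{-2u}$, and the convexity/monotonicity discussion built on it are not justified. The paper uses the hypothesis correctly: with $h(\zeta)=e^{-2\sqrt{3}\,\re(\zeta)}$ and $v=h-\tfrac12 h^2$ one has $v=\tfrac12\geq u$ on $\pa H$ (this is where the appendix's standing normalization $0\le u\le\tfrac12$ enters, and it is also what makes the constant independent of $u$ --- not ``absorbing $\sup u$ into the constant'' as you say for the strip $0\le\re(\zeta)\le1$); then, if $u-v>\epsilon$ somewhere, Lemma \ref{lemma_a0} applied to the disks $\{|\zeta-\zeta_0|<|\zeta_0|\}$ forces $u-v<\epsilon$ once $\re(\zeta_0)\geq M$, and periodicity confines a positive maximum of $u-v$ to a compact rectangle, where the differential inequality at the maximum gives the contradiction. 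Your skeleton (smallness of $u$ away from $\pa H$ via Lemma \ref{lemma_a0}, then comparison with a supersolution built from $e^{-2\sqrt3(\re(\zeta)-x_0)}$ on $\{\re(\zeta)\geq x_0\}$) can be repaired along exactly these lines --- periodicity plus the decay from Lemma \ref{lemma_a0} supply the compactness needed for the maximum principle --- but the decay and monotonicity of $u$ must be obtained this way, not from a one-dimensional reduction; also, your choice $x_0=1$ needs $\varepsilon_0\le\tfrac12$, which the unknown constant of Lemma \ref{lemma_a0} does not guarantee, so $x_0$ should be taken as a suitable universal constant.

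Part (\ref{item_a13}) is essentially the paper's own argument: the paper takes the same $h(\zeta)=2\int_{\pa H}\tfrac{2\sqrt3\,\re(\zeta)}{\pi|\zeta-\xi|}K_1\bigl(2\sqrt3\,|\zeta-\xi|\bigr)u(\xi)\,\dif\xi$ (quoting Dumas--Wolf, Lemma 5.8, for the fact that it solves $\Delta h=12h$ with $h|_{\pa H}=2u|_{\pa H}$, rather than re-deriving the kernel by images), sets $v=h-\tfrac12h^2$, uses $v|_{\pa H}=2u(1-u)\geq u$ and the maximum principle together with the decay of $u-v$ at infinity, and concludes from the asymptotics of $K_1$. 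Your re-derivation of the kernel, the monotonicity in $y_0$, and the identity $\int_{\pa H}P\,\dif y_0=e^{-2\sqrt3\,\re(\zeta)}$ are fine and add nothing problematic, so this part matches the paper.
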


\begin{proof}
We can assume without loss of generality that $H$ is the right half-plane $\H=\{\zeta\in\mathbb{C}\mid \re(\zeta)>0\}$, so that $\dist(\zeta,\pa \H)=\re(\zeta)$.  

\vspace{5pt}

(\ref{item_a12})
Set $h(\zeta)=e^{-2\sqrt{3}\re(\zeta)}$ and $v=h-\frac{1}{2}h^2$. Note that $v=\frac{1}{2}\geq u$ on $\pa \H$. The same computation as at the end of the previous proof yields
$$
\Delta\, v\leq 12 v-6v^2.
$$
Let us prove $u\leq v$ on $\H$ by applying the maximum principle similarly as in the previous proof. Assume by contradiction that this is not the case, then $u-v>\epsilon>0$ somewhere in $\H$.  Applying Lemma \ref{lemma_a0} to disks of the form 
$$
\{\zeta\in\mathbb{C}\mid |\zeta-\zeta_0|<|\zeta_0|\},
$$
we see that $u(\zeta_0)-v(\zeta_0)<\epsilon$ whenever $\re(\zeta_0)\geq M$ for a big enough $M>0$. On the other hand, by hypothesis, $u-v$ is invariant under a translation of the form $\zeta\mapsto \zeta+b\ima $ ($b>0$). As a result, $u-v$ takes positive maximum at some point $\zeta_0$ in the rectangle 
$
\{\zeta\in\mathbb{C}\mid 0\leq \re(\zeta)\leq M, \ 0\leq \im (\zeta)\leq b \}
$ 
and this contracts the fact that at $\zeta_0$ we have
$$ 
\Lap{}(u-v)\geq 4e^{u}-4e^{-2u}-(12v-6v^2)> 4e^{v}-4e^{-2v}-(12v-6v^2)\geq 0,
$$

\vspace{5pt}

(\ref{item_a13})
Let 
$$
K_1(x)=\int_0^{+\infty}e^{-x\ch(t)}\ch(t)\dif t=\sqrt{\frac{\pi}{2x}}\,e^{-x}(1+O(x^{-1}))
$$
be the modified Bessel function of the second kind. 
As shown in the proof of Lemma 5.8 in \cite{dumas-wolf}, the function $h\in C^2(\H)\cap C^0(\CH)$ defined by 
$$
h(\zeta)=2\int_{\pa\H}\frac{2\sqrt{3}\re(\zeta)}{\pi|\zeta-\xi|}K_1(2\sqrt{3}\,|\zeta-\xi|)u(\xi)\dif\xi,
$$
is a solution to the Dirichlet problem 
$$
\Lap{}h=12h,\quad h|_{\pa\H}=2\,u|_{\pa\H}.
$$ 
Put  $v=h-\frac{1}{2}h^2$ as before. On the boundary $\pa\H$ we have 
$v=2u(1-u)\geq u$ because of the hypothesis $0\leq u\leq \frac{1}{2}$. A similar application of the maximum principle as before shows that $u\leq v$ in $\H$ as well (the fact that $\lim_{|\zeta|\rightarrow+\infty}u-v=0$ is crucial here). Now the required inequality follows from the above expression of $h$ and the asymptotics of $K_1$.
\end{proof}

Given $\xi', \xi''\in\pa\H$ with $\im(\xi'')<0<\im(\eta')$, we let $X$ denote the surface with boundary obtained by gluing the half-planes $\overline{\H'}=e^{2\pi\ima/3}\overline\H$ and $\overline{\H''}=e^{4\pi\ima/3}\overline\H$  to $\CH$ via translations, as illustrated by Figure \ref{figure_s}, such that $\pa\H'$ and $\pa \H''$ meet $\pa\H$ at $\xi'$ and $\xi''$, respectively. 

We view $\CH$, $\overline{\H'}$ and $\overline{\H''}$ as subsets of $X$.
There is a natural projection $X\rightarrow\mathbb{C}$ mapping $\CH\subset X$ identically to $\CH\subset\mathbb{C}$. A disk/half-plane in $X$ is by definition a subset of $X$ whose projection to $\mathbb{C}$ is a disk/half-plane.

\begin{figure}[h]
\centering\includegraphics[width=4.7in]{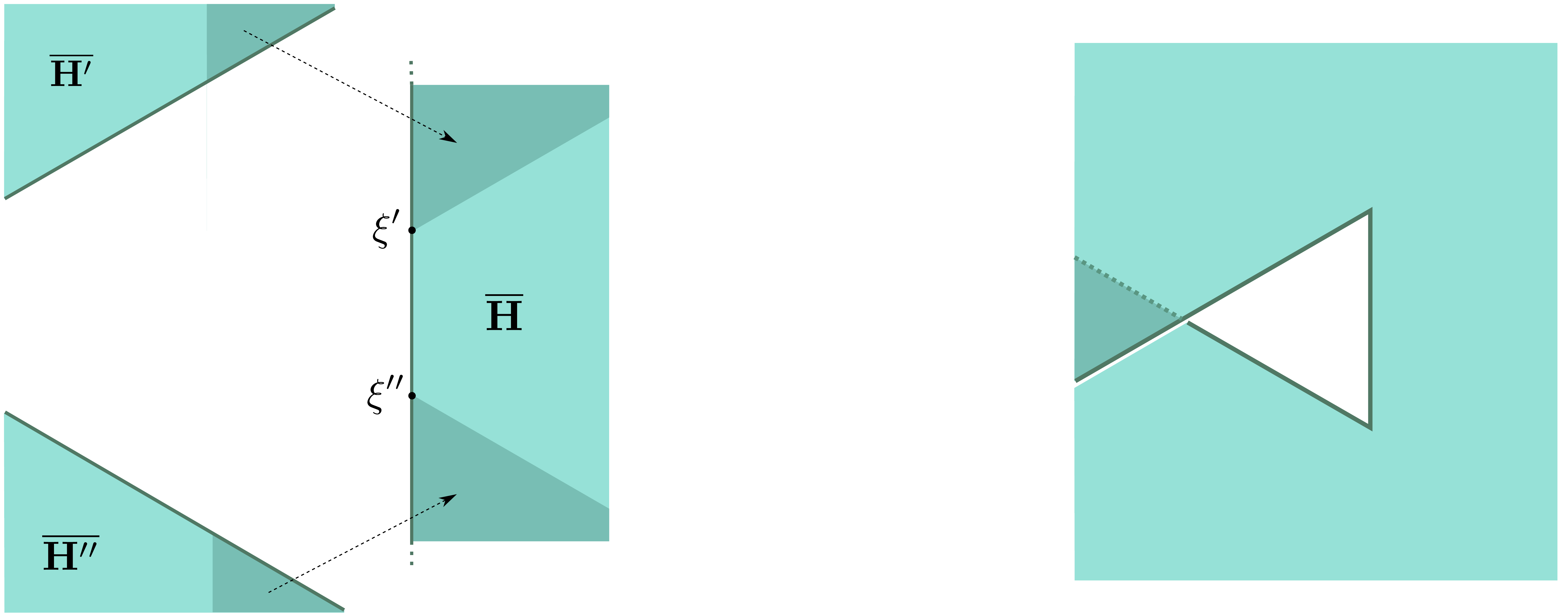}
\caption{The surface surface $X$}
\label{figure_s}
\end{figure}

\begin{lemma}\label{lemma_a2} 
There exists a constant $C>0$ such that if $u$ is a $C^2$ function defined in the interior of $X$ satisfying (\ref{eqn_eqn}) 
%...and (\ref{eqn_eqnbound})
and $u$ extends continuously to $\pa X$,  then
$$
u(\zeta)\leq C|\zeta|^{-\frac{1}{2}}e^{-2\sqrt{3}|\zeta|}
$$
for any $\zeta\in \H\subset X$.
\end{lemma}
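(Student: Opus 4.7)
The plan is to identify $X$ explicitly as the complement in $\mathbb{C}$ of a bounded open triangle $\Delta$ having $\xi'$ and $\xi''$ as two of its vertices, and then apply Lemma~\ref{lemma_a1}(\ref{item_a13}) to $u$ restricted to a half-plane $H_\zeta\subset X$ whose boundary line is a rotated tangent to a slightly enlarged disk containing $\overline{\Delta}$.

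\emph{Geometric setup.} Since the gluings are by translations and $\overline{\mathbf{H}'}=e^{2\pi i/3}\CH$, $\overline{\mathbf{H}''}=e^{4\pi i/3}\CH$ are rotated copies of $\CH$ already living in $\mathbb{C}$, one may realize $X$ as the union in $\mathbb{C}$ of $\CH$ together with the translates of $\overline{\mathbf{H}'},\overline{\mathbf{H}''}$ whose boundary lines pass through $\xi'$ and $\xi''$ respectively. Intersecting the three complementary closed half-planes shows that $\mathbb{C}\setminus X$ is an open (equilateral) triangle $\Delta$ with $\xi',\xi''$ and a third vertex determined by the configuration. Fix $R_0>1$ so that $\overline{\Delta}\subset D_{R_0-1}(0)$.

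\emph{Choice of half-plane.} For $\zeta\in\mathbf{H}$ with $|\zeta|\geq 2R_0$, let $L_\zeta$ be the line perpendicular to the segment $[0,\zeta]$ meeting it at $R_0\zeta/|\zeta|$, and let $H_\zeta$ be the open half-plane bounded by $L_\zeta$ on $\zeta$'s side. Since $\overline{\Delta}\subset D_{R_0-1}(0)$, the triangle lies strictly on the opposite side of $L_\zeta$, so $\overline{H_\zeta}\subset X$, $\zeta\in H_\zeta$, and $\mathrm{dist}(\zeta,L_\zeta)=|\zeta|-R_0$. The bounds $0\leq u\leq 1/2$ and the PDE on $H_\zeta$ hold by assumption; it remains to verify the decay and integrability hypotheses of Lemma~\ref{lemma_a1}(\ref{item_a13}).

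\emph{Verification via Lemma~\ref{lemma_a0}.} For any $w\in\overline{H_\zeta}$, the identity $\partial X=\partial\Delta$ yields $\mathrm{dist}(w,\partial X)\geq\max(1,|w|-R_0+1)=:r(w)$, so the disk of radius $r(w)$ around $w$ is contained in $X$, and Lemma~\ref{lemma_a0} gives $u(w)\leq C_0\,r(w)^{1/2}e^{-2\sqrt 3\,r(w)}$. This forces $u(w)\to 0$ as $|w|\to\infty$ in $H_\zeta$, and integration along $L_\zeta$ gives an $L^1$-bound $\|u|_{L_\zeta}\|_{L^1}\leq K$ with $K$ depending only on $R_0$; crucially, the estimate for $\mathrm{dist}(w,\partial X)$ is rotationally invariant, so $K$ is independent of the direction of $L_\zeta$, hence of $\zeta$. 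Lemma~\ref{lemma_a1}(\ref{item_a13}) now yields
\[
u(\zeta)\leq C'(|\zeta|-R_0)^{-1/2}e^{-2\sqrt 3(|\zeta|-R_0)}\leq C\,|\zeta|^{-1/2}e^{-2\sqrt 3|\zeta|},
\]
with $C'$ depending only on $K$ and on $\sup u\leq 1/2$; for $|\zeta|\leq 2R_0$ the bound $u\leq 1/2$ gives the claim after enlarging $C$. The delicate point is the uniformity of $K$, since the constant produced by Lemma~\ref{lemma_a1}(\ref{item_a13}) depends on $\|u|_{\partial H}\|_{L^1}$; the rotationally symmetric construction of $L_\zeta$ is designed precisely to achieve this uniformity.
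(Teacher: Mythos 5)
Your overall strategy is the same as the paper's (exponential decay from Lemma \ref{lemma_a0} applied to large disks, then Lemma \ref{lemma_a1}(\ref{item_a13}) on a half-plane whose boundary is orthogonal to the radial direction of $\zeta$, with attention to uniformity of the boundary $L^1$-norm), but your geometric setup contains a genuine error: $X$ is \emph{not} embedded in $\mathbb{C}$ as the complement of a triangle, and $\pa X\neq\pa\Delta$. The two rotated half-planes are glued to $\CH$ only, not to each other; their images under the projection $X\rightarrow\mathbb{C}$ overlap in a wedge far to the left, so the projection is two-to-one there. The actual boundary $\pa X$ consists of the segment of $\pa\H$ between $\xi''$ and $\xi'$ together with one boundary ray of $\pa\H'$ issuing from $\xi'$ and one boundary ray of $\pa\H''$ issuing from $\xi''$ (rays whose projections even cross each other). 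Two of your steps break as a result. First, ``$\overline{H_\zeta}$ avoids $\Delta$, hence $\overline{H_\zeta}\subset X$'' is not a valid inference: a half-plane far to the left avoids $\Delta$ but does not lift to $X$ at all, since it is not contained in either of the injectively projected charts $\CH\cup\overline{\H'}$ or $\CH\cup\overline{\H''}$. Your particular $H_\zeta$ (inner normal at angle $\theta\in[-\tfrac{\pi}{2},\tfrac{\pi}{2}]$, boundary at distance $R_0$ from $0$) does lift, but this must be argued through one of those two charts, not through the false identification $X=\mathbb{C}\setminus\Delta$. Second, the bound $\dist(w,\pa X)\geq\max(1,|w|-R_0+1)$ is false: for a point $w$ far out in the $\H'$ (or $\H''$) sheet, the nearest piece of $\pa X$ is the boundary ray from $\xi'$ lying in the \emph{same} sheet, and the distance to it is only a definite fraction of $|w|$ (e.g.\ about $\tfrac12|w|$ for $w$ projecting near the far end of a horizontal boundary line $L_\zeta$ with $\theta=\tfrac{\pi}{2}$), not $|w|-O(1)$. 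This is precisely why the paper works with the biggest disk in $X$ and only claims $r(\zeta)\geq\tfrac12(|\zeta|-c)$, and only on the sub-region $X'$ where the projection is injective.

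The damage is repairable: a linear-in-$|w|$ lower bound for $\dist_X(w,\pa X)$ along $L_\zeta$ (with a constant like $\tfrac12$, uniform in $\theta$ because the boundary rays make fixed angles with the possible directions of $L_\zeta$) still gives, via Lemma \ref{lemma_a0}, exponential decay of $u$ on $L_\zeta$ and hence a $\theta$-independent $L^1$ bound and the decay hypothesis needed for Lemma \ref{lemma_a1}(\ref{item_a13}); the loss of the constant in the exponent on the boundary does not affect the final estimate, which only uses $\dist(\zeta,L_\zeta)=|\zeta|-R_0$. But as written, the identification of $X$ with $\mathbb{C}\setminus\Delta$, the inclusion criterion for $H_\zeta$, and the distance estimate are all incorrect, and since the uniformity of the boundary integral is exactly the point you flag as delicate, the proof as it stands has a real gap.
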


\begin{proof}
Let $X'\subset X$ denote the union of $\CH$ with the two half-planes in $X$ which project to $\{\zeta\in\mathbb{C}\mid\im(\zeta)\geq \im(\xi')\}$ and $\{\zeta\in\mathbb{C}\mid\im(\zeta)\leq \im(\xi'')\}$, respectively. The projection $X\rightarrow\mathbb{C}$ is injective on $X'$, hence we represent a point in $X'$ by  the coordinate $\zeta$ of its projection.

We first show that $u(\zeta)$ decays exponentially as $\zeta$ tends to infinity in $X'$.

Let $D_\zeta$ denote the biggest disk in $X$ centered at $\zeta\in X'$ and let $r(\zeta)$ be the radius of $D_\zeta$. Clearly, there is a constant $c>0$ only depending on $\xi'$ and $\xi''$ such that
$$
r(\zeta)\geq \frac{1}{2}(|\zeta|-c).
$$
Since the function $x\mapsto x^\frac{1}{2}e^{-2\sqrt{3}\,x}$ is monotonically decreasing when $x\geq\frac{1}{4\sqrt{3}}$, applying  Lemma \ref{lemma_a0} to $D_\zeta$, we get
$$
u(\zeta)\leq C r(\zeta)^\frac{1}{2}e^{-2\sqrt{3}\,r(\zeta)}\leq  C \left(\frac{|\zeta|-c}{2}\right)^\frac{1}{2}e^{-\sqrt{3}(|\zeta|-c)}
$$
whenever $|\zeta|\geq 2+c$. Hence $u(\zeta)$ decays exponentially for $\zeta\in X'$ tending to infinity.

To prove the lemma, we consider, for each $\theta\in[-\frac{\pi}{2},\frac{\pi}{2}]$, the biggest half-plane $\H_\theta\subset X$ whose boundary intersects the ray $e^{\ima\theta}\mathbb{R}_{\geq 0}\subset \CH$ orthogonally. Note that the boundary $\pa \H_\theta$ is contained in $X'$. The exponential decay property we just established implies that the restriction of $u$ to $\H_\theta$ satisfies the assumptions of Lemma \ref{lemma_a1}, part (\ref{item_a13}), thus we get
\begin{equation}\label{eqn_a2proof1}
u(\zeta)\leq C'\,\dist(\zeta, \pa\H_\theta)^{-\frac{1}{2}}e^{-2\sqrt{3}\,\dist(\zeta, \pa\H_\theta)}
\end{equation}
for  $\zeta\in \H_\theta$. Now, for any $\zeta\in\CH$ with 
$$
|\zeta|\geq d:=\max_{\theta\in[-\frac{\pi}{2},\frac{\pi}{2}]}{\dist(0,\pa\H_{\theta})},$$
 we have $\zeta\in\CH_{\arg(\zeta)}$ and
\begin{equation}\label{eqn_a2proof2}
 |\zeta|=\dist(\zeta, \pa\H_{\arg(\zeta)})+\dist(0,  \pa\H_{\arg(\zeta)})\leq \dist(\zeta, \pa\H_{\arg(\zeta)})+d.
\end{equation}
Combining (\ref{eqn_a2proof1}) and (\ref{eqn_a2proof2}), we get the required estimate for those $\zeta\in \overline\H$ satisfying $|\zeta|\geq d$. Then the estimate holds for any $\zeta\in \overline\H$ because the ration between $u(\zeta)$ and $|\zeta|^{-\frac{1}{2}}e^{-2\sqrt{3}|\zeta|}$ has a upper bound on $\{\zeta\in \overline\H\mid |\zeta|\leq d\}$.
\end{proof}

See \eg \cite{jost} Corollary 1.2.7 for a proof the following well known result.
\begin{lemma}[\textbf{Gradient estimate for the Poisson equation}]\label{lemma_gradientpoisson}
For any $r>0$, there exists a constant $A$ such that 
any $C^2$ function $u$ defined on a neighborhood of the disk $D_r(z_0)=\{z\in\mathbb{C}\mid |z-z_0|\leq r\}$ satisfies
$$
|\pa_zu(z_0)|\leq A\left(\sup_{\pa D_r(z_0)}u-\inf_{\pa D_r(z_0)}u+ \sup_{D_r(z_0)}|\Lap{}u|\right)
$$
\end{lemma}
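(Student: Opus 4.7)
The proof proposal is the standard decomposition into a harmonic part and a Poisson part. After translating, assume $z_0=0$ and write $D=D_r(0)$. The plan is to split $u=v+w$, where $v$ is the harmonic function on $D$ with $v|_{\partial D}=u|_{\partial D}$ (given by the Poisson integral) and $w=u-v$ solves the Dirichlet problem $\Delta w=\Delta u$ on $D$ with $w|_{\partial D}=0$. Since $|\partial_z u(0)|\leq |\partial_z v(0)|+|\partial_z w(0)|$, it suffices to estimate each term separately.

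For the harmonic part, the Poisson integral formula on the disk reads
$$v(z)=\frac{1}{2\pi}\int_0^{2\pi}\frac{r^2-|z|^2}{|re^{i\theta}-z|^2}\,u(re^{i\theta})\,d\theta.$$
Since $\partial_z$ annihilates constants, we may replace $u$ by $u-c$ for any $c\in\mathbb{R}$ without changing $\partial_z v(0)$; taking $c=\frac{1}{2}(\sup_{\partial D}u+\inf_{\partial D}u)$ we obtain a bounded integrand of modulus at most $\frac{1}{2}(\sup_{\partial D}u-\inf_{\partial D}u)$. A direct computation of $\partial_z$ of the Poisson kernel at $z=0$ then yields $|\partial_z v(0)|\leq \frac{1}{r}\cdot\frac{1}{2}(\sup_{\partial D}u-\inf_{\partial D}u)$ (up to a universal constant).

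For the inhomogeneous part, use the Green's function $G(z,\zeta)$ for the Dirichlet Laplacian on $D$, which has the explicit form $G(z,\zeta)=\frac{1}{2\pi}\log\!\bigl|\frac{r(z-\zeta)}{r^2-\bar\zeta z}\bigr|$. Then $w(z)=\int_D G(z,\zeta)\Delta u(\zeta)\,dA(\zeta)$, so
$$|\partial_z w(0)|\leq \sup_D|\Delta u|\cdot\int_D|\partial_z G(0,\zeta)|\,dA(\zeta).$$
A direct calculation gives $\partial_z G(0,\zeta)=-\frac{1}{4\pi\bar\zeta}$, whose $L^1$ norm on $D$ is a constant multiple of $r$. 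This yields $|\partial_z w(0)|\leq C r\cdot\sup_D|\Delta u|$.

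Combining the two estimates produces the claimed inequality with a constant $A$ depending only on $r$ (indeed only on the scale, since the harmonic piece scales as $r^{-1}$ and the Poisson piece as $r$, so after writing $r=1$ by rescaling $z\mapsto rz$ one sees a single universal constant times $\max(r,r^{-1})$). I do not anticipate any genuine obstacle; the only delicate point is keeping track of the constants in the Poisson kernel differentiation and verifying that $\partial_z G(0,\cdot)$ is indeed integrable on $D$, which it is because the logarithmic singularity of $G$ at the diagonal produces only a $|\zeta|^{-1}$ singularity in the derivative.
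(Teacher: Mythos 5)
Your argument is correct, and it is worth noting that the paper offers no proof at all here: it simply cites \cite{jost}, so your decomposition $u=v+w$ with $v$ the Poisson integral of the boundary values and $w$ represented by the Green's function supplies exactly the standard argument behind that citation. Two small points. First, your explicit formula for the kernel derivative has a slip: with $G(z,\zeta)=\tfrac{1}{2\pi}\log\bigl|\tfrac{r(z-\zeta)}{r^{2}-\bar\zeta z}\bigr|$ one finds $\pa_z G(0,\zeta)=\tfrac{1}{4\pi}\bigl(\tfrac{\bar\zeta}{r^{2}}-\tfrac{1}{\zeta}\bigr)$, i.e.\ the singular term involves $\zeta$ rather than $\bar\zeta$ and there is a bounded correction coming from the image charge; this is harmless, since $|\pa_z G(0,\zeta)|\leq \tfrac{1}{4\pi}\bigl(|\zeta|^{-1}+r^{-1}\bigr)$ still has $L^{1}(D_r)$-norm $O(r)$, so your bound $|\pa_z w(0)|\leq Cr\sup_{D}|\Lap{}u|$ stands. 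Second, to make the representation $w(z)=\int_D G(z,\zeta)\Lap{}u(\zeta)\,dA(\zeta)$ and the differentiation under the integral fully rigorous for $u$ merely $C^{2}$ up to the boundary, one should observe that the Newtonian-type potential on the right is continuous on $\overline{D}$, vanishes on $\pa D$, and has the same distributional Laplacian as $w$, so it equals $w$ by the maximum principle, and that it is $C^{1}$ in the interior with derivative given by the differentiated kernel because that kernel is locally uniformly integrable; these are standard facts and do not affect the outcome. Your harmonic estimate $|\pa_z v(0)|\leq \tfrac{1}{2r}\bigl(\sup_{\pa D}u-\inf_{\pa D}u\bigr)$ and the scaling remark are both correct, so the lemma follows with $A$ depending only on $r$.
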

By virtue of the lemma,  the estimates on $u$ in Lemma \ref{lemma_a0}$, \ref{lemma_a1}$ and \ref{lemma_a2} immediately yield estimates on the derivative $\pa_\zeta u$.
\begin{corollary}\label{coro_a}
In each of the estimates in Lemma  \ref{lemma_a0}, \ref{lemma_a1} and Lemma \ref{lemma_a2}, one can take the constant to be big enough such that $|\pa_\zeta u|$ satisfies the same estimate.
\end{corollary}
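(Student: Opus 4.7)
The plan is to apply the Poisson gradient estimate (Lemma \ref{lemma_gradientpoisson}) on a disk of a fixed small radius $r_0$ (say $r_0=1$) centred at the point $\zeta$ at which we wish to bound $|\partial_\zeta u(\zeta)|$, then control the right-hand side using the $u$-estimates already proved in Lemmas \ref{lemma_a0}, \ref{lemma_a1}, \ref{lemma_a2}, together with Eq.~(\ref{eqn_eqn}) itself. The point is that both terms $\sup u - \inf u$ and $\sup|\Delta u|$ on the right-hand side of Lemma \ref{lemma_gradientpoisson} will be controlled by $\sup u$ on the disk, which in turn is controlled, up to a constant, by the value of the upper bound at $\zeta$.

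First I would record that, because $u$ is nonnegative and uniformly bounded by a universal constant $M$ on the relevant domains (which follows from the maximum-principle supersolution arguments inside the proofs of Lemmas \ref{lemma_a0}--\ref{lemma_a2}), the nonlinearity admits a linear bound
$$
|\Delta u| \;=\; |4e^u - 4e^{-2u}| \;\leq\; C_M\, u
$$
on the entire domain, since $x\mapsto e^x-e^{-2x}$ vanishes at $0$ and has bounded derivative on $[0,M]$. Applying Lemma \ref{lemma_gradientpoisson} with radius $r_0$ and using $u\geq 0$ to discard the $\inf u$ term therefore yields
$$
|\partial_\zeta u(\zeta)| \;\leq\; A\Bigl(\sup_{\partial D_{r_0}(\zeta)} u \,+\, \sup_{D_{r_0}(\zeta)} |\Delta u|\Bigr) \;\leq\; A(1+C_M)\sup_{D_{r_0}(\zeta)} u.
$$

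It remains to observe that each of the three $u$-bounds is ``quasi-invariant'' under shifting the evaluation point by at most $r_0$, at the price of enlarging the constant. For Lemma \ref{lemma_a0}, applying that lemma to the sub-disk $D_{r-r_0}(\zeta')$ for any $\zeta'\in D_{r_0}(0)$ (assuming $r\geq 2r_0$, otherwise the statement is trivial for an enlarged constant) gives $u(\zeta')\leq C(r-r_0)^{1/2}e^{-2\sqrt 3\,(r-r_0)}$, which differs from the bound at $\zeta=0$ by a fixed factor $e^{2\sqrt 3 r_0}$. For Lemma \ref{lemma_a1}, the distance $\dist(\zeta',\partial H)$ differs from $\dist(\zeta,\partial H)$ by at most $r_0$, so again the exponential factor changes by a bounded multiple and the polynomial prefactor $\dist(\,\cdot\,,\partial H)^{-1/2}$ by a bounded multiple once $\dist(\zeta,\partial H)\geq 2r_0$. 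The argument in Lemma \ref{lemma_a2} is identical with $|\zeta|$ in place of $\dist(\zeta,\partial H)$. Combining with the previous display yields the desired estimate on $|\partial_\zeta u|$ with a larger constant. There is no real obstacle here; the only point that needs verification is the linear bound $|\Delta u|\lesssim u$, which is what allows the Laplacian term in Lemma \ref{lemma_gradientpoisson} to be absorbed into the sup-of-$u$ term rather than producing a separate decay rate.
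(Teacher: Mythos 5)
Your proposal is correct and follows essentially the same route as the paper: apply Lemma \ref{lemma_gradientpoisson} on disks of fixed radius centered at $\zeta$, absorb $|\Delta u|=4e^u-4e^{-2u}\leq k\,u$ (valid since $u$ is nonnegative and bounded) into the supremum of $u$ on the disk, and note that the bounds of Lemmas \ref{lemma_a0}, \ref{lemma_a1} and \ref{lemma_a2} change only by a bounded factor when the evaluation point moves by the fixed radius. Your write-up merely makes explicit the quasi-invariance of the bounds under this shift, which the paper leaves implicit.
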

%\begin{proof}
%We only prove for Lemma \ref{lemma_a2}. Adapting the proof to Lemma \ref{lemma_a1} is just a matter of notations.
%
%It is sufficient to show that there is a constant $M$ such that 
%$|\pa_\zeta u(\zeta)|\leq C |\zeta|^{-\frac{1}{2}}e^{-2\sqrt{3}\,|\zeta|}$ whenever $|\zeta|\geq M$. We take 
%
%\end{proof}
This is proved by applying Lemma \ref{lemma_gradientpoisson} to disks of fixed radius centered at each $\zeta$, taking account of the fact that 
$$
0\leq \Delta u=4e^u-4e^{-2u}\leq ku
$$ 
for a constant $k$ because of the hypothesis $0\leq u\leq \frac{1}{2}$.

\bibliographystyle{amsalpha} \bibliography{mero}

\providecommand{\bysame}{\leavevmode\hbox to3em{\hrulefill}\thinspace}
\providecommand{\MR}{\relax\ifhmode\unskip\space\fi MR }
% \MRhref is called by the amsart/book/proc definition of \MR.
\providecommand{\MRhref}[2]{%
  \href{http://www.ams.org/mathscinet-getitem?mr=#1}{#2}
}
\providecommand{\href}[2]{#2}
\begin{thebibliography}{{Cho}13}

\bibitem[BH13]{benoist-hulin}
Yves Benoist and Dominique Hulin, \emph{Cubic differentials and finite volume
  convex projective surfaces}, Geom. Topol. \textbf{17} (2013), no.~1,
  595--620. \MR{3039771}

\bibitem[CG93]{choi-goldman}
Suhyoung Choi and William~M. Goldman, \emph{Convex real projective structures
  on closed surfaces are closed}, Proc. Amer. Math. Soc. \textbf{118} (1993),
  no.~2, 657--661. \MR{1145415 (93g:57017)}

\bibitem[{Cho}13]{choi_ends}
S.~{Choi}, \emph{{A classification of radial and totally geodesic ends of
  properly convex real projective orbifolds}},
  \href{http://arxiv.org/abs/1304.1605}{ArXiv:1304.1605} (2013).

\bibitem[CY77]{cheng-yau_1}
Shiu~Yuen Cheng and Shing~Tung Yau, \emph{On the regularity of the
  {M}onge-{A}mp\`ere equation {${\rm det}(\partial \sp{2}u/\partial
  x\sb{i}\partial sx\sb{j})=F(x,u)$}}, Comm. Pure Appl. Math. \textbf{30}
  (1977), no.~1, 41--68. \MR{0437805 (55 \#10727)}

\bibitem[CY86]{cheng-yau_2}
Shiu~Yuen Cheng and Shing-Tung Yau, \emph{Complete affine hypersurfaces. {I}.
  {T}he completeness of affine metrics}, Comm. Pure Appl. Math. \textbf{39}
  (1986), no.~6, 839--866. \MR{859275 (87k:53127)}

\bibitem[DW14]{dumas-wolf}
D.~{Dumas} and M.~{Wolf}, \emph{{Polynomial cubic differentials and convex
  polygons in the projective plane}},
  \href{http://arxiv.org/abs/1407.8149}{ArXiv:1407.8149} (2014).

\bibitem[FG06]{fock-goncharov}
Vladimir Fock and Alexander Goncharov, \emph{Moduli spaces of local systems and
  higher {T}eichm{\"u}ller theory}, Publ. Math. Inst. Hautes {\'E}tudes Sci.
  (2006), no.~103, 1--211.

\bibitem[Gol90]{goldman_convex}
William~M. Goldman, \emph{Convex real projective structures on compact
  surfaces}, J. Differential Geom. \textbf{31} (1990), no.~3, 791--845.
  \MR{1053346 (91b:57001)}

\bibitem[Hit92]{hitchin}
N.~J. Hitchin, \emph{Lie groups and {T}eichmüller space}, Topology \textbf{31}
  (1992), no.~3, 449--473. \MR{1174252}

\bibitem[Hub57]{huber_1}
Alfred Huber, \emph{On subharmonic functions and differential geometry in the
  large}, Comment. Math. Helv. \textbf{32} (1957), 13--72. \MR{0094452 (20
  \#970)}

\bibitem[Hub67]{huber}
\bysame, \emph{Vollst\"andige konforme {M}etriken und isolierte
  {S}ingularit\"aten subharmonischer {F}unktionen}, Comment. Math. Helv.
  \textbf{41} (1966/1967), 105--136. \MR{0224036 (36 \#7083)}

\bibitem[Jos13]{jost}
J{\"u}rgen Jost, \emph{Partial differential equations}, third ed., Graduate
  Texts in Mathematics, vol. 214, Springer, New York, 2013. \MR{3012036}

\bibitem[Lab07]{labourie_cubic}
Fran{\c{c}}ois Labourie, \emph{Flat projective structures on surfaces and cubic
  holomorphic differentials}, Pure Appl. Math. Q. \textbf{3} (2007), no.~4,
  part 1, 1057--1099. \MR{2402597 (2009c:53046)}

\bibitem[Lof01]{loftin_amer}
John~C. Loftin, \emph{Affine spheres and convex {$\Bbb{RP}\sp n$}-manifolds},
  Amer. J. Math. \textbf{123} (2001), no.~2, 255--274. \MR{1828223
  (2002c:53018)}

\bibitem[Lof04]{loftin_compactification}
\bysame, \emph{The compactification of the moduli space of convex {$\Bbb R\Bbb
  P\sp 2$} surfaces. {I}}, J. Differential Geom. \textbf{68} (2004), no.~2,
  223--276. \MR{2144248 (2006i:32014)}

\bibitem[{Lof}15]{loftin_neck}
J.~{Loftin}, \emph{{Convex RP\^{}2 Structures and Cubic Differentials under
  Neck Separation}}, ArXiv e-prints (2015).

\bibitem[Mar12]{marquis}
Ludovic Marquis, \emph{Surface projective convexe de volume fini}, Ann. Inst.
  Fourier (Grenoble) \textbf{62} (2012), no.~1, 325--392. \MR{2986273}

\bibitem[Str84]{strebel}
Kurt Strebel, \emph{Quadratic differentials}, Ergebnisse der Mathematik und
  ihrer Grenzgebiete (3) [Results in Mathematics and Related Areas (3)],
  vol.~5, Springer-Verlag, Berlin, 1984. \MR{743423 (86a:30072)}

\bibitem[Wan91]{wang}
Chang~Ping Wang, \emph{Some examples of complete hyperbolic affine
  {$2$}-spheres in {${\bf R}\sp 3$}}, Global differential geometry and global
  analysis ({B}erlin, 1990), Lecture Notes in Math., vol. 1481, Springer,
  Berlin, 1991, pp.~271--280. \MR{1178538 (93f:53012)}

\bibitem[Wan92]{wan}
Tom Yau-Heng Wan, \emph{Constant mean curvature surface, harmonic maps, and
  universal {T}eichm\"uller space}, J. Differential Geom. \textbf{35} (1992),
  no.~3, 643--657. \MR{1163452}

\bibitem[Yau75]{yau_max}
Shing~Tung Yau, \emph{Harmonic functions on complete {R}iemannian manifolds},
  Comm. Pure Appl. Math. \textbf{28} (1975), 201--228. \MR{0431040 (55 \#4042)}

\end{thebibliography}

\end{document}